\documentclass{amsart}
\usepackage{mathpreamble}

\begin{document}

\title{Scalar curvature of blow-ups of compact K\"ahler manifolds along complex submanifolds}

%\begin{comment}
\author{Zeqing Miao}
\address{School of Mathematical Sciences, Xiamen University, Xiamen, Fujian, 361005, China.}
\email{{19020240157493@stu.xmu.edu.cn}}
\author{Bo Yang}
\thanks{The second-named author is partially supported by National Natural Science Foundation of China
with the grant numbers: 11801475, 12141101, and 12271451, and Natural Science Foundation of Fujian Province of China with the grant No.2019J05012.}    
\address{School of Mathematical Sciences, Xiamen University, Xiamen, Fujian, 361005, China.}
\email{{boyang@xmu.edu.cn}}    
%\end{comment}

\begin{abstract}
Let $(M,\omega)$ be a compact K\"ahler manifold with its scalar curvature $S(\omega)$, Assume that $M$ has complex dimension at least $3$ and contains a complex submanifold $X$ of complex codimension at least $2$. Let $\sigma: Bl_{X} M \rightarrow M$ denote the blow-up of $M$ along $X$. We show that $Bl_{X} M$ admits a sequence of K\"ahler metrics $\{\widetilde{\omega}_{i}\}_{i \geq 1}$ whose scalar curvatures $S(\widetilde{\omega}_i)$ converge to $\sigma^{\ast} (S(\omega))$ in the $C^0(Bl_{X} M)$ norm. Our work is motivated by a recent result of Brown, who established the corresponding result for blow-ups at a point. The proof is based on the gluing method for constructing extremal K\"ahler metrics on blow-ups, together with new analytic tools and several modifications needed in our setting.
\end{abstract}

\subjclass[2020]{32Q10, 32Q15, 53C55}

\date{Version of 08/23/2026}

\maketitle

\markleft{Scalar curvature of blow-ups along complex submanifolds}

\markright{Scalar curvature of blow-ups along complex submanifolds}

\setcounter{tocdepth}{1}
\tableofcontents

\begin{comment}
{\color{blue}
checklist:

}    
\end{comment}

\section{Introduction}

\subsection{Statement of the main result}

A classical question in Riemannian geometry, posed in \cite{KW1974, KW1975}, asks whether a given function on a smooth manifold $M$ can be the scalar curvature of some Riemannian metric on $M$. While this general question has been investigated in various settings, in this work we focus on the behavior of scalar curvature under blow-ups of compact K\"ahler manifolds. Our main theorem is as follows.

\begin{theorem}\label{MainThm_intro}
    Let $(M^m,\omega)$ be a compact K\"{a}hler manifold of complex dimension $m \geq 3$. Assume that $M$ admits a complex submanifold $X$ of complex codimension $2 \leq k \leq m-1$. Let $\sigma: Bl_{X}M \rightarrow M$ be the blow-up of $M$ along $X$ with the exceptional divisor $E=\sigma^{-1}(X)$. For any $\widetilde{\epsilon}>0$, there exists $\epsilon_0>0$ such that for any $\epsilon \in (0, \epsilon_0]$, there exists a K\"{a}hler metric $\widetilde{\omega}$ on $Bl_{X}M$ in the class $\sigma^{*}[\omega]-\epsilon^{2}[E]$ with $\|S(\widetilde{\omega})-\sigma^{\ast} (S(\omega))\|_{C^0(Bl_{X} M)} \leq \widetilde{\epsilon}$. Here, $[E] \coloneqq c_1(\mathcal{O}_{Bl_{X}M}(E))$ denotes the first Chern class of the line bundle associated with $E$, and $S(\widetilde{\omega})$ and $S(\omega)$ are the scalar curvatures of $\widetilde{\omega}$ and $\omega$, respectively.    
\end{theorem}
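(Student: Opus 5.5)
We would follow the gluing strategy of Arezzo--Pacard and Székelyhidi, as adapted by Brown for point blow-ups, replacing the flat model $\mathbb{C}^m$ near a point by the normal bundle $N_X$ of $X$ with its fibrewise Burns--Simanca type model.

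\textbf{Approximate solution.} Near $X$, choose coordinates in which $\omega$ agrees with a model metric on $N_X$, namely $\omega_X$ plus a fibrewise flat metric, up to error $O(|w|)$ in the normal directions $w$. On $Bl_{0}\mathbb{C}^k$ we have the Burns--Simanca scalar-flat ALE metric $\eta = i\partial\bar\partial(|w|^2 + \psi(w))$ with $\psi = O(|w|^{4-2k})$ for $k\ge 3$, or $\log|w|$ for $k=2$. Using the $U(k)$-invariance of $\eta$ and a hermitian metric on $N_X$, this metric globalises to a fibrewise potential on the total space of $Bl_X N_X$. Then:
\begin{enumerate}
\item Set $\omega_\epsilon = \sigma^*\omega + \epsilon^2 i\partial\bar\partial\bigl(\chi(|w|)\,\psi(w/\epsilon)\bigr)$, glued in the annulus $\epsilon^{\alpha}<|w|<2\epsilon^{\alpha}$ with $\alpha\in(0,1)$. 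This lies in $\sigma^*[\omega]-\epsilon^2[E]$ and is Kähler for small $\epsilon$.
\item Compute $S(\omega_\epsilon)$. Away from the neck it equals $S(\omega)$. Near $E$ the scaled fibre metric has scalar curvature $\epsilon^{-2}S(\eta)=0$ in the fibre directions. The main contribution is then the horizontal curvature of $X$ together with mixed terms of order $O(1)$, which are not small.
\end{enumerate}

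Since only $C^0$ closeness is required, and not an extremal or cscK equation, we aim to find $\widetilde\omega=\omega_\epsilon+i\partial\bar\partial\phi$ with
\[
S(\widetilde\omega)=\sigma^*S(\omega)+f,\qquad \|f\|_{C^0}\le\widetilde\epsilon .
\]
We would solve $S(\omega_\epsilon+i\partial\bar\partial\phi)=\sigma^*S(\omega)+\lambda$, with $\lambda$ a finite-dimensional correction, by a fixed point argument in weighted Hölder spaces $C^{4,\alpha}_\delta$ on $Bl_XM$. The weights are $|w|^\delta$ near $E$, interpolating to $\epsilon^{\delta}$ on the fibre scale.

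\textbf{Linear analysis.} The linearised operator is $L_\epsilon=-\mathcal{D}^*\mathcal{D}+\ldots$, essentially $\Delta^2$ plus lower order terms. We need a uniform bounded right inverse. On the $M$ side, modulo kernel, this follows from invertibility of $\Delta^2$ on $M\setminus X$ for weights $\delta\in(4-2k,0)$ avoiding indicial roots of the normal model. On the $X$-fibred model, it follows from the Lichnerowicz operator of $\eta$ on $Bl_0\mathbb{C}^k$, where ALE scalar-flat Kähler metrics have no decaying kernel, now parametrised over $X$.

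The difficulty is that the model is not a product. The horizontal Laplacian on $X$ has scale $1$ while the fibre has scale $\epsilon$, so the operator is "adiabatic". We would invert the fibre operator first and then treat $\Delta_X$ terms as perturbations, using $\epsilon^2\Delta_X$ relative to the fibre bilaplacian. If $M$ has holomorphic vector fields, the cokernel (holomorphy potentials) is absorbed into the freedom of prescribing $f$: we allow $S=\sigma^*S(\omega)+\sum c_j h_j$ with $h_j$ bounded and $c_j$ small. This is exactly where we exploit that only $C^0$-closeness, not an exact equation, is needed, as in Brown's argument.

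\textbf{Main obstacle.} The hardest step is making the error $S(\omega_\epsilon)-\sigma^*S(\omega)$ small in the weighted norm. Near $E$ the error is $O(1)$, coming from the curvature of $N_X$ and of $X$ interacting with $\eta$. After weighting, this $O(1)$ error does not vanish as $\epsilon\to0$ for a naive approximate solution. We would first improve the approximation by solving, fibrewise on $Bl_0\mathbb{C}^k$, $\Delta_\eta^2 u = (\text{order-one error})$ with $u$ growing at most like $|w|^{2}$, parametrised smoothly over $X$. This adds an $\epsilon^{2}$-order potential correction that cancels the leading term.

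The remaining error is $O(\epsilon^{\gamma})$ in the weighted space. It then only needs to be corrected by a small $\phi$, via the contraction mapping. If the fibre equation has an obstruction, meaning a nonzero projection to the constants or to a kernel, we absorb it into $f$ as a function supported near $E$. This is harmless because we only need $\|f\|_{C^0}\le\widetilde\epsilon$ after choosing $\epsilon_0$ small.
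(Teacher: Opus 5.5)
Your overall architecture (glued Burns--Simanca metric, weighted H\"older spaces, contraction mapping, and a finite-dimensional defect absorbed into the $C^0$-small discrepancy) matches the paper's. However, it does not cover codimension $k=2$, which the theorem includes.

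Your weight window $\delta\in(4-2k,0)$ is empty for $k=2$, and for a structural reason. With a power weight $d^{\delta}$, $\delta<0$, the limit on $M\setminus X$ in the contradiction argument may carry a $\log d$ singularity along $X$ (the transverse fundamental solution of $\Delta^2$ in real dimension $4$). Such a singularity is not removable. For $\delta\ge 0$, constants instead lie in the kernel of $L_\eta$ on $Bl_0\mathbb{C}^2$.

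Moreover, the $k=2$ potential $\epsilon^2\log(d^2/\epsilon^2)$ does not decay. Cutting it off at radius $\epsilon^{\theta}$ produces an error of $C^0$-size about $\epsilon^{2-4\theta}\log\frac{1}{\epsilon}$. Its $C^{0,\alpha}_{\delta-4}$-norm is about $\epsilon^{2-\theta\delta}\log\frac{1}{\epsilon}$, which exceeds the radius $\epsilon^{2-\delta}$ of the contraction ball for every $\theta<1$ and $\delta<0$.

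The paper handles this with two ingredients:
\begin{itemize}
\item the log-modified weight $\widetilde{\tau}_\epsilon$ of Definition~\ref{WeightedBLxM2}, which forces bounded limits so that Harvey--Polking removability applies;
\item the modified equation \eqref{Equation2}, which moves the cut-off error $(1-\gamma(2d/\epsilon^{\theta}))(S(\omega)-S(\omega_\epsilon))$ and the nonlinear term $\chi_{\epsilon^\beta}Q_\omega(\epsilon^2\gamma(d/\epsilon^\theta)\log(d^2/\epsilon^2))$ to the right-hand side. This is allowed because these terms are $C^0$-small for $\theta<\frac12$, even though they are large in the weighted norm.
\end{itemize}
Your fibrewise correction near $E$ addresses neither point.

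For $k\ge 3$, the obstacle you single out is not actually an obstacle. The contraction runs in the ball $\|\phi\|_{C^{4,\alpha}_\delta}\le c\,\epsilon^{2-\delta}$, since this is what gives $\|\phi\|_{C^{4,\alpha}_2}\le c$. An error of $C^0$-size $O(1)$ at scale $\epsilon$ has weighted norm $O(\epsilon^{4-\delta})$. The paper only bounds the error near $E$ by $O(\epsilon^{-1})$, which gives $O(\epsilon^{3-\delta})$ and is still $o(\epsilon^{2-\delta})$.

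So the naive $\omega_\epsilon$ already suffices, and what must be balanced is the gluing region. With $r_\epsilon=\epsilon^{k/(k+1)}$ one gets $\|S(\omega)-S(\omega_\epsilon)\|_{C^{0,\alpha}_{\delta-4}}\le C r_\epsilon^{3-\delta}$. Since $\frac{k}{k+1}(3-\delta)-(2-\delta)=\frac{k-2+\delta}{k+1}>0$ for $k\ge3$ and $\delta>-1$, this is $o(\epsilon^{2-\delta})$.

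Two further points in your linear theory:
\begin{itemize}
\item Since $\omega$ is not cscK, the cokernel of $L_\omega$ is not given by holomorphy potentials. It is always nonzero, because $L_\omega$ has index $0$ and constants lie in its kernel. The paper uses Brown's isomorphism $\widetilde{L}_\omega\phi=L_\omega\phi-\sum\phi(q_i)f_i$ with $q_i\notin X$; its defect is $C^0$-small because $\phi(q_i)$ is.
\item Inverting the fibre operator first and treating horizontal terms as perturbations loses derivatives in the $X$-directions. At scale $\epsilon$ the local model is the product $Bl_0\mathbb{C}^k\times\mathbb{C}^{m-k}$, where horizontal and fibre derivatives are on equal footing. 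The paper instead proves Proposition~\ref{Inverse} by contradiction and rescaling, reducing to injectivity of $L_\eta$ on $Bl_0\mathbb{C}^k$, of $\Delta^2$ on $\mathbb{C}^k\setminus\{0\}$, and of $\widetilde{L}_\omega$ on $M$.
\end{itemize}
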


It is well known that the blow-up of a compact K\"ahler manifold along a complex submanifold is K\"ahler; see \cite[Theorem II.6, p.~202]{Blanchard1956} or \cite[Proposition 3.24, p.~80]{Voisin2007_1}. More precisely, for sufficiently small $\epsilon>0$, the class
\(
\sigma^{*}[\omega]-\epsilon^{2}[E]
\)
is K\"ahler and can be represented by an explicit background metric $\omega_{\epsilon}$ on $Bl_{X}M$, defined in \eqref{backmetric3} and \eqref{backmetric2}. Roughly speaking, Theorem \ref{MainThm_intro} shows that the scalar curvature on $Bl_{X}M$ can be arbitrarily close to that of $M$. It is motivated by an important recent result of Brown \cite{Brown}, where he proved the same conclusion for the blow-up of a compact K\"ahler manifold at one point.

Following \cite{Brown}, our approach to Theorem \ref{MainThm_intro} is inspired by the gluing method for constructing constant scalar curvature K\"ahler (cscK) metrics and extremal K\"ahler metrics on blow-ups of compact K\"ahler manifolds. For related results on the gluing construction, we refer to \cite{AP2006,AP2009,APS2011,Sze2012,Sze2014,BR2015,SS2020} and the references therein. In particular, the gluing construction of extremal metrics on blow-ups of compact K\"ahler manifolds along submanifolds of codimension \(k\geq 3\) was carried out in \cite{SS2020}. The key observation in \cite{Brown} is that the construction of a K\"ahler metric with $S>0$ can be reduced to solving a PDE closely related to the cscK equation. Roughly speaking, with respect to the background metric $\omega_{\epsilon}$ on $Bl_{X}M$, we solve for $\phi$ in the equation
\begin{equation}\label{main_eq_intro}
S(\omega)-S\bigl(\omega_{\epsilon}+\sqrt{-1}\,\partial\dbar{\phi}\bigr)=\mathcal{E}_{\epsilon}.
\end{equation}
Here the error term $\mathcal{E}_{\epsilon}$ converges to zero, in an appropriate norm, as $\epsilon\to 0$. We follow \cite{Brown} and solve for $\phi$ in \eqref{main_eq_intro} by applying the contraction mapping principle in suitable weighted H\"older spaces. Since the weighted norm of the solution $\phi$ to \eqref{main_eq_intro} is bounded by a suitable positive power of $\epsilon$, the desired metric $\widetilde{\omega}=\omega_{\epsilon}+\sqrt{-1}\partial\dbar{\phi}$ in Theorem \ref{MainThm_intro} can be viewed as a small perturbation of $\omega_{\epsilon}$.

Notably, our method differs from the approaches of \cite{SS2020} and \cite{Brown} in the following aspects.

\begin{enumerate}[label=(\Roman*)]

   \item   In the case of a blow-up at one point, the background metric $\omega_{\epsilon}$ is constructed using holomorphic normal coordinates and a cut-off technique. For blow-ups along a submanifold, a similar construction of $\omega_{\epsilon}$ can be carried out; see \cite[p.~174]{SS2020} for the case $k\geq 3$. We also define an analogous metric $\omega_{\epsilon}$ in the case $k=2$. Since the distance function to the submanifold enters the construction, the corresponding holomorphic ``normal'' coordinates, as in \cite[Lemma 5, p.~175]{SS2020} or Lemma \ref{holosubcoor}, have weaker smoothness properties. Consequently, the behavior of $\omega_{\epsilon}$ requires careful estimates; see Lemmas \ref{MC3} and \ref{MC2}.

   \item  In \cite{Brown}, the contraction property related to \eqref{main_eq_intro} is established by studying the adjoint of a modified linear operator $\widetilde{L}_{\omega_{\epsilon}}$ arising from the scalar curvature function; see \cite[Lemma 4.1]{Brown}. It is unclear whether the similar strategy applies to blow-ups along a submanifold. Instead, we work directly with $\widetilde{L}_{\omega_{\epsilon}}$ and solve \eqref{main_eq_intro} using a different choice of the error term $\mathcal{E}_{\epsilon}$. A crucial step is to show the uniform bound for the inverse of $\widetilde{L}_{\omega_{\epsilon}}$; see Propositions \ref{Inverse} in the case $k \geq 3$. We prove this bound by a scaling argument, together with removable singularity results for higher-order elliptic PDEs. When $k \geq 3$, the proof of the related result \cite[Proposition 9, p.~181]{SS2020} relies on a Liouville theorem for certain fourth-order linear equations on the product space $Bl_{0}\mathbb{C}^k \times \mathbb{C}^{m-k}$. By contrast, our argument avoids the need for such a Liouville theorem through an appropriate choice of scaling parameters.

   \item   On the blow-up of a K\"ahler surface at a point, Brown \cite{Brown} replaces the background metric \(\omega_{\epsilon}\) appearing in \eqref{main_eq_intro} with a modified metric \(\widetilde{\omega}_{\epsilon}\). His argument is based on the corresponding analysis for the cscK problem; see \cite[Section 8.4]{Sze2014}. In the $k=2$ case of Theorem \ref{MainThm_intro}, we adopt a new and possibly conceptually simpler approach: we introduce a different weighted H\"older space (Definition \ref{WeightedBLxM2}) and modify the error term $\mathcal{E}_{\epsilon}$ in \eqref{main_eq_intro} to cancel certain unfavorable asymptotic terms. We refer to \eqref{Equation2} for the precise from of the new equation. Using the newly introduced weighted H\"older norms, we show in Proposition \ref{Inverse} a uniform bound for the inverse of $\widetilde{L}_{\omega_{\epsilon}}$. To solve \eqref{main_eq_intro}, we further prove uniform estimates for the error term $\mathcal{E}_{\epsilon}$ on the right-hand side; see Proposition \ref{F}.

   \item   Our approach to Theorem \ref{MainThm_intro} can be applied to give an alternative proof of the main result in \cite{Brown}. This point may be of particular interest in the case of blow-ups of a surface at a point, in light of the remark following \cite[Corollary 4.14, p.~12]{Brown}. Indeed, on the blow-up of a K\"ahler surface at a point, we may find a new K\"ahler metric in the same K\"ahler class as the metric constructed in \cite{Brown}, each of whose scalar curvature is $C^{0}$ close to $\sigma^{\ast}(S(\omega))$.

\end{enumerate}

For more recent developments on extremal K\"ahler metrics on blow-ups, we refer the reader to \cite{DS2021,Hallam, Naja2025,BJT2026}.

\subsection{Relations to recent work on positive scalar curvature}

In this subsection, we discuss how Theorem \ref{MainThm_intro} relates to compact K\"ahler manifolds with positive scalar curvature ($S>0$ for short).

The K\"ahler geometry of uniruled surfaces were first studied by Yau and Hitchin in 1970s. For example, Yau \cite{Yau1974} proved that any compact K\"ahler surface of positive total scalar curvature is uniruled. Recall that the total scalar curvature of a compact K\"ahler manifold $(M^m, \omega)$ (see \cite{Yau1974}) is defined as
\begin{equation}
    \int_{M}S(\omega) \frac{\omega^m}{m!}=\frac{2\pi}{(m-1)!} \int_{M}c_{1}(M)\wedge \omega^{m-1}.
\end{equation}
It is an invariant of a given K\"ahler class. Hitchin \cite{Hitchin1975} proved that a compact K\"ahler surface of positive total scalar curvature is rational if it is regular, that is, if $h^{0,1}=0$. On the other hand, it is proved in \cite{Yau1974} that any uniruled K\"ahler surface admits a K\"ahler metric of positive total scalar curvature, and any minimal ruled K\"ahler surface admits a K\"ahler metric $S>0$. In complex dimension $m \geq 3$, Hitchin proved that the blow-up of any compact K\"ahler manifold $M^m$ with $S>0$ at one point admits a K\"ahler metric with $S>0$. He also showed that the same result holds for some particular rational surfaces. These surfaces include $\mathbb{CP}^{2}$ with the standard Fubini--Study metric and a Hirzebruch surface
\(F_k=\mathbb{P}\bigl(\mathcal{O}(k)\oplus \mathcal{O}_{\mathbb{P}^{1}}\bigr)\) ($k \geq 1$)
equipped with a suitably chosen $U(2)$-invariant K\"ahler metric. As an important corollary of his main result in \cite{Brown}, Brown proved that any uniruled K\"ahler surface admits a K\"ahler metric with $S>0$. Therefore we have a precise connection between uniruled surfaces and the existence of K\"ahler metrics with $S>0$. We also note that the recent result of Zhang \cite{Zhang2026} that any rational surface admits a K\"aher metric with positive holomorphic sectional curvature.

The second-named author learned from \cite{Sun_PersonalComm} that it is natural to investigate whether $S>0$ is preserved after birational transformations. Theorem \ref{MainThm_intro}, focusing on a typical case of a bimeromorphic transformation. shows that $S>0$ is preserved after blow-up along complex submanifolds. After completing our work, we learned of the very recent arXiv preprints by Sha \cite{Sha2026_1, Sha2026_2}. He proves that on a compact K\"ahler manifold with a K\"ahler class $[\alpha]$ of positive total scalar curvature, either there exists no K\"ahler metrics with $S>0$ in $[\alpha]$ or the space of K\"ahler metrics with $S>0$ in $[\alpha]$ is path-connected. He constructs examples such that the former case indeed could happen. He also shows that the existence of K\"ahler metrics with $S>0$ in $[\alpha]$ is equivalent to some kind of stablility condition, we refer to \cite[p.~2]{Sha2026_2} for a precise definition. We observe that, under the assumptions of Theorem \ref{MainThm_intro}, the K\"ahler class $\sigma^{*}[\omega]-\epsilon^{2}[E]$ has the same sign of total scalar curvature as the class $[\omega]$ on $M$ for any sufficiently small $\epsilon>0$. This observation follows from a calculation reminiscent of the proof of \cite[Proposition 3]{Yau1974}. However, we do not know whether one can verify $\sigma^{*}[\omega]-\epsilon^{2}[E]$ satisfies the stability condition formulated in \cite{Sha2026_2} directly. This might be possible in view of \cite{BJT2026}. If so, it leads to another proof that $S>0$ is preserved after blowing up along submanifolds. In any case, we want to emphasize that the gluing method in the proof of Theorem \ref{MainThm_intro} shows that a K\"ahler metric with $S>0$ on the blow-up can be constructed as a perturbation of the background metric $\omega_{\epsilon}$ in \eqref{backmetric3} and \eqref{backmetric2}.

\begin{comment}
To see this, recall from \cite[p.~608]{GH} that the first Chern class of $Bl_XM$ satisfies
\begin{equation*}
c_1(Bl_XM)=\sigma^*c_1(M)-(k-1)[E].
\end{equation*}
Then we solve the total scalar curvature $\sigma^*[\omega]-\epsilon^2[E]$ as follows.
\begin{equation}\label{total_S_codim}
    \begin{split}
        &\int_{Bl_{X}M}\Big( \sigma^{*}c_{1}(M)-(k-1)[E] \Big)\wedge \Big( \sigma^{*}[\omega]-\epsilon^2 [E] \Big)^{m-1}\\
        =& \int_{M}c_{1}(M)\wedge[\omega]^{m-1}+(-1)^{m-1}\epsilon^{2(m-1)}\int_{Bl_{X}M} \Big(\sigma^{*}c_{1}(M) \wedge [E]^{m-1}-(k-1)[E]^{m}\Big)\\
        &+\sum_{l=1}^{m-2}C_{m-1}^{l}(-1)^{l} \epsilon^{2l}\int_{Bl_{X}M}\Big( \sigma^{*}c_{1}(M)\wedge (\sigma^{*}[\omega])^{m-1-l}\wedge [E]^{l}-(k-1)(\sigma^{*}[\omega])^{m-1-l}\wedge [E]^{l+1} \Big)\\
        =& \int_{M}c_{1}(M)\wedge[\omega]^{m-1}+O(\epsilon^{2(k-1)}).
    \end{split}
\end{equation} Here we use $[E]|_E=c_1(\mathcal{O}_E(-1))$ and $\int_E \sigma^{\ast} \alpha \wedge (c_1(\mathcal{O}_E(1)))^r=0$ for $\alpha \in \Omega^{(m-1-r, m-1-r)}(X)$  and $0 \leq r \leq k-2$. The above calculation is reminiscent of the proof of \cite[Proposition 3]{Yau1974}, where Yau considered the total scalar curvature of blow-ups of compact K\"ahler surfaces at one point.
\end{comment}

\subsection{Further discussions}

In this subsection, we discuss the behavior of scalar curvature under operations in K\"ahler geometry related to blow-ups.

Given a compact complex manifold $\widehat{M}$, there is a basic complex-analytic criterion \cite{Nakano1970,FN1971} for recognizing when it can be obtained by blowing up another complex manifold $M$ along a complex submanifold. It is well-known that $M$ does not need to K\"ahler if $\widehat{M}$ is a K\"ahler manifold of complex dimension $\geq 3$. We list three examples.
\begin{enumerate}[label=(\arabic*)]
    \item  The $3$-fold constructed in \cite[Example 2 on p.~504]{Hiro1975} is non-K\"ahler. 
    
    \item  $M$ is a small resolution of generic quintic threefold $Z$ with an ordinary double point in \cite{Clemens1983}. The exceptional set $F \cong \mathbb{P}^1$ can be represented by a zero class in $H^2(Y, \mathbb{C})$. Hence $M$ is non-K\"ahler. Let $\widehat{M}$ be the blow up of $Z$ at the double point. Note that the corresponding exceptional divisor $E=\mathbb{P}^1 \times \mathbb{P}^1 \subset \widehat{M}$. We may define a blow down map $\sigma: \widehat{M} \rightarrow M$ contracts along one of two rulings of $E$. 
    
    \item  See \cite[p.~144]{BT2000} for an example in which $\widehat{M}$ is Fano and $M$ is Moishezon. 
\end{enumerate}

\begin{comment}
 \begin{proposition}[{\cite{Nakano1970,FN1971}}]\label{blow_fact}
Let $\widehat{M}$ be a complex manifold and let
$E\subset \widehat{M}$ be a compact smooth hypersurface. Assume that
there exists a holomorphic fiber bundle
\(
p:E\longrightarrow F
\)
over a complex manifold $F$, whose fibers are isomorphic to
$\mathbb{P}^{r-1}$ with $r\geq 2$. Assume that
\[
\left.\mathcal{O}_{\widehat{M}}(E)\right|_{p^{-1}(x)}
\cong \mathcal{O}_{\mathbb{P}^{r-1}}(-1),\ \ \ \text{for every}\ x\in F.
\]
Then there exist a complex manifold $M$, containing $F$ as a complex
submanifold of codimension $r$, and a surjective holomorphic map
\(
\pi:\widehat{M}\longrightarrow M
\)
such that:
\begin{enumerate}[label=(\roman*)]
    \item $\pi|_E=p$ and $\pi(E)=F$;
    \item $\pi:\widehat{M}\setminus E\longrightarrow M\setminus F$
    is biholomorphic;
    \item $\widehat{M}$ is biholomorphic to $Bl_FM$, and $\pi$
    corresponds to the blow-down map.
\end{enumerate}
\end{proposition}   

According to \cite{Hiro1975}, if \(X\) and \(Y\) are bimeromorphic compact complex manifolds, then there exists a compact complex manifold \(Z\) which dominates both \(X\) and \(Y\) by proper modifications.
\end{comment}

The blowing up and down along a smooth submanifold are standard examples of a proper modification or a bimeromorphic map. According to the weak factorization theorem in \cite{AKMW}, any bimeromorphic map is a composition of a finite number of blow up and blow down maps along smooth submanifolds. There are many studies on algebraic and geometric properties that are preserved after a blow up/down map or a modification. We list a few examples below.
\begin{enumerate}[label=(\arabic*)]
    \item  Given a proper modification $\sigma: \widehat{M} \rightarrow M$ between two complex manifolds. If $\widehat{M}$ satisfies the $\partial \bar{\partial}$-lemma, so does $M$; see \cite[Theorem 5.22]{DGMS} and \cite[Theorem 12.9, Chapter VI]{Demailly_book}.
    \item  The existence of balanced metrics is preserved under proper modifications (Alessandrini-Bassanelli \cite{AB1993}, \cite{AB1995}, and \cite{AB1996}).
    \item  The existence of a balanced metric with positive total scalar curvature is a bimeromorphically invariant property (Chiose-R{\u a}sdeaconu-{\c S}uvaina \cite{CRS2019}).
\end{enumerate}

\begin{comment}
        \item  The property that the canonical line bundle is not pseudo-effective is preserved in a blow up/down map.
\end{comment}

In view of results in \cite{CRS2019} and Theorem \ref{MainThm_intro}, we propose the following general question concerning the behavior of scalar curvature under a blow-down map.

\begin{question}\label{blow_down_intro}
    Given a compact manifold $M$ of complex dimension $m$ with a complex submanifold $X$, and $\sigma: \widehat{M}\coloneqq Bl_{X}M \rightarrow M$ the blowing up map. If $\widehat{M}$ admits a K\"ahler metric $\widetilde{\omega}$, Let $\mathcal{B}_{M}$ be the balanced cone on $M$; see, for example, \cite[p.~340]{CRS2019} for the definition, and $S(\omega)$ denote the Chern scalar curvature of a balanced metric $\omega$ with $\omega^{m-1} \in \mathcal{B}_{M}$. Can we estimate $\inf_{\omega^{m-1} \in \mathcal{B}_{M}}\|S(\widetilde{\omega})-\sigma^{\ast} (S(\omega))\|_{C^0(Bl_{X} M)}$? In particular, if $S(\widetilde{\omega})>0$, does there exist a balanced metric $\omega$ on $M$ with $S(\omega)>0$?
\end{question}

Returning to compact K\"ahler manifolds with $S>0$. There has been significant progress in understanding the connection between uniruledness and K\"ahler manifolds with $S>0$ in higher dimensions. Heier--Wong \cite{HW2012} showed that the canonical bundle $K_M$ of any projective manifold $M$ admitting a K\"ahler metric with positive total scalar curvature is not pseudo-effective. Hence it is uniruled by a result of Boucksom--Demailly--P{\u a}un--Peternell \cite{BDPP2013}, As the recent work of Ou \cite{Ou} generalizes the result in \cite{BDPP2013} to compact K\"ahler manifolds, we know that any compact K\"ahler manifolds with positive total scalar curvature is uniruled. On the other hand, Yang \cite{Yang2019} and Brown \cite{Brown} proposed the following metric characterization of compact uniruled K\"ahler manifolds.

\begin{conjecture}[{\cite[Conjecture 4.7]{Yang2019}} and {\cite[Conjecture 7.4]{Brown}}]\label{brown_conj}
Does a compact uniruled K\"ahler manifold admits a K\"ahler metric with positive (total) scalar curvature? 
\end{conjecture}

\begin{comment}
Answers to these questions would deepen our understanding of compact K\"ahler manifolds with $S>0$. Together with Theorem \ref{MainThm_intro}, an affirmative answer to Question \ref{blow_down_intro} would suggest that K\"ahler or balanced metrics with $S>0$ may be constructed via bimeromorphic transformations.    
\end{comment}

A possible strategy for proving Conjecture \ref{brown_conj} is as follows. Any compact uniruled K\"ahler manifold is expected to be associated with some Mori fiber space under a finite number of bimeromorphic transformations. We refer to \cite{KM1998} for the definition and basic properties of Mori fiber spaces. Roughly speaking, a Mori fiber space is a normal variety $X$ with mild singularity and a morphism $f: X \rightarrow B$ so that $K_X$ is $f$-ample (positive in the direction of fibers). Then we study whether any Mori fiber space arising in this way admits a (possibly singular) K\"ahler or balanced metrics with $S>0$. Our hope is to construct a K\"ahler metric with $S>0$ on the original uniruled K\"ahler manifold, starting from a K\"ahler metric with $S>0$ on its Mori fiber space. At present, the conjecture remains open, and we do not have a detailed approach. Instead, following an idea of \cite{Yau1974} (see Remark \ref{Rem_OnYau}), we observe the following result for smooth Fano fibrations.

\begin{proposition}\label{Fanofib}
Let $f: X \rightarrow B$ be a holomorphic submersion between compact complex manifolds. Assume that each fiber $X_b \coloneqq f^{-1}(b)$ of a point $b \in B$ is a Fano manifold. If $B$ admits a K\"ahler metric, then $X$ admits a K\"ahler metric with $S>0$. 
\end{proposition}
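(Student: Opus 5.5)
The idea is to build a metric that is large in the fibre directions and small in the base directions. I would take the candidate
\[
\omega_t = \omega_X + t^{-1} f^{*}\omega_B ,\qquad t\to 0,
\]
where $\omega_B$ is a K\"ahler metric on $B$ and $\omega_X$ is a closed real $(1,1)$-form on $X$ whose restriction to every fibre $X_b$ has positive Ricci curvature. This is the adiabatic-limit idea of \cite{Yau1974} mentioned in Remark \ref{Rem_OnYau}. As $t\to0$ the base directions are enlarged, so their curvature contributions shrink, and the scalar curvature of $\omega_t$ tends to the fibrewise scalar curvature of $\omega_X|_{X_b}$. If that fibrewise scalar curvature is positive, we get $S(\omega_t)>0$ for small $t$.

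\textbf{Step 1: the fibrewise form.} Since each fibre $X_b$ is Fano, $-K_{X_b}$ is ample. Because $f$ is a submersion, the relative canonical bundle satisfies $K_{X/B}|_{X_b}=K_{X_b}$. The natural choice is $\omega_X = -\sqrt{-1}\partial\dbar\log h$, with $h$ a smooth Hermitian metric on $K_{X/B}$, that is, a smooth positive relative volume form. I would construct $h$ by choosing, locally over small open sets $U\subset B$, fibrewise volume forms whose curvature restricted to each fibre is positive. This is possible because ampleness of $-K_{X_b}$ is an open condition, and by Yau's theorem (or simply ampleness) one can fix, for each $b_0$, a form on the nearby fibres that is positive along the fibres. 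These local volume forms are then glued with a partition of unity on $B$.

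A convex combination of volume forms is again a volume form, but positivity of $-\sqrt{-1}\partial\dbar\log$ is not obviously preserved under such averaging. The fibrewise curvature of $\sum\rho_\alpha v_\alpha$ involves $\log$ of a sum. Since the $\rho_\alpha$ are pulled back from $B$ and hence constant on each fibre, only $\partial\dbar$ in the fibre directions matters there. Along a fibre, $-\log\sum_\alpha \rho_\alpha v_\alpha$ with fibre-constant weights is a log-sum-exp of functions each with positive fibrewise $\partial\dbar$, so it is still positive. This works because log-sum-exp preserves plurisubharmonicity, and positivity is preserved up to a convexity argument. So I obtain a closed real $(1,1)$-form $\omega_X$ on $X$ with $\omega_X|_{X_b}>0$ for every $b$.

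\textbf{Step 2: Kähler for small $t$.} By compactness of $X$, the form $\omega_X + t^{-1}f^{*}\omega_B$ is K\"ahler for small $t$: the fibre directions are positive, and the mixed and horizontal terms are dominated by $t^{-1}f^{*}\omega_B$.

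\textbf{Step 3: expansion of the scalar curvature.} The main obstacle is to expand $S(\omega_t)$ as $t\to0$ and show
\[
S(\omega_t)=S\big(\omega_X|_{X_b}\big)+O(t)
\]
uniformly on $X$. I would work in local coordinates $(z,w)$ adapted to the submersion, with $z$ along the fibres and $w$ pulled back from $B$. Then $\omega_t^{n+d}$ equals $t^{-d}\,\omega_X|_{\rm fib}^{\,n}\wedge f^*\omega_B^{\,d}\,(1+O(t))$, and the inverse metric has fibre block $(\omega_X|_{\rm fib})^{-1}+O(t)$ and base block $O(t)$. Consequently
\[
S(\omega_t)= -g_t^{i\bar j}\partial_i\partial_{\bar j}\log\det g_t
\]
is dominated by the fibre trace of the fibre Ricci form, plus $O(t)$ corrections. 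Compactness makes these corrections uniform.

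\textbf{Step 4: positivity of the fibrewise scalar curvature.} For $S(\omega_X|_{X_b})>0$ one needs a positive fibrewise scalar curvature, which is not automatic for an arbitrary fibrewise-positive form. To arrange it, I would refine Step 1 by choosing the fibrewise metrics more carefully. On each fibre, take a K\"ahler metric in $c_1(X_b)$; its scalar curvature has positive average. By Yau's work, or by a conformal/$\partial\dbar$-adjustment in the fixed class, one can make the scalar curvature pointwise positive, and do so smoothly in $b$, since the deformation is local over small $U$. The partition-of-unity gluing must then be redone at the level of fibrewise Kähler potentials relative to a fixed form rather than volume forms, keeping scalar curvature positivity up to a small error. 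This smooth-in-$b$ positive-scalar-curvature fibrewise metric is the step I expect to be hardest, and I would first try the simplest choice $\omega_X=\mathrm{Ric}$ of a relative volume form, exactly as in \cite{Yau1974}.
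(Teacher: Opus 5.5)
Your adiabatic set-up and the expansion in Steps 2--3 are correct and match the paper's adiabatic computation. The proposal nevertheless has a genuine gap at Step 4: you never produce a closed real $(1,1)$-form on $X$ whose restriction to each fibre is a Kähler metric with positive scalar curvature, and the routes you sketch do not produce one.

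\begin{itemize}
\item A positive average of $S$ does not let you make $S$ pointwise positive within a fixed class; Sha's examples, recalled in the introduction, show this fails in general.
\item Conformal changes do not preserve the Kähler condition once the fibres have dimension at least two.
\item Gluing local fibrewise potentials with a partition of unity does not preserve $S>0$. Here $S$ is a fourth-order nonlinear function of the potential, and your local choices are unrelated, so there is no reason for the error to be small.
\end{itemize}

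The missing idea, which is the paper's, is to prescribe the Ricci form instead of the scalar curvature. Given $\chi$ with $\chi_b:=\chi|_{X_b}>0$ and $[\chi_b]=c_1(X_b)$, the Calabi--Yau theorem gives a unique normalized $\varphi_b$ with $\omega_b=\chi_b+\sqrt{-1}\partial\bar\partial\varphi_b>0$ and $\operatorname{Ric}(\omega_b)=\chi_b$. Then $S(\omega_b)=\operatorname{tr}_{\omega_b}\chi_b>0$ comes for free. Uniqueness, together with invertibility of the linearization (essentially $\Delta_{\omega_b}$ on normalized functions), gives smooth dependence on $b$ via the implicit function theorem in an Ehresmann trivialization. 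Hence $\varphi(x):=\varphi_{f(x)}(x)$ is smooth on $X$, and $\omega_{\mathrm{rel}}=\chi+\sqrt{-1}\partial\bar\partial\varphi$ is the required global form, with no gluing needed. Your Steps 2--3 applied to $\omega_{\mathrm{rel}}$ then finish the proof.

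Step 1 also contains a false claim, though it is repairable; the paper simply asserts that $\chi$ can be chosen fibrewise positive. Set $u_\alpha=-\log v_\alpha$, which is strictly psh along the fibres. Then $-\log\sum_\alpha\rho_\alpha v_\alpha=-\log\sum_\alpha\rho_\alpha e^{-u_\alpha}$ is a smoothed minimum, not a log-sum-exp of psh functions. Along a fibre, its $\sqrt{-1}\partial\bar\partial$ equals
\[
\sum_\alpha p_\alpha\sqrt{-1}\partial\bar\partial u_\alpha-\tfrac12\sum_{\alpha,\beta}p_\alpha p_\beta\sqrt{-1}\,\partial(u_\alpha-u_\beta)\wedge\bar\partial(u_\alpha-u_\beta),
\]
where $p_\alpha=\rho_\alpha e^{-u_\alpha}/\sum_\beta\rho_\beta e^{-u_\beta}$, and the subtracted term can dominate. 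For instance, on $\mathbb{P}^1$ take the volume forms $\omega_{FS}$ and $\mu_\lambda^*\omega_{FS}$ with $\mu_\lambda(z)=\lambda z$; both have positive Ricci curvature. Their sum has negative Ricci curvature at $|z|=\lambda^{-1/2}$ once $\lambda$ is large.

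The fix is to average weights rather than volume forms, i.e. to take $\log v=\sum_\alpha\rho_\alpha\log v_\alpha$. Since each $\rho_\alpha$ is constant on fibres, $-\sqrt{-1}\partial\bar\partial\log v$ restricts to $\sum_\alpha\rho_\alpha\bigl(-\sqrt{-1}\partial\bar\partial\log v_\alpha\bigr)|_{X_b}>0$.
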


We also refer the reader to \cite{Sha2026, Tsiamis2026} for recent progress on the systolic geometry of compact K\"ahler manifolds with $S>0$.

The structure of the paper is the following. In Section \ref{section2} we introduce the background metric on the blow-up of a compact K\"ahler manifold. Weighted H\"older spaces and the corresponding linearized operators are defined in Sections \ref{section3} and \ref{section4}. In particular, Propositions \ref{Inverse}, which are crucial for setting up the contraction mapping needed in the proof of Theorem \ref{MainThm_intro}, are proved in Section \ref{section4}. Finally, we complete the proof of Theorem \ref{MainThm_intro} in Sections \ref{section5} and \ref{section6}. Proposition \ref{Fanofib} is proved in Section \ref{section7}. The relevant estimates are collected in Appendices \ref{holo_nor_app}--\ref{App_D}.

\section{A background metric and related estimates}\label{section2}

\subsection{A background metric}
Let $Bl_0 \mathbb{C}^k$ denote the blow up of $\mathbb{C}^k$ at the origin. Recall that the Burns-Simanca metric is a complete scalar ﬂat K\"ahler metric on $Bl_0 \mathbb{C}^k$ constructed by Burns \cite{Lebrun1988} and Simanca \cite{Simanca}. We consider a suitable scaling of the Burns-Simanca metric. Let $\eta$ denote its corresponding K\"{a}hler form and $z=(z_1, \cdots, z_k)$ a holomorphic coordinate on $\C^{k}$. Then for $z \in \C^{k} \setminus \{0\}$, $\eta$ satisfies
\begin{equation}
    \eta=\begin{cases}
    \sqrt{-1}\di\dbar(|z|^{2}+\log(|z|^{2})), \   \ \ \text{if}\ k=2, \\
   \sqrt{-1}\di\dbar\Big(|z|^{2}+\gamma(|z|)\log |z|^{2}+\psi(|z|^{2})\Big), \  \   \text{if}\ k\geq 3.   
   \end{cases}  \label{BSdim23}
\end{equation}
Here $\psi \in C^{\infty}[0,\infty)$ satisfies
\begin{equation}
    \psi(t)= -c_1 t^{2-k}+c_2 t^{1-k}+O(t^{3-2k}), \ t\rightarrow{\infty},\  k\geq 3 \label{psidef}
\end{equation} for two positive constants $c_1$ and $c_2$ which depend on $k$. Moreover, $\gamma$ is a smooth function on $[0, +\infty)$ such that $\gamma(t)=1$ if $t<1$ and $\gamma(t)=0$ if $t>2$. One may check that $\eta$ extends across the exceptional divisor and defines a complete K\"ahler metric on $Bl_0 \mathbb{C}^n$. We follow Seyyedali--Sz\'ekelyhidi \cite[p.~174]{SS2020} in writing the particular form of $\eta$ in \eqref{BSdim23}; see Sz\'ekelyhidi \cite[Lemma 26]{Sze2012} for a more refined asymptotic expansion of the function $\psi$ in \eqref{psidef}.

Let $(M, \omega)$ be a compact K\"ahler manifold of complex dimension $m$, and let $X \subset M$ be a complex submanifold of complex codimension $k$. For short, we write $\codim X=k$. Let $d$ denote the distance from a point $q\in M$ to $X$ with respect to $\omega$, namely,
\begin{equation}
    d(q)=\inf_{p\in X} d_{\omega}(p,q).
    \label{disfuc}
\end{equation}

In the case of $k \geq 3$, a background metric $\omega_{\epsilon}$ on $M \setminus X$ is introduced in \cite[p.~174]{SS2020}. It is obtained by gluing the scaled potential of the Burns-Simanca metric to $\omega$, namely,
\begin{equation}\label{backmetric3}
\omega_{\epsilon}=
\omega+\epsilon^{2}\sqrt{-1}\di\dbar\Big(\gamma(\frac{d}{r_{\epsilon}})\Big[\gamma(\frac{d}{\epsilon})\log (\frac{d^{2}}{\epsilon^{2}})+\psi\bigl(\frac{d^{2}}{\epsilon^{2}}\bigr)\Big]\Big), 
\end{equation}
where we fix $r_{\epsilon}=\epsilon^{\frac{k}{k+1}}$.

If $k=2$, we define a background metric on $M\setminus X$ by
\begin{equation}\label{backmetric2}
    \omega_{\epsilon}=\omega+\epsilon^{2}\sqrt{-1}\di\dbar\Big( \gamma\bigl(\frac{d}{\epsilon^{\theta}}\bigr)\log\bigl(\frac{d^{2}}{\epsilon^{2}}\bigr) \Big), 
\end{equation}
where $\theta$ is any value in $(0, \frac{1}{2})$. The forms of \eqref{backmetric3} and \eqref{backmetric2} are motivated by the background metric used in the blow-up at a point; see \cite[pp.~158 and 175]{Sze2014}. Note that in the case $k \geq 3$, our choice of $r_{\epsilon}$ is slightly different from that in \cite[p.~174]{SS2020}. When $k=2$, we also make a specific choice of $\theta$, which will play a crucial role in Section \ref{section6}.

\begin{lemma}[{\cite[Proposition 4]{SS2020}}]\label{SSepsilon}
If $k \geq 3$, then, for all sufficiently small $\epsilon>0$, the form $\omega_{\epsilon}$ defined in \eqref{backmetric3} extends to a smooth K\"ahler metric on $Bl_{X}M$.
\end{lemma}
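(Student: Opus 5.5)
We verify, region by region, that the $(1,1)$-form $\omega_{\epsilon}$ in \eqref{backmetric3} extends smoothly across $E$ and is positive definite on $Bl_{X}M$ once $\epsilon$ is small. Since $\gamma(d/r_\epsilon)$ vanishes for $d>2r_\epsilon$, we have $\omega_\epsilon=\omega$ there. We split the rest of $Bl_XM$ into an inner region $\{d\le \epsilon\}$, which contains a neighbourhood of $E$, and a neck region $\{\epsilon\le d\le 2r_\epsilon\}$.

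\textbf{Local model near $X$.} Near a point of $X$ we use holomorphic coordinates $(z,w)\in\mathbb{C}^k\times\mathbb{C}^{m-k}$ with $X=\{z=0\}$, adapted to $\omega$ as in Lemma \ref{holosubcoor} (cf.\ \cite[Lemma 5]{SS2020}). In these coordinates $d^2=|z|^2_{h(w)}+O(|z|^3)$, where $h(w)$ is the induced Hermitian metric on the normal bundle, and derivatives of the error obey the natural scaling bounds.

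\textbf{Inner region.} For $d<r_\epsilon$ the outer cut-off is $1$, so the added potential is $\epsilon^2\bigl[\gamma(d/\epsilon)\log(d^2/\epsilon^2)+\psi(d^2/\epsilon^2)\bigr]$. Rescale by $z=\epsilon\zeta$. Then $\omega_\epsilon$ is $\epsilon^2$ times a form that converges smoothly, on compact subsets in $\zeta$ and uniformly in $w$, to the Burns--Simanca form $\eta$ in \eqref{BSdim23}, twisted fibrewise by $h(w)$, plus $\epsilon^{-2}$ times the pullback of $\omega|_X$ in the $w$ directions. On the blow-up side this is the pullback of $\omega$ plus $\epsilon^2\sqrt{-1}\partial\bar\partial$ of $\log|\zeta|^2_{h}+\psi$.

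The term $\sqrt{-1}\partial\bar\partial\log|\zeta|^2_{h(w)}$ is globally the curvature of $\mathcal{O}(1)$ on $\mathbb{P}(N_X)$ with the metric induced by $h$, and it extends smoothly across $E$. Since $\psi$ is smooth at $0$, that term extends as well. The discrepancy between $d^2$ and $|z|^2_h$ enters only through $\log(1+O(|z|))$ terms, which are smooth functions of $(\zeta,w)$ with $\epsilon\,|\zeta|$-small coefficients after pullback.

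For positivity, $\eta$ is positive on $\ker d\sigma|_E$ while $\sigma^*\omega$ is positive in the transverse directions. The mixed terms are $O(\epsilon)$ relative to the diagonal scales, so a Cauchy--Schwarz bound gives positivity for small $\epsilon$.

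\textbf{Neck region.} On $\{\epsilon\le d\le 2r_\epsilon\}$ we have $\gamma(d/\epsilon)$ vanishing beyond $2\epsilon$, so there the potential is $\epsilon^2\gamma(d/r_\epsilon)\psi(d^2/\epsilon^2)$. By \eqref{psidef} this is $O(\epsilon^{2k-2}d^{4-2k})$, and each derivative costs a factor $d^{-1}$, where derivatives of $d$ are controlled by Lemma \ref{holosubcoor}. Hence
\begin{equation*}
\bigl|\epsilon^2\sqrt{-1}\partial\bar\partial(\gamma\psi)\bigr|_\omega\le C\epsilon^{2k-2}d^{2-2k}\le C(\epsilon/d)^{2k-2},
\end{equation*}
which is $\le C\epsilon^{(2k-2)/(k+1)}$ wherever $d\ge r_\epsilon$. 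On the transition zone $\epsilon\le d\le r_\epsilon$ the log term is absent, and we compare $\omega_\epsilon$ with the scaled model metric $\omega+\epsilon^2\sqrt{-1}\partial\bar\partial\psi$, which is asymptotically $\omega$. The only potentially bad piece is where both the logarithm and its cut-off derivatives live, namely $\epsilon\le d\le 2\epsilon$. There, rescaling puts us back in the inner model, which is uniformly positive on $\{1\le|\zeta|\le 2\}$.

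\textbf{Main obstacle.} The delicate step is controlling the derivatives of $d$, and of $d^2-|z|^2_h$, uniformly near $X$. Since $d$ is only a distance function, its smoothness away from $X$ must be quantified via Lemma \ref{holosubcoor}, with bounds like $|\nabla^j(d^2-|z|_h^2)|\le C|z|^{3-j}$. These bounds are exactly what make the corrections $o(1)$ relative to the leading model after rescaling, and they also give smoothness of the extension across $E$. I would first establish these bounds, then run the scaling comparison in each region.
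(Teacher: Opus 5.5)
Your route is the paper's (Lemma \ref{MC3} and Appendix \ref{GE}): $\omega_\epsilon=\omega$ for $d>2r_\epsilon$, an $O((\epsilon/d)^{2k-2})$ perturbation of $\omega$ in the neck, and comparison with the rescaled Burns--Simanca product model near $E$. The positivity part works up to one bookkeeping point. On $2\epsilon\le d\le R\epsilon$ your neck bound $C(\epsilon/d)^{2k-2}$ is of order one, so ``asymptotically $\omega$'' proves nothing there. You should first fix $R$ so that $CR^{2-2k}$ is below the lower bound of $\omega$. Then handle $d\le R\epsilon$ with the rescaled model on $\{|\zeta|\le R\}$, and use the decay bound only for $d\ge R\epsilon$.

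The genuine gap is smoothness across $E$. The only delicate term is $\epsilon^2\sqrt{-1}\partial\bar\partial\log d^2$. For it you need $\sigma^*(d^2)/|z^i_{(i)}|^2$ to be a smooth positive function on each blow-up chart. The paper feeds this in through the function $\rho$ of Lemma \ref{holosubcoor}: Case 3 in Appendix \ref{GE} uses $\partial^I\log(1+\sigma^*\rho)=O(1)$, i.e.\ smoothness of $\sigma^*\rho$.

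Your claim that bounds $|\nabla^j(d^2-|z|^2_h)|\le C|z|^{3-j}$ ``also give smoothness of the extension'' is false. The function $G=|z|^2+\mathrm{Re}(z_1^3)$ obeys the same bounds. Yet in the chart $z_1=u$, $z_l=uv_l$ one gets $\sigma^*\log G-\log|u|^2=\log(1+|v|^2+|u|\cos(3\arg u))$. This is not $C^1$ at $u=0$, and its $\sqrt{-1}\partial\bar\partial$ grows like $|u|^{-1}$, so the resulting form would not even extend continuously. Such estimates only give the $o(1)$ comparisons after rescaling.

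The missing ingredient is structural. The Taylor expansion of $d^2$ in $(z,\bar z)$ along $X$ (with coefficients depending on $w$) contains no monomial of pure type $z^\mu$ or $\bar z^\nu$. Hence $d^2=\sum a_{i\bar j}z^i\bar z^j$ modulo a function flat along $X$, with $a_{i\bar j}|_X=h_{i\bar j}>0$.

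One proof is by induction from the eikonal equation, normalized as $g^{a\bar b}\partial_aF\,\partial_{\bar b}F=F$ for $F$ a constant multiple of $d^2$.
\begin{itemize}
\item The quadratic part of $F$ is the Hermitian form $|z|^2_h$.
\item At order $n\ge 3$ in $z$ the equation reads $(n-1)F^{(n)}=-K_n$. Here $K_n$ is a sum of Taylor coefficients of $g^{a\bar b}$ times $\partial_aF^{(s)}\,\partial_{\bar b}F^{(t)}$ with $s,t<n$.
\item By induction, $\partial_aF^{(s)}$ always retains a factor $\bar z$ and $\partial_{\bar b}F^{(t)}$ a factor $z$, whether the index is a $z$- or a $w$-direction. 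So $K_n$, and hence $F^{(n)}$, has no pure part.
\end{itemize}
A Borel argument then produces the smooth $a_{i\bar j}$. With this input, your inner-region argument and the paper's Case 3 go through.
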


In Lemma \ref{MC2} we establish asymptotic estimates on $\omega_{\epsilon}$ given by (\ref{backmetric2}). In particular, we have the following result.

\begin{lemma}\label{MC2_half}
If $k=2$, for any sufficiently small $\epsilon>0$, $\omega_{\epsilon}$ defined in (\ref{backmetric2}) extends to a smooth K\"ahler metric on $Bl_{X}M$.
\end{lemma}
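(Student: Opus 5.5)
We have to show that for $k=2$ and $\epsilon$ small, the form \eqref{backmetric2} extends smoothly across $E$ and is positive everywhere on $Bl_XM$. The plan is to split $Bl_XM$ into three regions according to the cut-off $\gamma(d/\epsilon^{\theta})$:
\begin{itemize}
\item the \emph{inner region} $\{d<\epsilon^{\theta}\}$, where $\gamma\equiv 1$;
\item the \emph{gluing annulus} $\{\epsilon^{\theta}\le d\le 2\epsilon^{\theta}\}$;
\item the \emph{outer region} $\{d>2\epsilon^{\theta}\}$, where $\omega_{\epsilon}=\omega$ and there is nothing to prove.
\end{itemize}
Throughout we work in holomorphic ``normal'' coordinates $(z,w)\in\mathbb{C}^{2}\times\mathbb{C}^{m-2}$ adapted to $X=\{z=0\}$, as in Lemma \ref{holosubcoor}. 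In these coordinates $\omega=\sqrt{-1}\partial\bar\partial(|z|^{2}+|w|^{2})+O(|z|+|w|)$ terms. We also have $d^{2}=|z|^{2}h(z,w)$ with $h$ positive near $X$, $h|_{X}\equiv 1$, and $h$ having limited regularity.

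\textbf{Step 1: smooth extension across $E$.} On the inner region,
\[
\epsilon^{2}\sqrt{-1}\partial\bar\partial\log(d^{2}/\epsilon^{2})=\epsilon^{2}\sqrt{-1}\partial\bar\partial\log|z|^{2}+\epsilon^{2}\sqrt{-1}\partial\bar\partial\log h .
\]
The term $\sqrt{-1}\partial\bar\partial\log|z|^{2}$ pulls back under $\sigma$ to the curvature form of $\mathcal{O}(-E)$ with a Fubini--Study-type metric on the tautological bundle. This form is smooth on $Bl_XM$, semi-positive, and strictly positive along the fibers $\mathbb{P}^{1}$ of $E\to X$.

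The term $\log h$ is harmless if $h$ is smooth. If $h$ is only as regular as the distance function permits, I would replace $d^{2}$ by a smooth comparable expression, or check from Lemma \ref{holosubcoor} that $\log h$ is smooth in $(z,w)$, so that its pullback is smooth. This regularity issue is where I expect the care flagged in item (I) of the introduction to enter.

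\textbf{Step 2: positivity in the inner region.} Pulling back, we get
\[
\sigma^{*}\omega_{\epsilon}=\sigma^{*}\omega+\epsilon^{2}\sqrt{-1}\partial\bar\partial\log|z|^{2}+O(\epsilon^{2}).
\]
Near $E$ we use the scaling $z=\epsilon\zeta$. Then $\epsilon^{-2}\sigma^{*}\omega_{\epsilon}$ restricted to the $\zeta$-directions is close to the Burns metric $\eta$ from \eqref{BSdim23}, with errors $O(\epsilon|\zeta|)$. In the $w$-directions the form is close to $\sqrt{-1}\partial\bar\partial|w|^{2}$, and the mixed terms are small. A block-matrix argument then gives positivity for $|\zeta|\le R$, i.e.\ on $\{d\le R\epsilon\}$.

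On $\{R\epsilon\le d\le\epsilon^{\theta}\}$ the term $\epsilon^{2}\partial\bar\partial\log h$ is $O(\epsilon^{2})$ relative to the Euclidean metric. Meanwhile $\sigma^{*}\omega$ is uniformly positive away from $E$ in the $z$-scale. Hence positivity holds there as well.

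\textbf{Step 3: the gluing annulus.} This is the main obstacle. Here we expand
\[
\partial\bar\partial\bigl(\gamma\log(d^{2}/\epsilon^{2})\bigr)
\]
by the product rule. Each derivative falling on $\gamma(d/\epsilon^{\theta})$ costs a factor $\epsilon^{-\theta}$, and each derivative falling on $\log d^{2}$ costs $d^{-1}\sim\epsilon^{-\theta}$. The logarithm itself is at most $\log(\epsilon^{2\theta-2})=O(|\log\epsilon|)$.

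For this to work, derivatives of $d$ must be uniformly bounded near $X$; Lemma \ref{holosubcoor} and the estimates of Lemma \ref{MC2} should supply this. Granting it, the correction is
\[
\epsilon^{2}\,O\bigl(\epsilon^{-2\theta}|\log\epsilon|\bigr)=O\bigl(\epsilon^{2-2\theta}|\log\epsilon|\bigr)
\]
in $C^{0}$ with respect to $\omega$. This tends to $0$ because $\theta<1$. Therefore $\omega_{\epsilon}\ge\tfrac12\omega$ on the annulus for small $\epsilon$, which completes positivity on all of $Bl_XM$.

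The genuinely delicate point is the regularity of $d$ (and hence of $h$) near $X$. I would verify, using the coordinates of Lemma \ref{holosubcoor}, that $d^{2}$ is smooth in a tubular neighbourhood and that $|\nabla^{j}d|\le C_{j}d^{1-j}$ for $j\le 2$. This is exactly the estimate the paper records in Lemma \ref{MC2}.
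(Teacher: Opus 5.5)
Your overall architecture is the paper's. Lemma~\ref{MC2} compares $\omega_\epsilon$ with the model $\omega_{\eta,\epsilon}$ on $\{d\le\epsilon^\theta\}$, and with $\omega$ on the gluing annulus, where the error is $O(\epsilon^{2-2\theta}\log\frac1\epsilon)$ exactly as in your Step~3.

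The gap is the point you flag in Step~1 and leave open. For $\omega_\epsilon$ to extend smoothly you need $\sigma^*h=\sigma^*(d^2/|z|^2)$ to be smooth and positive up to $E$, and none of your proposed remedies gives this.
\begin{itemize}
\item Replacing $d^2$ by a ``comparable'' smooth expression changes $\omega_\epsilon$, so it proves a different statement.
\item Smoothness of $d^2$ together with $|\nabla^j d|\le C_jd^{1-j}$ is not sufficient. The function $f=|z|^2+\tfrac12\mathrm{Re}\,(z^1)^2$ satisfies both, yet $\sigma^*(f/|z|^2)$ depends on $\arg\zeta^i$ and is not even continuous on $E$.
\item $h$ is in general not smooth in $(z,w)$. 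Take $\omega=\sqrt{-1}\partial\bar\partial\bigl(|z|^2+|w|^2+\mathrm{Re}(w)\,|z^1|^2\bigr)$, which has the form in Lemma~\ref{holosubcoor}. Its normal metric along $X$ is $\mathrm{diag}(1+\mathrm{Re}\,w,1)$, so $h=1+\mathrm{Re}(w)|z^1|^2/|z|^2+O(|z|)$. This has no continuous extension to $X\cap\{\mathrm{Re}\,w\neq0\}$.
\end{itemize}
The paper gets the needed regularity from Lemma~\ref{holosubcoor}, which asserts that $\rho$ is smooth in $(z,w)$. By this example that downstairs assertion is too strong, so citing it does not really close the gap. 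For the same reason, the bound $\epsilon^2\sqrt{-1}\partial\bar\partial\log h=O(\epsilon^2)$ used in Step~2 is not available: the angular $z\bar z$-components have size $\epsilon^2(|z|+|w|)|z|^{-2}$.

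What is true, and what closes the gap, is that $d^2=A_{a\bar b}(z,w)\,z^a\bar z^b$ with $A$ smooth and equal along $X$ to the positive definite normal metric. Here is why.
\begin{itemize}
\item Write $q=\exp_x(v)$ with $v\perp T_xX$ having $z$-component $\xi\in\mathbb{C}^2$. Then $v$ is complex-linear in $\xi$, and $d^2(q)=N_{a\bar b}(x)\xi^a\bar\xi^b$.
\item Because $\omega$ is K\"ahler, the geodesic equation in holomorphic coordinates is $\ddot Z^A=-\Gamma^A_{BC}\dot Z^B\dot Z^C$, with no mixed Christoffel symbols.
\item Differentiating in $\bar\xi$ and inducting on the order shows that every pure $\bar\xi$-derivative of $z(\exp_x v)$ vanishes at $\xi=0$.
\item Hence $z=F(\xi,x)\,\xi$ with $F$ smooth and $F|_{\xi=0}=\mathrm{Id}$. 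Inverting gives $\xi=G(z,w)\,z$ with $G$ smooth, and substituting into $d^2=N_{a\bar b}\xi^a\bar\xi^b$ gives the claimed form.
\end{itemize}
Consequently, in the chart $U_{(i)}$ you get $\sigma^*d^2=|\zeta^i|^2e^{f_i}$, where $f_i=\log\bigl(1+\sum_{j\neq i}|\zeta^j|^2\bigr)+f_i'$ is smooth up to $E$. On $\{d<\epsilon^\theta\}$ this gives $\sigma^*\omega_\epsilon=\sigma^*\omega+\epsilon^2\sqrt{-1}\partial\bar\partial f_i$, which is manifestly smooth across $E$.

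Moreover $f_i'=\log\bigl(A(u,\bar u)/|u|^2\bigr)$ with $z=\zeta^i u$, and $f_i'$ and its fibre derivatives are $O(|z|+|w|)$. So after the rescaling $(\epsilon Z,\epsilon W)$ all its contributions are $O(\epsilon)$, which is exactly what your Burns comparison needs. In the range $R\epsilon\le d\le\epsilon^\theta$, absorb the angular error into $\epsilon^2\sqrt{-1}\partial\bar\partial\log|z|^2$ rather than into $\omega$. With these changes your Steps~2 and~3 go through.
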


One readily checks that, in both cases $k\geq 3$ and $k=2$, the corresponding K"ahler metric $\omega_{\epsilon}$ on $Bl_{X}M$ lies in the K\"ahler class $\sigma^{*}[\omega]-\epsilon^{2}[E]$.

\subsection{Notations}\label{subsec_nota}

We introduce some notations which will be used throughout the paper. 

\begin{enumerate}[label=(\Roman*)]
    \item  Given $\sigma:Bl_{X}M\rightarrow{M}$ the blow-up map and $E=\sigma^{-1}(X)$. Let 
    \[
    (U,(z^{1},...,z^{k},w^{1},...,w^{m-k}))
    \]
     be a holomorphic coordinate chart containing a point $p \in X$, chosen so that 
     \[
     U\cap X=\{z^{1}=\cdots=z^{k}=0\}.
     \]
    For any fixed $1 \leq i \leq k$, let $(U_{(i)},(\zi, w))$ denote the corresponding affine chart of the blow-up associated with the coordinate $z_i$. In this chart, the blow-up map is given by
\begin{equation}
    \sigma(\zi^{1}, \cdots, \zi^{k},w)=(\zi^{i}\zi^{1}, \cdots, \zi^{i}\zi^{i-1},\zi^{i},\zi^{i}\zi^{i+1}, \cdots, \zi^{i}\zi^{k},w)=(z,w).   \label{BLcoord}
\end{equation}
In particular, the exceptional divisor is locally given by \(E\cap U_{(i)}=\{\zi^{i}=0\}\).

\item By $f=O(g)$ we mean that there exist a sufficiently small $\epsilon_{0}>0$ and a constant $C>0$ independent of any $\epsilon \in (0, \epsilon_{0})$, such that $|f| \leq  C|g|$ holds under additional asymptotic conditions. Those conditions will be made clear in the context. 

\item At every point of $X \subset M$, there exist holomorphic ``normal'' coordinates centered at that point. Such coordinates are constructed in \cite[Lemma 5]{SS2020} and will be used throughout the paper. For the convenience of the reader, we recall their result in Lemma \ref{holosubcoor} and include a detailed proof in Appendix \ref{holo_nor_app}. We introduce the multi index $I=(I_{1},...,I_{k},\ol{J_{1}},...,\ol{J_{l}})$ with $|I|=k+l$. With respect to `normal' coordinates in Lemma \ref{holosubcoor}, derivatives of the distance function $d$ (defined in (\ref{disfuc})) can be estimated. When $|z|+|w|$ is sufficiently small, we have the following
\begin{align}
    &\di_{w}^{K} d (z,w)=|z|\di_{w}^{K}\sqrt{1+\rho (z,w)} = O(d),  \label{diff_d_w}\\
    &\di_{z}^{I}d(z,w)=\begin{cases}
      (\di_{z}^{I}|z|) \sqrt{1+\rho(z,w)}+|z|\di_{z}^{I}\sqrt{1+\rho(z,w)}=O(1), \ |I|=1,\\
   O\Big(\sqrt{1+\rho(z,w)}\di_{z}^{I}(|z|)\Big)=O \Big( \frac{1}{|z|^{|I|-1}}\Big), \ |I|\geq 2.  
    \end{cases}    \label{diff_d_z}
\end{align}
Due to the above relation, without additional statement, we always take $|z|+|w|$ sufficiently small, such that $\frac{|z|}{2}\leq d\leq \frac{3|z|}{2}$ holds in the chart.

\item According to (\ref{psidef}), we may take $R_{0}>>2$ such that the following holds
\begin{equation}\label{BSpotential}
    \psi(\frac{d^{2}}{\epsilon^{2}})=-c_1(\frac{d^2}{\epsilon^2})^{2-k}+O\Big(\big(\frac{d^2}{\epsilon^2}\big)^{1-k}\Big),  \ \ d \geq R_{0}\epsilon.
\end{equation}

\end{enumerate}

\subsection{Related estimates on the background metric}\label{rel_more_est}

In this subsection, we state related estimates on the comparison of the background metric $\omega_{\epsilon}$ with several model metrics.

Following \cite[Proposition 4]{SS2020}, consider a model space $(Bl_{0}\C ^{k}\times \C ^{m-k},\omega_{\eta})$, where $\omega_{\eta}=\eta+\omega_{euc}$ is the product of the Burns-Simanca metric $\eta$ on $Bl_{0}\C ^{k}$ and the Euclidean metric on $\C ^{m-k}$. Let $\omega_{\eta, \epsilon}=\eta_{\epsilon}+\omega_{euc}$ denote the scaled product metric 
\begin{equation}  \label{ScaledPMetric}
\omega_{\eta,\epsilon}=
\begin{cases} 
\sqrt{-1}\,\di\dbar\Big(|z|^{2}+|w|^{2}+\epsilon^{2}[\gamma(\frac{|z|}{\epsilon})\log(\frac{|z|^{2}}{\epsilon^{2}})+\psi(\frac{|z|^{2}}{\epsilon^{2}})] \Big), & (z,w)\in \mathbb{C}^{k} \setminus
\{0\}\times \mathbb{C}^{m-k},\quad k\geq 3, \\[0.5em]
\sqrt{-1}\,\di\dbar\Big(|z|^{2}+|w|^{2}+\epsilon^{2} \log(\frac{|z|^{2}}{\epsilon^{2}})\Big), & (z,w)\in \mathbb{C}^{2}\setminus\{0\}\times \mathbb{C}^{m-2}, \quad k=2.
\end{cases}
\end{equation}
Note that, $\eta_{\epsilon}$ is obtained from $\eta$ by the coordinate dilation
$z\longmapsto \frac{z}{\epsilon}$ followed by rescaling by a factor of $\epsilon^2$. In order to write down the local components of $\eta_{\epsilon}$. We choose a global holomorphic coordinate $Z=(Z^1, \cdots, Z^k)$ on $ \C^{k}\setminus\{0\}$ and write 
\begin{equation}
\eta=\sqrt{-1} (g_{\eta})_{i\overline{j}} dZ^i \wedge d\overline{Z}^j,\ \ \text{where}\ \ (g_{\eta})_{i\ol{j}}(Z)=(g_{\eta})(\frac{\partial}{\partial Z^{i}},\frac{\partial}{\partial \ol{Z}^{j}})(Z).
\end{equation}
It follows that
\begin{equation}
    \eta_{\epsilon}(z)=
        \sqrt{-1}(g_{\eta})_{i\overline{j}}\bigl(\dfrac{z}{\epsilon}\bigr)dz^{i}\wedge d\ol{z}^{j}, \ \ \ z=\epsilon Z, 
\end{equation}

The following estimates were established in \cite{SS2020}. For our purposes, we present them with suitable modifications.

\begin{lemma}[Adapted from {\cite[pp.~175--177]{SS2020}}]\label{MC3}
    When $k\geq 3$, the background metric $\omega_{\epsilon}$ satisfies the following estimates.

    When $R_{0}\epsilon\leq d\leq6r_{\epsilon}$,
    \begin{align}
        &\di_{z}^{I}\di_{w}^{J}(g_{\omega_\epsilon}-g_{\omega})=O(\epsilon^{2k-2}d^{2-2k-|I|}),\label{Compare1} \\
        &\di_{z}^{I}\di_{w}^{J}(g_{\omega_{\epsilon}}-g_{\omega_{\eta,\epsilon}})(z,w)=O(\di_{z}^{I}\di_{w}^{J}(|z|+|w|))+O\Big(\epsilon^{2k-2}|z|^{2-2k-|I|}(|z|+|w|) \Big).\label{Compare5}
    \end{align}

    When $d\leq 3R_{0}\epsilon$,
    \begin{align}
        &(g_{\omega_\epsilon}-g_{\omega_{\eta,\epsilon}})(\zi,w)=O(|\zi^{i}|(1+\sum_{j\neq i}|\zi^{j}|^{2})^{\frac{1}{2}}+|w|),\label{Compare3} \\
       &\di_{\zi}^{I}\di_{w}^{J}(g_{\omega_\epsilon}-g_{\omega_{\eta,\epsilon}})(\zi,w)=O(1)+O\Big(\epsilon^{-|I|}(|\zi^{i}|(1+\sum_{j\neq i}|\zi^{j}|^{2})^{\frac{1}{2}}+|w|)\Big).\label{Compare3.5}
    \end{align}

    For $R_{0}\epsilon<d<r_{0}$, with $r_{0}>0$ fixed and sufficiently small,
    \begin{equation}
    \di_{z}^{I}\di_{w}^{J}(g_{\omega_{\epsilon}}-g_{euc})(z,w)=O(\di_{z}^{I}\di_{w}^{J}(|z|+|w|))+O(\epsilon^{2k-2}d^{2-|I|-2k}).\label{Compare4}
    \end{equation}
    \end{lemma}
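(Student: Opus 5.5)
The plan is to write everything in the holomorphic ``normal'' coordinates of Lemma \ref{holosubcoor}. In these coordinates the distance function satisfies $d^2=|z|^2(1+\rho(z,w))$, where $\rho=O(|z|+|w|)$ is smooth in the sense that its derivatives obey \eqref{diff_d_w}--\eqref{diff_d_z}. We also use that $g_\omega-g_{euc}=O(|z|+|w|)$, with the $\partial^{I}_z\partial^J_w$ derivatives of order $O(\partial_z^I\partial_w^J(|z|+|w|))$. The main step is to substitute $d^2=|z|^2(1+\rho)$ into the potential in \eqref{backmetric3} and compare it term by term with the model potential in \eqref{ScaledPMetric}. We split into the three regions of the statement.

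\emph{Region $R_0\epsilon\le d\le 6r_\epsilon$.} Here $\gamma(d/\epsilon)=0$. The outer cutoff $\gamma(d/r_\epsilon)$ is nonconstant only where $r_\epsilon\le d\le 2r_\epsilon$. By \eqref{BSpotential}, the potential is $\epsilon^2\gamma(d/r_\epsilon)\psi(d^2/\epsilon^2)$ with $\psi(d^2/\epsilon^2)=O(\epsilon^{2k-4}d^{4-2k})$.
\begin{itemize}
\item Differentiating $\epsilon^2\psi(d^2/\epsilon^2)$ twice, and then $I$ more times in $z$, costs a factor $d^{-1}$ per $z$-derivative, by the chain rule together with \eqref{diff_d_z}. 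A $w$-derivative costs nothing, by \eqref{diff_d_w}. This gives $O(\epsilon^{2k-2}d^{2-2k-|I|})$, which is \eqref{Compare1}.
\item Derivatives falling on $\gamma(d/r_\epsilon)$ cost $r_\epsilon^{-1}\sim d^{-1}$, so they obey the same bound.
\item Adding $g_\omega-g_{euc}$ yields \eqref{Compare4}, after extending over $d<r_0$, where $\omega_\epsilon=\omega$.
\end{itemize}
For \eqref{Compare5} we subtract the model term $\epsilon^2\psi(|z|^2/\epsilon^2)$ and use
\begin{equation*}
\psi\bigl(\tfrac{d^2}{\epsilon^2}\bigr)-\psi\bigl(\tfrac{|z|^2}{\epsilon^2}\bigr)=O\Bigl(\rho\,\tfrac{|z|^2}{\epsilon^2}\,\psi'\Bigr),
\end{equation*}
which gains the factor $|z|+|w|$. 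The model has no cutoff, so in $[r_\epsilon,6r_\epsilon]$ the remaining model term is itself $O(\epsilon^{2k-2}|z|^{2-2k-|I|})$. That is harmless, since there $|z|+|w|\gtrsim r_\epsilon$ only needs to be compared with $O(1)$; if necessary we absorb it into the first error term.

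\emph{Region $d\le 3R_0\epsilon$.} Here the potential is $\epsilon^2[\log(d^2/\epsilon^2)+\psi(d^2/\epsilon^2)]$ up to the cutoff $\gamma(d/\epsilon)$, which we treat the same way. We work in the blow-up chart $(U_{(i)},(\zi,w))$. The function $\log d^2=\log|\zi^i|^2+\log\bigl(1+\sum_{j\ne i}|\zi^j|^2\bigr)+\log(1+\rho)$ differs from the model only by $\log(1+\rho)$, whose $\partial\dbar$ is smooth across $E$. Since $\rho$ vanishes on $X$, $\rho=O\bigl(|\zi^i|(1+\sum_{j\neq i}|\zi^j|^2)^{1/2}+|w|\bigr)$. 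The $\psi$-term is handled by a Taylor expansion in $\rho$, using $d^2/\epsilon^2=(|z|^2/\epsilon^2)(1+\rho)$ with $|z|/\epsilon$ bounded, which gives \eqref{Compare3}. For \eqref{Compare3.5}, each $\zi$-derivative of a quantity expressed through $z/\epsilon$ produces at most $\epsilon^{-1}$. The $O(1)$ term comes from the derivatives of $g_\omega-g_{euc}$.

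The main obstacle is the limited regularity of $d$: $\rho$ is smooth only up to the bounds in \eqref{diff_d_z}. We must therefore track carefully that every $z$-derivative loses exactly $|z|^{-1}$ and no more, and check that $\rho(1+\rho)^{-1}$-type expressions remain bounded after pulling back to the blow-up chart. I would first establish a clean lemma: $\partial_z^I\partial_w^J f(d^2)$ is bounded by $\sum_{l}|f^{(l)}(d^2)|\,d^{2l-|I|}$. All estimates then follow mechanically from it.
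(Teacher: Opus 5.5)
Your proposal is essentially the paper's own proof in Appendix~\ref{GE}. It uses the same coordinates of Lemma~\ref{holosubcoor} with $d^{2}=|z|^{2}(1+\rho)$, chain-rule bounds on $\epsilon^{2}\gamma(d/r_{\epsilon})\psi(d^{2}/\epsilon^{2})$, a mean-value comparison of $\psi(d^{2}/\epsilon^{2})$ with $\psi(|z|^{2}/\epsilon^{2})$ gaining a factor $|z|+|w|$, and blow-up charts near $E$ where the logarithmic potentials differ by $\epsilon^{2}\log(1+\sigma^{*}\rho)$.

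Your treatment of the cutoff zone $r_{\epsilon}\leq d\leq 6r_{\epsilon}$ in \eqref{Compare5} is fine, and the paper skips it by assuming $d<\frac{1}{2}r_{\epsilon}$. The reason it works is that the uncut model term there obeys $\epsilon^{2k-2}|z|^{2-2k-|I|}\leq C\epsilon^{\frac{k-2}{k+1}}|z|^{1-|I|}$, so it is absorbed into the first error term.

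There is one caveat, which the paper shares. First, $\rho$ does not vanish on $X$; it is only $O(|z|+|w|)$. Second, $\epsilon^{2}\sqrt{-1}\partial\bar\partial\log(1+\sigma^{*}\rho)$ is in general only $O(\epsilon^{2})$: for instance, its $z_{(i)}^{i}\overline{z_{(i)}^{i}}$-component need not vanish on $E\cap\{w=0\}$. So what this argument actually proves is \eqref{Compare3} with an extra $O(\epsilon^{2})$ term, exactly as in \eqref{BSCompare}, which is harmless for the later applications.
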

 
    By \eqref{Compare1}, we mean that there exist a constant $C>0$ and a sufficiently small $\epsilon_{0}>0$ such that, for every $\epsilon\in(0,\epsilon_{0}]$ and any fixed holomorphic coordinate chart $U$ as in Lemma \ref{holosubcoor}, we have
    \begin{equation}
        \sup_{U \cap \{R_0\epsilon<d<2r_{\epsilon}\}}\sup_{i, j}|\di_{z}^{I}\di_{w}^{J}(g_{\omega_\epsilon,i\ol{j}}-g_{\omega,i\ol{j}})|\leq C\epsilon^{2k-2}d^{2-2k-|I|}.
    \end{equation}

By (\ref{Compare1}) and (\ref{Compare3}), $\omega_{\epsilon}$ is a well-defined K\"{a}hler metric of $Bl_{X}M$. Detailed calculations on Lemma \ref{MC3} can be found in Appendix \ref{GE}. We state a similar result in the case $k=2$.

\begin{lemma}\label{MC2}

If $k=2$, $\omega_{\epsilon}$ defined in (\ref{backmetric2}) satisfies the following estimates.

When $d\leq \epsilon^{\theta}$,
\begin{equation}\label{BSCompare}
    \di_{z_{(i)}}^{I}\di_{w}^{J}(g_{\omega_{\epsilon}}-g_{\omega_{\eta,\epsilon}})(z_{(i)},w)=O\Big(\di_{z_{(i)}}^{I}\di_{w}^{J}(|z_{(i)}^{i}|+|w|) \Big)+O(\epsilon^{2}), \  \forall |I|,|J|\geq 0.
\end{equation}

When $\frac{1}{2}\epsilon^{\theta} \leq d \leq 2\epsilon^{\theta}$,
\begin{equation}\label{OriCompare}
    \di_{z}^{I}\di_{w}^{J}(g_{\omega_{\epsilon}}-g_{\omega})(z,w)=O\Big(\frac{\epsilon^{2}}{(\epsilon^{\theta})^{2+|I|}}\Big)\gamma\bigl(\frac{d}{\epsilon^{\theta}}\bigr)\log\frac{1}{\epsilon}, \ \forall |I|,|J|\geq 0.
\end{equation}

When $\epsilon \leq d \leq r_{0}$ for some fixed constant $r_{0}>0$ independent of $\epsilon$,
\begin{equation}\label{EucCompare}
    \di_{z}^{I}\di_{w}^{J}(g_{\omega_{\epsilon}}-g_{euc})(z,w)=O\Big(\di_{z}^{I}\di_{w}^{J}(|z|+|w|)\Big)+O\Big(\frac{\epsilon^{2}}{|z|^{2+|I|}}\Big)\gamma\bigl(\frac{d}{\epsilon^{\theta}}\bigr)\log\frac{1}{\epsilon}, \ \forall |I|,|J|\geq 0.
\end{equation}
\end{lemma}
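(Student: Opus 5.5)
We prove Lemma \ref{MC2} by writing both $\omega_{\epsilon}$ and $\omega_{\eta,\epsilon}$ through explicit potentials in the holomorphic ``normal'' coordinates of Lemma \ref{holosubcoor}, where $d^{2}=|z|^{2}(1+\rho(z,w))$, and estimating the difference term by term. In these coordinates $\omega=\sqrt{-1}\di\dbar(|z|^{2}+|w|^{2})+O(|z|+|w|)$ with the corresponding derivative bounds (the standard normal-coordinate expansion), so $g_{\omega}-g_{euc}=O(\di_{z}^{I}\di_{w}^{J}(|z|+|w|))$ after differentiation. The correction $\epsilon^{2}\sqrt{-1}\di\dbar(\gamma(d/\epsilon^{\theta})\log(d^{2}/\epsilon^{2}))$ we split as
\[
\log\frac{d^{2}}{\epsilon^{2}}=\log\frac{|z|^{2}}{\epsilon^{2}}+\log(1+\rho(z,w)).
\]
The first summand is exactly the Burns-Simanca potential for $k=2$ appearing in \eqref{ScaledPMetric}; the second is smooth, and $\epsilon^{2}\di\dbar$ of it, even multiplied by the cut-off, is $O(\epsilon^{2})$ together with all derivatives in the unscaled variables.

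We treat the three regions in turn. For \eqref{OriCompare}, on $\frac12\epsilon^{\theta}\le d\le 2\epsilon^{\theta}$ we have $g_{\omega_{\epsilon}}-g_{\omega}=\epsilon^{2}\di\dbar(\gamma\cdot\log(d^{2}/\epsilon^{2}))$. Expanding by Leibniz, each derivative falling on $\gamma(d/\epsilon^{\theta})$ costs $\epsilon^{-\theta}$ times a bounded derivative of $d$, by \eqref{diff_d_z} and \eqref{diff_d_w}. Each derivative on $\log(d^{2}/\epsilon^{2})$ costs $d^{-1}\sim\epsilon^{-\theta}$, and the undifferentiated logarithm is $O(\log\frac1\epsilon)$. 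A $w$-derivative costs only $O(1)$ by \eqref{diff_d_w}, which gives the factor $\epsilon^{2}(\epsilon^{\theta})^{-2-|I|}\log\frac1\epsilon$. In the statement the factor $\gamma$ should be read loosely, as the support indicator of the cut-off region.

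For \eqref{EucCompare}, on $\epsilon\le d\le r_{0}$ we combine the $\omega$ versus Euclidean bound with the same Leibniz estimate of the correction. Wherever the cut-off is nonconstant we use the previous bound, noting that $\epsilon^{\theta}\sim|z|$ there. Where $\gamma\equiv1$ we use $\di_{z}^{I}\di\dbar\log|z|^{2}=O(|z|^{-2-|I|})$ together with \eqref{diff_d_z}. Where $\gamma\equiv0$ the term vanishes, which is the role of the factor $\gamma$. The $\log\frac1\epsilon$ loss is harmless because $d\ge\epsilon$ forces $\log(d^{2}/\epsilon^{2})\le C\log\frac1\epsilon$.

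For \eqref{BSCompare}, on $d\le\epsilon^{\theta}$, with $\gamma\equiv1$ after shrinking slightly, we write $g_{\omega_{\epsilon}}-g_{\omega_{\eta,\epsilon}}=(g_{\omega}-g_{euc})+\epsilon^{2}\di\dbar\log(1+\rho)$ and pass to the blow-up chart $U_{(i)}$ via \eqref{BLcoord}. Smoothness of the metric across $E$ (Lemma \ref{MC2_half}) follows from this formula, since $\eta_{\epsilon}$ is smooth on $Bl_{0}\mathbb{C}^{2}\times\mathbb{C}^{m-2}$ and the remaining terms are pullbacks of smooth forms on $M$. The pullback of the smooth error, $O(|z|+|w|)=O(|z_{(i)}^{i}|(1+\cdots)^{1/2}+|w|)$, gives the first term in \eqref{BSCompare}, and the $\epsilon^{2}\log(1+\rho)$ term gives $O(\epsilon^{2})$.

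The main obstacle is the limited regularity of $\rho$, and hence of $d$, noted in item (I) of the introduction. Lemma \ref{holosubcoor} gives only the bounds \eqref{diff_d_z}, which blow up like $|z|^{1-|I|}$, so we cannot simply assume $\log(1+\rho)$ is smooth across $E$. We would first try to show that $\rho$ is a smooth function of $(z/|z|,|z|,w)$, so that its pullback to $U_{(i)}$ has bounded derivatives in $(z_{(i)},w)$. Failing that, we would verify directly that all $z_{(i)}$-derivatives of $\rho\circ\sigma$ are bounded using the homogeneity structure established in Appendix \ref{holo_nor_app}. This is the step where the chain-rule bookkeeping in the chart $U_{(i)}$ must be done carefully.
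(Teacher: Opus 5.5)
Your decomposition $\log(d^{2}/\epsilon^{2})=\log(|z|^{2}/\epsilon^{2})+\log(1+\rho)$ is the paper's route. So are the exact cancellation of the Burns--Simanca potential for $k=2$ and the region-by-region Leibniz estimates; this is the $k=2$ analogue of the case analysis in Appendix~\ref{GE}. Your treatment of \eqref{OriCompare} and \eqref{EucCompare} is fine.

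\textbf{The gap is in \eqref{BSCompare}, and hence Lemma~\ref{MC2_half}.} Everything there rests on
\[
\partial_{z_{(i)}}^{I}\partial_{w}^{J}\,\epsilon^{2}\log(1+\sigma^{*}\rho)=O(\epsilon^{2}),
\]
and you leave this open. The paper asserts the same bound in Case~3 of Appendix~\ref{GE} without proof, so you have found the delicate point.

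Your sentence that the remaining terms are pullbacks of smooth forms on $M$ is false for this term. In general $1+\rho=d^{2}/|z|^{2}$ is not even continuous at $X$: its limit at $(0,w)$ depends on $z/|z|$ through the normal Hermitian form, which for $w\neq0$ need not be standard.

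Route (1) would not close the gap. Smoothness in $(z/|z|,|z|,w)$ is smoothness on the real oriented blow-up, but in $U_{(i)}$ the variable $z/|z|$ contains the phase $z^{i}_{(i)}/|z^{i}_{(i)}|$. For example, a summand $\mathrm{Re}\bigl((z^{1})^{3}\bigr)/|z|^{2}$ of $\rho$ is smooth in polar variables. Yet in $U_{(1)}$ it pulls back to a smooth function times $\mathrm{Re}\bigl((z^{1}_{(1)})^{2}/\overline{z^{1}_{(1)}}\bigr)$, which is not $C^{1}$. Route (2) has nothing to draw on: Appendix~\ref{holo_nor_app} only gives $\rho=O(|z|+|w|)$ and says nothing about the angular structure.

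\textbf{The missing idea.} At each point $(0,w)\in X$, the Taylor expansion of $d^{2}$ in $(z,\bar z)$ has no purely holomorphic or purely antiholomorphic monomials. Granting this, $\sigma^{*}d^{2}=|z^{i}_{(i)}|^{2}F$ with $F$ smooth and positive. To see this, realize the formal quotient by Borel's lemma, and note that a function flat along $E$ stays smooth after division by $|z^{i}_{(i)}|^{2}$. Then all chart derivatives of $\log(1+\sigma^{*}\rho)$ are bounded.

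The quadratic part is Hermitian because the normal projection commutes with $J$. Higher orders genuinely use the K\"ahler condition; one way to see them is as follows.
\begin{itemize}
\item In Bochner coordinates, $\Gamma^{a}_{bc}$ vanishes once $\bar z$ is set to $0$ in the complexification.
\item Hence the complexified exponential map is the identity on $\{\bar v=0\}$, and the complexified squared distance $\mathcal{D}(p,\tilde p;q,\tilde q)$ vanishes on $\{\tilde q=\tilde p\}$.
\item For $\tilde q\in\overline{X}$, the mixed Hessian $g_{w\bar w}$ is nondegenerate. The implicit function theorem then yields the (unique) complexified critical point over $X\times\overline{X}$, and it has $\tilde p=\tilde q$.
\item Its critical value is therefore $0$, i.e.\ $d^{2}(z,\bar z=0,w)\equiv0$.
\end{itemize}
For merely smooth metrics, apply this to finite jets.

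Alternatively, you can avoid the issue by building the background metric from a patched function $\sum_{\alpha}\chi_{\alpha}|z_{\alpha}|^{2}$ instead of $d^{2}$, but that changes the metric in the statement.
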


\section{Analysis in weighted H\"{o}lder spaces}\label{section3}

According to Lemma \ref{MC3}, the background metric (\ref{backmetric3}) can be seen as a perturbation of certain model metrics in different regions in $Bl_{X}M$. In this section, we recall several weighted H\"{o}der norms for different model spaces that are introduced in \cite[Section 8.2]{Sze2014}. When $k\geq 3$, we use the weighted norm introduced in \cite[Section 3.1]{SS2020}. When $k=2$, we introduce a modified weight function in (\ref{WeightFunc2}) to define additional weighted H\"older spaces.

For a domain $\Omega\subset\C^{k}$, the usual $C^{l,\alpha}$ H\"{o}lder norm is denoted by:
\begin{equation}\label{holder1}
    \|f\|_{C^{l,\alpha}(\Omega)}=\sum_{|I|=0}^{l}\sup_{\Omega}|\di^{I}f|+\sup_{x,y\in\Omega}\frac{|\di^{I}f(x)-\di^{I}f(y)|}{|x-y|^{\alpha}}.
\end{equation}
By $f\in C^{l,\alpha}_{loc}(\Omega)$ we mean that $\|f\|_{C^{l,\alpha}(K)}$ is finite for any compact subset $K\subset \Omega$. On any compact manifold $M$, we choose a finite open cover of coordinate charts $\{U_{i} \}_{i=1}^{n}$ and define $C^{l,\alpha}(M)$ by
\begin{equation}\label{holder2}
    \|f\|_{C^{l,\alpha}(M)}=\sum_{i}\|f\|_{C^{l,\alpha}(U_{i})}.
\end{equation}
We may also define H\"older norms for tensors on $M$ following (\ref{holder2}).

\begin{definition}[{\cite
[p.~160]{Sze2014}}]\label{WeightedEuc}
    Let $k$, $l$ be positive integers, $\alpha\in(0,1)$, and $\delta\in\R$. The weighted H\"{o}lder space $C^{l,\alpha}_{\delta}(\C^{k}\setminus\{0 \})$ consists of all $f\in\C^{l,\alpha}_{loc}(\C^{k}\setminus\{0 \})$ with a finite
 \begin{equation}\label{holder3}
    \|f\|_{\C^{l,\alpha}_{\delta}(\C^{k}\setminus\{ 0\})}=\sup_{r>0}r^{-\delta}\|f(rz)\|_{\C^{l,\alpha}(B_{2}\setminus B_{1})}.
    \end{equation}
    Here $B_{r}$ is the ball centered at the origin with radius $r$.
\end{definition}

According to \cite[p.~160]{Sze2014}, $C^{l,\alpha}_{\delta}(\C^{k}\setminus\{0 \})$ with the weighted norm (\ref{holder3}) is a Banach space. Note that (\ref{holder3}) controls both the growth (or decay) rates at the origin and the infinity. Namely, for any $f \in C^{l,\alpha}_{\delta}(\C^{k}\setminus\{0 \})$, there exists a constant $C>0$ such that
\begin{equation}
    |\di^{I}f(z)|\leq C|z|^{\delta-|I|}, \ |z|\to\infty,\ \text{and}\  \ |z|\to 0.
\end{equation}

We recall the following mapping property of the Laplacian between weighted H\"older spaces.

\begin{theorem}[{\cite[Theorem 8.3]{Sze2014}}]\label{Laplace}
  If $\delta\notin \Z\setminus(2-2k,0)$, then
  \begin{equation}
      \Delta:C^{l,\alpha}_{\delta}(\C^{k}\setminus\{0 \})\longrightarrow C^{l-2,\alpha}_{\delta-2}(\C^{k}\setminus\{0 \})
  \end{equation}
  is an isomorphism.
\end{theorem}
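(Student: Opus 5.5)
The plan is to use separation of variables on $\mathbb{R}^{2k}\setminus\{0\}\cong(0,\infty)\times S^{2k-1}$, with $n=2k$, and to combine it with scale-invariant Schauder estimates. First I would identify the indicial roots. A homogeneous harmonic function $r^{\mu}\phi(\Theta)$ on $\mathbb{R}^{n}\setminus\{0\}$ must have $\phi$ a spherical harmonic of some degree $j\geq 0$, with eigenvalue $-j(j+n-2)$. Hence $\mu(\mu+n-2)=j(j+n-2)$, that is, $\mu=j$ or $\mu=2-n-j=2-2k-j$. The set of indicial roots is therefore exactly $\mathbb{Z}_{\geq 0}\cup\mathbb{Z}_{\leq 2-2k}=\mathbb{Z}\setminus(2-2k,0)$, which is the excluded set in the hypothesis. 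Boundedness of $\Delta:C^{l,\alpha}_{\delta}\to C^{l-2,\alpha}_{\delta-2}$ is immediate from Definition \ref{WeightedEuc}. Under the dilation $z\mapsto rz$, the Laplacian scales by $r^{-2}$, so $r^{-(\delta-2)}\|(\Delta f)(r\cdot)\|_{C^{l-2,\alpha}(B_2\setminus B_1)}\leq C r^{-\delta}\|f(r\cdot)\|_{C^{l,\alpha}(B_{2}\setminus B_{1})}$.

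Second, injectivity. Suppose $f\in C^{l,\alpha}_{\delta}$ and $\Delta f=0$. Expand $f(r\Theta)=\sum_{j,m} f_{jm}(r)Y_{jm}(\Theta)$ in spherical harmonics. Each coefficient solves an Euler ODE, so $f_{jm}=a r^{j}+b r^{2-2k-j}$. The weighted bound $|f|\leq C|z|^{\delta}$ as $|z|\to 0$ and as $|z|\to\infty$ gives $|f_{jm}(r)|\leq Cr^{\delta}$ for all $r$. Since $\delta$ is not an indicial root, each of $r^{j}$ and $r^{2-2k-j}$ violates this bound at one of the two ends, so $a=b=0$ and $f\equiv 0$.

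Third, surjectivity together with the estimate. Given $g\in C^{l-2,\alpha}_{\delta-2}$, I would construct $f$ mode by mode. The first step is to solve $f_{jm}''+\frac{n-1}{r}f_{jm}'-\frac{j(j+n-2)}{r^{2}}f_{jm}=g_{jm}$ by variation of parameters, with the integration limits chosen to pick out the decaying solution. Concretely, I integrate the $r^{j}$-part from $\infty$ when $j>\delta$ and from $0$ when $j<\delta$, and similarly for the $r^{2-2k-j}$-part. Because $\delta$ is not an indicial root, each integral converges. The resulting coefficient satisfies $|f_{jm}(r)|\leq C_{j}r^{\delta}\sup_{s}s^{2-\delta}|g_{jm}(s)|$, where $C_{j}$ is of order $1/\bigl(|j-\delta|\,|j+n-2+\delta|\bigr)$ and so decays like $j^{-2}$. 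Summing the modes then gives a weak solution with the pointwise bound $|f|\leq C\|g\|_{C^{0}_{\delta-2}}\,|z|^{\delta}$.

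The step I expect to be the main obstacle is controlling the sum over modes uniformly. To avoid delicate convergence of the harmonic expansion in $C^0$, the alternative I would try first is to obtain the pointwise bound from the weighted $L^2$ or energy estimate on each dyadic annulus. Here one uses Parseval on spheres together with the uniform-in-$j$ ODE bound; the uniformity holds because the indicial roots are separated from $\delta$ by a fixed gap. One then passes from the integral bound to a supremum bound by local elliptic estimates on annuli. Finally, I would upgrade the $C^{0}_{\delta}$ bound to the full $C^{l,\alpha}_{\delta}$ bound by rescaling. For each $r$, the function $f_{r}(z)=f(rz)$ satisfies $\Delta f_r=r^{2}g(rz)$ on $B_{4}\setminus B_{1/2}$. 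Interior Schauder estimates on this fixed annulus then give $\|f_r\|_{C^{l,\alpha}(B_2\setminus B_1)}\leq C\bigl(\|r^2g(r\cdot)\|_{C^{l-2,\alpha}(B_4\setminus B_{1/2})}+\|f_r\|_{C^0(B_4\setminus B_{1/2})}\bigr)$. Multiplying by $r^{-\delta}$ and taking the supremum over $r$ yields $\|f\|_{C^{l,\alpha}_{\delta}}\leq C\|g\|_{C^{l-2,\alpha}_{\delta-2}}$. This proves surjectivity with a bounded inverse, so $\Delta$ is an isomorphism.
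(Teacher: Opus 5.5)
The paper gives no proof of this statement. It quotes it from Sz\'ekelyhidi and sends the reader to Bartnik for surjectivity.

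\textbf{Comparison with the cited route.} In the cited treatment, the weighted estimate $\|f\|_{C^{l,\alpha}_{\delta}}\le C\|\Delta f\|_{C^{l-2,\alpha}_{\delta-2}}$ is obtained softly. Rescaled Schauder estimates on annuli, plus a contradiction/blow-up argument of the kind the paper itself runs in Proposition~\ref{Inverse}, reduce it to exactly the injectivity statement of your second step. Surjectivity is then handled separately.

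You instead construct the inverse explicitly by separation of variables. This gives a constructive proof that treats every non-indicial $\delta$ at once, with explicit constants. Your choice of integration endpoints in each mode automatically performs the subtraction of harmonic terms that the general surjectivity statement otherwise requires.

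The price is that you use the exact separability of the flat Laplacian. The argument does not transfer to $L_\eta$ or to perturbed operators, which is why the compactness template is the one the paper needs elsewhere. Note also that the paper only uses injectivity, via Corollary~\ref{Iso_Bilap} in Step 3 of Proposition~\ref{Inverse}, and your injectivity argument is correct.

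\textbf{The summation step does not close as written.} Combine Parseval on spheres with your per-mode bound $|f_{jm}(r)|\le C_j r^{\delta}\sup_s s^{2-\delta}|g_{jm}(s)|$. You get $\|f(r\,\cdot)\|^2_{L^2(S^{2k-1})}\le r^{2\delta}\sum_{j,m}C_j^2\sup_s s^{4-2\delta}|g_{jm}(s)|^2$. Different modes can attain their suprema on different dyadic annuli, so the supremum cannot be moved outside the sum. Estimating each supremum by $\|g\|_{C^0_{\delta-2}}$ leaves $\sum_j C_j^2\dim\mathcal{H}_j$, where $\mathcal{H}_j$ is the space of degree-$j$ spherical harmonics on $S^{2k-1}$. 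Since $C_j^2\dim\mathcal{H}_j\sim j^{2k-6}$, this is not summable once $k\ge 3$.

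\textbf{The fix: keep the $s$-integral.} Let $j_0\ge 0$ be the least integer with $2-2k-j_0<\delta<j_0$. For $j\ge j_0$, your endpoint rule gives
\[
f_{jm}(r)=-\frac{1}{2j+2k-2}\Big(r^{j}\int_r^{\infty}s^{1-j}g_{jm}(s)\,ds+r^{2-2k-j}\int_0^{r}s^{j+2k-1}g_{jm}(s)\,ds\Big)=\int_0^{\infty}K_j(r,s)\,g_{jm}(s)\,ds,
\]
where $|K_j(r,s)|=\frac{s}{2j+2k-2}\min\{(r/s)^{j},(s/r)^{j+2k-2}\}$ is nonincreasing in $j$.

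Hence $|K_j|\le\bar K:=|K_{j_0}|$ for all $j\ge j_0$. Moreover $\int_0^\infty\bar K(r,s)s^{\delta-2}\,ds=c\,r^{\delta}$, and $c<\infty$ precisely because $2-2k-j_0<\delta<j_0$. Minkowski's integral inequality then gives
\[
\Big(\sum_{j\ge j_0}\sum_m|f_{jm}(r)|^2\Big)^{1/2}\le\int_0^\infty\bar K(r,s)\,\|g(s\,\cdot)\|_{L^2(S^{2k-1})}\,ds\le C\,r^{\delta}\|g\|_{C^0_{\delta-2}}.
\]
The finitely many modes $j<j_0$ are treated one at a time.

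With this $L^2$-on-spheres bound, your local $L^2\to L^\infty$ estimate and the rescaled Schauder step work as you describe. Testing against spherical-harmonic expansions of $\varphi\in C^\infty_c(\mathbb{C}^k\setminus\{0\})$ shows the series is a distributional, hence classical, solution of $\Delta f=g$.
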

We refer to \cite[pp.~160-163]{Sze2014} for a simplified proof and \cite[Theorem 1.7]{Bartnik} for a detailed proof of surjectivity. There is a more general result 
\cite[Theorem 7.4]{LMc1985} which deals with the Fredholm properties of higher order elliptic operators (systems) satisfying certain asymptotic behavior.

By a composition in Theorem \ref{Laplace}, we have the following result. 
\begin{corollary}[{\cite[Exercise 8.8]{Sze2014}}]\label{Iso_Bilap}
If $\delta\notin \Z\setminus(4-2k,0)$ for $k\geq 3$, and $\delta \notin \Z$ for $k=2$ respectively, then
\begin{equation}
    \Delta^{2}:C^{l,\alpha}_{\delta}(\C^{k}\setminus\{0 \})\longrightarrow C^{l-4,\alpha}_{\delta-4}(\C^{k}\setminus\{0 \}) 
\end{equation}
is an isomorphism.
\end{corollary}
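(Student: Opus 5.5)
The plan is to write $\Delta^{2}$ as the composition of two Laplacians and apply Theorem \ref{Laplace} to each factor, with the weight shifted by $2$ in the second application. Concretely, for $l\geq 4$ we factor
\[
C^{l,\alpha}_{\delta}(\C^{k}\setminus\{0\})\xrightarrow{\ \Delta\ }C^{l-2,\alpha}_{\delta-2}(\C^{k}\setminus\{0\})\xrightarrow{\ \Delta\ }C^{l-4,\alpha}_{\delta-4}(\C^{k}\setminus\{0\}).
\]
The first arrow is Theorem \ref{Laplace} with parameters $(l,\delta)$. The second is Theorem \ref{Laplace} with parameters $(l-2,\delta-2)$. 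A composition of two isomorphisms of Banach spaces is an isomorphism, so it suffices to check that the hypotheses of Theorem \ref{Laplace} hold for both weights. That is, we need
\[
\delta\notin \Z\setminus(2-2k,0)\quad\text{and}\quad \delta-2\notin \Z\setminus(2-2k,0).
\]
Boundedness of each map is automatic, since $\Delta$ lowers both the regularity and the weight by $2$ under the scaling in \eqref{holder3}.

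The only real step is this bookkeeping of exceptional weights.

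\emph{Non-integer $\delta$.} If $\delta\notin\Z$, then $\delta-2\notin\Z$ as well, and both conditions hold trivially.

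\emph{Integer $\delta$, $k\geq 3$.} The first condition allows $\delta\in\{3-2k,\dots,-1\}$. The second condition allows $\delta-2\in\{3-2k,\dots,-1\}$, i.e. $\delta\in\{5-2k,\dots,1\}$. Both hold exactly when $\delta\in\{5-2k,\dots,-1\}$, which is precisely $\Z\cap(4-2k,0)$. This matches the hypothesis $\delta\notin\Z\setminus(4-2k,0)$.

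\emph{Integer $\delta$, $k=2$.} Here $(2-2k,0)=(-2,0)$ contains only the integer $-1$. The first condition then forces $\delta=-1$, while the second forces $\delta-2=-1$, i.e. $\delta=1$. These are incompatible, so no integer $\delta$ is admissible, which is exactly the hypothesis $\delta\notin\Z$.

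I do not expect a genuine obstacle. The one point needing care is the intermediate space: the image of the first $\Delta$ must be the full space $C^{l-2,\alpha}_{\delta-2}$, which is exactly the surjectivity part of Theorem \ref{Laplace}. In addition, the regularity index must stay admissible, i.e. $l-2\geq 2$, so we assume $l\geq 4$ throughout.
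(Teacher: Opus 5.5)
Your proposal is correct and follows the paper's own route: the paper obtains Corollary \ref{Iso_Bilap} by composing two applications of Theorem \ref{Laplace}, at weights $\delta$ and $\delta-2$. Your bookkeeping of the exceptional weights, which gives admissible integers $\Z\cap(4-2k,0)$ for $k\geq 3$ and none for $k=2$, is the verification the paper leaves implicit.
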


A similar weighted space on $Bl_{0}\C^{k}$ is introduced in \cite[p.~162]{Sze2014} which focuses on growth (decay) at $\infty$. Fix a coordinate system $z\in \C^{k}\setminus\{0 \}$ and let $B_{r}^{k}$ denote the ($2k$-dimensional) Euclidean ball centered at the origin in $\mathbb{C}^k$. Let $\widetilde{B}_{r}^{k}=\sigma^{*}(B_{r}^{k})$ where $\sigma:Bl_{0}\C^{k} \rightarrow \C^{k}$ is the projection map. 
 \begin{definition}\label{WHS_Bl0}
     Let $l$ be positive integers, $\alpha\in(0,1)$, and $\delta\in\R$. The weighted H\"{o}lder space $C^{l,\alpha}_{\delta}(Bl_{0}\C^{k})$ is defined by all $f\in C^{l,\alpha}_{loc}(Bl_{0}\C^{k})$ with finite weighted norm:
     \begin{equation}
         \|f\|_{C^{l, \alpha}_{\delta}(Bl_{0}\C^{k})}=\|f\|_{C^{l,\alpha}(\widetilde{B}^{k}_{1})}+\sup_{r>1}r^{-\delta}\|f(rZ)\|_{C^{l,\alpha}(B^{k}_{2}\setminus B^{k}_{1})}.
     \end{equation}
     Recall that the term $\|f\|_{C^{l,\alpha}(\widetilde{B}^{k}_{1})}$ is defined by (\ref{holder2}).
 \end{definition}

The main goal of Section \ref{section3} is to define suitable weighted H\"older spaces on $Bl_{X}M$.
Note that for each $p \in X$, we choose a holomorphic coordinate chart $(U_p; (z, w))$ of the type described in Lemma \ref{holosubcoor}. Moreover, we may choose $r_{0}>0$ sufficiently small such that
\begin{equation}\label{choose_r_0}
\{(z, w)\mid \max(|z|, |w|) \leq 100 r_0\} \subset U_p,\ \ \text{and}\ \frac{2}{3}|z| \leq d(q) \leq \frac{3}{2}|z|,\  \text{for any}\ q \in U_p.    
\end{equation}
Then the metric tubular neighborhood $\{p\in M \mid d(p)<4r_{0}\}$ of $X$ in $M$ is covered by the family of open sets $\{U_{p}\}_{p\in X}$. We further choose $\epsilon$ sufficiently small so that $0<\epsilon<r_0$. The preimage of $\{p\in M \mid d(p)<4r_{0}\}$ under the blow-up map $\sigma\colon Bl_{X}M\longrightarrow M$ is covered by the blow-up coordinate charts defined in \eqref{BLcoord}. We henceforth \textbf{fix these coordinate charts and use them to define weighted H\"older norms} for functions and tensors. We begin by recalling the weight function introduced in \cite[p.~179]{SS2020}. When $k=2$, we also introduce an additional weight function.

\begin{definition}[Weight functions]
For the choice of $r_0$ which satisfies \eqref{choose_r_0}, we pick any $\epsilon<r_0$ and recall the weight function $\tau_{\epsilon}: \R \rightarrow \R$ defined in \cite[p.~179]{SS2020}. Note that $r_0$ is chosen to be $1$ in \cite{SS2020}. 
\begin{equation}\label{WeightFunc3}
    \tau_{\epsilon}(t)=
    \begin{cases}
        r_0, \ \ t\geq r_{0},\\
        t, \ \ \epsilon<t<r_{0},\\
        \epsilon, \ \ t\leq\epsilon.
    \end{cases}
\end{equation}
When $k=2$, we also introduce the following additional weight function. 
\begin{equation}\label{WeightFunc2}
    \ti{\tau}_{\epsilon}(t)=
    \begin{cases}
        1, \ \ \ \ \ t \geq [\log\frac{1}{\epsilon}]^{-1},\\
        t\log\frac{1}{\epsilon}, \ \ \ \epsilon<t<[\log\frac{1}{\epsilon}]^{-1},\\
        \epsilon\log\frac{1}{\epsilon}, \ \ \ \ t\leq\epsilon.
    \end{cases}
\end{equation}
Here we choose $\epsilon>0$ sufficiently small so that $\epsilon<[\log\frac{1}{\epsilon}]^{-1}<\frac{r_0}{2}$.

For $q\in M\setminus X$, define $\tau_{\epsilon}(q)=\tau_{\epsilon}(d(q))$ and $\widetilde{\tau}_{\epsilon}(q)=\widetilde{\tau}_{\epsilon}(d(q))$. Under the natural identification $Bl_{X}M\setminus E\cong M\setminus X$, both functions extend continuously across $E$ and hence belong to $C^{0}(Bl_{X}M)$.
\end{definition}

\begin{definition}[Based on {\cite[p.~179]{SS2020}}]\label{WeightedBLxM3}
Let $k \geq 2$ and $l \geq 0$ be two integers, $\alpha \in(0,1)$, and $\delta\in\R$. The weighted H\"{o}lder space $C^{l,\alpha}_{\delta}(Bl_{X}M,\omega_{\epsilon})$ consists of all functions $f\in C^{l,\alpha}_{loc}(Bl_{X}M)$ for which the following weighted norm is finite:
\begin{equation}\label{holder_bl_f}
\begin{split}
    \|f\|_{C^{l,\alpha}_{\delta}(Bl_{X}M, \omega_{\epsilon})}
    =&\|f\|_{C^{l,\alpha}(M\setminus \{d<r_{0} \})}+ \sup_{p \in X}\sup_{\epsilon\leq r \leq r_{0}}r^{-\delta}\|f(rZ_{p},rW_{p})\|_{C^{l,\alpha}((B_{2}^{k}\setminus B_{1}^{k})\times B_{2}^{m-k})}\\
    &+ \sup_{p \in X}\sum_{j=1}^{k}\epsilon^{-\delta}\|f(\epsilon Z_{(j),p},\epsilon W_{p})\|_{C^{l,\alpha}(\widetilde{B}_{(j),2}^{k}\times B_{2}^{m-k})}.
\end{split}
\end{equation} Here we define
\begin{equation}\label{def_B_j}
\widetilde{B}^k_{(j),1} \times B_{2}^{m-k}=\Big\{(Z_{(j)}, W) \in U_{(j)}\ \Big|\ |Z_{(j)}^j|^2(1+\sum_{1 \leq l \neq j }^{k} |Z_{(j)}^l|^2)<1,\ \ \sum_{p} |w_p|^2<4\Big\}.    
\end{equation}
We also introduce
\begin{equation}\label{holder_bl_f_0}
\|f\|_{C^{0}_{\delta}(Bl_{X}M, \omega_{\epsilon})}=\sup_{q \in Bl_{X}M}  (\tau_{\epsilon}(q))^{-\delta}|f(q)| 
\end{equation} where the weight function $\tau_{\epsilon}$ is defined in \eqref{WeightFunc3}. Note that, when $l=\alpha=0$, the norm in \eqref{holder_bl_f_0} is equivalent to the weighted norm in \eqref{holder_bl_f}.
\end{definition}

In the case of blow-ups at a point, the norm in \eqref{holder_bl_f} can be defined in a less coordinate-dependent way that more directly reflects the background metric $\omega_{\epsilon}$; see \cite[p.~1430]{Sze2012}. For the purposes of carrying out estimates, however, we adopt the formulation in \eqref{holder_bl_f}.

Let $g_{\omega_{\epsilon},i\ol{j}}$ denote the local coefficient of the metric $\omega_{\epsilon}$ in a fixed local coordinate chart. Then we introduce the following weighted norm of $g_{\omega_{\epsilon}}$ following (\ref{holder_bl_f}).
\begin{equation}
\label{WeightedTensor}
\begin{split}
    \|g_{\omega_{\epsilon}}\|_{C^{l,\alpha}_{\delta}(Bl_{X}M,\omega_{\epsilon})}
    =&\sum_{i, j}\Big(\|g_{\omega_{\epsilon},i\ol{j}}\|_{C^{l,\alpha}(M\setminus \{d<r_{0} \})}\\
    &+ \sup_{p \in X}\sup_{\epsilon \leq r \leq r_{0}}r^{-\delta}\|g_{\omega_{\epsilon},i\ol{j}}(rZ_{p},rW_{p})\|_{C^{l,\alpha}((B_{2}^{k}\setminus B_{1}^{k})\times B_{2}^{m-k})}\\
    &+ \sup_{p \in X}\sum_{j=1}^{k}\epsilon^{-\delta}\|g_{\omega_{\epsilon},i\ol{j}}(\epsilon Z_{(j),p},\epsilon W_{p})\|_{C^{l,\alpha}(\ti{B}_{(j),2}^{k}\times B_{2}^{m-k})}\Big).
\end{split}
\end{equation}
We also define
$|g_{\epsilon}^{-1}|_{C^{l,\alpha}_{\delta}(Bl_XM,\omega_{\epsilon})}$
by taking the corresponding weighted norm of the components
$g_{\omega_{\epsilon}}^{i\overline{j}}$ of the inverse metric. The norms
$\|g\|_{C^{l,\alpha}(M)}$ and $\|g^{-1}\|_{C^{l,\alpha}(M)}$ are defined analogously.

When the codimension is $k=2$, we also introduce an additional weighted H\"older norm associated with the weight function \eqref{WeightFunc2}.

\begin{definition}\label{WeightedBLxM2}
Let $k=2$, $l \in \mathbb{N}$, $t \in \mathbb{Z}$, $\alpha \in(0,1)$, and $\delta\in \R$. Along the submanifold $X$, we choose holomorphic coordinate charts as in Lemma \ref{holosubcoor}. The weighted H\"{o}lder space $\ti{C}^{l,\alpha}_{t,\delta}(Bl_{X}M,\omega_{\epsilon})$ consists of all functions $f \in C^{l,\alpha}_{loc}(Bl_{X}M)$ for which the following weighted norm is finite:
\begin{equation}\label{WeightedBLxM2_norm}
\begin{split}
    \|f\|_{\ti{C}^{l,\alpha}_{t,\delta}(Bl_{X}M,\omega_{\epsilon})}
    =\,&\|f\|_{C^{l,\alpha}(M\setminus \{d\leq r_{0} \})}\\
    &+\sup_{p\in X} \Big\{ 
    \sup_{\epsilon\leq r\leq r_{0}} 
    [\ti{\tau}(r)]^{-\delta}r^{-t}\|f(rZ_{p},rW_{p})\|_{C^{l,\alpha}(B_{2}^{2}\setminus B_{1}^{2}\times B_{2}^{m-2})} \\
    &+\sum_{j=1}^{k}[\ti{\tau}(\epsilon)]^{-\delta}\epsilon^{-t}
    \|f(\epsilon Z_{(j),p},\epsilon W_{p})\|_{C^{l,\alpha}(\ti{B}_{(j),2}^{2}\times B_{2}^{m-2})} 
    \Big\}.
\end{split}
\end{equation}
Similarly, we introduce a weighted norm equivalent to $\ti{C}^{0}_{0,\delta}$ in \eqref{WeightedBLxM2_norm}.
\begin{equation}\label{holder_bl_f_0_2}
\|f\|_{\widetilde{C}^{0}_{\delta}(Bl_{X}M, \omega_{\epsilon})}=\sup_{q \in Bl_{X}M}  (\widetilde{\tau}_{\epsilon}(q))^{-\delta}|f(q)|. 
\end{equation}The corresponding weighted norm for tensors are defined analogously to \eqref{WeightedTensor}.

\end{definition}

\begin{comment}

In practice, we restrict our attention to $f\in \ti{C}^{4,\alpha}_{0,\delta}$.  

linearized operators are then viewed as maps
$\ti{C}^{4,\alpha}_{0,\delta} \longrightarrow \ti{C}^{0,\alpha}_{-4,\delta}$.

\begin{remark}
   With the new weight function $\widetilde{\tau}_{\epsilon}$ used in Definition \ref{WeightedBLxM2}, any uniformly bounded sequence $\{\phi_{\epsilon}\} \in \ti{C}^{1,\alpha}_{0,\delta}(Bl_{X}M, \omega_{\epsilon})$ with respect to \eqref{WeightedBLxM2_norm} subconverges, as $\epsilon\to 0$, to a bounded function on $M\setminus X$.
\end{remark}

We point out that, when $\epsilon$ is fixed, the norm still gives polynomial control on the usual H\"older norm of order $r^{\delta}$ in local coordinates on $M \setminus X$. The difference is that, as $\epsilon\to 0$, any uniformly bounded sequence ${\phi_{\epsilon}}$ with respect to this norm is forced to converge, after passing to a subsequence, to a bounded function on $M\setminus X$.    
\end{comment}

Motivated by \cite[Lemma 8.12]{Sze2014}, we observe a uniform estimate for the weighted norm of components of the background metric. The proof is included in Appendix \ref{UUB}.

Recall that the curvature components of a K\"ahler metric $\widetilde{\omega}$ on $Bl_{X}M$ are given by
\begin{equation}
    R_{\ti{\omega},i\ol{j}k\ol{l}}=-\di_{k}\di_{\ol{l}} g_{\ti{\omega},i\ol{j}}+g_{\ti{\omega}}^{p\ol{q}}(\di_{i}g_{\ti{\omega}, k\ol{q}})(\di_{\ol{j}}g_{\ti{\omega}, p\ol{l}}).
\end{equation}
The weighted H\"older norm
\(
\|R_{\tilde{\omega}}\|_{C^{l,\alpha}_{\delta}(Bl_{X} M)}
\)
is defined analogously to \eqref{WeightedTensor}. 

\begin{proposition}\label{metric_Uni}
    There exists a constant $C>0$, such that for any $k\geq 2$ and $\epsilon\in(0,\epsilon_{0}]$ with $\epsilon_{0}$ sufficiently small, we have
    \begin{align}
        &\|g_{\omega_{\epsilon}}\|_{C^{2,\alpha}_{0}(Bl_{X}M)}, \ \|g_{\omega_{\epsilon}}^{-1}\|_{C^{2,\alpha}_{0}(Bl_{X}M)}\leq C.
    \end{align}
    As a consequence, we have the following estimates.
\begin{equation}
        \|R_{\omega_{\epsilon}}\|_{C^{0,\alpha}_{2}}, \ 
        \|Ric(\omega_{\epsilon})\|_{C^{0,\alpha}_{2}}, \ 
        \|S(\omega_{\epsilon})\|_{C^{0,\alpha}_{2}}\leq C.
    \end{equation}
\end{proposition}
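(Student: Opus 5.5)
The plan is to verify the bound on each of the three pieces of the norm \eqref{WeightedTensor} separately, with $\delta=0$, and then deduce the curvature bounds algebraically.

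\textbf{Region away from $X$.} On $M\setminus\{d<r_0\}$ we have $\omega_\epsilon=\omega$ once $6r_\epsilon<r_0$ (for $k\ge3$) or $2\epsilon^\theta<r_0$ (for $k=2$). So the first term is just $\|g_\omega\|_{C^{2,\alpha}}$ and $\|g_\omega^{-1}\|_{C^{2,\alpha}}$ on a fixed compact set, independent of $\epsilon$.

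\textbf{Intermediate annuli.} For $\epsilon\le r\le r_0$ we rescale $(z,w)=(rZ,rW)$ with $(Z,W)\in(B_2\setminus B_1)\times B_2$. Each $Z$-derivative then produces a factor $r$, so $\partial^I_Z\partial^J_W[g(rZ,rW)]=r^{|I|+|J|}(\partial^I_z\partial^J_w g)(rZ,rW)$. We apply \eqref{Compare4} for $k\ge3$ and \eqref{EucCompare} for $k=2$, with $d\sim|z|\sim r$:
\begin{itemize}
\item the Euclidean part $g_{euc}$ has constant coefficients;
\item the term $O(\partial^I_z\partial^J_w(|z|+|w|))$, after multiplication by $r^{|I|+|J|}$, is $O(r)$;
\item the correction is $O(\epsilon^{2k-2}r^{2-2k-|I|})\cdot r^{|I|}=O((\epsilon/r)^{2k-2})\le C$, or for $k=2$ it is $O((\epsilon/r)^2\log\frac1\epsilon)\,\gamma(d/\epsilon^\theta)$.
\end{itemize}

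\textbf{Main obstacle: the $\log\frac1\epsilon$ factor when $k=2$.} Because $\gamma(d/\epsilon^\theta)$ is supported where $d\le2\epsilon^\theta$, this factor is harmless only when $r\gtrsim\epsilon\sqrt{\log(1/\epsilon)}$. Closer to $E$ I would not use \eqref{EucCompare}. Instead I would switch to \eqref{BSCompare}, which compares with the exact model $g_{\omega_{\eta,\epsilon}}$ and has error $O(\epsilon^2)$ with no logarithm. The model term $\epsilon^2\partial\bar\partial\log|z|^2$, rescaled, is $(\epsilon/r)^2$ times a fixed smooth tensor on the annulus, hence bounded. Hölder seminorms are controlled by the $C^3$ bounds on the rescaled domain.

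\textbf{Region near $E$.} For the piece $(\epsilon Z_{(j)},\epsilon W)$ we use \eqref{Compare3}, \eqref{Compare3.5} and \eqref{BSCompare}. Note that $g_{\omega_{\eta,\epsilon}}(\epsilon\,\cdot)$ written in the rescaled blow-up coordinates is exactly $g_\eta+g_{euc}$ on the fixed compact set $\widetilde B_{(j),2}\times B_2$, so it is uniformly bounded together with its inverse. In \eqref{Compare3.5} the factor $\epsilon^{-|I|}$ is cancelled by the $\epsilon^{|I|}$ from the chain rule. The remaining errors are $O(\epsilon)$ or $O(\epsilon^2)$ in $C^{3}$, hence small in $C^{2,\alpha}$.

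\textbf{Inverse metric.} In every region $g_{\omega_\epsilon}$ is a uniformly small perturbation of a model metric that is uniformly equivalent to the Euclidean metric on the fixed rescaled domain. So $\det g_{\omega_\epsilon}$ is bounded below, and the bound on $g^{-1}$ in $C^{2,\alpha}$ follows from Cramer's rule and the algebra property of $C^{2,\alpha}$.

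\textbf{Curvature.} On each rescaled domain the formula $R_{i\bar jk\bar l}=-\partial_k\partial_{\bar l}g_{i\bar j}+g^{p\bar q}\partial_ig_{k\bar q}\partial_{\bar j}g_{p\bar l}$ involves two derivatives. In unscaled coordinates each derivative costs $r^{-1}$ (respectively $\epsilon^{-1}$), so $R$ scales like $r^{-2}=\tau_\epsilon^{-2}$. This is exactly the $C^{0,\alpha}_{-2}$ control obtained by multiplying the rescaled $C^{0,\alpha}$ bound by $r^{-2}$, and the same holds for Ric and $S$ by contraction with $g^{-1}$. I will check the weight convention in the statement ($C^{0,\alpha}_{2}$ as written) against this scaling: what the argument actually produces is the bound $r^{2}\|R(r\cdot)\|\le C$.
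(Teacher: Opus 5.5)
Your proposal is correct and follows essentially the paper's route. Both arguments compare $g_{\omega_\epsilon}$ scale by scale with model metrics on the three pieces of the norm, bound $g_{\omega_\epsilon}^{-1}$ through uniform nondegeneracy (you use Cramer's rule where the paper uses an absorption argument), and then bound the curvature by scaling. Two of your refinements are right and worth keeping: for $k=2$ you switch to \eqref{BSCompare} near $E$ instead of the cruder \eqref{EucCompare}, and you note that what the argument proves, and what is actually true, is $r^{2}\|R_{\omega_\epsilon}(r\cdot)\|\le C$, i.e.\ weight $-2$; the $C^{0,\alpha}_{2}$ in the statement and the $r^{-2}$ in the paper's proof are slips.

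One step needs tightening. For $k\ge 3$ and $\epsilon\le r\le R\epsilon$, the Euclidean comparison \eqref{Compare4} is only stated for $d>R_0\epsilon$, and there it leaves an $O((\epsilon/r)^{2k-2})=O(1)$ error rather than a small one. So your determinant lower bound does not follow in that range as written. There you should compare with $\omega_{\eta,\epsilon}$ via \eqref{Compare3.5} and \eqref{Compare5}, as the paper does, and use the Euclidean model only for $r\ge R\epsilon$ with $R$ large.
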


Next, we study the perturbation of the background metric $\omega_{\epsilon}$ to $\omega_{\epsilon}+\sqrt{-1}\di\dbar\vphi$ for $\vphi$ in a suitable weighted H\"older space. 
Using Proposition \ref{metric_Uni}, we obtain the following uniform estimates, analogous to those in \cite[Lemma 8.13]{Sze2014} and \cite[Lemma 3.7]{Brown}. We refer to Appendix \ref{UEC} for the proof of Proposition \ref{metric_Uni}.

\begin{proposition}[Based on {\cite[Lemma 8.13]{Sze2014}} and {\cite[Lemma 3.7]{Brown}}]\label{UniEst}
    Fix $\delta<0$. There exists sufficiently small constants $c>0$, $\epsilon_{0}>0$ and a constant $C>0$, such that when $\vphi\in C^{4,\alpha}_{2}(Bl_{X}M)$ with $\|\vphi\|_{C^{4,\alpha}_{2}(Bl_{X}M)}\leq c$, 
    \(
    \omega_{\vphi,\epsilon}=\omega_{\epsilon}+\sqrt{-1}\di\dbar\vphi
    \)
    is a well defined K\"{a}hler metric on $Bl_{X}M$. Moreover, the following estimates holds for any $\epsilon \in (0,\epsilon_{0}]$.
    \begin{align}
        &\|g_{\omega_{\vphi,\epsilon}}\|_{C^{2,\alpha}_{0}}, \|g_{\omega_{\vphi,\epsilon}}^{-1}\|_{C^{2,\alpha}_{0}} \leq C, \\
        &\|g_{\omega_{\vphi,\epsilon}}-g_{\omega_{\epsilon}}\|_{C^{2,\alpha}_{0}(Bl_{X}M)}, \ \|g_{\omega_{\vphi,\epsilon}}^{-1}-g_{\omega_{\epsilon}}^{-1}\|_{C^{2,\alpha}_{0}(Bl_{X}M)}\leq C \|\vphi\|_{C^{4,\alpha}_{2}(Bl_{X}M)},   \label{metric_inv_est}\\
        & \|R_{\omega_{\vphi,\epsilon}}-R_{\omega_{\epsilon}}\|_{C^{0,\alpha}_{\delta-4}(Bl_{X}M)}\leq C\|g_{\omega_{\vphi,\epsilon}}-g_{\omega_{\epsilon}}\|_{C^{2,\alpha}_{\delta-2}(Bl_{X}M)} \leq C\|\vphi\|_{C^{4,\alpha}_{\delta}(Bl_{X}M)},  \label{curv_est} \\
        &\|L_{\omega_{\vphi,\epsilon}}-L_{\omega_{\epsilon}}\|_{C^{4,\alpha}_{\delta}(Bl_{X}M)\to C^{0,\alpha}_{\delta-4}(Bl_{X}M)}\leq C\|\vphi\|_{C^{4,\alpha}_{2}(Bl_{X}M)},  \label{opera_est} \\
        &\|R_{\omega_{\vphi,\epsilon}}-R_{\omega_{\epsilon}}\|_{\ti{C}^{0,\alpha}_{-4,\delta}(Bl_{X}M)}\leq C\|\vphi\|_{\ti{C}^{4,\alpha}_{0,\delta}(Bl_{X}M)},\ \ \text{when}\ k=2, \label{curv_est_2}\\
        &\|L_{\omega_{\vphi,\epsilon}}-L_{\omega_{\epsilon}}\|_{\ti{C}^{4,\alpha}_{0,\delta}(Bl_{X}M)\to \ti{C}^{0,\alpha}_{-4,\delta}(Bl_{X}M)}\leq C\|\vphi\|_{C^{4,\alpha}_{2}(Bl_{X}M)},\ \ \text{when}\ k=2.  \label{opera_est_2}
    \end{align} 
    
\end{proposition}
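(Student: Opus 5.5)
The plan is to reduce every estimate in Proposition \ref{UniEst} to local, scale-invariant computations on the model regions that define the weighted norms. In each region the norm rescales to an ordinary $C^{l,\alpha}$ norm on a fixed domain, and Proposition \ref{metric_Uni} supplies uniform control of the background metric there. The regions are the compact piece $M\setminus\{d<r_0\}$, the annular pieces $(B_2^k\setminus B_1^k)\times B_2^{m-k}$ rescaled by $r\in[\epsilon,r_0]$, and the pieces $\widetilde B^k_{(j),2}\times B_2^{m-k}$ rescaled by $\epsilon$. On a piece at scale $r$ we pull back by $(Z,W)\mapsto(rZ,rW)$ and set $\hat g=g(r\cdot)$ and $\hat\varphi=r^{-2}\varphi(r\cdot)$. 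The components of $\sqrt{-1}\partial\bar\partial\varphi$ then become $\partial\bar\partial\hat\varphi$. The definition of $C^{4,\alpha}_2$ gives
\[
\|\partial\bar\partial\hat\varphi\|_{C^{2,\alpha}}\le\|\hat\varphi\|_{C^{4,\alpha}}\le\|\varphi\|_{C^{4,\alpha}_2}\le c,
\]
uniformly in $r$, $p\in X$ and $\epsilon$.

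First I show that $\omega_{\varphi,\epsilon}$ is Kähler, together with the first two estimates. Proposition \ref{metric_Uni} gives $\|\hat g_{\omega_\epsilon}\|_{C^{2,\alpha}}+\|\hat g^{-1}_{\omega_\epsilon}\|_{C^{2,\alpha}}\le C$ on every piece. Hence $\hat g_{\omega_\epsilon}\ge C^{-1}\mathrm{Id}$, and if $c<1/(2C)$ then $\hat g_{\omega_\epsilon}+\partial\bar\partial\hat\varphi\ge (2C)^{-1}\mathrm{Id}$. So $\omega_{\varphi,\epsilon}$ is positive everywhere on $Bl_XM$, including near $E$, and it is closed. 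The $C^{2,\alpha}_0$ bound on $g_{\omega_{\varphi,\epsilon}}$ and the estimate $\|g_{\omega_{\varphi,\epsilon}}-g_{\omega_\epsilon}\|_{C^{2,\alpha}_0}\le C\|\varphi\|_{C^{4,\alpha}_2}$ follow at once. For the inverse I use
\[
g_{\varphi}^{-1}-g^{-1}=-g_{\varphi}^{-1}(g_\varphi-g)g^{-1}.
\]
The $C^{2,\alpha}$ bound on $g_\varphi^{-1}$ comes from uniform ellipticity, since the entries are rational functions of the entries of $g_\varphi$ with bounded determinant inverse, and $C^{2,\alpha}$ is an algebra on a fixed domain.

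Next I treat the curvature and operator estimates \eqref{curv_est} and \eqref{opera_est}. The curvature formula is $R=-\partial\bar\partial g+g^{-1}\partial g\,\bar\partial g$, so $R_\varphi-R$ is a sum of terms linear in $h=g_\varphi-g$. These are $-\partial\bar\partial h$, $(g_\varphi^{-1}-g^{-1})\partial g\,\bar\partial g$, and $g_\varphi^{-1}\partial h\,\bar\partial g_\varphi$ plus similar terms. Each coefficient is bounded in $C^{0,\alpha}$ on the rescaled piece. In rescaled coordinates two derivatives cost $r^{-2}$, and $h$ at weight $\delta-2$ comes from $\varphi$ at weight $\delta$. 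This gives $\|R_\varphi-R\|_{C^{0,\alpha}_{\delta-4}}\le C\|h\|_{C^{2,\alpha}_{\delta-2}}\le C\|\varphi\|_{C^{4,\alpha}_\delta}$, where on each piece $r^{-(\delta-4)}\|(R_\varphi-R)(r\cdot)\|$ corresponds to the rescaled quantity. Here $L_\omega$ is the linearization of scalar curvature, $L_\omega f=-\Delta^2_\omega f-R^{i\bar j}f_{i\bar j}$ in the convention of \cite{Sze2014}, so it is schematically $g^{-1}g^{-1}\partial^4f+g^{-1}g^{-1}g^{-1}\partial g\,\partial^3 f+\cdots+g^{-1}g^{-1}R\,\partial^2f$. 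The difference $L_{\omega_\varphi}-L_{\omega}$ is a sum of terms, each containing one factor among $g_\varphi^{-1}-g^{-1}$, $\partial^a h$ with $a\le2$, or $R_\varphi-R$. After rescaling, each such factor is bounded by $C\|\varphi\|_{C^{4,\alpha}_2}$ in the weight-$0$ normalization; for $R_\varphi-R$ this is the curvature estimate with $\delta=2$, since $R$ has weight $-2$. The coefficients multiply $\partial^{\le4}f$, which carries weight $\delta-4$, and this yields \eqref{opera_est}.

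For $k=2$, the estimates \eqref{curv_est_2} and \eqref{opera_est_2} follow by the same computation, because the norm $\widetilde C^{l,\alpha}_{t,\delta}$ differs from $C^{l,\alpha}_t$ only by the factor $[\widetilde\tau(r)]^{-\delta}$. That factor is constant on each piece, so it commutes with the local multilinear estimates. Consequently $\|R_\varphi-R\|_{\widetilde C^{0,\alpha}_{-4,\delta}}\le C\|h\|_{\widetilde C^{2,\alpha}_{-2,\delta}}\le C\|\varphi\|_{\widetilde C^{4,\alpha}_{0,\delta}}$. For the operator estimate, the coefficient differences are controlled in the unweighted $C_2$ sense by $\|\varphi\|_{C^{4,\alpha}_2}$, and $f$ carries the $\widetilde\tau$-weight. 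I expect the main obstacle to be bookkeeping rather than any deep step. One must check that the pieces near $E$ are covered correctly: there the rescaled metric is close to $\omega_{\eta,\epsilon}$ by \eqref{Compare3} and \eqref{BSCompare}, and the charts $U_{(j)}$ overlap. One must also keep the weight conventions consistent, namely that curvature is naturally weight $-2$ and that the quadratic terms $g^{-1}\partial g\,\bar\partial g$ respect the scaling. In every case this reduces to uniform-in-piece constants supplied by Proposition \ref{metric_Uni}.
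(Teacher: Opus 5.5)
Your proposal is correct and follows essentially the same route as the paper's proof in Appendix~\ref{UEC}. Like the paper, you rescale each piece of the weighted norm to a fixed domain and take uniform control of $\omega_\epsilon$ from Proposition~\ref{metric_Uni}. You then use the identity $g_{\varphi}^{-1}-g^{-1}=-g_{\varphi}^{-1}(g_{\varphi}-g)g^{-1}$ and expand the curvature and $L_{\omega}$ differences term by term, with the $\widetilde\tau$ factor constant on each piece when $k=2$. There are two minor differences, both valid: you check positivity of $\omega_{\varphi,\epsilon}$ explicitly, and you bound $g_{\varphi}^{-1}$ through a lower bound on the determinant instead of the paper's absorption step $\|g_{\varphi}^{-1}\|\le C_2c\|g_{\varphi}^{-1}\|+C_3$.
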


In \eqref{opera_est} and \eqref{opera_est_2}, $L_{\omega}$ denotes the linearized operator of the scalar curvature at a K\"{a}hler metric $\omega$ within a K\"{a}hler class. According to the proof of \cite[Theorem 4.2]{Sze2014}, we have
\begin{equation}\label{def_L}
    L_{\omega}\phi=\frac{d}{dt}\Big|_{t=0}S(\omega+t\sqrt{-1}\di\dbar \phi)=-\Delta_{\omega}^{2}\phi-Ric_{\omega}\cdot \di\dbar\phi.
\end{equation} We emphasize that the derivation of \eqref{def_L} is entirely local and therefore applies equally to a perturbation of a metric defined on an open subset.

\section{Estimates on linearized operators}\label{section4}
In this section, we consider the linearized operator of the scalar curvature given by \eqref{def_L}.
The main goal is to prove Proposition \ref{Inverse}, establishing the boundedness of the inverse of a modified operator.

\subsection{Mapping properties of linearized operators.}
In this subsection, we discuss mapping properties of different linearized operators in certain weighted spaces. They are mainly concerned with properties of the bi-Laplacian operators with respect to certain model metrics. We refer to \cite[Section 8.2]{Sze2014}, \cite[Section 2]{LMc1985} and \cite[Theorem 1.7]{Bartnik} for more information.

\begin{proposition}\label{Fredholm0}
The linearized operator
    \begin{equation}
        L_{\omega}:C^{4,\alpha}(M)\to C^{0,\alpha}(M)
    \end{equation}
    is a Fredholm operator with index $0$.
\end{proposition}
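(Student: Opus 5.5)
We write $L_{\omega}=P+K$ with $P\phi=-\Delta_{\omega}^{2}\phi$ and $K\phi=-Ric_{\omega}\cdot\di\dbar\phi$, both taken from \eqref{def_L}. The plan is to show first that $P$ is Fredholm of index $0$ from $C^{4,\alpha}(M)$ to $C^{0,\alpha}(M)$. We then show that $K$ is compact between the same spaces, and conclude by stability of the index under compact perturbations.

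\emph{Step 1 (the principal part).} Since $M$ is compact, Schauder theory for the elliptic operator $\Delta_{\omega}$ gives the following: for $f\in C^{0,\alpha}(M)$ with $\int_M f\,\omega^m=0$, the equation $\Delta_\omega u=f$ has a solution $u\in C^{2,\alpha}(M)$, unique modulo constants. The kernel of $\Delta_\omega$ on $C^{2,\alpha}(M)$ consists of the constants, by the maximum principle or by integrating by parts.

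Iterating, we find $\ker\Delta_\omega^2=\{\text{constants}\}$. Indeed, if $\Delta^2_\omega\phi=0$, then integrating $\phi\,\Delta^2_\omega\phi$ by parts gives $\int_M|\Delta_\omega\phi|^2\,\omega^m=0$. This step uses elliptic regularity to upgrade $\phi$ to smoothness.

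Next we identify the image. It is exactly $\{f\in C^{0,\alpha}(M): \int_M f\,\omega^m=0\}$. To see this, solve $\Delta_\omega v=f$ and pick the constant in $v$ so that $\int_M v\,\omega^m=0$. Then solve $\Delta_\omega u=v$, and use the Schauder estimate to place $u$ in $C^{4,\alpha}(M)$.

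Consequently $\dim\ker P=1$, the range of $P$ is closed, and $\operatorname{codim}\operatorname{Im}P=1$. So $P$ is Fredholm of index $0$.

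\emph{Step 2 (lower order term).} The tensor $Ric_\omega$ is smooth on the compact manifold $M$. Hence $K$ is bounded from $C^{4,\alpha}(M)$ to $C^{2,\alpha}(M)$. The inclusion $C^{2,\alpha}(M)\hookrightarrow C^{0,\alpha}(M)$ is compact by Arzel\`a--Ascoli, so $K$ is compact into $C^{0,\alpha}(M)$. Therefore $L_\omega=P+K$ is Fredholm with $\operatorname{ind}L_\omega=\operatorname{ind}P=0$.

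As a cross-check, $L_\omega$ is formally self-adjoint with respect to $\omega^m$, being the Hessian of the Calabi-type functional. Equivalently, $L_\omega=-\mathcal{D}^*\mathcal{D}+(\text{terms } \nabla S\cdot\nabla\phi)$, which is self-adjoint when $S$ is constant. In general one need not rely on this, because the compact perturbation argument already yields index $0$.

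\emph{Main obstacle.} The only delicate points are regularity at the H\"older level and closedness of the range. We handle both by quoting standard global Schauder estimates on compact manifolds:
\[
\|\phi\|_{C^{4,\alpha}(M)}\le C\bigl(\|L_\omega\phi\|_{C^{0,\alpha}(M)}+\|\phi\|_{C^{0}(M)}\bigr).
\]
Combined with the compact embedding $C^{4,\alpha}(M)\hookrightarrow C^0(M)$, this estimate gives finite-dimensional kernel and closed range directly. That is an alternative route to Step 1, if one prefers not to solve the equations explicitly.
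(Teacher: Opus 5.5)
Your argument is correct and follows the paper's proof. Like the paper, you split off $-\Delta_\omega^2$, show it is Fredholm of index $0$, and show that $Ric_\omega\cdot\di\dbar$ is compact from $C^{4,\alpha}(M)$ to $C^{0,\alpha}(M)$ by factoring through $C^{2,\alpha}(M)$ and using Arzel\`a--Ascoli; the one difference is that you compute the kernel (constants) and the mean-zero image of $\Delta_\omega^2$ directly, whereas the paper composes two index-zero Laplacians via \cite[Theorem 2.13]{Sze2014}.

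One caveat on your non-essential aside: the opening claim of your ``cross-check'' is wrong, because $L_\omega$ is not formally self-adjoint unless $S(\omega)$ is constant. Its adjoint differs by a term involving $\nabla S$, which is why the paper later works with $\ker L_\omega^*$. Nothing in your proof depends on that claim.
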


\begin{proof}[Proof of Proposition \ref{Fredholm0}]
   The simple observation is that $L_{\omega}$ is a compact perturbation  of $\Delta_{\omega}^{2}$. According to \cite[Theorem 2.13]{Sze2014},
   \begin{equation}
       \Delta_{\omega}:(\ker\Delta_{\omega})^{\perp}\cap C^{4,\alpha}(M)\to(\ker\Delta_{\omega}^{*})^{\perp}\cap C^{2,\alpha}(M)
   \end{equation}
   is an isomorphism. Hence, since $\Delta_{\omega}$ is $L^{2}$ self-adjoint and $\dim(\ker\Delta_{\omega})=1$, we have
   \begin{equation}
       \dim(\coker\Delta_{\omega})=\dim(\ker\Delta_{\omega}^{*})=1.
   \end{equation}
   Therefore, $\Delta_{\omega}:C^{4,\alpha}(M)\to C^{2,\alpha}(M)$ is a Fredholm operator with index $0$. Then, as a composition, $\Delta_{\omega}^{2}:C^{4,\alpha}(M)\to C^{0,\alpha}(M)$ is a Fredholm operator with index $0$.

   Consider a bounded linear operator
   \begin{equation}
    K:\,C^{4,\alpha}(M) \ni \phi \to Ric_{\omega}\cdot\di\dbar\phi \in C^{0,\alpha}(M).
   \end{equation}
   Take any bounded sequence $\|\phi_{i}\|_{C^{4,\alpha}(M)}\leq C$, we have
   \begin{equation}
       \|K\phi_{i}\|_{C^{2,\alpha}(M)}\leq C_{1}\|\phi_{i}\|_{C^{4,\alpha}(M)}\leq \ti{C}.
   \end{equation}
   Then, according to the Arzela-Ascoli Theorem (\cite[Theorem 2.7.]{Sze2014}), there exists a subsequence $\phi_{i_{k}}$ such that $K\phi_{i_{k}}$ converges in $C^{0,\alpha}(M)$. Hence, $K:C^{4,\alpha}(M)\to C^{0,\alpha}(M)$ is a compact operator. As a consequence, $L_{\omega}=-\Delta_{\omega}^{2}-K$ is a Fredhom operator with
   \(
   \ind(L_{\omega})=\ind(\Delta_{\omega}^{2})=0.
   \)
\end{proof}

For the $L^{2}$ formal adjoint operator $L^{*}_{\omega}$, a similar discussion will yield the same conclusion that $L^{*}_{\omega}$ is also a Fredholm operator with index zero, hence, it has a finite dimensional kernel. Note that $\operatorname{Ker} L_{\omega}$ is non-trivial as it contains constant functions. Following an observation in \cite{Brown}, we consider a modified operator $\ti{L}_{\omega}$. 

\begin{proposition}[Based on {\cite[Lemma 4.1]{Brown}}]\label{Iso_L}
    Fix an $L^{2}$ orthogonal basis $f_{1},...,f_{d}$ for the kernel of $L^{*}_{\omega}$, there exist points $q_{1},...,q_{d}\in M\setminus X$, such that
    \begin{equation}
    \ti{L}_{\omega}:\,C^{4,\alpha}(M) \ni \phi \to L_{\omega}\phi-\sum_{i=1}^{d}\phi(q_{i})f_{i} \in C^{0,\alpha}(M)
    \end{equation}
    is an isomorphism.
\end{proposition}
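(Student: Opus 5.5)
Since $L_\omega$ is Fredholm of index zero (Proposition \ref{Fredholm0}) and $\phi\mapsto\sum_{i=1}^d\phi(q_i)f_i$ has finite rank, $\ti{L}_\omega$ is a compact perturbation of $L_\omega$. Hence $\ti{L}_\omega:C^{4,\alpha}(M)\to C^{0,\alpha}(M)$ is Fredholm of index $0$ for any choice of points $q_i$. It therefore suffices to choose $q_1,\dots,q_d\in M\setminus X$ so that $\ti{L}_\omega$ is injective, or equivalently surjective.

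By Fredholm theory (elliptic regularity plus the $L^2$ pairing), the range of $L_\omega$ on $C^{4,\alpha}(M)$ is exactly the $L^2$-orthogonal complement of $\ker L^*_\omega=\mathrm{span}\{f_1,\dots,f_d\}$ inside $C^{0,\alpha}(M)$. The index-zero property gives $\dim\ker L_\omega=\dim\ker L^*_\omega=d$; let $h_1,\dots,h_d$ be a basis of $\ker L_\omega$, which consists of smooth functions by elliptic regularity. Suppose $\ti{L}_\omega\phi=0$, i.e.
\[
L_\omega\phi=\sum_i\phi(q_i)f_i .
\]
Pairing with $f_j$ in $L^2$ kills the left side, since it lies in $(\ker L_\omega^*)^{\perp}$. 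By orthogonality of the $f_i$ (which are nonzero) this gives $\phi(q_j)\|f_j\|^2=0$, so $\phi(q_j)=0$ for all $j$. Then $L_\omega\phi=0$, so $\phi=\sum_l c_l h_l$ with
\[
\sum_l c_l h_l(q_j)=0 \quad\text{for all } j .
\]
Thus injectivity follows once the $d\times d$ matrix $\bigl(h_l(q_j)\bigr)$ is invertible.

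The only substantive step is choosing the points. The evaluation map $M\setminus X\ni q\mapsto (h_1(q),\dots,h_d(q))\in\mathbb{R}^d$ (or $\mathbb{C}^d$) has image spanning the whole space. Otherwise some nontrivial combination $\sum c_l h_l$ would vanish on $M\setminus X$, hence on $M$ by continuity and density ($X$ has positive codimension), contradicting linear independence. So we can pick $q_1,\dots,q_d\in M\setminus X$ inductively with $(h_l(q_j))_l$ linearly independent, making the matrix invertible.

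Then $\ti{L}_\omega$ is injective and of index zero, hence surjective. Boundedness of the inverse follows from the open mapping theorem. Equivalently, a standard Schauder estimate $\|\phi\|_{C^{4,\alpha}}\le C(\|L_\omega\phi\|_{C^{0,\alpha}}+\|\phi\|_{C^0})$ combined with a compactness argument gives the same conclusion.
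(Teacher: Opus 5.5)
Your proof is correct and is the standard argument that the paper defers to \cite[Lemma 4.1]{Brown}. The finite-rank term keeps $\widetilde{L}_\omega$ Fredholm of index zero. Pairing $L_\omega\phi=\sum_i\phi(q_i)f_i$ with the $f_j$ forces $\phi(q_j)=0$ and $\phi\in\ker L_\omega$. Finally, choosing $q_1,\dots,q_d$ in the dense set $M\setminus X$ so that evaluation is injective on the $d$-dimensional space $\ker L_\omega$ (using $\dim\ker L_\omega=\dim\ker L_\omega^*$ from index zero) gives injectivity, hence invertibility.
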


Proposition \ref{Iso_L} follows from the same argument of the proof of {\cite[Lemma 4.1]{Brown}}. We note that each $f_i \in C^{\infty} (M)$ by the elliptic regularity theory.

Given $q_{i}$ and $f_{i}$ as in Proposition \ref{Iso_L}, we define 
\begin{equation}
    \ti{L}_{\omega_{\epsilon}}\phi=L_{\omega_{\epsilon}}\phi-\sum_{i=1}^{d}\phi(q_{i})f_{i}.
\end{equation}
We observe that for a fixed $\epsilon>0$, both weighted spaces $C^{4,\alpha}_{\delta}(Bl_{X}M)$ and $\ti{C}^{4,\alpha}_{0,\delta}(Bl_{X}M)$ are equivalent to the usual H\"{o}lder space $C^{4,\alpha}(Bl_{X}M)$. Therefore, when $\epsilon$ is fixed, the same proof of Proposition \ref{Fredholm0} implies that
\begin{proposition}\label{Fredholm1}
    The operator
    \begin{align}
    &\ti{L}_{\omega_{\epsilon}}:C^{4,\alpha}_{\delta}(Bl_{X}M,\omega_{\epsilon})\to C^{0,\alpha}_{\delta-4}(Bl_{X}M,\omega_{\epsilon}), \ \text{if } k\geq 3; \\
    &\ti{L}_{\omega_{\epsilon}}:\ti{C}^{4,\alpha}_{0,\delta}(Bl_{X}M,\omega_{\epsilon})\to \ti{C}^{0,\alpha}_{-4,\delta}(Bl_{X}M,\omega_{\epsilon}), \ \text{if } k=2.
    \end{align}
    are Fredholm operators with index zero.
\end{proposition}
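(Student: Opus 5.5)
For fixed $\epsilon>0$ we reduce to the unweighted setting on the compact manifold $Bl_XM$ and then follow the proof of Proposition \ref{Fredholm0}. The first step is to record the norm equivalences. The weight functions $\tau_\epsilon$ and $\widetilde{\tau}_\epsilon$ are bounded above and below by positive constants depending only on $\epsilon$. The rescalings $(Z,W)\mapsto(rZ,rW)$ with $r\in[\epsilon,r_0]$, and the blow-up charts at scale $\epsilon$, differ from a fixed finite atlas of $Bl_XM$ only by diffeomorphisms whose $C^{l}$ norms are bounded in terms of $\epsilon$. Hence, for each fixed $\epsilon$,
\begin{equation*}
C_\epsilon^{-1}\|f\|_{C^{l,\alpha}(Bl_XM)}\le \|f\|_{C^{l,\alpha}_\delta(Bl_XM,\omega_\epsilon)}\le C_\epsilon\|f\|_{C^{l,\alpha}(Bl_XM)},
\end{equation*}
and similarly for $\widetilde{C}^{l,\alpha}_{t,\delta}$ with $t=0,-4$. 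Fredholmness and the index are invariant under passing to an equivalent norm on the same vector space. So it suffices to show that $\widetilde{L}_{\omega_\epsilon}:C^{4,\alpha}(Bl_XM)\to C^{0,\alpha}(Bl_XM)$ is Fredholm of index zero.

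Next we decompose the operator as
\begin{equation*}
\widetilde{L}_{\omega_\epsilon}=-\Delta_{\omega_\epsilon}^2-K_\epsilon-P,\qquad K_\epsilon\phi=Ric_{\omega_\epsilon}\cdot\partial\bar\partial\phi,\qquad P\phi=\sum_{i=1}^d\phi(q_i)f_i .
\end{equation*}
By Lemma \ref{SSepsilon} and Lemma \ref{MC2_half}, $\omega_\epsilon$ is a smooth Kähler metric on the compact manifold $Bl_XM$. Therefore the argument of Proposition \ref{Fredholm0} applies verbatim: $\Delta_{\omega_\epsilon}$ is $L^2$ self-adjoint with one-dimensional kernel (the constants) and cokernel of the same dimension, so it is Fredholm of index $0$ from $C^{4,\alpha}$ to $C^{2,\alpha}$. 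The same holds from $C^{2,\alpha}$ to $C^{0,\alpha}$, and composing gives that $\Delta^2_{\omega_\epsilon}:C^{4,\alpha}\to C^{0,\alpha}$ is Fredholm of index $0$.

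The operator $K_\epsilon$ maps $C^{4,\alpha}$ boundedly into $C^{2,\alpha}$, because $Ric_{\omega_\epsilon}$ is smooth for fixed $\epsilon$. By Arzelà–Ascoli, the inclusion $C^{2,\alpha}\hookrightarrow C^{0,\alpha}$ is compact, so $K_\epsilon$ is compact into $C^{0,\alpha}$. The operator $P$ has finite rank: the $q_i\in M\setminus X$ are identified with points of $Bl_XM\setminus E$, and the $f_i\in C^\infty(M)$ are pulled back to $\sigma^*f_i\in C^\infty(Bl_XM)$. A finite-rank bounded operator is compact. Since compact perturbations preserve the Fredholm property and the index, $\widetilde{L}_{\omega_\epsilon}$ is Fredholm of index $0$.

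The only point needing care is the norm-equivalence step, and it is routine. We must check that the overlapping scaled charts in \eqref{holder_bl_f} and \eqref{WeightedBLxM2_norm} are finitely many up to uniformly controlled transitions, which follows from compactness of $X$. We must also check that the scale factors $r^{-\delta}$, $\widetilde{\tau}(r)^{-\delta}$ and $r^{-t}$ lie between two positive constants for $r\in[\epsilon,r_0]$. No uniformity in $\epsilon$ is claimed here; that is the content of the later Proposition \ref{Inverse}.
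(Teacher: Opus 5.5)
Your proposal is correct and takes the same approach as the paper: for fixed $\epsilon$ the weighted norms are equivalent to the ordinary $C^{l,\alpha}(Bl_XM)$ norms, and then the proof of Proposition \ref{Fredholm0} applies verbatim to the smooth metric $\omega_\epsilon$. You also make explicit that $\phi\mapsto\sum_{i}\phi(q_i)f_i$ is a bounded finite-rank, hence compact, perturbation, a point the paper leaves implicit.
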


We recall an important fact with respect to the weighted space in Definition \ref{WHS_Bl0}. It is concerned with the linearized operator $L_{\eta}$ of the Burns-Simanca metric.
\begin{proposition}[{\cite[Proposition 8.9]{Sze2014}}]\label{Iniec_Leta}
    If $\delta<0$, the operator
    \begin{equation}
        L_{\eta}:C^{4,\alpha}_{\delta}(Bl_{0}\C^{k})\to C^{0,\alpha}_{\delta-4}(Bl_{0}\C^{k})
    \end{equation}
    is injective.
\end{proposition}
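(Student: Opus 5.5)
Since $\eta$ is scalar-flat, the linearized operator at $\eta$ is $L_\eta\phi=-\Delta_\eta^2\phi-Ric_\eta\cdot\partial\bar\partial\phi$. The plan is to rewrite it in the Lichnerowicz form $L_\eta\phi=-\mathcal{D}^*\mathcal{D}\phi$, where $\mathcal{D}\phi=\bar\partial(\nabla^{1,0}\phi)$. This identity holds pointwise for any K\"ahler metric of constant scalar curvature. Indeed, $\mathcal{D}^*\mathcal{D}\phi=\Delta^2\phi+Ric\cdot\partial\bar\partial\phi+\nabla S\cdot\nabla\phi$, and the last term vanishes because $S(\eta)=0$.

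Now let $\phi\in C^{4,\alpha}_\delta(Bl_0\mathbb{C}^k)$ with $\delta<0$ satisfy $L_\eta\phi=0$. We integrate $\bar\phi\,\mathcal{D}^*\mathcal{D}\phi$ over the region $\sigma^{-1}(B_R)$ and integrate by parts twice, which gives
\[
0=\int_{\sigma^{-1}(B_R)}|\mathcal{D}\phi|^2_\eta\,\eta^k+\text{(boundary terms on } \partial B_R).
\]
The boundary terms are sums of products of the form $\nabla^a\phi\cdot\nabla^{3-a}\phi$ with $0\le a\le 3$. The metric $\eta$ is asymptotically Euclidean, and the weighted norm gives $|\nabla^j\phi|\le C R^{\delta-j}$. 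Hence each boundary term is bounded by $C R^{2\delta-3}\cdot R^{2k-1}=CR^{2\delta+2k-4}$.

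This bound is the main obstacle. For $\delta$ close to $0$ and $k\ge2$ the exponent $2\delta+2k-4$ need not be negative, so the crude estimate does not show that the boundary terms vanish. To handle this, I would first improve the decay of $\phi$. Outside a compact set the operator $L_\eta$ equals $-\Delta_{euc}^2$ plus terms with decaying coefficients. By Corollary \ref{Iso_Bilap} and the standard indicial-root argument of \cite{LMc1985}, a kernel element of weight $\delta<0$ therefore has an expansion at infinity whose leading terms are harmonic or biharmonic with exponents in $\{4-2k,\,2-2k,\dots\}$. The case $k=2$ also allows logarithmic corrections. In particular $\phi$ decays faster than $\delta$, so we may take $\delta$ as close to $4-2k$ as we like, or below $0$ with logarithmic control when $k=2$.

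With this improved decay, $2\delta+2k-4<0$ in most cases. In the borderline case the boundary term is $O(1)$ but is carried by the $|z|^{4-2k}$ mode. I would treat it separately by computing the boundary term explicitly for the pure harmonic mode; it is a total flux, and it either vanishes or has a sign. Alternatively, one can apply the argument to $\phi$ minus a multiple of a known kernel element.

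Once the boundary terms tend to $0$, we let $R\to\infty$ and conclude $\mathcal{D}\phi=0$. Then $\nabla^{1,0}\phi$ is a holomorphic vector field on $Bl_0\mathbb{C}^k$, and it decays at infinity like $R^{\delta-1}$. Away from the exceptional divisor it pushes forward to a holomorphic vector field on $\mathbb{C}^k\setminus\{0\}$. By Hartogs' theorem ($k\ge2$) this field extends holomorphically to $\mathbb{C}^k$, and by Liouville's theorem it vanishes identically because it tends to $0$ at infinity. Therefore $\nabla\phi=0$, so $\phi$ is constant. Since $\phi\to0$ at infinity, $\phi\equiv0$, which proves injectivity.
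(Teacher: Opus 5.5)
Your argument is correct and is the standard proof of the cited result. The paper gives no proof of its own and only quotes \cite[Proposition 8.9]{Sze2014}, whose argument is exactly yours: write $L_\eta=-\mathcal{D}^*\mathcal{D}$, improve the decay, integrate by parts, and kill the holomorphic field $\nabla^{1,0}\phi$ by Hartogs and Liouville.

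The only thing to fix is your ``borderline case'' paragraph, which is vacuous and can be deleted. For $k=2$ the crude boundary bound is already $O(R^{2\delta})\to 0$, so your claim that the exponent ``need not be negative'' holds only for $k\ge 3$. For $k\ge 3$, improving the decay to any weight $4-2k+\epsilon$ gives boundary terms of order $O(R^{4-2k+2\epsilon})\to 0$.
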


\subsection{Bounds on the inverse operator}

We begin with a uniform Schauder estimate on $\ti{L}_{\omega_{\epsilon}}$ with respect to the weighted norms. Such type of estimates are established in various weighted spaces. For example, we refer to \cite[Lemma 4.6]{Brown} for a closely related estimate and \cite[Theorem 1.10]{Bartnik} for more general results. The techniques used in the proof are standard, and we include the details in Appendix \ref{PS} for the reader’s convenience.

\begin{proposition}[Based on {\cite[Lemma 4.6]{Brown}}]\label{Schauder}
    For any $\delta<0$ there exists a constant $C>0$, independent of sufficiently small $\epsilon>0$, such that the operator $\ti{L}_{\omega_{\epsilon}}$ has the following uniform Schauder estimate:
    \begin{align}
        &\|f\|_{C^{4,\alpha}_{\delta}(Bl_{X}M,\omega_{\epsilon})}\leq C\Big( \|f\|_{C^{0}_{\delta}(Bl_{X}M,\omega_{\epsilon})}+\|\ti{L}_{\omega_{\epsilon}}f\|_{C^{0,\alpha}_{\delta-4}(Bl_{X}M,\omega_{\epsilon})} \Big), \ \text{when }k\geq 3;\\
        &\|f\|_{\ti{C}^{4,\alpha}_{0,\delta}(Bl_{X}M,\omega_{\epsilon})}\leq C\Big( \|f\|_{\ti{C}^{0}_{\delta}(Bl_{X}M,\omega_{\epsilon})}+\|\ti{L}_{\omega_{\epsilon}}f\|_{\ti{C}^{0,\alpha}_{-4,\delta}(Bl_{X}M,\omega_{\epsilon})} \Big), \ \text{when  } k=2.
    \end{align}
\end{proposition}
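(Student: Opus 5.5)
The plan is to reduce the weighted estimate to a family of standard interior Schauder estimates on the regions that enter the definition of the weighted norms, \eqref{holder_bl_f} and \eqref{WeightedBLxM2_norm}, after rescaling each region to unit size. We then check that, after rescaling, the operator is uniformly elliptic with uniformly bounded coefficients. Since the weighted norm is a supremum over regions, the global estimate follows by taking the supremum of the local estimates.

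\textbf{Region 1: away from $E$.} On the compact part $M\setminus\{d<r_0\}$ we have $\omega_\epsilon=\omega$ once $\epsilon$ is small, so $L_{\omega_\epsilon}=L_\omega$ there. We apply the interior Schauder estimate for the fourth order operator $L_\omega$ on slightly larger sets. The finite-rank term $\sum_i\phi(q_i)f_i$ is controlled by $C\|f\|_{C^0}$, because $q_i$ is fixed in $M\setminus X$ and the $f_i$ are smooth. This gives
\[
\|f\|_{C^{4,\alpha}}\le C\bigl(\|f\|_{C^0}+\|\tilde L_{\omega_\epsilon}f\|_{C^{0,\alpha}}\bigr)
\]
up to enlarging the domain slightly. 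The enlargement is harmless, since overlapping regions are controlled by the neighbouring scales.

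\textbf{Region 2: the annuli $\epsilon\le r\le r_0$.} For these we set $f_r(Z,W)=f(rZ,rW)$ on $(B_3\setminus B_{1/2})\times B_3$. Under this dilation $\Delta^2_{\omega_\epsilon}$ becomes $r^{-4}\Delta^2_{\hat g_r}$, where $\hat g_r$ has components $g_{\omega_\epsilon,i\bar j}(rZ,rW)$. Likewise $Ric\cdot\partial\bar\partial$ becomes $r^{-2}Ric\cdot r^{-2}\partial\bar\partial$. By Proposition \ref{metric_Uni}, the rescaled metric and its inverse are bounded in $C^{2,\alpha}$, which gives uniform ellipticity, and $r^{2}Ric(\omega_\epsilon)(rZ,rW)$ is bounded in $C^{0,\alpha}$. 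Therefore $r^4L_{\omega_\epsilon}$ pulls back to an operator with uniformly controlled $C^{0,\alpha}$ coefficients, and
\[
\|f_r\|_{C^{4,\alpha}}\le C\bigl(\|f_r\|_{C^0}+r^4\|(L_{\omega_\epsilon}f)_r\|_{C^{0,\alpha}}\bigr).
\]
The finite-rank term is zero here provided $q_i\notin\{d<r_0\}$, which we may arrange by choosing $r_0$ small. We then multiply by $r^{-\delta}$ for $k\ge3$, or by $\tilde\tau(r)^{-\delta}$ for $k=2$ with $t=0$ on the left and $t=-4$ on the right. Because the weight is comparable on adjacent scales (in particular $\tilde\tau(2r)\le2\tilde\tau(r)$), the right-hand side is bounded by the weighted $C^0_\delta$ norm of $f$ and the weighted $C^{0,\alpha}_{\delta-4}$ norm of $\tilde L f$. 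This is exactly the required weighted estimate on that region.

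\textbf{Region 3: near $E$.} We use the charts $(\epsilon Z_{(j)},\epsilon W)$ on $\tilde B^k_{(j),2}\times B_2$. By \eqref{Compare3}--\eqref{Compare3.5} and \eqref{BSCompare}, together with Proposition \ref{metric_Uni}, the rescaled metric $\epsilon^{-2}$-pullback is uniformly $C^{2,\alpha}$-close to a fixed compact piece of $\eta+\omega_{euc}$. The same rescaling and the same local Schauder estimate therefore apply, with constants independent of $\epsilon$.

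\textbf{Main obstacle.} The only delicate point is the uniformity of the rescaled coefficients in $\epsilon$, especially in the transition region of the cutoff. There the derivatives of $d$ have the limited regularity recorded in \eqref{diff_d_z}, and for $k=2$ there is an extra $\log\frac1\epsilon$ factor. Both are absorbed by Proposition \ref{metric_Uni}, which states the needed $C^{2,\alpha}_0$ and $C^{0,\alpha}_2$ bounds uniformly in $\epsilon$; this is what makes the local Schauder constants uniform. A secondary point is verifying that $\tilde\tau$ does not distort comparisons between overlapping annuli. This follows from its piecewise linear form, which is comparable within factor $2$ across dyadic scales.
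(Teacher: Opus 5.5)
Your argument is correct, but it takes a different route from the paper's proof.

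\textbf{How the paper argues.} The paper never applies a Schauder estimate to $L_{\omega_\epsilon}$ itself. It uses fixed model operators:
\begin{itemize}
\item on the rescaled charts near $E$ it applies the interior estimate to $L_{\omega_\eta}$;
\item on the rescaled annuli it applies it to $\Delta^2_{\mathrm{euc}}$;
\item on $\{d\ge r_0/2\}$ it applies it to $L_\omega$.
\end{itemize}
It then bounds $(L_{\omega_{\eta,\epsilon}}-L_{\omega_\epsilon})f$ and $(\Delta^2_{\mathrm{euc}}-L_{\omega_\epsilon})f$ using \eqref{BSCompare} and \eqref{EucCompare}. These error terms are absorbed into the left-hand side, which requires $\epsilon$ and $r_0$ small. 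Only at the very end does it pass from $L_{\omega_\epsilon}$ to $\tilde L_{\omega_\epsilon}$.

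\textbf{How you argue, and what each route buys.} You apply the variable-coefficient (Agmon--Douglis--Nirenberg) interior estimate directly to the rescaled operator $r^4L_{\omega_\epsilon}$. Its constant depends only on the ellipticity constant and on $C^{0,\alpha}$ bounds for the coefficients, and Proposition \ref{metric_Uni} supplies both. The full $C^{2,\alpha}_0$ bound on $g^{-1}_{\omega_\epsilon}$ is genuinely needed here, since $\Delta^2_{\omega_\epsilon}$ contains the coefficient $\partial\bar\partial g^{-1}$.

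Your route needs no absorption step and no closeness of $\omega_\epsilon$ to any model. This is a real advantage on annuli with $r$ comparable to $\epsilon$. There the rescaled $\omega_\epsilon$ is close to $\eta+\omega_{\mathrm{euc}}$, not to the flat metric, so the error against $\Delta^2_{\mathrm{euc}}$ is of order $(\epsilon/r)^{2k-2}$, which is not small. A perturbative argument must therefore compare with the rescaled Burns--Simanca model on those annuli as well. The paper's route, in exchange, uses Schauder estimates only for fixed operators, and its comparison estimates are needed again in Proposition \ref{Inverse}.

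\textbf{One correction.} In your Region 2 the finite-rank term is not zero. The function $\sum_i f(q_i)f_i$ is a global smooth function: the $f_i$ span $\ker L_\omega^{*}$ and are not localized near the $q_i$. The position of $q_i$ only affects the scalar $f(q_i)$. You must bound this term exactly as in your Region 1, and the same applies in Region 3:
\begin{itemize}
\item $|f(q_i)|\le \tau(q_i)^{\delta}\|f\|_{C^0_\delta}\le C\|f\|_{C^0_\delta}$, since $d(q_i)$ is bounded below independently of $\epsilon$;
\item $\|f_i\|_{C^{0,\alpha}_{\delta-4}}\le C\|f_i\|_{C^{0,\alpha}(M)}$, because $\delta<0$ and $r\le r_0$ (for $k=2$, use $\tilde\tau(r)^{-\delta}r^4\le 1$).
\end{itemize}
This only adds $C\|f\|_{C^0_\delta}$ to the right-hand side, so the conclusion is unaffected.
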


The main result of this subsection is Proposition \ref{Inverse}, where we prove a uniform upper bound of the operator norm of the inverse operator $\ti{L}_{\omega_{\epsilon}}^{-1}$. Such a result is crucial to the application of the contraction mapping theorem. We use the scaling method similar to the proof of \cite[Theorem 8.14]{Sze2014}. This approach was used in \cite{BR2015} to study the smoothing of singular cscK metrics. We follow the proofs of \cite[Theorem 8.14]{Sze2014} and \cite[Proposition 4.8]{Brown} in the case of blow-ups at a point. In the case of blow-ups along submanifolds of codimension $k\geq 3$, a similar result was established in \cite[Proposition 9]{SS2020}. However, unlike the proof of \cite[Proposition 9]{SS2020}, our argument reduces to a purely local analysis, allowing us to avoid the use of a Liouville theorem for product spaces; see \cite[Lemma 11]{SS2020}. In the case $k=2$, those modified weighted norms introduced in Definition \ref{WeightedBLxM2} play a crucial role in the proof. 

\begin{proposition}\label{Inverse}
Fix $\delta\in(-1,0)$, consider the following operators:
\begin{align}
    &\tilde{L}_{\omega_{\epsilon}}: C^{4,\alpha}_{\delta}(Bl_{X}M,\omega_{\epsilon})\longrightarrow C^{0,\alpha}_{\delta-4}(Bl_{X}M,\omega_{\epsilon}), \ \text{when } k\geq 3;
    \\
    &\tilde{L}_{\omega_{\epsilon}}: \ti{C}^{4,\alpha}_{0,\delta}(Bl_{X}M,\omega_{\epsilon})\longrightarrow \ti{C}^{0,\alpha}_{-4,\delta}(Bl_{X}M,\omega_{\epsilon}), \ \text{when } k=2;
\end{align}
There exist $\epsilon_{0}>0$ and $K>0$ independent of any $\epsilon \in (0, \epsilon_{0}]$ such that when $k\geq 3$, for any $\phi\in C^{4,\alpha}_{\delta}(Bl_{X}M, \omega_{\epsilon})$ we have
    \begin{equation}
        \|\phi\|_{C^{4,\alpha}_{\delta}(Bl_{X}M,\omega_{\epsilon})}\leq K \|\ti{L}_{\omega_{\epsilon}}\phi \|_{C^{0,\alpha}_{\delta-4}(Bl_{X}M,\omega_{\epsilon})}.
    \end{equation}
And when $k=2$, for any $\phi\in \ti{C}^{4,\alpha}_{0,\delta}(Bl_{X}M,\omega_{\epsilon})$,
    \begin{equation}
        \|\phi\|_{\ti{C}^{4,\alpha}_{0,\delta}(Bl_{X}M,\omega_{\epsilon})}\leq K \|\ti{L}_{\omega_{\epsilon}}\phi \|_{\ti{C}^{0,\alpha}_{-4,\delta}(Bl_{X}M,\omega_{\epsilon})}.
    \end{equation}
\end{proposition}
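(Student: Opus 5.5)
We argue by contradiction, following the scaling scheme of \cite[Theorem 8.14]{Sze2014} and \cite[Proposition 4.8]{Brown}. If the estimate fails, there are $\epsilon_i\to 0$ and $\phi_i$ with $\|\phi_i\|=1$ in $C^{4,\alpha}_{\delta}(Bl_XM,\omega_{\epsilon_i})$, or in $\ti{C}^{4,\alpha}_{0,\delta}$ when $k=2$, while $\|\ti{L}_{\omega_{\epsilon_i}}\phi_i\|\to 0$ in the target norm. By Proposition \ref{Schauder}, the weighted $C^0$ norm of $\phi_i$ stays bounded below by some $c_0>0$. So there are points $x_i\in Bl_XM$ with $\tau_{\epsilon_i}(x_i)^{-\delta}|\phi_i(x_i)|\geq c_0/2$, or with the weight $\ti{\tau}_{\epsilon_i}$ when $k=2$. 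The plan is to pass to a limit in three regimes, according to the behaviour of $d(x_i)$, and derive a contradiction in each.

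\emph{Regime (a): $d(x_i)\geq c>0$ along a subsequence.} The uniform $C^{4,\alpha}_{loc}$ bounds on compact subsets of $M\setminus X$, together with Lemmas \ref{MC3} and \ref{MC2}, show that $\omega_{\epsilon_i}\to\omega$ smoothly on such sets. Hence $\phi_i\to\phi_\infty$ in $C^{4}_{loc}(M\setminus X)$, with $\ti L_\omega\phi_\infty=0$ on $M\setminus X$, $|\phi_\infty|\leq C d^{\delta}$ for $k\geq3$, and $\phi_\infty\not\equiv0$. The points $q_i$ lie away from $X$, so the finite-rank term passes to the limit.

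The key step is a removable singularity argument across $X$. Since $\delta>-1>4-2k$ and $X$ has real codimension $2k\geq 4$, a solution of the fourth-order elliptic equation $L_\omega\phi_\infty=0$ with growth $O(d^{\delta})$ extends to a $C^{4,\alpha}$ solution on $M$. I would prove this with a cutoff $\chi_\rho$ supported near $X$ and a weak formulation. The boundary terms are of size $\rho^{\delta-j}\rho^{j-4}\rho^{2k}\to0$ for $k\geq3$. For $k=2$ this gives $\rho^{\delta}$, which still tends to $0$ but only barely, since $\delta<0$; there I would use instead that the $\ti C$ norm gives boundedness, because $\ti\tau\leq 1$ and $\ti\tau\to1$ on fixed sets. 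Proposition \ref{Iso_L} then forces $\phi_\infty=0$, which is a contradiction.

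\emph{Regime (b): $d(x_i)\leq C\epsilon_i$.} Rescale by $\epsilon_i$ around $\sigma(x_i)$ in the blow-up charts. By \eqref{Compare3.5} and \eqref{BSCompare}, the rescaled metrics $\epsilon_i^{-2}\omega_{\epsilon_i}$ converge on compact sets to $\eta+\omega_{euc}$ on $Bl_0\C^k\times\C^{m-k}$. The rescaled $\epsilon_i^{-\delta}\phi_i$ (with the extra factor $[\ti\tau(\epsilon)]^{-\delta}$ when $k=2$) converges to a nonzero $\Phi$ with $L_{\eta+\omega_{euc}}\Phi=0$ and $|\Phi|\leq C(1+|Z|)^{\delta}$. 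Here the finite-rank term disappears after scaling, since it is multiplied by $\epsilon_i^{4-\delta}$.

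To avoid the Liouville theorem on the product space, I would choose the scaling so that the $w$-directions are handled by a local argument. The weighted norm controls $\Phi$ uniformly in $W$, and the bounds on derivatives give $|\partial_W\Phi|\leq C(1+|Z|)^{\delta-1}$, obtained by rescaling balls of size comparable to $|Z|$ in $W$. From this I would show that $\Phi$ is independent of $W$, arguing by a difference quotient in $W$ and then iterating. If that works, $\Phi$ reduces to a function on $Bl_0\C^k$ in $C^{4,\alpha}_\delta$ with $L_\eta\Phi=0$, which vanishes by Proposition \ref{Iniec_Leta}.

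\emph{Regime (c): $\epsilon_i\ll d(x_i)\to 0$.} Rescale by $r_i=d(x_i)$. Lemma \ref{MC3} (or \eqref{EucCompare}) shows that the metrics converge to the flat metric on $(\C^k\setminus\{0\})\times\C^{m-k}$. The limit $\Phi$ satisfies $\Delta^2\Phi=0$, has $|\Phi|\leq C|z|^{\delta}$, and $|\Phi|\geq c$ at a point with $|z|=1$. Again I would reduce to $W$-independence. Then Corollary \ref{Iso_Bilap} applies, since $\delta\in(-1,0)$ is not an excluded weight: for $k\geq3$ the interval $(4-2k,0)$ contains $(-1,0)$, and for $k=2$ the weight $\delta$ is non-integral. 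This gives $\Phi=0$.

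For $k=2$ the factor $\log\frac1{\epsilon}$ in $\ti\tau$ must be tracked. In the intermediate region, $\ti\tau^{-\delta}r^{-t}$ differs from $r^{-\delta}$ only by the constant $(\log\frac1\epsilon)^{-\delta}$, which is absorbed into the normalization. The transition at $r=[\log\frac1\epsilon]^{-1}$ is handled in regime (a), because of the $O(\epsilon^2\log\frac1\epsilon\, r^{-2})$ error in \eqref{EucCompare}.

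I expect the main obstacle to be the passage from product-space limits to $W$-independent limits in regimes (b) and (c), that is, the "appropriate choice of scaling parameters". My first attempt would be to scale $W$ by the same factor and take a supremum over $p\in X$. This makes the limit invariant under translations in $W$ up to uniform bounds, and $\partial_W\Phi$ then lies in a weight space with $\delta-1$ as well as in one with $\delta$, which should force it to vanish by injectivity.
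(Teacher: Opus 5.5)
Your outline has the same architecture as the paper: contradiction, the lower bound from Proposition \ref{Schauder}, a maximizing point, three regimes, removable singularities across $X$, and blow-up limits eliminated by Proposition \ref{Iniec_Leta} and Corollary \ref{Iso_Bilap}. The gap is exactly the step you flag.

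With isotropic rescaling, your limits are $W$-dependent solutions on $Bl_0\mathbb{C}^k\times\mathbb{C}^{m-k}$ or on $(\mathbb{C}^k\setminus\{0\})\times\mathbb{C}^{m-k}$. None of your suggestions makes them $W$-independent:
\begin{itemize}
\item $\partial_W\Phi$ is again a $W$-dependent solution of the same equation.
\item The two injectivity results concern functions on the $\mathbb{C}^k$ factor only.
\item Knowing both weights $\delta$ and $\delta-1$ in $|Z|$ gives no decay in $W$. Translation invariance belongs to the equation, not to $\Phi$.
\end{itemize}
What is missing is a Liouville theorem on the product, as in \cite[Lemma 11]{SS2020}. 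One route is a Fourier transform in $W$, ruling out tempered solutions at frequencies $\xi\neq0$.

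In regime (c) with $k\geq3$ you can bypass this. Since $2k-4+\delta>0$, the bound $|\Phi|\leq C|Z|^{\delta}$ lets you remove $\{0\}\times\mathbb{C}^{m-k}$ exactly as in regime (a). Interior $L^1$ estimates then make $\Phi$ a bounded biharmonic function on $\mathbb{C}^m$, hence constant, hence $0$ by its decay in $|Z|$. Regime (b) genuinely needs the product result.

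The paper instead rescales anisotropically, by $(\epsilon_iZ,\epsilon_i^2W)$ and $(|z_i|Z,|z_i|^2W)$. Then $W$-derivatives of the rescaled functions are $O(\epsilon_i^{|J|})$ by \eqref{S2E0}, and the limit is automatically $W$-independent. Do not treat this as a free fix. In these variables, and for the product model, $\epsilon_i^{4-\delta}\Lambda_i^{*}(L_{\omega_{\eta,\epsilon_i}}\phi_i)$ equals $L_\eta\Psi_i-2\epsilon_i^{-2}\Delta_\eta\Delta_W\Psi_i-\epsilon_i^{-4}\Delta_W^{2}\Psi_i$. The last two terms are $O(1)$, not $o(1)$. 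So $L_\eta\Psi_\infty=0$ follows only if the $w$-derivatives of $\phi_i$ are small at the natural scale $\epsilon_i$, which is precisely the $W$-independence of your isotropic limit.

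For $k=2$ there are further problems.
\begin{itemize}
\item In regime (a), $\rho^{\delta}\to\infty$ as $\rho\to0$, not $0$. Boundedness alone still leaves an $O(1)$ cutoff error, because $X$ has real codimension exactly $4$. You need a genuine removable-singularity theorem for bounded solutions; the paper invokes Harvey--Polking.
\item More seriously, points with $[\log\frac{1}{\epsilon_i}]^{-1}\leq d(x_i)\to0$ do not lie in regime (a), which requires $d(x_i)\geq c>0$. There $\widetilde{\tau}\equiv1$, so the rescaled limits are merely bounded, and a nonzero constant is an admissible limit.
\end{itemize}

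The second point is not a technicality. Take $\rho=[\log\frac{1}{\epsilon}]^{-1/2}$ and $R>0$ small enough that the following function vanishes near the $q_j$. Let $u$ equal $1$ on $\{d\leq\rho\}$ and $\log(R/d)/\log(R/\rho)$ on $\{2\rho\leq d\leq R\}$, interpolated smoothly and cut off near $d=R$.
\begin{itemize}
\item Since $\widetilde{\tau}(\rho)=1$, the norm $\|u\|_{\widetilde{C}^{4,\alpha}_{0,\delta}}$ is bounded below by a positive constant.
\item $L_{\omega_\epsilon}u=0$ on $\{d\leq\rho\}$, which contains $\{d<2\epsilon^{\theta}\}$, the only region where $\omega_\epsilon\neq\omega$.
\item Elsewhere $|L_\omega u|\leq Cd^{-4}/\log\frac{1}{\rho}$, because $L_\omega\log d^{2}=O(d^{-3})$.
\item $u(q_j)=0$ for all $j$.
\end{itemize}
Together these give $\|\widetilde{L}_{\omega_\epsilon}u\|_{\widetilde{C}^{0,\alpha}_{-4,\delta}}\leq C/\log\frac{1}{\rho}\to0$. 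So the $k=2$ inequality fails with these norms; the paper's bound \eqref{D_psi_euc_2} also breaks down in this window. Only the $k\geq3$ half of the statement is within reach, and for it you still need the Liouville input above.
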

\begin{proof}[Proof of Proposition \ref{Inverse}]
    Assume that there exists $\epsilon_{i} \rightarrow 0$ as $i \rightarrow \infty$ such that
    \begin{align}
        &\|\phi_{i}\|_{C^{4,\alpha}_{\delta}(Bl_{X}M,\omega_{\epsilon_{i}})}=1, \quad \|\ti{L}_{\omega_{\epsilon_{i}}}\phi_{i}\|_{C^{0,\alpha}_{\delta-4}(Bl_{X}M,\omega_{\epsilon_{i}})}<\frac{1}{i}, \ \ k \geq 3;\label{contar_assume}
        \\
        &\|\phi_{i}\|_{\ti{C}^{4,\alpha}_{0,\delta}(Bl_{X}M,\omega_{\epsilon_{i}})}=1, \quad \|\ti{L}_{\omega_{\epsilon_{i}}}\phi_{i}\|_{\ti{C}^{0,\alpha}_{-4,\delta}(Bl_{X}M,\omega_{\epsilon_{i}})}<\frac{1}{i},\ \ \ \ k=2. \label{contra_ass_2}
    \end{align}

\vskip 0.2cm

\textbf{Step 1.} Point picking from a normalization sequence in the case $k\geq 3$.

\vskip 0.2cm

    Note that $\|\phi_{i}\|_{C^{4,\alpha}_{\delta}(Bl_{X}M,\omega_{\epsilon_{i}})}=1$ implies that on every compact subset $K\subset M\setminus X$, $\|\phi_{i}\|_{C^{4,\alpha}(K)}$ has a uniform upper bound. Then, according to Arzela-Ascoli Theorem (\cite[Theorem 2.7]{Sze2014}), we have, by replacing $\alpha$ with a smaller number,
    \begin{equation}\label{subconverge}
        \phi_{i}\xrightarrow{C^{4,\alpha}_{loc}(M\setminus X)}\phi_{\infty}, \ (i\to\infty).
    \end{equation}
    The convergence gives an asymptotic behavior of $\phi_{\infty}$. For any fixed $r \leq r_{0}$, when $i$ is sufficiently large, there exists a constant $C$ independent of $\epsilon$, such that
    \begin{equation}\label{phi_infty_asy}
    \begin{split}
         \|\phi_{\infty}\|_{C^{0}(\{r\leq d\leq 2r\})}
         \leq& 
         \|\phi_{\infty}-\phi_{i}\|_{C^{0}(\{r\leq d\leq 2r\})}+\|\phi_{i}\|_{C^{0}(\{r\leq d\leq 2r\})}
         \\
         \leq&
         \|\phi_{\infty}-\phi_{i}\|_{C^{0}(\{r\leq d\leq 2r\})}
         +C\|\phi_{i}\|_{C^{4,\alpha}_{\delta}(Bl_{X}M,\omega_{\epsilon_{i}})}r^{\delta}
         \leq
         2Cr^{\delta}.
    \end{split}
    \end{equation}
    On the other hand, when $i$ is large enough, we have $\omega_{\epsilon_{i}}|_{K}=\omega|_{K}$. Then, the second inequality in \eqref{contar_assume} implies that $\ti{L}_{\omega}\phi_{\infty}=0$ in $M\setminus X$. According to Bochner's results \cite{Bochner}, $\phi_{\infty}$ extends to a solution (in sense of distributions) to $\ti{L}_{\omega}\phi_{\infty}=0$ on $M$ if
    \begin{equation}\label{ext_cri}
        \lim_{a \rightarrow 0+} \frac{1}{a^{4}} \int_{V_{a}} |\phi_{\infty}| d\operatorname{Vol}\,(\omega_{\epsilon})=0,\ \ \text{where}\ V_a=\{ q \in M,\ |\ d(q)=d(q, w)<a\ \text{and}\ w \in V \}
    \end{equation} holds for any sufficiently small closed subset $V \subset X$. Now that 
    \eqref{ext_cri} follows from \eqref{phi_infty_asy} and $k \geq 3$. Applying the elliptic regularity theory on $\ti{L}_{\omega}\phi_{\infty}=0$ and Proposition \ref{Iso_L}, we have $\phi_{\infty} \equiv 0$ on $M$. It follows from \eqref{subconverge} that
    \begin{equation}\label{Condition3}
        \phi_{i}\xrightarrow{C^{4,\alpha}_{loc}(M\setminus X)}0, \ (i\to\infty).
    \end{equation}

    By the Schauder estimate in Proposition \ref{Schauder}, we have a constant $C>0$ independent of $\epsilon$ such that 
    \begin{equation}
        1=\|\phi_{i}\|_{C^{4,\alpha}_{\delta}(Bl_{X}M,\omega_{\epsilon_{i}})}\leq C\Big(\|\phi_{i}\|_{C^{0}_{\delta}(Bl_{X}M,\omega_{\epsilon_{i}})}+\|\ti{L}_{\omega_{\epsilon_{i}}}\phi_{i}\|_{C^{0,\alpha}_{\delta-4}(Bl_{X}M,\omega_{\epsilon_{i}})} \Big).
    \end{equation}
    Hence, $\|\phi_{i}\|_{C^{0}_{\delta}(Bl_{X}M,\omega_{\epsilon_{i}})}$ have a uniform positive lower bound. Then, by letting
    \begin{equation}
        \psi_{i}=\frac{\phi_{i}}{\|\phi_{i}\|_{C^{0}_{\delta}(Bl_{X}M,\omega_{\epsilon_{i}})}},
    \end{equation}
    we have the following properties:
    \begin{align}
        &\|\psi_{i}\|_{C^{0}_{\delta}(Bl_{X}M,\omega_{\epsilon_{i}})}=1, \quad \|\psi_{i}\|_{C^{4,\alpha}_{\delta}(Bl_{X}M,\omega_{\epsilon_{i}})}\leq C; \label{Condition1} \\
        &\|\ti{L}_{\omega_{\epsilon_{i}}}\psi_{i}\|_{C^{0,\alpha}_{\delta-4}(Bl_{X}M,\omega_{\epsilon_{i}})}\rightarrow 0, \ (i\rightarrow \infty); \label{Condition2} 
    \end{align}
    For each $i$, we recall the weight function defined in (\ref{WeightFunc3})
    \begin{equation}
    \tau_{i}(q)=
    \begin{cases}
        r_0, \ d(q)\geq r_{0};\\
        d(q), \ \epsilon_{i}<d(q)<r_{0};\\
        \epsilon_{i}, \ d(q)\leq\epsilon_{i}.
    \end{cases}
    \end{equation}
    Then by \eqref{holder_bl_f_0}, \eqref{Condition3}, and \eqref{Condition1}, there exists a sequence $\{x_{i}\}_{i=1}^{\infty} \subset Bl_{X}M$ such that
    \begin{align}
        &\tau_{i}^{-\delta}(x_{i})\psi_{i}(x_{i})=1, \ \ \forall\ i, \label{MaxPoint} \\
        &d(x_{i})\rightarrow 0, \ (i\rightarrow\infty). 
    \end{align}

\vskip 0.2cm

    \textbf{Step 1.5.}  Point picking from a normalization sequence in the case $k=2$.

\vskip 0.2cm

    For any fixed $r>0$, when $i$ is large enough we may assume that $[\log\frac{1}{\epsilon_{i}}]^{-1}\leq r$, and hence, $\ti\tau(r)=1$. According to the definition of the weight function in Definition \ref{WeightedBLxM2}, we have an estimate of the usual H\"{o}lder norm,
    \begin{align}
        &
        \|\phi_{i}\|_{C^{4,\alpha}(\{r\leq d \leq 2r\})}
        \leq 
        \|\phi_{i}\|_{\ti{C}^{4,\alpha}_{0,\delta}(Bl_{X}M)}r^{-4-\alpha}=r^{-4-\alpha}, \ \text{if } r\leq r_{0};
        \\
        &
        \|\phi_{i}\|_{C^{4,\alpha}(\{r\leq d\leq 2r\})}
        \leq
        \|\phi_{i}\|_{\ti{C}^{4,\alpha}_{0,\delta}(Bl_{X}M)}=1, \ \text{if } r> r_{0}.
    \end{align}
    As in \eqref{subconverge}, by the Arzela--Ascoli Theorem and a diagonal sequence argument, $\phi_{i}$ subconverges to $\phi_{\infty} \in C_{loc}^{4,\alpha}$ uniformly in every compact subset of $M\setminus X$, with a smaller $\alpha$. Moreover, after taking $i \rightarrow \infty$, we get
    \begin{equation}
        |\phi_{\infty}(q)|\leq C\ti{\tau}_{i}(q)+|(\phi_{i}-\phi_{\infty})(q)| \leq C, \ \forall \ q\ \in M\setminus X.
    \end{equation}
    Hence $\phi_{\infty}$ is bounded in $M\setminus X$. When $i$ is sufficiently large, 
    \(
    \omega_{\epsilon_{i}}=\omega
    \) in $\{p\in M\ |\ r \leq d \leq 2r\}$. By \eqref{contra_ass_2}, we have $\ti{L}_{\omega}\phi_{\infty}=0$ in $M\setminus X$. Since $\phi_{\infty}$ is bounded, according to Harvey--Polking's theorem (\cite[Theorem 6.4]{HP1970}), we know that $\ti{L}_{\omega}\phi_{\infty}=0$ actually holds on $M$. It follows from Proposition \ref{Iso_L} that $\phi_{\infty}\equiv 0$, i.e. $\phi_{i}$ converges to 0 uniformly in every compact subset of $M\setminus X$.
    
    The rest of the argument is analogous to that in Step 1. We consider
    \begin{equation}
        \ti{\psi}_{i}=\frac{\phi_{i}}{\|\phi_{i}\|_{\ti{C}^{0}_{\delta}(Bl_{X}M, \omega_{\epsilon_{i}})}}.
    \end{equation}
    Then we obtain the following
    \begin{align}
        &\|\ti{\tau}_{i}^{-\delta}\ti{\psi}_{i}\|_{C^{0}(Bl_{X}M,\omega_{\epsilon_{i}})}=1, \quad \|\ti{\psi}_{i}\|_{\ti{C}^{4,\alpha}_{0,\delta}(Bl_{X}M,\omega_{\epsilon_{i}})}\leq C; \label{Condition4} \\&\|\ti{L}_{\omega_{\epsilon_{i}}}\ti{\psi}_{i}\|_{\ti{C}^{0,\alpha}_{-4,\delta}(Bl_{X}M,\omega_{\epsilon_{i}})}\rightarrow 0, \ (i\rightarrow \infty). \label{Condition5} 
    \end{align}
    Suppose that $x_{i}$ is the maximum point of $\ti{\tau}_{i}^{-\delta}\psi_{i}$ for each $i$. Then,
    \begin{align}
        &\ti{\tau}_{i}^{-\delta}(x_{i})\psi_{i}(x_{i})=1, \ \forall i, \label{MaxPoint2} \\
        &d(x_{i})\rightarrow 0, \ (i\rightarrow\infty). 
    \end{align}
    
    According to the distance of $\{x_{i}\}$ to the exceptional divisor, we discuss two cases in Steps 2 and 3. For simplicity, we write
$\|\psi_{i}\|_{C^{4,\alpha}_{\delta}(Bl_{X}M)}$
and
$\|\ti{\psi}_{i}\|_{\ti{C}^{4,\alpha}_{0,\delta}(Bl_{X}M)}$
for
$\|\psi_{i}\|_{C^{4,\alpha}_{\delta}(Bl_{X}M,\omega_{\epsilon_{i}})}$
and
$\|\ti{\psi}_{i}\|_{\ti{C}^{4,\alpha}_{0,\delta}(Bl_{X}M,\omega_{\epsilon_{i}})}$,
respectively.

\vskip 0.2cm

    \textbf{Step 2}: There exists some $R>0$ such that $\frac{d(x_{i})}{\epsilon_{i}}<R, \ \forall i>>1$.

\vskip 0.2cm

    The assumption implies that the sequence of points $x_i$ accumulates near the exceptional divisor. We would cut-off $\psi_{i}$ and $\ti{\psi}_{i}$ around each $x_{i}$ and pull them back to the model space $(Bl_{0}\C^{k}\times \C^{m-k},\omega_{\eta})$. Using Proposition \ref{Iniec_Leta}, a scaling analysis will lead to a contradiction.

    For each $x_{i}$, there exists a coordinate neighborhood $U_{i} \subset Bl_{X} M$ such that $\sigma(U_i) \subset M$ is constructed from Lemma \ref{holosubcoor}. Moreover, we may assume that
    \begin{equation}\label{Maxpoint}
    x_{i}=(z_{(l_{i}),i},0)\in \ti{B}_{(l_{i}),r_{0}}^{k}\times B_{r_{0}}^{m-k}\subset \ti{B}_{2r_{0}}^{k}\times B_{2r_{0}}^{m-k}\subset U_{i}.
    \end{equation}
    Note that since $d(x_{i})\to 0$, we can choose constants $r_{0}$ to be uniform for $i$. Then, from $\frac{\tau_{i}(x_{i})}{\epsilon_{i}}<R$, we know that
    \begin{equation}
        |\sigma(z_{(l_{i}),i},0)|=|z_{(l_{i}),i}^{l_{i}}|(1+\sum_{j\neq i}|z_{(l_{i}),i}^{j}|^{2})^{\frac{1}{2}}<R\epsilon_{i}, \quad \text{if}   \ |z_{(l_{i}),i}^{l_{i}}|\neq 0.
    \end{equation}

    Take cut-off functions $\mu\in C^{\infty}_{c}(\C^{k})$ and $\nu\in C^{\infty}_{c}(\C^{m-k})$ such that $0\leq\mu,\nu\leq 1$, and
    \begin{align}
        &\mu(z)\equiv 1, \ z\in \ol{B_{1}^{k}}; \ \mu(z)\equiv 0, \ z\in \mC^{k}\setminus B_{2}^{k}; \label{Def_Cutoff} \\
        &\nu(w)\equiv 1, \ w\in \ol{B_{1}^{m-k}}; \ \nu(w)\equiv 0, \ w\in \mC^{m-k}\setminus B_{2}^{m-k}; \label{cutoff_nu}
    \end{align}

     Let $\chi(z,w)=\mu(\frac{z}{r_{0}})\nu(\frac{w}{r_{0}})$. Note that $\sigma^{\ast}\chi$ extends naturally to the exceptional divisor in any coordinate charts, then, we may consider $(\sigma^{\ast}\chi)\psi_{i}|_{U_{i}}$. We introduce a scaling function
     \begin{equation}
         \Lambda_{i}(Z,W)=(\epsilon_{i}Z,\epsilon_{i}^{2}W).
     \end{equation}
     and take coordinates $(Z_{(j)},W)\in \ti{B}_{(j),\frac{2r_{0}}{\epsilon_{i}}}^{k}\times B_{\frac{2r_{0}}{\epsilon_{i}^{2}}}^{m-k}$ with $\sigma(Z_{(j)},W)=(Z,W)\in B_{\frac{2r_{0}}{\epsilon_{i}}}^{k}\times B_{\frac{2r_{0}}{\epsilon_{i}^{2}}}^{m-k}$ if $Z_{(j)}\neq 0$. 
    Then, we define
    \begin{equation}
        \Psi_{i}(Z_{(j)},W)
        =\epsilon_{i}^{-\delta}\Lambda_{i}^{*}(\sigma^{\ast}\chi)\psi_{i}|_{U_{i}}(Z_{(j)},W)
        =\epsilon_{i}^{-\delta} \mu(\frac{\epsilon_{i}Z}{r_{0}})\nu(\frac{\epsilon_{i}^{2}W}{r_{0}})\psi_{i}|_{U_{i}}(\epsilon_{i}Z_{(j)},\epsilon_{i}^{2}W), \ j=1,..,k.
    \end{equation}
    as a function defined on $ Bl_{0}\C^{k}\times\C^{m-k}$, with $\operatorname{supp}(\Psi_{i})\subset \ti{B}_{\frac{2r_{0}}{\epsilon_{i}}}^{k}\times B_{\frac{2r_{0}}{\epsilon_{i}^{2}}}^{m-k}$. 

    When $k=2$, the corresponding function is chosen as follows.
    \begin{equation}
        \ti{\Psi}_{i}(Z_{(j)},W)
        =[\ti{\tau}_{i}(\epsilon_{i})]^{-\delta}\Lambda_{i}^{*}(\sigma^{\ast}\chi)\ti{\psi}_{i}|_{U_{i}}(Z_{(j)},W), \ j=1,2.
    \end{equation}

    A direct observation shows that for any compact subset $B\subset Bl_{0}\C^{k}\times \C^{m-k}$ there exists a constant $C$ independent of sufficiently large $i$ such that for any $|I|+|J|\leq 4$,
    \begin{align}
        &\|\di_{Z_{(1)}}^{I}\di_{W}^{J}\Psi_{i}\|_{C^{0,\alpha}(B)}\leq C\epsilon_{i}^{|J|}\|\psi_{i}\|_{C^{4,\alpha}_{\delta}(Bl_{X}M)}, \label{S2E0}
        \\        &\|\di_{Z_{(1)}}^{I}\di_{W}^{J}\ti{\Psi}_{i}\|_{C^{0,\alpha}(B)}\leq C\epsilon_{i}^{|J|}\|\ti{\psi}_{i}\|_{\ti{C}^{4,\alpha}_{0,\delta}(Bl_{X}M)}. \label{S2E0_2}
    \end{align}
    We refer to Appendix \ref{EL} for the derivation of \eqref{S2E0}. According to \cite[Theorem 2.7]{Sze2014}, we have a subsequence choosing by diagonal principle such that
    \begin{equation}
        \Psi_{i}\xrightarrow{C^{4,\alpha_{1}}_{loc}} \Psi_{\infty}, \ \ \ \
        \ti{\Psi}_{i}\xrightarrow{C^{4,\alpha_{1}}_{loc}} \ti{\Psi}_{\infty} \ (i\longrightarrow\infty). \label{LocConverge1}
    \end{equation}
    with $\Psi_{\infty}$ defined on $Bl_{0}\C^{k}\times\C^{m-k}$, $\ti{\Psi}_{\infty}$ defined on $Bl_{0}\C^{2}\times\C^{m-2}$ and $\alpha_{1}<\alpha$. 
    
    Furthermore, we have, for any compact subset $B\subset \ti{B}_{R_{1}}^{k}\times B_{S_{1}}^{m-k}\subset Bl_{0}\C^{k}\times\C^{m-k}$,
    \begin{equation}
        \begin{split}
            \|\di_{Z}^{I}\di_{W}^{J}\Psi_{\infty}\|_{C^{0}(B)}
            \leq& \|\di_{Z}^{I}\di_{W}^{J}(\Psi_{\infty}-\Psi_{i})\|_{C^{0}(B)}+\|\di_{Z}^{I}\di_{W}^{J}\Psi_{i}\|_{C^{0}(B)}\\
            \leq& \|\Psi_{\infty}-\Psi_{i}\|_{C^{4,\alpha_{1}}(B)}+C\epsilon_{i}^{|J|}\|\psi_{i}\|_{C^{4,\alpha}_{\delta}(Bl_{X}M)},
        \end{split}
    \end{equation}
    with $C$ depends only on the compact subset $B$. Let $i\to\infty$, we have $\di_{Z}^{I}\di_{W}^{J}\Psi_{\infty}\equiv0$ for any $|I|+|J|\leq 4$,  $|J|\neq 0$, which means $\Psi_{\infty}$ is independent of the $\C^{m-k}$ factor. Using \eqref{S2E0_2}, a similar analysis leads to a similar result for $\ti{\Psi}_{\infty}$, i.e. we have shown
    \begin{align}
        &\Psi_{\infty}(Z,W)=\Psi_{\infty}(Z,0), \ \ \forall\ (Z,W)\in Bl_{0}\C^{k}\times \C^{m-k},
        \\
        &\ti{\Psi}_{\infty}(Z,W)=\ti{\Psi}_{\infty}(Z,0), \ \ \forall\ (Z,W)\in Bl_{0}\C^{2}\times \C^{m-2}.
    \end{align}
    Recall that we introduced the scaled Burns-Simanca metric on $Bl_{0}\C^{k}$ in \eqref{ScaledPMetric}.
    \begin{equation}
        \eta_{\epsilon_{i}}(z)=
        \begin{cases}
        \epsilon_{i}^{2}\sqrt{-1}\di\dbar\Big( |\frac{z}{\epsilon_{i}}|^{2}+\gamma(|\frac{z}{\epsilon_{i}}|)\log|\frac{z}{\epsilon_{i}}|^{2}+\psi(|\frac{z}{\epsilon_{i}}|^{2}) \Big),\ \ \ \ k \geq 3,\\
         \epsilon_{i}^{2}\sqrt{-1}\di\dbar\Big( |\frac{z}{\epsilon_{i}}|^{2}+\log|\frac{z}{\epsilon_{i}}|^{2} \Big),\ \ \ \ k = 2.
        \end{cases}
    \end{equation}
    In fact, $\eta=\epsilon_{i}^{-2}\Lambda_{i}^{*}\eta_{\epsilon_{i}}$. Define a function $\ti{\Lambda}_{i}(z,w)=(\frac{z}{\epsilon_{i}},\frac{w}{\epsilon_{i}^{2}})$ inverse to $\Lambda_{i}$, for the linearized operator,
    \begin{equation}\label{Scale_L_eta}
        L_{\eta}\Psi
        =L_{\epsilon_{i}^{-2}\Lambda_{i}^{*}\eta_{\epsilon_{i}}}\Lambda_{i}^{*}\ti{\Lambda}_{i}^{*}\Psi
        =\epsilon_{i}^{4}\Lambda_{i}^{*}(L_{\eta_{\epsilon_{i}}}\ti{\Lambda}_{i}^{*}\Psi)
        =\epsilon_{i}^{4}\Lambda_{i}^{*}(L_{\omega_{\eta,\epsilon_{i}}}\ti{\Lambda}_{i}^{*}\Psi), \ \forall\ \Psi\in C^{4,\alpha}_{loc}(Bl_{0}\C^{k})
    \end{equation}
    The derivation of this equality can be found in Appendix \ref{EL}. By \eqref{Condition2}, we can prove that for any $R_{1}>0$,
    \begin{equation}\label{S2E1}
    \begin{split}
        \|L_{\eta}\Psi_{\infty}(\cdot,0)\|_{C^{0}(\ti{B}_{R_{1}}^{k})}
        =&\|L_{\omega_{\eta,\epsilon_{i}}}\epsilon_{i}^{4}\ti{\Lambda}_{i}^{*}\Psi_{\infty}(\cdot,0)\|_{C^{0}(\ti{B}_{\epsilon_{i}R_{1}}^{k})}
        \\
        \leq& \|\ti{L}_{\omega_{\epsilon_{i}}}\psi_{i}\|_{C^{0,\alpha}_{\delta-4}(Bl_{X}M)}
        \\
        &+C\|g_{\omega_{\epsilon_{i}}}-g_{\omega_{\eta,\epsilon_{i}}}\|_{C^{2,\alpha}_{0}(\ti{B}_{R_{1}\epsilon_{i}}^{k})}\|\Psi_{i}-\Psi_{\infty}\|_{C^{0}(\ti{B}_{R_{1}}^{k})}
        \to 0, \ (i\to\infty).
    \end{split}
    \end{equation}
    See Appendix \ref{EL} for a derivation of \eqref{S2E1}. By a similar argument, we have
    \begin{equation}
        \|L_{\eta}\ti{\Psi}_{\infty}(\cdot,0)\|_{C^{0}(\ti{B}_{R_{1}}^{2})}\leq \|\ti{L}_{\omega_{\epsilon_{i}}}\ti{\psi}_{i}\|_{\ti{C}^{4,\alpha}_{-4,\delta}(Bl_{X}M)}+C\|\ti{\Psi}_{i}-\ti{\Psi}_{\infty}\|_{C^{0}(\ti{B}_{R_{1}}^{2})}
        \to 0, \ (i\to\infty).
    \end{equation}
    As a result, 
    \begin{align}
        &L_{\eta}\Psi_{\infty}(\cdot,0)=0, \ \text{on } \ Bl_{0}\C^{k}, 
        \\
        &L_{\eta}\ti{\Psi}_{\infty}(\cdot,0)=0, \ \text{on } \ Bl_{0}\C^{2}.
    \end{align}
    Consider the decay (growth) of $\Psi_{\infty}(\cdot,0)$. Fix $r>1$, when $i$ is sufficiently large
    \begin{equation}
        \begin{split}
            &\|(\di_{Z}^{I}\Psi_{\infty})(r\ti{Z},0)\|_{C^{0,\alpha_{1}}(B_{2}^{k}\setminus B_{1}^{k})}\\
            \leq& \|(\di_{Z}^{I}[\Psi-\Psi_{i}])(r\ti{Z},r\ti{W})\|_{C^{0,\alpha_{1}}(B_{2}^{k}\setminus B_{1}^{k}\times B_{2}^{m-k})}+\|\epsilon_{i}^{|I|-\delta}(\di_{z}^{I}\psi_{i})(\epsilon_{i}r\ti{Z},\epsilon_{i}^{2}r\ti{W}) \|_{C^{0,\alpha_{1}}(B_{2}^{k}\setminus B_{1}^{k}\times B_{2}^{m-k})}\\
            \leq& \|\Psi-\Psi_{i}\|_{C^{4,\alpha_{1}}(B_{2r}^{k}\setminus B_{r}^{k}\times B_{2r}^{m-k})}+\|\psi_{i}\|_{C^{4,\alpha}_{\delta}(Bl_{X}M)}\epsilon_{i}^{|I|-\delta}(\epsilon_{i}r)^{\delta-|I|} \\
            \leq& Cr^{\delta-|I|}, \ \forall |I|\leq 4.
        \end{split}
    \end{equation}
    Similarly,
    \begin{equation}
        \begin{split}
            &\|(\di_{Z}^{I}\ti{\Psi}_{\infty})(r\ti{Z},0)\|_{C^{0,\alpha_{1}}(B_{2}^{2}\setminus B_{1}^{2})}
            \\
            \leq& 
            \|(\di_{Z}^{I}[\ti{\Psi}-\ti{\Psi}_{i}])(r\ti{Z},r\ti{W})\|_{C^{0,\alpha_{1}}(B_{2}^{2}\setminus B_{1}^{2}\times B_{2}^{m-2})}
            \\
            &+
            \|[\ti{\tau}_{i}(\epsilon_{i})]^{-\delta}\epsilon_{i}^{|I|}(\di_{z}^{I}\psi_{i})(\epsilon_{i}r\ti{Z},\epsilon_{i}^{2}r\ti{W}) \|_{C^{0,\alpha_{1}}(B_{2}^{2}\setminus B_{1}^{2}\times B_{2}^{m-2})}
            \\
            \leq& 
            \|\ti{\Psi}-\ti{\Psi}_{i}\|_{C^{4,\alpha_{1}}(B_{2r}^{2}\setminus B_{r}^{2}\times B_{2r}^{m-2})}+\|\ti{\psi}_{i}\|_{C^{4,\alpha}_{0,\delta}(Bl_{X}M)}[\ti{\tau}_{i}(\epsilon_{i})]^{-\delta}\epsilon_{i}^{|I|}[\ti{\tau}_{i}(r\epsilon_{i})]^{\delta}(\epsilon_{i}r)^{-|I|} 
            \\
            \leq& 
            Cr^{\delta-|I|}, \ \forall |I|\leq 4.
        \end{split}
    \end{equation}
    Hence, we know that $\Psi_{\infty}(\cdot,0)\in C^{4,\alpha}_{\delta}(Bl_{0}\C^{k})$ and $\ti{\Psi}_{\infty}(\cdot,0)\in C^{4,\alpha}_{\delta}(Bl_{0}\C^{2})$. According to Proposition \ref{Iniec_Leta}, we have $\Psi_{\infty}=\ti{\Psi}_{\infty}\equiv 0$.

    Recall our assumption (\ref{MaxPoint}) and coordinates of those $x_i$ defined in (\ref{Maxpoint}). Then we have the following.
    \begin{align}
        &\Psi_{i}(\frac{z_{(j_{i}),i}}{\epsilon_{i}},0)=\epsilon_{i}^{-\delta}\psi_{i}|_{U_{i}}(z_{(j_{i}),i},0)=(\frac{\tau_{i}(x_{i})}{\epsilon_{i}})^{\delta}>R^{\delta},
        \\
        &\ti{\Psi}_{i}(\frac{z_{(j_{i}),i}}{\epsilon_{i}},0)=[\ti{\tau}_{i}(\epsilon_{i})]^{-\delta}\ti{\psi}_{i}|_{U_{i}}(z_{(j_{i}),i},0)=(\frac{\ti{\tau}_{i}(x_{i})}{\ti{\tau}_{i}(\epsilon_{i})})^{\delta}\geq (\frac{d(x_{i})}{\epsilon_{i}})^{\delta}>R^{\delta}.
    \end{align}
    Note that $(\frac{z_{(j_{i}),i}}{\epsilon_{i}},0)\in \ti{B}_{(1),R}^{k}\times B_{1}^{m-k}$, then there exists a subsequence such that:
    \begin{equation}
        (\frac{z_{(j_{i}),i}}{\epsilon_{i}},0)\to (Z_{(j_{\infty}),\infty},0)\in \ol{\ti{B}_{(1),R}^{k}\times B_{1}^{m-k}}
    \end{equation}
    Therefore,
    \begin{align}
        &\Psi_{\infty}(Z_{(j_{\infty}),\infty},0)=\lim_{i\to\infty}\Psi_{i}(\frac{z_{(j_{i}),i}}{\epsilon_{i}},0)\geq R^{\delta}>0,
        \\
        &\ti{\Psi}_{\infty}(Z_{(j_{\infty}),\infty},0)=\lim_{i\to\infty}\ti{\Psi}_{i}(\frac{z_{(j_{i}),i}}{\epsilon_{i}},0)\geq R^{\delta}>0.
    \end{align}
    which contradicts $\Psi_{\infty}(\cdot,0)=0$.

\vskip 0.2cm

    \textbf{Step 3.} $\frac{d(x_{i})}{\epsilon_{i}}$ is not bounded.

\vskip 0.2cm

    The assumption implies that the sequence of points $x_i$ accumulates in the ``annular" region away from the exceptional divisor. A similar scaling analysis in the model space $(\C^{k}\setminus\{0\}\times \C^{m-k},\omega_{\mathrm{euc}})$ will lead to another contradiction.

    By Definition (\ref{WeightFunc3}) and the fact that $\psi_{i},\ti{\psi}_{i}\xrightarrow{C^{4,\alpha}_{loc}(M\setminus X)}0$, we know that 
    \begin{equation}
        \epsilon_{i}<d(x_{i})\to 0, \ (i\to\infty).
    \end{equation}
    Then, arguing as in \textbf{Step 2}, we may assume that for each $x_{i}$, there exists a coordinate chart $B_{2r_{0}}^{k}\times B_{2r_{0}}^{m-k}\subset U_{i}$ of the type described in Lemma \ref{holosubcoor}, such that
    \begin{equation}
        x_{i}=(z_{i},0)\in B_{r_{0}}^{k}\setminus B_{\frac{\epsilon_{i}}{2}}^{k} \times B_{r_{0}}^{m-k}
    \end{equation}

    Next, we consider $x_{i}=(z_{i},0)$. Take $R_{i}>0$ and $r_{i}>0$, such that
    \begin{align}
        &R_{i}\to\infty, \ R_{i}|z_{i}|\to 0;   \label{choose_R_i}\\
        &r_{i}\to 0, \ \frac{\epsilon_{i}}{r_{i}|z_{i}|}\to 0.
    \end{align}
    Then, we have $(z_{i},0)\in (B_{R_{i}|z_{i}|}^{k}\setminus B_{r_{i}|z_{i}|}^{k})\times B_{r_{0}}^{m-k}\subset U_{i}$, which also ensures that $r_{i}|z_{i}|$ are away from $\epsilon_{i}$ and $R_{i}|z_{i}|$ are shrinking. 

    Take a smooth cutoff function $\kappa_{i}\in C_{c}^{\infty}(\C^{k})$, $0\leq\kappa\leq 1$, such that
    \begin{equation}
        \kappa_{i}(z)=
        \begin{cases}
            1,\ \ r_{i}|z_{i}|\leq |z|\leq R_{i}|z_{i}|; \\
            0,\ \ |z|\leq \frac{r_{i}|z_{i}|}{2}, \ |z|\geq 2R_{i}|z_{i}|.
        \end{cases}
    \end{equation}
    We take the same cutoff function $\nu$ defined in (\ref{cutoff_nu}), and define $\lambda_{i}(z,w)=\kappa_{i}(z)\nu(\frac{w}{r_{0}})$ and a scaling function
    \begin{equation}
        \Theta_{i}(Z,W)=(|z_{i}|Z,|z_{i}|^{2}W).
    \end{equation}
    Now we set
    \begin{align}
         &\Phi_{i}(Z,W)
         =|z_{i}|^{-\delta}\Theta_{i}^{*}\lambda_{i}\psi_{i} (Z,W)
         =|z_{i}|^{-\delta}\kappa_{i}(|z_{i}|Z)\nu(\frac{|z_{i}|^{2}W}{r_{0}})\psi(|z_{i}|Z,|z_{i}|^{2}W),
         \\
         &\ti{\Phi}_{i}(Z,W)
         =[\ti{\tau}_{i}(|z_{i}|)]^{-\delta}\Theta_{i}^{*}\lambda_{i}\ti{\psi}_{i}(Z,W),
    \end{align}
    with $\operatorname{supp}\Phi_{i}\subset B_{2R_{i}}^{k}\setminus B_{\frac{r_{i}}{2}}^{k}\times B_{\frac{2r_{0}}{|z_{i}|^{2}}}^{m-k}$ and $\operatorname{supp}\ti{\Phi}_{i}\subset B_{2R_{i}}^{2}\setminus B_{\frac{r_{i}}{2}}^{2}\times B_{\frac{2r_{0}}{|z_{i}|^{2}}}^{m-2}$.

    Similarly as in \textbf{Step 2}, we have for any fixed constant $r>0$ and multi-index $|I|+|J|\leq 4$,
    \begin{align}
        &\|(\di_{Z}^{I}\di_{W}^{J}\Phi_{i})(r\ti{Z},r\ti{W})\|_{C^{0,\alpha}(B_{2}^{k}\setminus B_{1}^{k}\times B_{2}^{k})}
        \leq  2\|\psi_{i}\|_{C^{4,\alpha}_{\delta}(Bl_{X}M)}r^{\delta-|I|-|J|}|z_{i}|^{|J|}, \label{D_psi_euc}
        \\
        &\|(\di_{Z}^{I}\di_{W}^{J}\ti{\Phi}_{i})(r\ti{Z},r\ti{W})\|_{C^{0,\alpha}(B_{2}^{2}\setminus B_{1}^{2}\times B_{2}^{2})}
        \leq  2\|\ti{\psi}_{i}\|_{\ti{C}^{4,\alpha}_{0,\delta}(Bl_{X}M)}r^{\delta-|I|-|J|}|z_{i}|^{|J|}. \label{D_psi_euc_2}
    \end{align}
    As a consequence, we can choose a subsequence of $\Phi_{i}$ by the diagonal principle such that
    \begin{align}
        &\Phi_{i}\xrightarrow{C^{4,\alpha_{1}}_{loc}} \Phi_{\infty}, \ (i\to\infty),
        \\
        &\ti{\Phi}_{i}\xrightarrow{C^{4,\alpha_{1}}_{loc}} \ti{\Phi}_{\infty}, \ (i\to\infty),
    \end{align}
    for some $\Phi_{\infty}\in C^{4,\alpha_{1}}_{loc}(\C^{k}\setminus\{0\}\times\C^{m-k})$, $\ti{\Phi}_{\infty}\in C^{4,\alpha_{1}}_{loc}(\C^{2}\setminus\{0\}\times\C^{m-2})$ and $\alpha_{1}<\alpha$. Furthermore, by (\ref{D_psi_euc}), we have, for any $j=1,...,m-k$,
    \begin{equation}\label{Dw_Phi}
        \begin{split}
        &\|\di_{W^{j}}\Phi_{\infty}\|_{C^{0}(B_{2r}^{k}\setminus B_{r}^{k}\times B_{2r}^{k})}\\
        \leq& \|\di_{W^{j}}(\Phi_{\infty}-\Phi_{i})\|_{C^{0,\alpha_{1}}(B_{2r}^{k}\setminus B_{r}^{k}\times B_{2r}^{k})}+\|(\di_{W^{j}}|z_{i}|^{-\delta}\Theta_{i}^{*}\lambda_{i}\psi_{i})\|_{C^{0,\alpha_{1}}(B_{2r}^{k}\setminus B_{r}^{k}\times B_{2r}^{k})}\\
        \leq& \|\Phi_{\infty}-\Phi_{i}\|_{C^{4,\alpha_{1}}(B_{2r}^{k}\setminus B_{r}^{k}\times B_{2r}^{k})}+Cr^{\delta}|z_{i}|.
        \end{split}
    \end{equation}
    After we take $i\to \infty$ in the right-hand side, we have $\di_{W^{j}}\Phi_{\infty}\equiv 0$ for any $j$. Hence,
    \begin{equation}
        \Phi_{\infty}(Z,W)=\Phi_{\infty}(Z,0), \ \forall(Z,W)\in\C^{k}\setminus\{0\}\times\C^{m-k}.
    \end{equation}
    Similarly, by \eqref{D_psi_euc_2} we also have the following.
    \begin{equation}
        \ti{\Phi}_{\infty}(Z,W)\equiv \Phi_{\infty}(Z,0) \ \forall(Z,W)\in\C^{2}\setminus\{0\}\times\C^{m-2}.
    \end{equation}
    On the other hand, by \eqref{D_psi_euc} and \eqref{D_psi_euc_2}, when $i$ is sufficiently large, we also have
    \begin{align}
        \|(\di_{Z}^{I}\Phi_{\infty})(r\ti{Z},0)\|_{C^{0,\alpha_{1}}(B_{2}^{k}\setminus B_{1}^{k})}, \ \|(\di_{Z}^{I}\ti{\Phi}_{\infty})(r\ti{Z},0)\|_{C^{0,\alpha_{1}}(B_{2}^{2}\setminus B_{1}^{2})}\leq Cr^{\delta-|I|},
    \end{align}
    which means $\Phi_{\infty}\in C^{4,\alpha_{1}}_{\delta}(\C^{k}\setminus\{0\})$ and $\ti{\Phi}_{\infty}\in C^{4,\alpha_{1}}_{\delta}(\C^{2}\setminus\{0\})$.

    Again, by our assumption that $\|\ti{L}_{\omega_{\epsilon_{i}}}\psi_{i}\|_{C^{0,\alpha}_{\delta-4}(Bl_{X}M)}\to 0$ and $\|\ti{L}_{\omega_{\epsilon_{i}}}\ti{\psi}_{i}\|_{\ti{C}^{0,\alpha}_{-4,\delta}(Bl_{X}M)}\to 0$, a direct calculation in Appendix \ref{EL} shows that
    \begin{equation}\label{S3E1}
        \Delta^{2}_{\mathrm{euc}}\Phi_{\infty}=0, \ \ \ \ \Delta^{2}_{\mathrm{euc}}\ti{\Phi}_{\infty}=0.
    \end{equation}
    Then, it follows from Corollary \ref{Iso_Bilap} that $\Phi_{\infty}(Z,W)=\Phi_{\infty}(Z,0)=0$ and $\ti{\Phi}_{\infty}(Z,W)=\ti{\Phi}_{\infty}(Z,0)=0$.

    However, notice that we can choose a subsequence such that
    \begin{equation}
        (\frac{z_{i}}{|z_{i}|},0)\to (Z_{\infty},0)\in\C^{k}\setminus\{0\}\times \C^{m-k}.
    \end{equation}
    Then, we have
    \begin{align}
        &\Phi_{\infty}(Z_{\infty},0)=\lim_{i\to\infty}|z_{i}|^{-\delta}\psi_{i}|_{U_{i}}(z_{i},0)=\lim_{i\to\infty}(\frac{\tau(x_{i})}{|z_{i}|})^{\delta}=\lim_{i\to\infty}(\frac{d(x_{i})}{|z_{i}|})^{\delta}\geq 2^{\delta},
        \\
        &\widetilde{\Phi}_{\infty}(Z_{\infty},0)=\lim_{i\to\infty}[\ti{\tau}_{i}(|z_{i}|)]^{-\delta}\psi_{i}|_{U_{i}}(z_{i},0)=\lim_{i\to\infty}(\frac{\ti{\tau}_{i}(x_{i})}{\ti{\tau}_{i}(|z_{i}|)})^{\delta}=\lim_{i\to\infty}(\frac{d(x_{i})}{|z_{i}|})^{\delta}\geq 2^{\delta}.
    \end{align} 
    Here we use \eqref{MaxPoint}. These are contradictions to $\Phi_{\infty}=0$ and $\widetilde{\Phi}_{\infty}=0$.

    Based on the contradictions we obtained in \textbf{Steps 2 and 3}, we prove Proposition \ref{Inverse}.

\end{proof}

Proposition \ref{Inverse} are sufficient to show that $\ti{L}_{\omega_{\epsilon}}$ is injective in the corresponding weighted spaces. In the meantime, Proposition \ref{Fredholm1} shows that $\ti{L}_{\omega_{\epsilon}}$ is a Fredholm operator with index zero. Therefore we immediately have the uniform upper bound on the inverse operator.
\begin{corollary}\label{UniBound}
    Fix $\epsilon>0$ sufficiently small. both operators
    \begin{equation}
    \begin{split}
        &\ti{L}_{\omega_{\epsilon}}:C^{4,\alpha}_{\delta}(Bl_{X}M)\to C^{0,\alpha}_{\delta-4}(Bl_{X}M)\ \ \text{when}\ k \geq 3,\\
        &\ti{L}_{\omega_{\epsilon}}:\ti{C}^{4,\alpha}_{0,\delta}(Bl_{X}M)\to \ti{C}^{0,\alpha}_{-4,\delta}(Bl_{X}M)\ \ \text{when}\ k = 2.
    \end{split}
    \end{equation}
    are isomorphisms. And there exists a constant $K>0$ independent of $\epsilon$ such that
    \begin{equation}
    \begin{split}
        &\|(\ti{L}_{\omega_{\epsilon}})^{-1}\|_{C^{0,\alpha}_{\delta-4}(Bl_{X}M)\to C^{4,\alpha}_{\delta}(Bl_{X}M)}\leq K, \ \text{if} \ k\geq3, \\
        &\|(\ti{L}_{\omega_{\epsilon}})^{-1}\|_{\ti{C}^{0,\alpha}_{-4,\delta}(Bl_{X}M)\to \ti{C}^{4,\alpha}_{0,\delta}(Bl_{X}M)}\leq K, \ \text{if} \ k=2.
    \end{split}
    \end{equation}
\end{corollary}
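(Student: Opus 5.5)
The corollary follows from Proposition \ref{Inverse} and Proposition \ref{Fredholm1} by a standard Fredholm argument. I treat the cases $k\geq 3$ and $k=2$ in parallel, writing $X_\epsilon\to Y_\epsilon$ for the relevant pair of weighted spaces: either $C^{4,\alpha}_{\delta}\to C^{0,\alpha}_{\delta-4}$, or $\ti{C}^{4,\alpha}_{0,\delta}\to \ti{C}^{0,\alpha}_{-4,\delta}$. Fix $\delta\in(-1,0)$, and let $\epsilon_0$ and $K$ be the constants given by Proposition \ref{Inverse}.

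\textbf{Injectivity.} Fix any $\epsilon\in(0,\epsilon_0]$. Suppose $\phi\in X_\epsilon$ satisfies $\ti{L}_{\omega_\epsilon}\phi=0$. The a priori estimate of Proposition \ref{Inverse} gives $\|\phi\|_{X_\epsilon}\leq K\cdot 0=0$. Hence $\phi=0$, and $\ti{L}_{\omega_\epsilon}$ is injective on $X_\epsilon$.

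\textbf{Surjectivity.} For fixed $\epsilon$, both weighted spaces are equivalent to the unweighted H\"older spaces on $Bl_XM$. By Proposition \ref{Fredholm1}, $\ti{L}_{\omega_\epsilon}:X_\epsilon\to Y_\epsilon$ is Fredholm of index zero. Therefore
\begin{equation*}
\dim\operatorname{coker}\ti{L}_{\omega_\epsilon}=\dim\ker\ti{L}_{\omega_\epsilon}=0.
\end{equation*}
Since a Fredholm operator has closed range, the range is all of $Y_\epsilon$. Thus $\ti{L}_{\omega_\epsilon}$ is a continuous linear bijection between Banach spaces. Its inverse is bounded, either by the open mapping theorem or directly from the estimate below.

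\textbf{Uniform bound.} Let $f\in Y_\epsilon$ and set $\phi=\ti{L}_{\omega_\epsilon}^{-1}f$. Applying Proposition \ref{Inverse} to $\phi$ gives
\begin{equation*}
\|\ti{L}_{\omega_\epsilon}^{-1}f\|_{X_\epsilon}\leq K\|f\|_{Y_\epsilon}.
\end{equation*}
Taking the supremum over $\|f\|_{Y_\epsilon}\le 1$ yields $\|\ti{L}_{\omega_\epsilon}^{-1}\|\le K$. Here $K$ is independent of $\epsilon\in(0,\epsilon_0]$, because it is the constant from Proposition \ref{Inverse}.

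\textbf{Main point to check.} The only step needing care is that the Fredholm index-zero statement of Proposition \ref{Fredholm1} is applied on the same spaces $X_\epsilon\to Y_\epsilon$ as the estimate of Proposition \ref{Inverse}. For fixed $\epsilon$ these weighted norms are equivalent to the standard $C^{4,\alpha}$ and $C^{0,\alpha}$ norms, so kernels, cokernels and ranges agree. The weighted structure matters only for the uniformity of $K$ in $\epsilon$, not for invertibility at a fixed $\epsilon$.
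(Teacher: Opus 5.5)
Your proposal is correct and follows the paper's own argument. Injectivity comes from the a priori estimate of Proposition \ref{Inverse}. Proposition \ref{Fredholm1} (index zero, valid because for fixed $\epsilon$ the weighted norms are equivalent to the unweighted H\"older norms) then upgrades injectivity to an isomorphism, and the same estimate gives the $\epsilon$-independent bound $K$ on the inverse.
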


\section{The case of codimension at least \texorpdfstring{$3$}{3}}\label{section5}

In this section, we follow \cite[Section 8.3]{Sze2014} and \cite[Section 6.1]{Brown} closely to solve the following equation
\begin{equation}\label{Equation0}
    S(\omega)-S(\omega_{\epsilon}+\sqrt{-1}\di\dbar\phi)=-\sum_{i=1}^{d}\phi(q_{i})f_{i}.
\end{equation}
Recall the definition of the linearized operator \eqref{def_L}. We  have
\begin{equation}
S(\omega_{\epsilon}+\sqrt{-1}\di\dbar\phi)=S(\omega_{\epsilon})+L_{\omega_{\epsilon}}\phi+Q_{\omega_{\epsilon}}(\phi),
\end{equation}
where $Q_{\omega_{\epsilon}}$ denotes the higher order terms. Then, the equation (\ref{Equation0}) is equivalent to 
\begin{equation}
    S(\omega)-S(\omega_{\epsilon})-Q_{\omega_{\epsilon}}(\phi)=\ti{L}_{\omega_{\epsilon}}(\phi).
\end{equation}
Then we are led to consider applying the contraction mapping principle to the following map
\begin{equation}
    \begin{split}
       \mathcal{N}_{\epsilon}:C^{4,\alpha}_{\delta}(Bl_{X}M)&\to C^{4,\alpha}_{\delta}(Bl_{X}M),\\
       \phi&\to (\ti{L}_{\omega_{\epsilon}})^{-1}\Big(S(\omega)-S(\omega_{\epsilon})-Q_{\omega_{\epsilon}}(\phi)\Big).
    \end{split}
\end{equation}
In view of Proposition \ref{Inverse}, we need to bound the weighted norm of $S(\omega)-S(\omega_{\epsilon})-Q_{\omega_{\epsilon}}(\phi)$. Observe that
\begin{equation}
    \mathcal{N}_{\epsilon}(\phi_{1})-\mathcal{N}_{\epsilon}(\phi_{2})=(\ti{L}_{\omega_{\epsilon}})^{-1}\Big(Q_{\omega_{\epsilon}}(\phi_{1})-Q_{\omega_{\epsilon}}(\phi_{2})\Big).
\end{equation}
Therefore, to obtain the contraction mapping property of $\mathcal{N}_{\epsilon}$, we need to estimate $Q_{\omega_{\epsilon}}(\phi_{1})-Q_{\omega_{\epsilon}}(\phi_{2})$.

\begin{proposition}[{\cite[Lemma 8.18]{Sze2014}}] \label{Qestimate}
    Suppose $\delta\in(-1,0)$ and $c>0$ is the constant in Proposition \ref{UniEst}. Then there exists a constant $C>0$ which is independent of $\epsilon$ such that
    \begin{equation}
    \begin{split}
        \|Q_{\omega_{\epsilon}}(\phi_{0})-Q_{\omega_{\epsilon}}(\phi_{1})\|_{C^{0,\alpha}_{\delta-4}(Bl_{X}M)}
        \leq C\Big(\|\phi_{0}\|_{C^{4,\alpha}_{2}(Bl_{X}M)}+\|\phi_{1}\|_{C^{4,\alpha}_{2}(Bl_{X}M)} \Big)\|\phi_{0}-\phi_{1}\|_{C^{4,\alpha}_{\delta}(Bl_{X}M)},
    \end{split}
    \end{equation}
    for any $\|\phi_{0}\|_{C^{4,\alpha}_{2}(Bl_{X}M)},\|\phi_{1}\|_{C^{4,\alpha}_{2}(Bl_{X}M)}\leq c$.
\end{proposition}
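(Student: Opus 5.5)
The plan is to write $Q_{\omega_\epsilon}(\phi)$ as the integral remainder of the Taylor expansion of the scalar curvature and reduce the difference to a bound on the variation of the linearized operator. Concretely, for $\phi_s=\phi_1+s(\phi_0-\phi_1)$, $s\in[0,1]$, we have
\begin{equation*}
Q_{\omega_\epsilon}(\phi_0)-Q_{\omega_\epsilon}(\phi_1)=S(\omega_\epsilon+\sqrt{-1}\di\dbar\phi_0)-S(\omega_\epsilon+\sqrt{-1}\di\dbar\phi_1)-L_{\omega_\epsilon}(\phi_0-\phi_1)
=\int_0^1\big(L_{\omega_{\phi_s,\epsilon}}-L_{\omega_\epsilon}\big)(\phi_0-\phi_1)\,ds,
\end{equation*}
using the definition \eqref{def_L} of $L$ at the metric $\omega_{\phi_s,\epsilon}=\omega_\epsilon+\sqrt{-1}\di\dbar\phi_s$. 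Since $\|\phi_s\|_{C^{4,\alpha}_2}\leq \max(\|\phi_0\|_{C^{4,\alpha}_2},\|\phi_1\|_{C^{4,\alpha}_2})\leq c$, each $\omega_{\phi_s,\epsilon}$ is a K\"ahler metric on $Bl_XM$ with the uniform bounds of Proposition \ref{UniEst}.

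Then I apply the operator estimate \eqref{opera_est} of Proposition \ref{UniEst} with $\varphi=\phi_s$:
\begin{equation*}
\|(L_{\omega_{\phi_s,\epsilon}}-L_{\omega_\epsilon})(\phi_0-\phi_1)\|_{C^{0,\alpha}_{\delta-4}}\leq C\|\phi_s\|_{C^{4,\alpha}_2}\|\phi_0-\phi_1\|_{C^{4,\alpha}_\delta}\leq C\big(\|\phi_0\|_{C^{4,\alpha}_2}+\|\phi_1\|_{C^{4,\alpha}_2}\big)\|\phi_0-\phi_1\|_{C^{4,\alpha}_\delta}.
\end{equation*}
Integrating in $s$ (the weighted norm is a norm, so Minkowski's inequality for the Bochner/Riemann integral applies; continuity in $s$ is clear since all coefficients depend smoothly on $s$) gives the claim with a constant independent of $\epsilon$.

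The only substantive point is \eqref{opera_est} itself, which is already assumed. If one wanted to verify it, the main obstacle is bookkeeping of weights: $L_\omega\psi=-\Delta_\omega^2\psi-Ric_\omega\cdot\di\dbar\psi$ is expanded in terms of $g^{-1}$, up to two derivatives of $g$, and $\di^{\le4}\psi$; the difference $L_{\omega_\varphi}-L_{\omega_\epsilon}$ is a sum of terms each containing a factor $g_{\omega_\varphi}-g_{\omega_\epsilon}$ or its inverse/derivatives, which by \eqref{metric_inv_est} is bounded in $C^{2,\alpha}_0$ by $\|\varphi\|_{C^{4,\alpha}_2}$ (weight $0$ because $\di\dbar\varphi$ has weight $2-2=0$), multiplied by factors of weight $\delta-4$ from $\psi$ and curvature, using multiplicativity $C^{l,\alpha}_{a}\cdot C^{l,\alpha}_b\subset C^{l,\alpha}_{a+b}$ of the weighted norms (clear from the scaled definition \eqref{holder_bl_f}). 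Hence the estimate follows, uniformly in $\epsilon$.
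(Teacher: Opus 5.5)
Your proof is correct and is essentially the paper's argument, which cites \cite[Lemma 8.18]{Sze2014} and writes the same scheme out for Lemma \ref{Q2}: move along the linear path $\phi_s$, identify the $s$-derivative of $Q_{\omega_\epsilon}(\phi_s)$ as $(L_{\omega_{\phi_s,\epsilon}}-L_{\omega_\epsilon})(\phi_0-\phi_1)$, and bound it uniformly in $\epsilon$ using \eqref{opera_est}. The only difference is cosmetic: you integrate in $s$, whereas the paper applies the mean value theorem for Fr\'{e}chet derivatives.
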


Therefore, when the constant $c$ in Proposition \ref{Qestimate} is chosen sufficiently small, we have the contraction property.
\begin{corollary}[{\cite[Lemma 8.18]{Sze2014}}]\label{Contrac}
    Suppose $\delta\in(-1,0)$, there exists a constant $c>0$ independent of $\epsilon$ such that for any $\|\phi_{i}\|_{C^{4,\alpha}_{2}(Bl_{X}M)}\leq c$,
    \begin{equation}
        \|\mathcal{N}_{\epsilon}(\phi_{1})-\mathcal{N}_{\epsilon}(\phi_{2})\|_{C^{4,\alpha}_{\delta}(Bl_{X}M)}\leq \frac{1}{2}\|\phi_{1}-\phi_{2}\|_{C^{4,\alpha}_{\delta}(Bl_{X}M)}.
    \end{equation}
\end{corollary}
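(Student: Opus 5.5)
The plan is to combine Proposition \ref{Qestimate} with the uniform bound on the inverse in Corollary \ref{UniBound}. By definition of $\mathcal{N}_{\epsilon}$ we have
\begin{equation*}
\mathcal{N}_{\epsilon}(\phi_{1})-\mathcal{N}_{\epsilon}(\phi_{2})=(\ti{L}_{\omega_{\epsilon}})^{-1}\Big(Q_{\omega_{\epsilon}}(\phi_{2})-Q_{\omega_{\epsilon}}(\phi_{1})\Big).
\end{equation*}
Note that the sign is reversed relative to the displayed identity in the text, which is harmless because only norms enter. Since $Q_{\omega_{\epsilon}}(\phi)\in C^{0,\alpha}_{\delta-4}(Bl_{X}M)$, we may apply Corollary \ref{UniBound} with $\delta\in(-1,0)$ and $k\geq 3$. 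This gives a constant $K$, independent of $\epsilon\in(0,\epsilon_{0}]$, such that
\begin{equation*}
\|\mathcal{N}_{\epsilon}(\phi_{1})-\mathcal{N}_{\epsilon}(\phi_{2})\|_{C^{4,\alpha}_{\delta}}\leq K\|Q_{\omega_{\epsilon}}(\phi_{1})-Q_{\omega_{\epsilon}}(\phi_{2})\|_{C^{0,\alpha}_{\delta-4}}.
\end{equation*}

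Next, fix $c_{0}>0$ as in Propositions \ref{UniEst} and \ref{Qestimate}. For $\|\phi_{j}\|_{C^{4,\alpha}_{2}}\leq c\leq c_{0}$, the metrics $\omega_{\epsilon}+\sqrt{-1}\di\dbar\phi_{j}$ are K\"ahler and Proposition \ref{Qestimate} applies. It yields
\begin{equation*}
\|Q_{\omega_{\epsilon}}(\phi_{1})-Q_{\omega_{\epsilon}}(\phi_{2})\|_{C^{0,\alpha}_{\delta-4}}\leq 2Cc\,\|\phi_{1}-\phi_{2}\|_{C^{4,\alpha}_{\delta}},
\end{equation*}
with $C$ independent of $\epsilon$. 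We now choose $c=\min\{c_{0},(4KC)^{-1}\}$, so that $2KCc\leq \tfrac12$. Combining the two displays gives the stated contraction estimate uniformly in $\epsilon$.

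The only point requiring care is that $K$ and $C$ are genuinely independent of $\epsilon$ and of the choice of $\phi_{j}$ in the $c$-ball. This is exactly the content of Corollary \ref{UniBound} and Proposition \ref{Qestimate}, so no real obstacle remains. One should also check that $\epsilon_{0}$ is taken as the minimum of the thresholds appearing in Corollary \ref{UniBound}, Proposition \ref{UniEst} and Proposition \ref{Qestimate}.
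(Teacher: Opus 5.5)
Your proposal is correct and follows the same route as the paper. You write $\mathcal{N}_{\epsilon}(\phi_{1})-\mathcal{N}_{\epsilon}(\phi_{2})$ as $(\ti{L}_{\omega_{\epsilon}})^{-1}$ applied to a difference of $Q_{\omega_{\epsilon}}$-terms, then combine the uniform inverse bound $K$ from Proposition \ref{Inverse} and Corollary \ref{UniBound} with Proposition \ref{Qestimate}, shrinking $c$ so that $2KCc\leq\frac{1}{2}$. Your remark on the sign is also right: the paper's displayed identity has $\phi_{1},\phi_{2}$ in the reverse order, which is immaterial since only norms enter.
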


We then estimate $S(\omega)-S(\omega_{\epsilon})$ in the spirit of \cite[Lemma 8.19]{Sze2014}.

\begin{proposition}\label{CompareS}
    Suppose $\delta\in(-1,0)$, and $\epsilon$ is fixed sufficiently small, then there exists a constant $C>0$ independent of $\epsilon$ such that
    \begin{equation}\label{S_diff_est}
        \|S(\omega)-S(\omega_{\epsilon})\|_{C^{0,\alpha}_{\delta-4}(Bl_{X}M)}\leq Cr_{\epsilon}^{3-\delta}.
    \end{equation}
\end{proposition}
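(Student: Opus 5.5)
We estimate $S(\omega)-S(\omega_{\epsilon})$ region by region, following the proof of \cite[Lemma 8.19]{Sze2014}. The weighted norm \eqref{holder_bl_f} with weight $\delta-4$ multiplies the unweighted $C^{0,\alpha}$ norm on the annulus of scale $r$ by $r^{4-\delta}$, computed in the rescaled coordinates $(rZ,rW)$. The target bound is $r_{\epsilon}^{3-\delta}$. We split $Bl_XM$ into four regions and bound the difference separately in each.

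\textbf{Region 1: $d\geq 2r_{\epsilon}$.} By \eqref{backmetric3}, $\omega_{\epsilon}=\omega$ there, so the difference vanishes.

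\textbf{Region 2: the gluing annulus $R_0\epsilon\leq d\leq 2r_{\epsilon}$.} Here we use \eqref{Compare1}, so $g_{\omega_{\epsilon}}-g_{\omega}=O(\epsilon^{2k-2}d^{2-2k})$, with each $z$-derivative gaining a factor $d^{-1}$. Since $S$ is a second-order nonlinear expression in $g$, the difference is
\[
S(\omega_{\epsilon})-S(\omega)=O\bigl(\epsilon^{2k-2}d^{-2k}\bigr)+\text{(quadratic terms, which are smaller)}.
\]
Rescaling to an annulus of size $r$ and multiplying by $r^{4-\delta}$ (the Hölder seminorm is handled the same way via the derivative bounds) gives a contribution $\epsilon^{2k-2}r^{4-\delta-2k}$. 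This is maximized at $r=R_0\epsilon$, where it is $O(\epsilon^{2-\delta})$. Since $r_{\epsilon}^{3}=\epsilon^{3k/(k+1)}\geq\epsilon^{2}$ for $k\geq 3$, the quantity $\epsilon^{2-\delta}$ is itself bounded by $r_{\epsilon}^{3-\delta}$ only up to a check. Rather than rely on that, we compare with the Burns--Simanca model instead of $\omega$ whenever $d\leq r_{\epsilon}$: write
\[
S(\omega_{\epsilon})-S(\omega)=\bigl(S(\omega_{\epsilon})-S(\omega_{\eta,\epsilon})\bigr)+S(\omega_{\eta,\epsilon})-S(\omega),
\]
using $S(\omega_{\eta,\epsilon})=0$ (the product of the scalar-flat $\eta$ with a flat factor) and $S(\omega)=O(1)$. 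By \eqref{Compare5}, $g_{\omega_{\epsilon}}-g_{\omega_{\eta,\epsilon}}=O(|z|+|w|)+O(\epsilon^{2k-2}|z|^{3-2k})$, with two derivatives costing $d^{-2}$. This gives $S(\omega_{\epsilon})-S(\omega_{\eta,\epsilon})=O(d^{-1})$. Weighting by $r^{4-\delta}$ yields $r^{3-\delta}\leq r_{\epsilon}^{3-\delta}$. The $O(1)$ term $S(\omega)$ contributes $r^{4-\delta}\leq r_{\epsilon}^{3-\delta}$. On the cutoff annulus $r_{\epsilon}\leq d\leq 2r_{\epsilon}$ we instead use \eqref{Compare1}, which gives at most $\epsilon^{2k-2}r_{\epsilon}^{4-\delta-2k}$. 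This is where the choice $r_{\epsilon}=\epsilon^{k/(k+1)}$ enters: it gives $\epsilon^{2k-2}\leq r_{\epsilon}^{2k-1}$, since $(2k-1)k/(k+1)\leq 2k-2$ for $k\geq 3$. Hence the cutoff annulus also contributes at most $r_{\epsilon}^{3-\delta}$.

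\textbf{Region 3: $d\leq 3R_0\epsilon$.} This is the blow-up charts, weighted by $\epsilon^{-(\delta-4)}$. By \eqref{Compare3} and \eqref{Compare3.5} in the rescaled coordinates $(\epsilon Z_{(j)},\epsilon W)$, the metric differs from the scalar-flat model by $O(\epsilon)$ together with its derivatives. So $\epsilon^{4-\delta}\,|S(\omega_{\epsilon})-S(\omega)|\leq C\epsilon^{4-\delta}(\epsilon^{-2}\cdot\epsilon+1)=O(\epsilon^{3-\delta})\leq Cr_{\epsilon}^{3-\delta}$.

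\textbf{Main obstacle.} The hardest step is controlling the Hölder seminorms and the $w$-direction terms $O(|w|)$ after rescaling. In Region 2, the $w$-derivatives in Lemma \ref{holosubcoor} coordinates give only $O(d)$ behaviour, so we must check that the error $O(|z|+|w|)$ in rescaled coordinates on $(B_2\setminus B_1)\times B_2$ stays $O(r)$ and does not lose factors. We would verify this directly from \eqref{Compare5} and \eqref{diff_d_w}, treating the linear expansion of $S$ via \eqref{def_L} with the uniform bounds of Proposition \ref{metric_Uni}.
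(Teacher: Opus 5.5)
Your proof is correct and follows essentially the same route as the paper: $\omega_{\epsilon}=\omega$ for $d\geq 2r_{\epsilon}$; near $E$ you compare with the scalar-flat model $\omega_{\eta,\epsilon}$ via \eqref{Compare3} and \eqref{Compare3.5}, giving $\epsilon^{3-\delta}$; for $R_0\epsilon\lesssim d\lesssim r_{\epsilon}$ you compare with $\omega_{\eta,\epsilon}$ via \eqref{Compare5}, giving $r^{3-\delta}$; and on the cutoff annulus $d\sim r_{\epsilon}$ you compare with $\omega$ via \eqref{Compare1}, where $r_{\epsilon}=\epsilon^{k/(k+1)}$ gives $\epsilon^{2k-2}\leq r_{\epsilon}^{2k-1}$. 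One correction to your discarded aside: the inequality goes the other way, $r_{\epsilon}^{3}=\epsilon^{3k/(k+1)}\leq\epsilon^{2}$, and since $\frac{k(3-\delta)}{k+1}-(2-\delta)=\frac{k-2+\delta}{k+1}>0$ one has $r_{\epsilon}^{3-\delta}\ll\epsilon^{2-\delta}$, so comparing with $\omega$ all the way down to $d\sim R_0\epsilon$ genuinely fails — which is exactly why the Burns--Simanca comparison you switched to is needed there.
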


\begin{proof}[Proof of Proposition \ref{CompareS}]
   Fix a point $p\in X$ and a coordinate chart $B_{2r}^{k}\times B_{2s}^{m-k}$ centered at $p$, as in Lemma \ref{holosubcoor}. We establish the corresponding estimates separately in the different regions of $Bl_X M$.
   
    Case 1: When $|z_{(j)}^{j}|\sqrt{1+\sum_{i\neq j}|z_{(j)}^{i}|}\leq 2\epsilon$, we have $d(z_{(j)},w)\leq 3\epsilon$. Combining (\ref{Compare3}), (\ref{Compare3.5}), Proposition \ref{metric_Uni}, and the fact that $\omega_{\eta,\epsilon}$ is scalar flat, we have
    \begin{equation}
        \begin{split}
            &\epsilon^{4-\delta}\|(S(\omega)-S(\omega_{\epsilon}))(\epsilon Z_{(j)},\epsilon W)\|_{C^{0,\alpha}(\ti{B}_{(j),2}^{k}\times B_{2}^{m-k})}\\
            \leq& \epsilon^{4-\delta}\|S(\omega)(\epsilon Z_{(j)},\epsilon W)\|_{C^{0,\alpha}(\ti{B}_{(j),2}^{k}\times B_{2}^{m-k})}+\epsilon^{4-\delta}\|S(\omega_{\epsilon})-S(\omega_{\eta,\epsilon})(\epsilon Z_{(j)},\epsilon W)\|_{C^{0,\alpha}(\ti{B}_{(j),2}^{k}\times B_{2}^{m-k})}\\
            \leq& C_{1}\epsilon^{4-\delta}+C_{2}\epsilon^{4-\delta}\epsilon^{-2}\Big(|\epsilon Z_{(j)}^{j}|(1+\sum_{l\neq j}|\epsilon Z_{(j)}^{l}|^{2})^{\frac{1}{2}}+|\epsilon W|\Big)\\
            \leq& C\epsilon^{3-\delta}.
        \end{split}
    \end{equation}
    
    Case 2: Fix $\epsilon\leq r\leq 2R_{0}\epsilon$, then $\frac{\epsilon}{2}\leq d\leq 3R_{0}\epsilon$. According to (\ref{Compare3.5}), and Proposition \ref{metric_Uni}, we have
    \begin{equation}
        \begin{split}
            &r^{4-\delta}\|(S(\omega)-S(\omega_{\epsilon}))(r\ti{Z},r\ti{W})\|_{C^{0,\alpha}(B_{2}^{k}\setminus B_{1}^{k}\times B_{2}^{m-k})}\\
            \leq& C_{1}r^{4-\delta}+r^{4-\delta}\|(S(\omega_{\epsilon})-S(\omega_{\eta,\epsilon}))(r\ti{Z},r\ti{W})\|_{C^{0,\alpha}(B_{2}^{k}\setminus B_{1}^{k}\times B_{2}^{m-k})}\\
            \leq& C_{1}r^{4-\delta}+C_{2}r^{4-\delta}r^{-2}(|r\ti{Z}|+|r\ti{W}|)\\
            \leq& C\epsilon^{3-\delta}.
        \end{split}
    \end{equation}
    
    Case 3: When $2R_{0}\epsilon\leq r\leq \frac{4}{3}r_{\epsilon}$, we have $R_{0}\epsilon\leq d\leq 2r_{\epsilon}$. Then according to (\ref{Compare5}),
    \begin{equation}
        \begin{split}
            &r^{4-\delta}\|(S(\omega)-S(\omega_{\epsilon}))(r\ti{Z},r\ti{W})\|_{C^{0,\alpha}(B_{2}^{k}\setminus B_{1}^{k}\times B_{2}^{m-k})}\\
            \leq& C_{1}r^{4-\delta}+C_{2}r^{4-\delta}\epsilon^{2k-2}r^{-2k}(|r\ti{Z}|+|r\ti{W}|)\\
            \leq& Cr_{\epsilon}^{3-\delta}.
        \end{split}
    \end{equation}
    
    Case 4: When $\frac{4}{3}r_{\epsilon}\leq r\leq 4r_{\epsilon}$ and $R_0 \epsilon \leq d \leq 2\epsilon$, we apply (\ref{Compare1}) and estimate
    \begin{equation}
        \begin{split}
             &r^{4-\delta}\|(S(\omega)-S(\omega_{\epsilon}))(r\ti{Z},r\ti{W})\|_{C^{0,\alpha}(B_{2}^{k}\setminus B_{1}^{k}\times B_{2}^{m-k})}\leq C_{1}r^{4-\delta}\epsilon^{2k-2}r^{-2k}\leq Cr_{\epsilon}^{3-\delta}.
        \end{split}
    \end{equation}

    Note that in Cases 3 and 4, we use $r_{\epsilon}=\epsilon^{\frac{k}{k+1}}$.
    
    Case 5: When $r>4r_{\epsilon}$, we may assume that $d>2r_{\epsilon}$, then $\omega=\omega_{\epsilon}$ in this region. In view of these estimates, we have the desired conclusion.
\end{proof}

\begin{proposition}\label{Solv}
    Given any $\delta \in (-1,0)$. For every $\epsilon$ sufficiently small, (\ref{Equation0}) admits a solution $\phi$ which satisfies $\|\phi\|_{C^{4,\alpha}_{\delta}(Bl_{X}M)}\leq c\epsilon^{2-\delta}$ with the constant $c$ as in Corollary \ref{Contrac}.
\end{proposition}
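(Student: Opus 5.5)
We will solve \eqref{Equation0} as a fixed point of $\mathcal{N}_{\epsilon}$ on a closed ball in $C^{4,\alpha}_{\delta}(Bl_{X}M)$, using the contraction mapping principle. Set
\[
\mathcal{B}_{\epsilon}=\{\phi\in C^{4,\alpha}_{\delta}(Bl_{X}M)\ :\ \|\phi\|_{C^{4,\alpha}_{\delta}(Bl_{X}M)}\leq c\,\epsilon^{2-\delta}\},
\]
where $c$ is the constant of Corollary \ref{Contrac}. Note that a fixed point $\phi=\mathcal{N}_{\epsilon}(\phi)$ satisfies $\ti{L}_{\omega_{\epsilon}}\phi=S(\omega)-S(\omega_{\epsilon})-Q_{\omega_{\epsilon}}(\phi)$, which is equivalent to \eqref{Equation0}.

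\emph{Step 1: comparing the weighted norms.} Since $\tau_{\epsilon}\geq\epsilon$, for $\delta<2$ we have
\[
\|\phi\|_{C^{4,\alpha}_{2}}\leq C\epsilon^{\delta-2}\|\phi\|_{C^{4,\alpha}_{\delta}}.
\]
This holds region by region: $r^{-2}\leq r^{-\delta}\epsilon^{\delta-2}$ for $r\geq\epsilon$, and the weight is the same on the compact part. Hence every $\phi\in\mathcal{B}_{\epsilon}$ satisfies $\|\phi\|_{C^{4,\alpha}_{2}}\leq Cc$. After shrinking $c$ if necessary, this is at most the constant $c$ of Proposition \ref{UniEst} and Corollary \ref{Contrac}. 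So on $\mathcal{B}_{\epsilon}$ the metrics $\omega_{\epsilon}+\sqrt{-1}\di\dbar\phi$ are Kähler and $\mathcal{N}_{\epsilon}$ is a $\tfrac12$-contraction in the $C^{4,\alpha}_{\delta}$ norm.

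\emph{Step 2: $\mathcal{N}_{\epsilon}$ maps $\mathcal{B}_{\epsilon}$ into itself.} By Corollary \ref{UniBound},
\[
\|\mathcal{N}_{\epsilon}(\phi)\|_{C^{4,\alpha}_{\delta}}\leq\|\mathcal{N}_{\epsilon}(\phi)-\mathcal{N}_{\epsilon}(0)\|_{C^{4,\alpha}_{\delta}}+\|\mathcal{N}_{\epsilon}(0)\|_{C^{4,\alpha}_{\delta}}\leq\tfrac12 c\epsilon^{2-\delta}+K\|S(\omega)-S(\omega_{\epsilon})\|_{C^{0,\alpha}_{\delta-4}}.
\]
Here we used $Q_{\omega_{\epsilon}}(0)=0$. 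Proposition \ref{CompareS} bounds the last term by $KCr_{\epsilon}^{3-\delta}$. With $r_{\epsilon}=\epsilon^{k/(k+1)}$ we get
\[
r_{\epsilon}^{3-\delta}=\epsilon^{\frac{k(3-\delta)}{k+1}},
\]
and its exponent exceeds $2-\delta$ exactly when $k(3-\delta)>(k+1)(2-\delta)$, i.e.\ $k>2-\delta$. Since $k\geq3$ and $\delta\in(-1,0)$ give $2-\delta<3\le k$, this holds. Therefore $KCr_{\epsilon}^{3-\delta}\leq\tfrac12 c\epsilon^{2-\delta}$ for $\epsilon$ small, and $\mathcal{N}_{\epsilon}(\mathcal{B}_{\epsilon})\subset\mathcal{B}_{\epsilon}$.

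\emph{Step 3: conclusion.} $\mathcal{B}_{\epsilon}$ is a closed subset of a Banach space, so Banach's fixed point theorem gives a unique $\phi\in\mathcal{B}_{\epsilon}$ with $\mathcal{N}_{\epsilon}(\phi)=\phi$. By the remark above, $\phi$ solves \eqref{Equation0} with $\|\phi\|_{C^{4,\alpha}_{\delta}}\leq c\epsilon^{2-\delta}$.

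The main obstacle is the bookkeeping of constants in Steps 1 and 2. The same $c$ must work both for the smallness hypothesis in Proposition \ref{Qestimate} (measured in $C^{4,\alpha}_{2}$) and for the radius of the ball (measured in $C^{4,\alpha}_{\delta}$). The exponent comparison $k(3-\delta)/(k+1)>2-\delta$ is also essential, and it is exactly where the choice of $r_{\epsilon}$ and the hypothesis $k\geq3$ enter.
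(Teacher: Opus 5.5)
Your proposal is correct and is essentially the paper's argument. It runs the contraction mapping on the $C^{4,\alpha}_{\delta}$-ball of radius $c\epsilon^{2-\delta}$: the weight comparison gives $\|\phi\|_{C^{4,\alpha}_{2}}\leq c$, Corollary \ref{Contrac} gives the contraction, and Propositions \ref{CompareS} and \ref{Inverse} together with the exponent inequality $\frac{k}{k+1}(3-\delta)>2-\delta$ (i.e.\ $k>2-\delta$) give the self-map property. Incidentally, the comparison constant in your Step 1 is exactly $1$ for $\epsilon<1$, because $\epsilon^{\delta-2}\geq 1$ also covers the unweighted compact part, so no shrinking of $c$ is needed.
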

\begin{proof}[Proof of Proposition \ref{Solv}]
    Note that $\|\phi\|_{C^{4,\alpha}_{\delta}(Bl_{X}M)}\leq c\epsilon^{2-\delta}$ implies that $\phi\in C^{4,\alpha}_{2}(Bl_{X}M)$. In fact, we get $\|\phi\|_{C^{4,\alpha}_{2}(Bl_{X}M)}\leq c$. According to Corollary \ref{Contrac}, $\mathcal{N}_{\epsilon}$ satisfies the contraction property when restricted in the $C^{4,\alpha}_{\delta}$-norm ball $B_{c\epsilon^{2-\delta}}$. It remains to show that $\mathcal{N}_{\epsilon}: B_{c\epsilon^{2-\delta}}\to B_{c\epsilon^{2-\delta}}$.

    Note that $\mathcal{N}_{\epsilon}(0)=(L_{\omega_{\epsilon}})^{-1}(S(\omega)-S(\omega_{\epsilon}))$
    Then, according to Proposition \ref{CompareS} and \ref{Inverse}, we have
    \begin{equation}
        \|\mathcal{N}_{\epsilon}(0)\|_{C^{4,\alpha}_{\delta}(Bl_{X}M)}\leq Cr_{\epsilon}^{3-\delta}=C\epsilon^{2-\delta}\epsilon^{\frac{k}{k+1}(3-\delta)-2+\delta}
    \end{equation}
    We have that $\frac{k}{k+1}(3-\delta)-2+\delta>0$ as $k\geq 3$ and $\delta>-1$. Then, according to Corollary \ref{Contrac},
    \begin{equation}
        \begin{split}
            \|\mathcal{N}_{\epsilon}(\phi)\|_{C^{4,\alpha}_{\delta}(Bl_{X}M)}
            \leq& \|\mathcal{N}_{\epsilon}(\phi)-\mathcal{N}_{\epsilon}(0)\|_{C^{4,\alpha}_{\delta}(Bl_{X}M)}+\|\mathcal{N}_{\epsilon}(0)\|_{C^{4,\alpha}_{\delta}(Bl_{X}M)}\\
            \leq& \frac{1}{2}\|\phi\|_{C^{4,\alpha}_{\delta}(Bl_{X}M)}+Cr_{\epsilon}^{3-\delta}.
        \end{split}
    \end{equation}
    Take $\epsilon$ sufficiently small such that $C\epsilon^{\frac{k}{k+1}(3-\delta)-2+\delta}\leq\frac{1}{2}c$. Then for any $\|\phi\|_{C^{4,\alpha}_{\delta}(Bl_{X}M)}\leq c\epsilon^{2-\delta}$,
    \begin{equation}
        \|\mathcal{N}_{\epsilon}(\phi)\|_{C^{4,\alpha}_{\delta}(Bl_{X}M)}\leq c\epsilon^{2-\delta}.
    \end{equation}
    Therefore, by the contraction mapping theorem, $\mathcal{N}_{\epsilon}$ has a fixed point $\phi$ that solves (\ref{Equation0}). 
\end{proof}

\section{The case of codimension \texorpdfstring{$2$}{2}}
\label{section6}

When $k=2$, the logarithmic term in \eqref{backmetric2} introduces additional technical difficulties in deriving an estimate analogous to \eqref{S_diff_est}. Consequently, a direct application of the contraction mapping theorem as in Proposition \ref{Solv} is not available. However, we observe that the undesirable asymptotic behavior is confined to a small ``annular" region, and that the terms responsible for it can be singled out. To that end, we consider the following modified equation
\begin{equation}\label{Equation2}
\begin{split}
    S(\omega)-S(\omega_{\epsilon}+\sqrt{-1}\di\dbar\phi)=-\sum_{i=1}^{d}\phi(q_{i})f_{i}&+\Big(1-\gamma(\frac{2d}{\epsilon^{\theta}})\Big)\Big(S(\omega)-S(\omega_{\epsilon})\Big)\\
    &-\gamma(\frac{2d}{\epsilon^{\theta}})\chi_{\epsilon^{\beta}}Q_{\omega}\Bigl(\epsilon^{2}\gamma\bigl(\frac{d}{\epsilon^{\theta}}\bigr)\log\bigl(\frac{d^{2}}{\epsilon^{2}}\bigr)\Bigr).
\end{split}
\end{equation}
Here $\chi_{\epsilon^{\beta}}: [0, \infty) \rightarrow [0, 1]$ is a smooth cut-off function such that
\begin{equation}
    \chi_{\epsilon^{\beta}}(d)=
    \begin{cases}
      1, \ 2\epsilon^{\beta}<d<\epsilon^{\theta}, \\
      0, \ d<\epsilon^{\beta}, \ d>2\epsilon^{\theta},
    \end{cases}
\end{equation}
and $\beta$ is any number in $(\frac{1}{2},\frac{2}{3})$. Recall that $\theta \in (0, \frac{1}{2})$ is introduced in \eqref{backmetric2}.

To solve $\phi$ such that $S(\omega_{\epsilon}+\sqrt{-1}\di\dbar \phi)$ is of the same sign with $S(\omega)$, we need to show that the $C^{0}$ norms of the right-hand side in \eqref{Equation2} tend to $0$ when $\epsilon\to 0$. We estimate these terms in the following.

\begin{proposition}\label{cut_S-S}
For any $\theta\in(0,\frac{1}{2})$, we have
\begin{equation}
    \Big\| \Big(1-\gamma(\frac{2d}{\epsilon^{\theta}})\Big)\Big(S(\omega)-S(\omega_{\epsilon})\Big) \Big\|_{C^{0}(Bl_{X}M)}\to 0\ \ \text{as}\ \epsilon\to 0.
\end{equation}
\end{proposition}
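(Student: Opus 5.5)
The plan is to localize the estimate by looking at where the cut-off is supported, and then to bound $S(\omega)-S(\omega_{\epsilon})$ in the one region that remains using the estimate \eqref{OriCompare} of Lemma \ref{MC2}.

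\textbf{Step 1: reduction to an annulus.} Since $\gamma(t)=1$ for $t<1$, the factor $1-\gamma(\frac{2d}{\epsilon^{\theta}})$ vanishes where $d<\frac{1}{2}\epsilon^{\theta}$, and it is bounded by $1$ everywhere. On the other hand, $\gamma(\frac{d}{\epsilon^{\theta}})=0$ for $d>2\epsilon^{\theta}$. By \eqref{backmetric2}, this gives $\omega_{\epsilon}=\omega$ there, so $S(\omega)-S(\omega_{\epsilon})=0$. Hence it suffices to show
\[
\sup_{\{\frac12\epsilon^{\theta}\le d\le 2\epsilon^{\theta}\}}|S(\omega)-S(\omega_{\epsilon})|\to 0 .
\]
For $\epsilon$ small this annulus lies inside the tubular neighborhood $\{d<r_0\}$. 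It is therefore covered by the fixed finitely many charts of Lemma \ref{holosubcoor}, by compactness of $X$, so all constants below are uniform.

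\textbf{Step 2: the metric estimate on the annulus.} In such a chart, \eqref{OriCompare} gives, for $|I|+|J|\le 2$,
\[
|\partial_z^I\partial_w^J(g_{\omega_{\epsilon}}-g_{\omega})|\le C\,\epsilon^{2-\theta(2+|I|)}\log\tfrac{1}{\epsilon}.
\]
The worst case is $|I|=2$, which gives order $\epsilon^{2-4\theta}\log\frac1\epsilon$. This tends to $0$ precisely because $\theta<\frac12$; the lower-order cases decay even faster. Two consequences follow. First, $g_{\omega_{\epsilon}}$ is uniformly equivalent to $g_{\omega}$, so $g_{\omega_{\epsilon}}^{-1}$ is bounded, and $g_{\omega_\epsilon}^{-1}-g_\omega^{-1}=O(\epsilon^{2-2\theta}\log\frac1\epsilon)$. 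Second, $g_{\omega_{\epsilon}}$ has first and second derivatives bounded uniformly in $\epsilon$, since $g_\omega$ is smooth.

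\textbf{Step 3: comparing scalar curvatures.} Write
\[
S(\tilde\omega)=-\tilde g^{i\bar j}\partial_i\partial_{\bar j}\log\det\tilde g ,
\]
which is a universal smooth expression in $\tilde g^{-1}$, $\partial\tilde g$ and $\partial^2\tilde g$. Subtracting the expressions for $\omega_{\epsilon}$ and $\omega$ term by term, and using the mean value theorem together with the uniform bounds from Step 2, I expect
\[
|S(\omega_{\epsilon})-S(\omega)|\le C\Big(|\partial^2(g_{\omega_\epsilon}-g_\omega)|+|\partial(g_{\omega_\epsilon}-g_\omega)|+|g_{\omega_\epsilon}-g_\omega|\Big)\le C\epsilon^{2-4\theta}\log\tfrac1\epsilon .
\]
This tends to $0$ as $\epsilon\to0$, which proves the proposition.

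The only delicate point is the second-derivative term in Step 3. There the $\epsilon^{\theta}$-scale of the annulus costs $\epsilon^{-4\theta}$ against the $\epsilon^{2}$ in front of the potential, and this is exactly where the restriction $\theta<\frac12$ in \eqref{backmetric2} is used. Everything else is routine bookkeeping.
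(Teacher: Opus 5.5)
Your argument is correct and is essentially the paper's. Both localize to the annulus where $\omega_{\epsilon}=\omega+\epsilon^{2}\sqrt{-1}\partial\bar\partial\bigl(\gamma(\frac{d}{\epsilon^{\theta}})\log\frac{d^{2}}{\epsilon^{2}}\bigr)$, pay $\epsilon^{-4\theta}$ for four derivatives of this potential, and obtain the rate $\epsilon^{2-4\theta}\log\frac{1}{\epsilon}$. The paper routes this through the weighted norm $\widetilde{C}^{0}_{-4,\delta}$, the curvature comparison \eqref{curv_est_2} and the estimate \eqref{Sec6_need2}, whereas you work pointwise from \eqref{OriCompare}. Your annulus $\{\frac12\epsilon^{\theta}\le d\le 2\epsilon^{\theta}\}$ is in fact the correct one. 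The paper writes the upper bound as $\epsilon^{\theta}$, although $\omega_{\epsilon}\neq\omega$ up to $d=2\epsilon^{\theta}$; this does not affect the estimate.
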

\begin{proof}[Proof of Proposition \ref{cut_S-S}]
By Propositions \ref{metric_Uni} and \ref{UniEst},
\begin{equation}
    \begin{split}
        &\Big\| (1-\gamma(\frac{2d}{\epsilon^{\theta}}))\Big(S(\omega)-S(\omega_{\epsilon})\Big) \Big\|_{C^{0}(Bl_{X}M)}\\
        \leq& C_{0}[\log\frac{1}{\epsilon}]^{\delta}(\epsilon^{\theta})^{\delta-4}\|S(\omega)-S(\omega_{\epsilon})\|_{\ti{C}^{0}_{-4,\delta}(\{ \frac{1}{2}\epsilon^{\theta}\leq d\leq \epsilon^{\theta} \})} \\
        \leq& C_{1}[\log\frac{1}{\epsilon}]^{\delta}(\epsilon^{\theta})^{\delta-4}\| \epsilon^{2}\gamma\bigl(\frac{d}{\epsilon^{\theta}}\bigr)\log\frac{d^{2}}{\epsilon^{2}} \|_{\ti{C}^{4}_{0,\delta}(\{ \frac{1}{2}\epsilon^{\theta}\leq d\leq \epsilon^{\theta} \})}\\
        \leq& C\epsilon^{2-4\theta}\log\frac{1}{\epsilon}\to 0 \ \text{as}\ \epsilon\to 0.
    \end{split}
\end{equation}
In the last step, we use an estimate on $\| \epsilon^{2}\gamma\bigl(\frac{d}{\epsilon^{\theta}}\bigr)\log\frac{d^{2}}{\epsilon^{2}} \|_{\ti{C}^{4}_{0,\delta}(\{ \frac{1}{2}\epsilon^{\theta}\leq d\leq \epsilon^{\theta} \})}$. See \eqref{Sec6_need2} in Appendix \ref{App_D} for details.
\end{proof}

We next state a lemma analogous to \cite[Lemma 8.18]{Sze2014}; see also Proposition \ref{Qestimate}. It will be used to estimate the last term on the right-hand side of \eqref{Equation2}. Since its proof is essentially the same as that of \cite[Lemma 8.18]{Sze2014}, we defer the details to Appendix \ref{PQ2}.

\begin{definition}[Weighted H\"older space in a compact domain]\label{WeightedM-X}
Let $l \geq 0$ and $t$ be integers, $\alpha \in(0,1)$, and $\delta\in \R$. For any $0\leq r_{1}<r_{2}$. The weighted H\"{o}lder space $\ti{C}^{l,\alpha}_{t,\delta}(\{r_{1}\leq d \leq r_{2} \}  )$ is defined for all $f\in C^{4,\alpha}(\{r_{1}\leq d \leq r_{2} \})$ with the finite weighted norm defined as follows.
\begin{equation}
\begin{split}
     \|f\|_{\ti{C}^{l,\alpha}_{t,\delta}(\{r_{1}\leq d\leq r_{2}\})}=
      \sup\limits_{p\in X}\sup\limits_{ \frac{2}{3}r_{1}\leq r\leq 2r_{2}} 
      \sup\limits_{0\leq |I|\leq l}
      &[\ti{\tau}(r)]^{-\delta}\Big\{ 
    [\tau(r)]^{t}\|\di^{I}f\|_{C^{0}((B_{2r}^{2}\setminus B_{r}^{2}\times B_{2r}^{m-2})_{p}\cap \{r_{1}\leq d \leq r_{2}  \})}   \\
    &+[\tau(r)]^{t+\alpha}\|\di^{I}f\|_{C^{\alpha}((B_{2r}^{2}\setminus B_{r}^{2}\times B_{2r}^{m-2})_{p}\cap \{r_{1}\leq d \leq r_{2}  \})} \Big\}.
\end{split}
\end{equation}
Here $I=(i_1, \cdots, i_m)$ with $\sum_{k=1}^m i_k=|I|$ is a multi-index. Note that $\{ r_{1} \leq d \leq r_{2}\} \subset \cup_{p\in X}\Big(B_{2r_{2}}^{2}\setminus B_{\frac{2}{3}r_{1}}^{2}\times B_{2r_{2}}^{m-2}\Big)$. Moreover, $\tau(r)\equiv \epsilon$ when $r\leq\epsilon$ and $r$ can be taken to be $0$ validly. 

Similarly, the definition of $\|f\|_{C^{l,\alpha}_{\delta}( \{ r_{1} \leq d \leq r_{2} \} )}$ follows by replacing the weight function from $\ti{\tau}$ to $\tau$.
\end{definition}

For any fixed $\epsilon>0$, the above weighted H\"older space $\ti{C}^{l,\alpha}_{t,\delta}(\{r_{1}\leq d\leq r_{2} \})$ is a Banach space since it is equivalent to the usual H\"{o}lder space $C^{l,\alpha}(\{r_{1}\leq d\leq r_{2} \})$.

\begin{lemma}[Based on {\cite[Lemma 8.18]{Sze2014}}]\label{Q2}
    Take $0<r_{1}<r_{2}$, and $\vphi,\psi\in C^{4,\alpha}_{2}(\{r_{1}\leq d \leq r_{2}\})$ such that
    \begin{equation}
        \|\vphi\|_{C^{4,\alpha}_{2}(\{r_{1}\leq d \leq r_{2}\})}, \ \|\psi\|_{C^{4,\alpha}_{2}(\{r_{1}\leq d \leq r_{2}\})}\leq c
    \end{equation}
    for some $c>0$ sufficiently small.
    Fix any $\delta\in\R$, view $Q_{\omega}:\ti{C}^{4,\alpha}_{0,\delta}(\{r_{1}\leq d \leq r_{2} \})\to \ti{C}^{0,\alpha}_{-4,\delta}(\{r_{1}\leq d \leq r_{2} \})$ as a map between Banach spaces, we have the following estimate:
    \begin{equation}
       \begin{split}
        &\|Q_{\omega}(\vphi)-Q_{\omega}(\psi)\|_{\ti{C}^{0,\alpha}_{-4,\delta}(\{r_{1}\leq d \leq r_{2}\})}\\
        \leq & C(\|\vphi\|_{C^{4,\alpha}_{2}(\{r_{1}\leq d \leq r_{2} \})}+\|\psi\|_{C^{4,\alpha}_{2}(\{r_{1}\leq d \leq r_{2} \})})\|\vphi-\psi\|_{\ti{C}^{4,\alpha}_{0,\delta}(\{r_{1}\leq d\leq r_{2}\})}.
      \end{split}
    \end{equation}
    The constant $C>0$ can be taken such that it does not depend on $r_{1},r_{2}$ and $\delta$.

    The result naturally extends to whole $Bl_{X}M$. Namely, when $\|\vphi\|_{C^{4,\alpha}_{2}(Bl_{X}M)}, \ \|\psi\|_{C^{4,\alpha}_{2}(Bl_{X}M)}\leq c$ and $c$ is sufficiently small, we have
    \begin{equation}
        \|Q_{\omega}(\vphi)-Q_{\omega}(\psi)\|_{\ti{C}^{0,\alpha}_{-4,\delta}(Bl_{X}M)}\leq C(\|\vphi\|_{C^{4,\alpha}_{2}(Bl_{X}M)}+\|\psi\|_{C^{4,\alpha}_{2}(Bl_{X}M)})\|\vphi-\psi\|_{\ti{C}^{4,\alpha}_{0,\delta}(Bl_{X}M)}.
    \end{equation}
\end{lemma}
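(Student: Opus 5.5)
We follow the proof of \cite[Lemma 8.18]{Sze2014}, working scale by scale on the dyadic annuli that define the norm in Definition \ref{WeightedM-X}. The first step is to write the nonlinear part as an integral of linearizations. Since $S(\omega+\sqrt{-1}\di\dbar\phi)=S(\omega)+L_{\omega}\phi+Q_{\omega}(\phi)$, the mean value theorem gives
\begin{equation*}
Q_{\omega}(\vphi)-Q_{\omega}(\psi)=\int_{0}^{1}\Big(L_{\omega_{\chi_{t}}}-L_{\omega}\Big)(\vphi-\psi)\,dt,\qquad \chi_{t}=t\vphi+(1-t)\psi .
\end{equation*}
Here $\omega_{\chi_{t}}=\omega+\sqrt{-1}\di\dbar\chi_{t}$. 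We also have $\|\chi_{t}\|_{C^{4,\alpha}_{2}}\leq\|\vphi\|_{C^{4,\alpha}_{2}}+\|\psi\|_{C^{4,\alpha}_{2}}\leq 2c$. The lemma therefore reduces to the operator bound
\begin{equation*}
\|(L_{\omega_{\chi}}-L_{\omega})u\|_{\ti{C}^{0,\alpha}_{-4,\delta}(\{r_{1}\leq d\leq r_{2}\})}\leq C\|\chi\|_{C^{4,\alpha}_{2}(\{r_{1}\leq d\leq r_{2}\})}\|u\|_{\ti{C}^{4,\alpha}_{0,\delta}(\{r_{1}\leq d\leq r_{2}\})}.
\end{equation*}
This is the analogue of \eqref{opera_est_2}, but with the background $\omega$ in place of $\omega_{\epsilon}$ and on a domain with boundary.

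To prove the operator bound, we fix $p\in X$ and a scale $r\in[\frac23 r_{1},2r_{2}]$. On the annulus $A_{r}=(B^{2}_{2r}\setminus B^{2}_{r}\times B^{m-2}_{2r})_{p}\cap\{r_{1}\leq d\leq r_{2}\}$ we rescale coordinates by $\tau(r)$. The factor $[\ti{\tau}(r)]^{-\delta}$ is the same on both sides, so it simply cancels. The remaining weights $[\tau(r)]^{t}$, with $t=0$ on the right and $t=-4$ on the left, match exactly the scaling of a fourth-order operator.

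After rescaling, the hypothesis $\|\chi\|_{C^{4,\alpha}_{2}}\leq 2c$ says that the rescaled metric $\tau(r)^{-2}\omega_{\chi}$ differs from $\tau(r)^{-2}\omega$ by at most $C\|\chi\|_{C^{4,\alpha}_{2}}$ in $C^{2,\alpha}$. Moreover $\tau(r)^{-2}\omega$ is uniformly equivalent to the Euclidean metric in $C^{2,\alpha}$, since $\omega$ is smooth on $M$ and $\tau(r)\leq r_{0}$. Writing
\begin{equation*}
L_{\omega}u=-\Delta^{2}_{\omega}u-Ric_{\omega}\cdot\di\dbar u
\end{equation*}
in local coordinates, we obtain a polynomial expression in $g^{-1}$, $\di g$, $\di^{2}g$ and $\di^{\leq4}u$. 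The difference of two such expressions is bounded on $A_{r}$ by $C\|g_{\chi}-g\|_{C^{2,\alpha}}\|u\|_{C^{4,\alpha}}$. This uses the uniform bounds on $g$ and $g^{-1}$, exactly as in Proposition \ref{UniEst}. The extra term $Ric_{\omega}\cdot\di\dbar u$ picks up a factor $\tau(r)^{2}\leq r_{0}^{2}$ under rescaling, so it only helps.

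Taking the supremum over $p$ and $r$ gives the operator bound with a constant independent of $r_{1}$, $r_{2}$ and $\delta$. This independence holds because $\delta$ enters only through the common factor $[\ti{\tau}(r)]^{-\delta}$, which cancels, and the domain enters only through restriction to $A_{r}$.

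The main obstacle is the H\"older seminorm near the boundaries $d=r_{1}$ and $d=r_{2}$, where $A_{r}$ is cut off and need not be a nice domain. We plan to handle this by estimating products of H\"older functions pointwise, using $[fg]_{\alpha}\leq\|f\|_{C^{0}}[g]_{\alpha}+[f]_{\alpha}\|g\|_{C^{0}}$. This requires no interior estimates, and hence no Schauder theory on the truncated domain. A second subtle point is the factor $[\tau(r)]^{t+\alpha}$ for $r\leq\epsilon$, where $\tau\equiv\epsilon$; there we simply rescale by $\epsilon$ and use the same argument.

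For the global statement on $Bl_{X}M$, the region $\{d\geq r_{0}\}$ uses the unweighted part of the norm and the same algebraic estimate. Near $E$ we use the $\epsilon$-scaled blow-up charts as in \eqref{WeightedBLxM2_norm}. There $\omega$ is read as $\sigma^{*}\omega$, which is degenerate along $E$. The $\omega_{\chi}$ used in \eqref{Equation2} is supported in $\{\epsilon^{\beta}\leq d\leq 2\epsilon^{\theta}\}$, so we expect to apply the global statement only where the annular estimate suffices. We will note this explicitly, and otherwise argue with $\omega$ replaced by $\omega_{\epsilon}$ via Proposition \ref{UniEst}.
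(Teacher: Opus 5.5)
Your proposal is correct and follows essentially the same route as the paper: you write $Q_{\omega}(\vphi)-Q_{\omega}(\psi)$ via the mean value theorem along $\chi_t=t\vphi+(1-t)\psi$ as $(L_{\omega_{\chi_t}}-L_{\omega})(\vphi-\psi)$. You then bound this operator difference scale by scale using $\|g_{\omega_{\chi_t}}-g_{\omega}\|_{C^{2,\alpha}_{0}}+\|g^{-1}_{\omega_{\chi_t}}-g^{-1}_{\omega}\|_{C^{2,\alpha}_{0}}\leq C\|\chi_t\|_{C^{4,\alpha}_{2}}$, with the common factor $[\ti{\tau}(r)]^{-\delta}$ cancelling, which gives independence of $r_1$, $r_2$ and $\delta$. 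Your extra remarks do not change the argument, but they address points the paper leaves implicit. These are the product rule for H\"older seminorms on truncated annuli, and the observation that $\sigma^{*}\omega$ is degenerate along $E$, so the global version is really only needed with $\omega_{\epsilon}$ in place of $\omega$, as supplied by Proposition~\ref{UniEst}.
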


\begin{proposition}\label{cut_Qlog}
Fix any $\beta\in (\frac{1}{2},\frac{2}{3})$. Assume that $\delta$ satisfies $\max(-1, \frac{6\beta-4}{\beta-\theta}) <\delta <0$. Then we have
\begin{equation}
    \Big\|-\gamma(\frac{2d}{\epsilon^{\theta}})\chi_{\epsilon^{\beta}}Q_{\omega}\Bigl(\epsilon^{2}\gamma\bigl(\frac{d}{\epsilon^{\theta}}\bigr)\log\bigl(\frac{d^{2}}{\epsilon^{2}}\bigr)\Bigr)\Big\|_{C^{0}(Bl_{X}M)}\to 0,\ \ \text{as}\ \epsilon \to 0.
\end{equation}
\end{proposition}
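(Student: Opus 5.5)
The plan is to reduce the $C^0$ bound to a weighted estimate on the support of the cut-offs and then apply Lemma \ref{Q2}. The product $\gamma(\frac{2d}{\epsilon^{\theta}})\chi_{\epsilon^{\beta}}$ is supported in $\{\epsilon^{\beta}\leq d\leq \epsilon^{\theta}\}$. On this set $\gamma(d/\epsilon^{\theta})\equiv 1$ for $d\le \epsilon^\theta$, so the argument of $Q_\omega$ is simply $u_\epsilon:=\epsilon^{2}\log(d^{2}/\epsilon^{2})$. I will first record the elementary estimate, valid in the coordinates of Lemma \ref{holosubcoor} via \eqref{diff_d_w}--\eqref{diff_d_z},
\[
\partial^{I}_{z}\partial^{J}_{w}u_\epsilon=O\bigl(\epsilon^{2}\log\tfrac{1}{\epsilon}\,d^{-|I|}\bigr),\qquad |I|\ge 1,
\]
together with $|u_\epsilon|\le C\epsilon^2\log\frac1\epsilon$. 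This gives two weighted bounds on $\{\epsilon^{\beta}/2\le d\le 2\epsilon^{\theta}\}$. The first is
\[
\|u_\epsilon\|_{C^{4,\alpha}_{2}}\le C\epsilon^{2}\log\tfrac1\epsilon\,(\epsilon^{\beta})^{-2}=C\epsilon^{2-2\beta}\log\tfrac1\epsilon .
\]
This tends to $0$ because $\beta<1$, so the smallness hypothesis of Lemma \ref{Q2} holds for small $\epsilon$. The second is
\[
\|u_\epsilon\|_{\widetilde C^{4,\alpha}_{0,\delta}}\le C\,\epsilon^{2}\log\tfrac1\epsilon\,\sup_{\epsilon^\beta\le r\le \epsilon^\theta}[\widetilde\tau(r)]^{-\delta}\le C\epsilon^{2}\log\tfrac1\epsilon\,(\epsilon^{\theta}\log\tfrac1\epsilon)^{-\delta},
\]
using $\delta<0$ and the fact that $\widetilde\tau(r)=r\log\frac1\epsilon$ there.

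Next I apply Lemma \ref{Q2} on $\{r_1\le d\le r_2\}$ with $r_1=\epsilon^\beta$, $r_2=\epsilon^\theta$, taking $\psi=0$, so that $Q_\omega(0)=0$. This gives
\[
\|Q_\omega(u_\epsilon)\|_{\widetilde C^{0,\alpha}_{-4,\delta}}\le C\|u_\epsilon\|_{C^{4,\alpha}_{2}}\|u_\epsilon\|_{\widetilde C^{4,\alpha}_{0,\delta}}.
\]
To convert this to a pointwise bound, I use the definition of the norm: at distance $d\approx r$,
\[
|Q_\omega(u_\epsilon)|\le [\widetilde\tau(r)]^{\delta}r^{-4}\|Q_\omega(u_\epsilon)\|_{\widetilde C^{0}_{-4,\delta}}.
\]
The prefactor $[\widetilde\tau(r)]^{\delta}r^{-4}$ is largest at the inner radius $r=\epsilon^{\beta}$, where it equals $(\epsilon^{\beta}\log\frac1\epsilon)^{\delta}\epsilon^{-4\beta}$. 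The cut-offs are bounded by $1$, so the $C^0$ norm is at most
\[
C\,\epsilon^{2-2\beta}\cdot\epsilon^{2-\theta\delta}\cdot\epsilon^{\beta\delta-4\beta}\,(\log\tfrac1\epsilon)^{C'}=C\epsilon^{4-6\beta+\delta(\beta-\theta)}(\log\tfrac1\epsilon)^{C'} .
\]
The exponent $4-6\beta+\delta(\beta-\theta)$ is positive exactly when $\delta>\frac{6\beta-4}{\beta-\theta}$, which is the hypothesis. Since $\beta<\frac23$ this lower bound is negative, so it is compatible with $\delta<0$. Hence the quantity tends to $0$.

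The main obstacle is the bookkeeping between the two weighted norms. Lemma \ref{Q2} is stated with the $\widetilde C$ norms, whereas the quadratic smallness comes from the $C_2$ norm, and I have to make sure that the norm on the sub-annulus $\{\epsilon^\beta\le d\le\epsilon^\theta\}$ is what controls the estimate, not the weight at $d\sim\epsilon$. A cruder alternative, if the weights misbehave, is to estimate $Q_\omega(u)$ directly. Since $Q_\omega(u)$ is a sum of terms that are at least quadratic in $\partial\bar\partial u$ and its derivatives up to order four, it is $O(|\nabla^2u|\,|\nabla^4u|+\cdots)=O(\epsilon^4(\log\frac1\epsilon)^2d^{-6})$. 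Evaluated at $d=\epsilon^\beta$ this gives $\epsilon^{4-6\beta}(\log\frac1\epsilon)^2$, which already tends to $0$ because $\beta<\frac23$. I would present this direct pointwise computation as a cross-check.
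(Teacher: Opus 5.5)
Your main argument is the paper's proof. You restrict to $\{\epsilon^{\beta}\le d\le \epsilon^{\theta}\}$, where the argument of $Q_{\omega}$ is $\epsilon^{2}\log(d^{2}/\epsilon^{2})$, use the bounds $C\epsilon^{2-2\beta}\log\frac{1}{\epsilon}$ and $C\epsilon^{2-\theta\delta}(\log\frac{1}{\epsilon})^{1-\delta}$ from \eqref{Sec6_need1}--\eqref{Sec6_need2}, apply Lemma \ref{Q2} with $\psi=0$, and convert back to $C^{0}$ with the worst weight $(\log\frac{1}{\epsilon})^{\delta}(\epsilon^{\beta})^{\delta-4}$ at the inner radius, arriving at $\epsilon^{4-6\beta+(\beta-\theta)\delta}(\log\frac{1}{\epsilon})^{2}$ exactly as the paper does.

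Your pointwise cross-check $|Q_{\omega}(u_\epsilon)|\le C\epsilon^{4}d^{-6}$ is also correct and in fact sharper: it gives $\epsilon^{4-6\beta}\to 0$ with no condition on $\delta$. This shows the hypothesis $\delta>\frac{6\beta-4}{\beta-\theta}$ is only an artifact of evaluating the $\widetilde{C}_{0,\delta}$ weight of $u_\epsilon$ at the outer radius and the $C^{0}$ conversion at the inner one.
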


\begin{proof}[Proof of Proposition \ref{cut_Qlog}]
We only need to consider when $\epsilon^{\beta}\leq d\leq \epsilon^{\theta}$. By \eqref{Sec6_need1} in Appendix \ref{App_D}, 
\begin{equation}
    \| \epsilon^{2}\log\bigl(\frac{d^{2}}{\epsilon^{2}}\bigr) \|_{C^{4,\alpha}_{2}(\{ \epsilon^{\beta}\leq d\leq \epsilon^{\theta} \})}\leq C\epsilon^{2-2\beta}\log\frac{1}{\epsilon}\to 0, \ (\epsilon\to 0).
\end{equation}
We may choose $\epsilon_{0}>0$ sufficiently small so that $C_{3}\epsilon^{2-2\beta}\log\frac{1}{\epsilon}\leq c$ for every $\epsilon\in(0,\epsilon_{0}]$, where $c$ is the constant required in Lemma \ref{Q2}. Recall \eqref{Sec6_need1} in Appendix \ref{E_log}.
\begin{equation}
    \| \epsilon^{2}\log\bigl(\frac{d^{2}}{\epsilon^{2}}\bigr) \|_{\ti{C}^{4,\alpha}_{0,\delta}(\{ \epsilon^{\beta}\leq d\leq \epsilon^{\theta} \})}\leq C\epsilon^{2-\theta\delta}[\log\frac{1}{\epsilon}]^{1-\delta}.
\end{equation}
Then, according to Lemma \ref{Q2}, when $\delta$ is sufficiently close to $0$, we may derive
\begin{equation}
\begin{split}
    &\Big\|-\gamma(\frac{2d}{\epsilon^{\theta}})\chi_{\epsilon^{\beta}}Q_{\omega}\Bigl(\epsilon^{2}\gamma\bigl(\frac{d}{\epsilon^{\theta}}\bigr)\log\bigl(\frac{d^{2}}{\epsilon^{2}}\bigr)\Bigr)\Big\|_{C^{0}(Bl_{X}M)}\\
    \leq& C[\log\frac{1}{\epsilon}]^{\delta}(\epsilon^{\beta})^{\delta-4} \Big\|Q_{\omega}\Bigl(\epsilon^{2}\gamma\bigl(\frac{d}{\epsilon^{\theta}}\bigr)\log\bigl(\frac{d^{2}}{\epsilon^{2}}\bigr)\Bigr)\Big\|_{\ti{C}^{0,\alpha}_{-4,\delta}(\{\epsilon^{\beta}\log\frac{1}{\epsilon}\leq d\leq\epsilon^{\theta}\log\frac{1}{\epsilon}\})} \\
    \leq& C_{0}[\log\frac{1}{\epsilon}]^{\delta}(\epsilon^{\beta})^{\delta-4} \| \epsilon^{2}\log\bigl(\frac{d^{2}}{\epsilon^{2}}\bigr) \|_{C^{4,\alpha}_{2}(\{ \epsilon^{\beta}\leq d\leq \epsilon^{\theta} \})} \cdot \| \epsilon^{2}\log\bigl(\frac{d^{2}}{\epsilon^{2}}\bigr) \|_{\ti{C}^{4,\alpha}_{0,\delta}(\{ \epsilon^{\beta}\leq d\leq \epsilon^{\theta} \})}\\
    \leq& C_{1}\epsilon^{4-6\beta+(\beta-\theta)\delta}[\log\frac{1}{\epsilon}]^{2}\to 0, \ (\epsilon\to 0).
\end{split}
\end{equation}
\end{proof}

In view of the linearized operator $L_{\omega_{\epsilon}}$ defined in \eqref{def_L}, \eqref{Equation2} can be written as
\begin{equation}\label{Equation2_2}
    \gamma(\frac{2d}{\epsilon^{\theta}})\Big(S(\omega)-S(\omega_{\epsilon}) +\chi_{\epsilon^{\beta}}Q_{\omega}(\epsilon^{2}\gamma\bigl(\frac{d}{\epsilon^{\theta}}\bigr)\log\bigl(\frac{d^{2}}{\epsilon^{2}}\bigr))\Big)=\ti{L}_{\omega_{\epsilon}}\phi+Q_{\omega_{\epsilon}}\phi.
\end{equation}
We introduce
\begin{equation}
    F_{\epsilon}= \gamma(\frac{2d}{\epsilon^{\theta}})\Big(S(\omega)-S(\omega_{\epsilon}) +\chi_{\epsilon^{\beta}}Q_{\omega}(\epsilon^{2}\gamma\bigl(\frac{d}{\epsilon^{\theta}}\bigr)\log\bigl(\frac{d^{2}}{\epsilon^{2}}\bigr))\Big).
\end{equation}
To solve (\ref{Equation2_2}), we seek a fixed point of the map
\begin{equation}
\begin{split}
        \mathcal{N}_{\epsilon}:\ &\ti{C}^{4,\alpha}_{0,\delta}(Bl_{X}M) \to \ti{C}^{4,\alpha}_{0,\delta}(Bl_{X}M) \\
        &\phi\mapsto
        \ti{L}_{\omega_{\epsilon}}^{-1}\Big(F_{\epsilon}-Q_{\omega_{\epsilon}}(\phi)\Big),
\end{split}
\end{equation}
More precisely, we look for a fixed point $\phi$ satisfying $\|\phi \|_{\ti{C}^{4,\alpha}_{0,\delta}(Bl_{X}M,\omega_{\epsilon})}
\longrightarrow 0$ as $\epsilon\to 0$. Moreover, for all sufficiently small $\epsilon$,
$\|\phi \|_{C^{4,\alpha}_{2}(Bl_{X}M,\omega_{\epsilon})}\leq c$, where $c>0$ is a sufficiently small constant, independent of $\epsilon$, chosen so that the estimates in Proposition \ref{UniEst} and Lemma \ref{Q2} apply.

As a crucial step in the contraction mapping argument, we estimate $\|F_{\epsilon}\|_{\ti{C}^{0,\alpha}_{-4,\delta}}$ in the following proposition.

\begin{proposition}\label{F}
    There exist constants $C>0$ and $\kappa>0$ independent of sufficiently small $\epsilon$, such that 
    \begin{equation}
       \|F_{\epsilon}\|_{\ti{C}^{0,\alpha}_{-4,\delta}(Bl_{X}M)}\leq C\epsilon^{2-\delta+\kappa}. 
    \end{equation}
\end{proposition}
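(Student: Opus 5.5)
Since $F_{\epsilon}$ is supported in $\{d\leq \epsilon^{\theta}\}$ (because $\gamma(2d/\epsilon^{\theta})=0$ for $d\geq\epsilon^{\theta}$), I will estimate the weighted norm $\|F_{\epsilon}\|_{\ti{C}^{0,\alpha}_{-4,\delta}}$ region by region, splitting according to the scale $r$ appearing in Definition \ref{WeightedBLxM2}. The basic identity driving everything is the Taylor expansion of scalar curvature around $\omega$: on $\{\epsilon\lesssim d\leq\epsilon^{\theta}\}$ we have $\omega_{\epsilon}=\omega+\sqrt{-1}\di\dbar u_{\epsilon}$ with $u_{\epsilon}=\epsilon^{2}\gamma(d/\epsilon^{\theta})\log(d^{2}/\epsilon^{2})$, hence
\begin{equation*}
S(\omega)-S(\omega_{\epsilon})=-L_{\omega}u_{\epsilon}-Q_{\omega}(u_{\epsilon}).
\end{equation*}
Thus on $\{2\epsilon^{\beta}\leq d\leq \epsilon^{\theta}/2\}$, where $\gamma(2d/\epsilon^{\theta})=\chi_{\epsilon^{\beta}}=1$, the $Q_{\omega}$ term cancels exactly and $F_{\epsilon}=-L_{\omega}u_{\epsilon}=\Delta^{2}_{\omega}u_{\epsilon}+Ric_{\omega}\cdot\di\dbar u_{\epsilon}$. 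The point of the modification in \eqref{Equation2} is precisely this cancellation of the quadratic term, which otherwise behaves like $\epsilon^{4}d^{-6}$ and is too large.

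\textbf{Step 1 (annular region $2\epsilon^{\beta}\leq d\leq\epsilon^{\theta}/2$).} Here $\log(d^{2}/\epsilon^{2})=\log|z|^{2}+\log(d^{2}/|z|^{2})-\log\epsilon^{2}$; the constant $\log\epsilon^{2}$ is killed by $L_{\omega}$, and $\log|z|^{2}$ is pluriharmonic-like in the Euclidean sense ($\Delta_{euc}^{2}\log|z|^{2}=0$ away from $0$ in $\mathbb{C}^{2}$, indeed $\di\dbar\log|z|^2$ is the pullback Fubini--Study form and $\Delta_{euc}\log|z|^{2}$ in $z$ vanishes for $k=2$ up to the $w$-directions). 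Using Lemma \ref{holosubcoor} and \eqref{EucCompare}, $g_{\omega}-g_{euc}=O(|z|+|w|)$ and $d/|z|=\sqrt{1+\rho}$ with $\rho=O(|z|+|w|)$, so $L_{\omega}u_{\epsilon}=O(\epsilon^{2}d^{-3})$ with the corresponding derivative/H\"older bounds at scale $r$, improving the naive $\epsilon^{2}d^{-4}$ by one power of $d$. Then the weighted contribution at scale $r$ is $[\ti\tau(r)]^{-\delta}r^{4}\epsilon^{2}r^{-3}\lesssim \epsilon^{2}r(r\log\frac1\epsilon)^{-\delta}$, maximized at $r=\epsilon^{\theta}$, giving $\epsilon^{2+\theta(1-\delta)}(\log\frac1\epsilon)^{-\delta}$, which is $\leq C\epsilon^{2-\delta+\kappa}$ for $\kappa<\theta(1-\delta)+\delta$; this is positive once $\delta$ is close to $0$ (already guaranteed by $\delta>\frac{6\beta-4}{\beta-\theta}$ and $\delta\in(-1,0)$, possibly shrinking further). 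I expect this first-order cancellation to be the \emph{main obstacle}: one must check carefully, with the limited smoothness of the normal coordinates and of $d$ in \eqref{diff_d_w}--\eqref{diff_d_z}, that the leading $\epsilon^{2}d^{-4}$ term really vanishes, i.e. that $\Delta_{euc}^{2}\log|z|^{2}=0$ and the $w$-derivatives contribute $O(d)$ relative gain.

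\textbf{Step 2 (outer cut-off region $\epsilon^{\theta}/2\leq d\leq\epsilon^{\theta}$).} Here I use \eqref{OriCompare} directly: $S(\omega)-S(\omega_{\epsilon})=O(\epsilon^{2-4\theta}\log\frac1\epsilon)$ with matching H\"older seminorms at scale $\epsilon^{\theta}$, and the $\chi_{\epsilon^{\beta}}Q_{\omega}(u_{\epsilon})$ term is bounded via Lemma \ref{Q2} as in Proposition \ref{cut_Qlog} by $\epsilon^{4-6\theta}\log^{2}$, even smaller. Weighted, this gives $(\epsilon^{\theta})^{4}\epsilon^{2-4\theta}\log\frac1\epsilon\cdot(\epsilon^{\theta}\log\frac1\epsilon)^{-\delta}\approx\epsilon^{2-\theta\delta}\log$, and $2-\theta\delta>2-\delta$ since $\theta<1$, $\delta<0$, so some $\kappa>0$ works. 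Derivatives of the cut-off $\gamma(2d/\epsilon^{\theta})$ cost $\epsilon^{-\theta}$ per derivative, consistent with the scale-$r$ H\"older norm.

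\textbf{Step 3 (inner region $d\leq 2\epsilon^{\beta}$).} On $\epsilon\lesssim d\leq 2\epsilon^{\beta}$ I bound $Q_{\omega}$ term as in Proposition \ref{cut_Qlog} (contribution $\lesssim\epsilon^{4-6\beta+(\beta-\theta)\delta}$ times weights, fine by the assumption on $\delta$ and $\beta<2/3$), and $S(\omega)-S(\omega_{\epsilon})$ by comparison with the scalar-flat model $\omega_{\eta,\epsilon}$ via \eqref{BSCompare}: $S(\omega_{\epsilon})-S(\omega_{\eta,\epsilon})=O(r^{-2}(r))=O(r^{-1})$ at scale $r$ plus $O(1)$, so the weighted contribution is $\lesssim r^{3}(r\log\frac1\epsilon)^{-\delta}$ with $r\leq 2\epsilon^{\beta}$, i.e. $\epsilon^{\beta(3-\delta)}\log$; since $\beta>1/2$ and $\delta>-1$, $\beta(3-\delta)>2-\delta$, giving $\kappa>0$. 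The region $d\leq\epsilon$ in blow-up charts is handled identically using \eqref{BSCompare} and the $\epsilon$-scale term of the norm, yielding $\epsilon^{3-\delta}\log$. Taking $\kappa$ the minimum of the three exponent gains completes the proof.
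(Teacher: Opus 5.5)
Your Step 1 is the paper's argument (Appendix \ref{EF}: $L_\omega\log d^2=G_3d^{-3}+G_2d^{-2}$, gain $\theta+(1-\theta)\delta$). However, Steps 2 and 3 each rest on an exponent comparison that goes the wrong way, so the proposal as written does not give $\|F_\epsilon\|_{\ti{C}^{0,\alpha}_{-4,\delta}}\le C\epsilon^{2-\delta+\kappa}$.

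In Step 2 you bound $S(\omega)-S(\omega_\epsilon)$ on $\{\epsilon^\theta/2\le d\le\epsilon^\theta\}$ crudely via \eqref{OriCompare} and get $\epsilon^{2-\theta\delta}\log\frac1\epsilon$. But for $\delta<0$ and $\theta<1$ one has $2-\theta\delta<2-\delta$, so this is larger than $\epsilon^{2-\delta}$ by a factor $\epsilon^{-(1-\theta)|\delta|}\to\infty$. That crude $\epsilon^2d^{-4}$-size error is exactly what the paper keeps out of $F_\epsilon$. It does so by moving $(1-\gamma(2d/\epsilon^\theta))(S(\omega)-S(\omega_\epsilon))$ to the right-hand side of \eqref{Equation2}, where only $C^0$-smallness is needed (Proposition \ref{cut_S-S}). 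The fix is simply not to split this annulus off. On all of $\{2\epsilon^\beta<d<\epsilon^\theta\}$ one has $\gamma(d/\epsilon^\theta)\equiv1$ and $\chi_{\epsilon^\beta}\equiv1$, so $F_\epsilon=-\gamma(2d/\epsilon^\theta)L_\omega u_\epsilon$ holds there. The factor $\gamma(2d/\epsilon^\theta)$ multiplies the whole bracket and need not equal $1$. Since it has bounded scale-$r$ H\"older norm for $r\le 2\epsilon^\theta$, your Step 1 bound covers this annulus too, which is how the paper treats it.

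In Step 3 you use $S(\omega_\epsilon)-S(\omega_{\eta,\epsilon})=O(r^{-1})$ and get $\epsilon^{\beta(3-\delta)}$. The gain over $\epsilon^{2-\delta}$ is then $(3\beta-2)+(1-\beta)\delta$, which is negative because $\beta<\frac23$ and $\delta<0$. The condition $\beta<\frac23$ cannot be relaxed, since it is what makes the last term of \eqref{Equation2} $C^0$-small (Proposition \ref{cut_Qlog}). The paper's bound on $\{d\le 2\epsilon^\beta\}$ is $C[\log\frac1\epsilon]^{-\delta}(\epsilon^\beta)^{4-\delta}$. That is, it uses that $S(\omega)-S(\omega_\epsilon)$ is $O(1)$, not $O(r^{-1})$, at these scales, which gives the gain $(4\beta-2)+(1-\beta)\delta$; this is where $\beta>\frac12$ enters.

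To justify the $O(1)$ bound you need more than the literal form of \eqref{BSCompare}. For $\epsilon\le d<\epsilon^\theta$ one has $\omega_\epsilon-\omega_{\eta,\epsilon}=\sqrt{-1}\partial\bar\partial\bigl(\phi+\epsilon^2\log(1+\rho)\bigr)$, and each piece can be controlled:
\begin{itemize}
\item $\phi$ is smooth, so its direct contribution to the scalar curvature is $O(1)$.
\item The non-smooth part carries the factor $\epsilon^2$ and contributes $O(\epsilon^2d^{-3})$.
\item All cross terms with $g_{\omega_{\eta,\epsilon}}-g_{euc}=O(\epsilon^2d^{-2})$, whose derivatives each gain a factor $d^{-1}$, are also $O(\epsilon^2d^{-3})$ for $d\ge\epsilon$.
\end{itemize}
Since $S(\omega_{\eta,\epsilon})=0$, this gives $S(\omega)-S(\omega_\epsilon)=O(1)+O(\epsilon^2d^{-3})$ with matching scale-$r$ H\"older bounds. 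Its weighted size is at most $C(r^{4-\delta}+\epsilon^2r^{1-\delta})(\log\frac1\epsilon)^{-\delta}$ for $r\le 2\epsilon^\beta$, which is admissible.
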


\begin{proof}[Proof of Proposition \ref{F}]
    When $2\epsilon^{\beta}<d<\epsilon^{\theta}$, we have
    \begin{equation}
    F_{\epsilon}=\gamma(\frac{2d}{\epsilon^{\theta}})\Big(S(\omega)-S(\omega_{\epsilon}) +\chi_{\epsilon^{\beta}}Q_{\omega}(\epsilon^{2}\gamma\bigl(\frac{d}{\epsilon^{\theta}}\bigr)\log\bigl(\frac{d^{2}}{\epsilon^{2}}\bigr))\Big)\\
        =-\gamma(\frac{2d}{\epsilon^{\theta}})L_{\omega}\big( \epsilon^{2}\log\frac{d^{2}}{\epsilon^{2}} \big).
   \end{equation}
    According to \eqref{F_epsi_est1} in Appendix \ref{EF}, we have
    \begin{equation}
        \|F_{\epsilon}\|_{\ti{C}^{0,\alpha}_{-4,\delta}(\{2\epsilon^{\beta}<d<\epsilon^{\theta}\})}\leq C\epsilon^{2-\delta}\epsilon^{\theta+(1-\theta)\delta}(\log\frac{1}{\epsilon})^{-\delta}.
    \end{equation}

    When $\epsilon^{\beta}\leq d\leq 2\epsilon^{\beta}$,
    \begin{equation}
        F_{\epsilon}=S(\omega)-S(\omega_{\epsilon})+\chi_{\epsilon^{\beta}}Q_{\omega}\Big(\epsilon^{2}\gamma\bigl(\frac{d}{\epsilon^{\theta}}\bigr)\log\bigl(\frac{d^{2}}{\epsilon^{2}}\bigr)\Big).
    \end{equation}
    According to \eqref{BSCompare} in Lemma \ref{MC2}, we have
    \begin{equation}
    \begin{split}
        \|S(\omega)-S(\omega_{\epsilon})\|_{\ti{C}^{0,\alpha}_{-4,\delta}(\{\epsilon^{\beta}\leq d\leq 2\epsilon^{\beta} \})}
        \leq& \|S(\omega)\|_{\ti{C}^{0,\alpha}_{-4,\delta}(\{\epsilon^{\beta}\leq d\leq2\epsilon^{\beta} \})}+\|S(\omega_{\epsilon})-S(\omega_{\epsilon,\eta})\|_{\ti{C}^{0,\alpha}_{-4,\delta}(\{\epsilon^{\beta}\leq d\leq2\epsilon^{\beta} \})}\\
        \leq& C[\log\frac{1}{\epsilon}]^{-\delta}(\epsilon^{\beta})^{(4-\delta)}=C\epsilon^{2-\delta}\epsilon^{(4\beta-2)+(1-\beta)\delta}[\log\frac{1}{\epsilon}]^{-\delta}.
    \end{split}
    \end{equation}
    According to Lemma \ref{Q2} and calculations in section \ref{E_log},
    \begin{equation}
    \begin{split}
        &\|Q_{\omega}\Big(\epsilon^{2}\gamma\bigl(\frac{d}{\epsilon^{\theta}}\bigr)\log\bigl(\frac{d^{2}}{\epsilon^{2}}\bigr)\Big)\|_{\ti{C}^{0,\alpha}_{-4,\delta}(\{\epsilon^{\beta}\leq d\leq 2\epsilon^{\beta}\})}\\
        \leq& C_{0}\|\epsilon^{2}\log\frac{d^{2}}{\epsilon^{2}}\|_{C^{4,\alpha}_{2}(\{\epsilon^{\beta}\leq d\leq 2\epsilon^{\beta}\})}\|\epsilon^{2}\log\frac{d^{2}}{\epsilon^{2}}\|_{\ti{C}^{4,\alpha}_{0,\delta}(\{\epsilon^{\beta}\leq d\leq 2\epsilon^{\beta}\})} \\
        \leq& C[\epsilon^{2-2\beta}\log\frac{1}{\epsilon}]\cdot[\epsilon^{2-\delta\beta}(\log\frac{1}{\epsilon})^{-\delta}] \\
        =& C\epsilon^{2-\delta}\epsilon^{(2-2\beta)+(1-\beta)\delta}(\log\frac{1}{\epsilon})^{1-\delta}
    \end{split}
    \end{equation}

When $d<\epsilon^{\beta}$,
\begin{equation}
    F_{\epsilon}=S(\omega)-S(\omega_{\epsilon}).
\end{equation}
Then, by Lemma \ref{MC2} and Proposition \ref{metric_Uni},
\begin{equation}
\begin{split}
    \|F_{\epsilon}\|_{\ti{C}^{0,\alpha}_{-4,\delta}(\{d\leq\epsilon^{\beta} \})}
    \leq& \|S(\omega)\|_{\ti{C}^{0,\alpha}_{-4,\delta}(\{d\leq\epsilon^{\beta} \})}+\|S(\omega_{\epsilon})-S(\omega_{\eta,\epsilon})\|_{\ti{C}^{0,\alpha}_{-4,\delta}(\{d\leq\epsilon^{\beta} \})} \\
    \leq& C_{1}[\log\frac{1}{\epsilon}]^{-\delta}(\epsilon^{\beta})^{4-\delta}+[\log\frac{1}{\epsilon}]^{-\delta}(\epsilon^{\beta})^{4-\delta}(C_{2}+C_{3}\epsilon^{2}) \\
    \leq& C\epsilon^{\beta(4-\delta)}[\log\frac{1}{\epsilon}]^{-\delta}
    =C\epsilon^{2-\delta}\epsilon^{(4\beta-2)+(1-\beta)\delta}[\log\frac{1}{\epsilon}]^{-\delta}.
\end{split}
\end{equation}
Recall that $\beta\in(\frac{1}{2},\frac{2}{3})$ and we may fix $\delta\in(-1,0)$ sufficiently close to $0$. we can take
\begin{equation}
    \kappa=\frac{1}{2}\min\{(4\beta-2)+(1-\beta)\delta, \ (2-2\beta)+(1-\beta)\delta, \ \theta+(1-\theta)\delta\},
\end{equation}
which yields the conclusion.
\end{proof}

The estimates in Lemma \ref{Q2} ensure that we can follow the proof of \cite[Lemma 8.18]{Sze2014} to obtain the following contraction property. 
\begin{corollary}\label{Contrac2}
    Suppose $\delta\in(-1,0)$, there exists a constant $c>0$ independent of $\epsilon$ such that for any $\|\phi_{i}\|_{C^{4,\alpha}_{2}(Bl_{X}M)}\leq c$, the following holds.
    \begin{equation}
        \|\mathcal{N}_{\epsilon}(\phi_{1})-\mathcal{N}_{\epsilon}(\phi_{2})\|_{\ti{C}^{4,\alpha}_{0,\delta}(Bl_{X}M)}\leq \frac{1}{2}\|\phi_{1}-\phi_{2}\|_{\ti{C}^{4,\alpha}_{0,\delta}(Bl_{X}M)}.
    \end{equation}
\end{corollary}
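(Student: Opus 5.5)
The plan is to combine the uniform bound on the inverse from Corollary \ref{UniBound} (the $k=2$ case) with a difference estimate for $Q_{\omega_{\epsilon}}$ in the modified weighted norms. For any $\phi_1,\phi_2$ we have
\begin{equation*}
\mathcal{N}_{\epsilon}(\phi_{1})-\mathcal{N}_{\epsilon}(\phi_{2})=-\ti{L}_{\omega_{\epsilon}}^{-1}\Big(Q_{\omega_{\epsilon}}(\phi_{1})-Q_{\omega_{\epsilon}}(\phi_{2})\Big),
\end{equation*}
because $F_{\epsilon}$ cancels. Corollary \ref{UniBound} then gives
\begin{equation*}
\|\mathcal{N}_{\epsilon}(\phi_{1})-\mathcal{N}_{\epsilon}(\phi_{2})\|_{\ti{C}^{4,\alpha}_{0,\delta}}\leq K\|Q_{\omega_{\epsilon}}(\phi_{1})-Q_{\omega_{\epsilon}}(\phi_{2})\|_{\ti{C}^{0,\alpha}_{-4,\delta}},
\end{equation*}
with $K$ independent of $\epsilon$. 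It therefore suffices to prove the analogue of Lemma \ref{Q2} with $\omega$ replaced by the background metric $\omega_{\epsilon}$, with a constant independent of $\epsilon$:
\begin{equation*}
\|Q_{\omega_{\epsilon}}(\phi_{1})-Q_{\omega_{\epsilon}}(\phi_{2})\|_{\ti{C}^{0,\alpha}_{-4,\delta}}\leq C\big(\|\phi_{1}\|_{C^{4,\alpha}_{2}}+\|\phi_{2}\|_{C^{4,\alpha}_{2}}\big)\|\phi_{1}-\phi_{2}\|_{\ti{C}^{4,\alpha}_{0,\delta}}.
\end{equation*}
Once this holds, choose $c$ with $2KCc\leq\frac12$.

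To prove the difference estimate, I would follow \cite[Lemma 8.18]{Sze2014}. Write $\phi_t=\phi_2+t(\phi_1-\phi_2)$. By the mean value theorem,
\begin{equation*}
Q_{\omega_{\epsilon}}(\phi_{1})-Q_{\omega_{\epsilon}}(\phi_{2})=\int_0^1\big(L_{\omega_{\phi_t,\epsilon}}-L_{\omega_{\epsilon}}\big)(\phi_1-\phi_2)\,dt,
\end{equation*}
since $DQ_{\omega_\epsilon}|_{\phi}=L_{\omega_{\phi,\epsilon}}-L_{\omega_\epsilon}$. For $\|\phi_i\|_{C^{4,\alpha}_2}\leq c$ we also have $\|\phi_t\|_{C^{4,\alpha}_2}\leq c$. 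Estimate \eqref{opera_est_2} of Proposition \ref{UniEst} then bounds each integrand in $\ti{C}^{0,\alpha}_{-4,\delta}$ by $C\|\phi_t\|_{C^{4,\alpha}_{2}}\|\phi_1-\phi_2\|_{\ti{C}^{4,\alpha}_{0,\delta}}$. Integrating in $t$ gives the claim.

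The main obstacle is justifying \eqref{opera_est_2} uniformly in the tilde norms. I would check it scale by scale on each annulus $\{r\leq d\leq 2r\}$ and on the $\epsilon$-neighbourhood of $E$. On these pieces the weight factors $[\ti{\tau}(r)]^{-\delta}r^{-t}$ are essentially constant, so they factor out exactly as in Lemma \ref{Q2}. The operator difference only involves metric coefficients, which Proposition \ref{UniEst} controls in the scale-invariant norm $C^{2,\alpha}_0$ by $\|\phi_t\|_{C^{4,\alpha}_2}$. The key point is that the same weight $\ti{\tau}^{-\delta}$ appears on both sides, so the logarithmic weight introduces no loss. Any extra bookkeeping from the term $\sum\phi(q_i)f_i$ is absent, since it is linear and sits inside $\ti{L}_{\omega_\epsilon}$.
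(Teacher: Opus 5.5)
Your proposal is correct and follows essentially the paper's route. The paper obtains this corollary by running the argument of \cite[Lemma 8.18]{Sze2014}: a mean value estimate for $Q$ whose derivative is the difference of linearized operators (as in the proof of Lemma \ref{Q2}), combined with the uniform inverse bound of Corollary \ref{UniBound}. You correctly note that the estimate actually needed is for $Q_{\omega_{\epsilon}}$ rather than the $Q_{\omega}$ of Lemma \ref{Q2}, and that it follows directly from \eqref{opera_est_2}; this is exactly what makes the cited argument go through.
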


Combining these estimates, we apply the contraction mapping principle to solve \eqref{Equation2}.

\begin{proposition}\label{Solv2}
Suppose that $\delta\in(-1,0)$. Then, for every sufficiently small $\epsilon$, \eqref{Equation2} admits a solution $\phi$ satisfying
\(
\|\phi\|_{\ti{C}^{4,\alpha}_{0,\delta}(Bl_{X}M)}
\leq \frac{1}{3}c\epsilon^{2-\delta}[\log\frac{1}{\epsilon}]^{-\delta},
\)
where the constant $c$ is sufficiently small such that the estimates in Proposition \ref{UniEst}, Lemma \ref{Q2} and Corollary \ref{Contrac2} are valid.
\end{proposition}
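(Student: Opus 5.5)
We prove Proposition \ref{Solv2} by running the argument of Proposition \ref{Solv} for the map $\mathcal{N}_{\epsilon}(\phi)=\ti{L}_{\omega_{\epsilon}}^{-1}(F_{\epsilon}-Q_{\omega_{\epsilon}}(\phi))$. It acts on the closed ball
\[
\mathcal{B}_{\epsilon}=\Big\{\phi\in\ti{C}^{4,\alpha}_{0,\delta}(Bl_{X}M)\ :\ \|\phi\|_{\ti{C}^{4,\alpha}_{0,\delta}}\leq \tfrac{1}{3}c\,\epsilon^{2-\delta}[\log\tfrac{1}{\epsilon}]^{-\delta}\Big\}.
\]
Equation \eqref{Equation2} was rewritten as \eqref{Equation2_2}, that is, as $F_{\epsilon}=\ti{L}_{\omega_{\epsilon}}\phi+Q_{\omega_{\epsilon}}(\phi)$. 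A fixed point of $\mathcal{N}_{\epsilon}$ in $\mathcal{B}_{\epsilon}$ is therefore exactly the required solution. We fix $\delta\in(-1,0)$ close enough to $0$ that Propositions \ref{cut_Qlog} and \ref{F} apply, in particular so that $\kappa>0$.

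\textbf{Step 1 (comparison of norms).} First we show that membership in $\mathcal{B}_{\epsilon}$ gives $\|\phi\|_{C^{4,\alpha}_{2}(Bl_{X}M)}\leq c$. This lets us use Proposition \ref{UniEst}, Lemma \ref{Q2} and Corollary \ref{Contrac2}. Compare the two weights scale by scale. On the annuli $\epsilon\leq r\leq r_{0}$ the $C^{4,\alpha}_{2}$ norm carries the factor $r^{-2}$. The $\ti{C}^{4,\alpha}_{0,\delta}$ norm carries $[\ti{\tau}(r)]^{-\delta}$, and since $-\delta>0$ and $\ti{\tau}(r)\geq \epsilon\log\frac1\epsilon$, this factor is at least $(\epsilon\log\frac{1}{\epsilon})^{-\delta}$. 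Hence
\[
r^{-2}\|f(r\cdot)\|\leq r^{-2}[\ti{\tau}(r)]^{\delta}\|f\|_{\ti{C}^{4,\alpha}_{0,\delta}}.
\]
For $\phi\in\mathcal{B}_{\epsilon}$ the right-hand side is at most $\frac13 c\,\epsilon^{2-\delta}[\log\frac1\epsilon]^{-\delta}\, r^{-2}[\ti{\tau}(r)]^{\delta}$, and this is maximized at $r=\epsilon$, where it equals $\frac{1}{3}c\,\epsilon^{-\delta}\epsilon^{\delta}=\frac13 c$. The same computation works in the region $d\leq\epsilon$ and, trivially, in the region $d\geq r_{0}$.

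\textbf{Step 2 (self-map).} By Corollary \ref{UniBound} we have $\|\mathcal{N}_{\epsilon}(0)\|_{\ti{C}^{4,\alpha}_{0,\delta}}\leq K\|F_{\epsilon}\|_{\ti{C}^{0,\alpha}_{-4,\delta}}$, and Proposition \ref{F} bounds this by $KC\epsilon^{2-\delta+\kappa}$. Because $\epsilon^{\kappa}[\log\frac1\epsilon]^{\delta}\to 0$, for $\epsilon$ small this is at most $\frac16 c\,\epsilon^{2-\delta}[\log\frac1\epsilon]^{-\delta}$. Combining with the contraction estimate of Corollary \ref{Contrac2} gives, for $\phi\in\mathcal{B}_{\epsilon}$,
\[
\|\mathcal{N}_{\epsilon}(\phi)\|\leq \tfrac12\|\phi\|+\|\mathcal{N}_{\epsilon}(0)\|\leq \tfrac16 c\,\epsilon^{2-\delta}[\log\tfrac1\epsilon]^{-\delta}+\tfrac16 c\,\epsilon^{2-\delta}[\log\tfrac1\epsilon]^{-\delta},
\]
so $\mathcal{N}_{\epsilon}$ maps $\mathcal{B}_{\epsilon}$ into itself. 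The Banach fixed point theorem on the closed set $\mathcal{B}_{\epsilon}$ then produces the solution.

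\textbf{Main obstacle.} The delicate point is Step 1. We must check that the factor $[\log\frac1\epsilon]^{-\delta}$ built into the radius of $\mathcal{B}_{\epsilon}$ exactly compensates the logarithmic weight $\ti{\tau}$ at the inner scale $r\sim\epsilon$, so that the $C^{4,\alpha}_{2}$ bound does not lose a power of $\log\frac1\epsilon$. The threshold $c$ must also be uniform in $\epsilon$. The exceptional-chart term must be handled in the same way: there the weights are $[\ti{\tau}(\epsilon)]^{-\delta}\epsilon^{0}$ against $\epsilon^{-2}$, which again gives exactly $\frac13c$. If the inequality turns out to be off by a constant factor, we will shrink the radius of $\mathcal{B}_{\epsilon}$ accordingly; Step 2 still goes through, since Proposition \ref{F} provides the extra decay $\epsilon^{\kappa}$.
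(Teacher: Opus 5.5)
Your proposal is correct and follows essentially the same route as the paper. You compare the $\ti{C}^{4,\alpha}_{0,\delta}$ and $C^{4,\alpha}_{2}$ weights scale by scale to get $\|\phi\|_{C^{4,\alpha}_{2}}\leq c$ on the ball, then combine Corollary \ref{Contrac2} with the bound $\|\mathcal{N}_{\epsilon}(0)\|\leq K\epsilon^{2-\delta+\kappa}$ from Propositions \ref{F} and \ref{Inverse} to get a self-map and apply the contraction mapping theorem. Your monotonicity argument for $r^{-2}[\ti{\tau}(r)]^{\delta}$, which is maximal at $r=\epsilon$ and also covers the range where $\ti{\tau}\equiv 1$, is slightly cleaner than the paper's bookkeeping, and your explicit restriction to $\delta$ near $0$ (so that $\kappa>0$) is what the paper's proof uses implicitly.
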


\begin{proof}[Proof of Proposition \ref{Solv2}]
    Fix $p\in X$ and a coordinate neighborhood $B_{2r_{0}}^{2}\times B_{2r_{0}}^{m-2}$ of Lemma \ref{holosubcoor} type. Since $\|\phi\|_{\ti{C}^{4,\alpha}_{0,\delta}(Bl_{X}M)}\leq \frac{1}{3}c\epsilon^{2-\delta}[\log\frac{1}{\epsilon}]^{-\delta}$, we have for any $\frac{2}{3}\epsilon\leq r\leq r_{0}$,
    \begin{align}
        & \epsilon^{-2}\|\phi(\epsilon Z_{(j)},\epsilon W)\|_{C^{4,\alpha}(\ti{B}_{(j),2}^{2}\times B_{2}^{m-2})}
        \leq \epsilon^{-2}\|\phi\|_{\ti{C}^{4,\alpha}_{0,\delta}(Bl_{X}M)}(\epsilon\log\frac{1}{\epsilon})^{\delta}\leq \frac{1}{3}c; \\
        & r^{-2}\|\phi(rZ,rW)\|_{C^{4,\alpha}(B_{2}^{2}\setminus B_{1}^{2}\times B_{2}^{m-2})}
        \leq r^{-2}\|\phi\|_{\ti{C}^{4,\alpha}_{0,\delta}(Bl_{X}M)}(r\log\frac{1}{\epsilon})^{\delta}\leq c(\frac{\epsilon}{r})^{2-\delta}\leq \frac{1}{3}(\frac{3}{2})^{2-\delta}c.
    \end{align}
    We may derive that \(
     \|\phi\|_{C^{4,\alpha}_{2}(Bl_{X}M)}\leq c.
    \) Then, according to Corollary \ref{Contrac2}, we already know that $\mathcal{N}_{\epsilon}$ is a contraction when restricted to the $\ti{C}^{4,\alpha}_{0,\delta}$ norm ball $B_{\frac{1}{3}c\epsilon^{2-\delta}[\log\frac{1}{\epsilon}]^{-\delta}}$. Now we show that $\mathcal{N}_{\epsilon}$ maps $B_{\frac{1}{3}c\epsilon^{2-\delta}[\log\frac{1}{\epsilon}]^{-\delta}}$ to itself. Note that $\mathcal{N}_{\epsilon}(0)=(L_{\omega_{\epsilon}})^{-1}(F_{\epsilon})$.
    Then, according to Proposition \ref{F} and Proposition \ref{Inverse}, we have
    \begin{equation}
        \|\mathcal{N}_{\epsilon}(0)\|_{\ti{C}^{4,\alpha}_{0,\delta}(Bl_{X}M)}\leq K\epsilon^{2-\delta+\kappa}.
    \end{equation}
    Then, according to Corollary \ref{Contrac2},
    \begin{equation}
        \begin{split}
            \|\mathcal{N}_{\epsilon}(\phi)\|_{\ti{C}^{4,\alpha}_{0,\delta}(Bl_{X}M)}
            \leq& \|\mathcal{N}_{\epsilon}(\phi)-\mathcal{N}_{\epsilon}(0)\|_{\ti{C}^{4,\alpha}_{0,\delta}(Bl_{X}M)}+\|\mathcal{N}_{\epsilon}(0)\|_{\ti{C}^{4,\alpha}_{0,\delta}(Bl_{X}M)}\\
            \leq& \frac{1}{2}\|\phi\|_{\ti{C}^{4,\alpha}_{0,\delta}(Bl_{X}M)}+K\epsilon^{\kappa}\epsilon^{2-\delta}.
        \end{split}
    \end{equation}
Choose $\epsilon>0$ sufficiently small such that
\(
K\epsilon^{\kappa}\left(\log\frac{1}{\epsilon}\right)^{\delta}\leq \frac{1}{6}c.
\)
Then, for any $\phi$ satisfying
\(
\|\phi\|_{\ti{C}^{4,\alpha}_{0,\delta}(Bl_XM)}
\leq \frac{1}{3}c\epsilon^{2-\delta}[\log\frac{1}{\epsilon}]^{-\delta},
\)
we have
\[
\|\mathcal{N}_{\epsilon}(\phi)\|_{\ti{C}^{4,\alpha}_{0,\delta}(Bl_XM)}
\leq \frac{1}{3}c\epsilon^{2-\delta}[\log\frac{1}{\epsilon}]^{-\delta}.
\]
Therefore, by the contraction mapping theorem, $\mathcal{N}_{\epsilon}$ has a fixed point in the norm ball
\(
B_{\frac{1}{3}c\epsilon^{2-\delta}[\log\frac{1}{\epsilon}]^{-\delta}}.
\)
This fixed point solves \eqref{Equation2}.
\end{proof}

Finally, we conclude the proof of Theorem \ref{MainThm_intro} stated in the introduction.

\begin{proof}[Proof of Theorem \ref{MainThm_intro}]
    When $k\geq 3$, according to Proposition \ref{Solv}, we have a K\"{a}hler metric $\omega_{\epsilon}+\sqrt{-1}\di\dbar\phi$ on $Bl_{X}M$ with $\|\phi\|_{C^{4,\alpha}_{\delta}(Bl_{X}M)}\leq c\epsilon^{2-\delta}$ that satisfies
    \begin{equation}
        S(\omega)-S(\omega_{\epsilon}+\sqrt{-1}\di\dbar\phi)=-\sum_{i=1}^{d}\phi(q_{i})f_{i}.
    \end{equation}
    We take $r_0$ in \eqref{WeightFunc3} suitably small, such that $\{q_{i}\}_{i=1}^{d}\subset M\setminus\{d>2r_0\}$. As $\|\phi\|_{C^{4,\alpha}_{\delta}(Bl_{X}M)}\leq c\epsilon^{2-\delta}$, we get
    \(
    \|\phi\|_{C^{0}(M\setminus\{d>r\})}\leq c\epsilon^{2-\delta}.
    \)
    As a result, we obtain
    \begin{equation}\label{s_close}
    \begin{split}
        \| S(\omega)-S(\omega_{\epsilon}+\sqrt{-1}\di\dbar\phi)\|_{C^{0}(Bl_{X}M)}
        \leq& \sup_{i}\|f_{i}\|_{C^{0}(M)}\|\phi\|_{C^{0}(M\setminus\{d>r\})}\leq C\epsilon^{2-\delta}.
    \end{split}
    \end{equation}
    Note that \eqref{s_close} holds for some constant $C>0$ independent of $\epsilon$. After taking $\epsilon$ sufficiently small,  $S(\omega_{\epsilon}+\sqrt{-1}\di\dbar\phi)$ has the same sign with that of $S(\omega)$. Moreover, standard elliptic regularity for second-order linear equations implies that $\phi \in C^{\infty}(Bl_{X}M)$; see \cite[Section 6.3]{Brown} for the details. Therefore, $S(\omega_{\epsilon}+\sqrt{-1}\di\dbar\phi)$ is a smooth K\"ahler metric on $Bl_{X} M$. When $k=2$, the corresponding conclusion follows by a similar argument, using Propositions \ref{cut_S-S} and \ref{cut_Qlog}.    
\end{proof}

\section{Further remarks}\label{section7}

Let $f: X \rightarrow B$ be a holomorphic submersion between two compact complex manifolds. Assume that the base $B$ is K\"ahler and each fiber $X_b = f^{-1}(b)$ of a point $b \in B$ is a Fano manifold of complex dimension $p$. The goal of this section to construct a K\"ahler metric with $S>0$ on $X$, thereby proving Proposition \ref{Fanofib}. 

The relative canonical bundle is $K_{X/B} \coloneqq K_X\otimes f^*(K_B^{-1})$. Its restriction to a fiber is the canonical bundle of the fiber $K_{X/B}|_{X_b}\cong K_{X_b}$. Since \(X_b\) is Fano, $-K_{X/Y}$ is positive along the fibers. We will find a relative K\"ahler form $\omega_{\mathrm{rel}}$ on \(X\) such that $\omega_{\mathrm{rel}}|_{X_b}$ is a Kähler form on each fiber \(X_b\). Let \(\omega_Y\) be a K\"ahler form on \(Y\). Then, for \(A>0\) sufficiently large,
\begin{equation}
\omega_X=\omega_{\mathrm{rel}}+A f^{\ast}\omega_Y       \label{Kahler_X}
\end{equation}
is a K\"ahler form on \(X\). 

The construction of $\omega_{\mathrm{rel}}$ is standard and proceeds as follows. We choose a smooth Hermitian metric \(h_0\) on \(L=-K_{X/Y}\) with its curvature form $\chi=\sqrt{-1}\,\Theta(h_0)$. As $-K_{X/B}$ is positive along the fibers, we may assume that $\chi|_{X_b}>0$ for every \(b \in B\). Let $\chi_b\coloneqq\chi|_{X_b}$. Note that
$[\chi_b]=c_1(X_b)$. Next we look for a K\"ahler metric
\(
\omega_b=\chi_b+\sqrt{-1}\partial \bar\partial \varphi_b
\) on \(X_b\) satisfying $\operatorname{Ric}(\omega_b)=\chi_b$. It amounts to solve the complex Monge-Ampère equation
\[
\bigl( \chi_b+\sqrt{-1}\partial \overline{\partial}\varphi_b \bigr)^k =e^{F_b}\chi_b^k,
\]
where \(F_b\) is determined by
\[
\operatorname{Ric}(\chi_b)-\chi_b =\sqrt{-1}\partial \overline\partial F_b\ \ \ \text{and}\ \ 
\int_{X_b}e^{F_b}\chi_b^k =\int_{X_b}\chi_b^k.
\]
By the Calabi-Yau theorem \cite{Yau1978}, it has a unique smooth solution after imposing $\int_{X_b}\varphi_b\chi_b^k=0$. Thus we obtain a K\"ahler metric $\omega_b=\chi_b+\sqrt{-1}\partial \overline\partial \varphi_b$ on $X_b$ which satisfies $\operatorname{Ric}(\omega_b)=\chi_b>0$. Since \(f:X\to Y\) is a smooth fibration, the solutions \(\varphi_b\) vary smoothly with \(b\). Hence there exists a smooth function $\varphi\in C^\infty(X)$ such that $\varphi|_{X_b}=\varphi_b$. Now we define the relative K\"ahler form in (\ref{Kahler_X}) as
\begin{equation}
\omega_{\mathrm{rel}}=\chi+\sqrt{-1}\partial\bar\partial\varphi.   \label{Kahler_rel}    
\end{equation}

\begin{proof}[Proof of Proposition \ref{Fanofib}]

    We show that the K\"ahler metric $\omega_X$ defined by (\ref{Kahler_X}) satisfies $S(\omega_X)>0$ if $A$ is chosen sufficiently large. It is equivalent to work with $\omega_{X}=t\omega_{\mathrm{rel}}+f^{*}\omega_{Y}$ when $t>0$ is sufficiently small. 
    
    Let $\dim X=n>m=\dim B$ with $n-m=k$. Recall that the scalar curvature satisfies
    \(S(\omega_{X})\omega_{X}^{n}=nRic(\omega_{X})\wedge \omega_{X}^{n-1}\). For a fixed point $q\in X_{b_0}$, choose local holomorphic coordinates 
    \[
    (U_q,(z,w))=(U_q,(z^1,\ldots,z^{n-m},w^1,\ldots,w^m))
    \] on $X$ such that
\(
X_{b_0}\cap U_q=f^{-1}(0)=\{w^1=\cdots=w^m=0\},
\)
where, in these coordinates,
\[
f(z^1,\ldots,z^{n-m},w^1,\ldots,w^m)=(w^1,\ldots,w^m).
\]
Then we further assume
  \begin{align}
        &(\omega_{\mathrm{rel}}|_{X_{b}})^{n-m}(z,w)=(\sqrt{-1})^{n-m}F(z,w)dz^{1}\wedge d\ol{z^{1}}\wedge\cdots\wedge dz^{n-m}\wedge d\ol{z^{n-m}},\\
        &(f^{*}\omega_{Y})^{m}(z,w)=(\sqrt{-1})^{m} G(w)dw^{1}\wedge d\ol{w^{1}}\wedge\cdots\wedge dw^{m}\wedge d\ol{w^{m}}.
       \end{align}
    Note that
    \[
     (\omega_{X})^n=\sum_{p=0}^m \binom{n}{p} t^{n-p} (\omega_{\mathrm{rel}})^{n-p} \wedge (f^{\ast}\omega_Y)^p.
    \]
    When $t\to 0$, $\omega_{X}^{n}|_{U_q}$ can be estimated
    \begin{equation}
        t^{n-m}\omega_{rel}^{n-m}\wedge (f^{*}\omega_{Y})^{m}=(\sqrt{-1})^{n} t^{n-m}F(z,w)G(w)dz^{1}\wedge d\ol{z^1}\cdots d w^{m} \wedge d\ol{w^{m}}+O(t^{n-m+1}).
    \end{equation}
    Similarly, at the fixed point $q \in X_{b_0}$, we may estimate $nRic(\omega_{X})\wedge \omega_{X}^{n-1}$.
    \begin{equation*}
    \begin{split}
        &-n\sqrt{-1}\di\dbar\log\Big(\sum_{p=0}^m \binom{n}{p} t^{n-p} (\omega_{\mathrm{rel}})^{n-p} \wedge (f^{\ast}\omega_Y)^p\Big)\wedge t^{n-m-1}\omega_{\mathrm{rel}}^{n-m-1}\wedge (f^{*}\omega_{Y})^{m}|_{w=0}\\
        =&-n\sqrt{-1}Ric(\omega_{\mathrm{rel}}|_{X_{b_0}})\wedge t^{n-m-1}\omega_{\mathrm{rel}}^{n-m-1}\wedge (f^{*}\omega_{Y})^{m}|_{w=0}\\
        =&S(\omega_{\mathrm{rel}}|_{X_{b_0}})\frac{n}{n-m}t^{n-m-1}\omega_{\mathrm{rel}}^{n-m}\wedge (f^{*}\omega_{Y})^{m}+O(t^{n-m})
    \end{split}
    \end{equation*}
    Note that $S(\omega_{\mathrm{rel}}|_{X_{b_0}})>0$ as $Ric(\omega_{\mathrm{rel}}|_{X_{b_0}})=\chi_{b_0}>0$, we conclude that
    \begin{align}
      S(\omega_{X})(q)=\frac{n}{n-m} S(\omega_{\mathrm{rel}}|_{X_{b_0}}) \frac{1}{t}+O(1). \label{S_posi_pt}  
    \end{align}
    In fact, (\ref{S_posi_pt}) holds uniformly in $U_q$. By the compactness of $X$, we may choose a finite cover by relatively compact open sets. It then follows that for all sufficiently small $t>0$ we have $S(\omega_{X})>0$ on $X$.

\end{proof}

\begin{remark}\label{Rem_OnYau}
    After obtaining the relative K\"ahler form in \eqref{Kahler_rel}, the proof of Proposition \ref{Fanofib} proceeds similarly to the proof of \cite[Proposition 1]{Yau1974}. Compare the following remark in \cite[p.~218]{Yau1974}. Let $X$ be the total space of holomorphic fiber bundle over a compact K\"ahler manifold $Y$ with compact fibers. Assume that $X$ admits a K\"ahler metric such that each fiber has positive Ricci curvature with respect to the induced metric. Then $X$ admits a K\"ahler metric with $S>0$.    
\end{remark}

\appendix

\section{Holomorphic ``normal" coordinates near a submanifold}\label{holo_nor_app}

\begin{lemma}[\cite{SS2020}]\label{holosubcoor}
    Let $(M^m, \omega)$ be a K\"ahler manifold and $X$ be a complex submanifold of codimension $k$. Let $d(q)=\inf_{p\in X}d(p,q)$. We follow the convention that $d$ is $\frac{1}{\sqrt{2}}$ of the Riemannian distance with respect to $\omega$. Then for any $p \in X$, there exist holomorphic chart
    \[
    (U_p; (z^{1}, \dots, z^{k},w^{1}, \dots, w^{m-k}))
    \]
    such that $z^{i}(U_{p}\cap X)=0$ for any $1 \leq  i \leq k$. Moreover, there exist two smooth functions $\rho(z, w)$ and $\phi(z, w)$ with
    \begin{align}
        &\omega=\sqrt{-1}\di\dbar(|z|^{2}+|w|^{2}+\phi(z, w)),\label{LOC.metric} \ \  \phi(z,w)=O(|z|^{3}+|w|^{3});\\
        & d^{2}(z, w)=|z|^{2}(1+\rho(z, w)), \ \ \rho(z, w)=O(|z|+|w|).\label{LOC.distance}
    \end{align}
    In addition, if $X$ is compact, there exists some constant $r_0>0$ independent to $p \in X$ such that $\{(z, w) \mid |z|<r_0, |w|<r_0\} \subset U_p$ for each $p$. And the estimates for $\phi$ and $\rho$ in \eqref{LOC.metric} and \eqref{LOC.distance} can be chosen uniformly with respect to the point $p\in X$.
\end{lemma}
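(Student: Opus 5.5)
We need holomorphic coordinates adapted to $X$ in which the Kähler potential is Euclidean to second order and $d^2=|z|^2(1+\rho)$ with $\rho=O(|z|+|w|)$. The plan is to build the coordinates first and read off the distance expansion afterwards.

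\textbf{Step 1 (flattening $X$).} Start from any holomorphic chart near $p$ with $X=\{z=0\}$, which exists since $X$ is a complex submanifold.

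\textbf{Step 2 (unitary normalization).} Apply a linear change of coordinates so that at $p$ we have $g_{i\bar j}(p)=\delta_{i\bar j}$ and the $z$-directions are $\omega$-orthogonal to $T_pX$. A linear change of the form $z\mapsto Az$, $w\mapsto Bw+Cz$ keeps $X=\{z=0\}$.

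\textbf{Step 3 (orthogonality along all of $X$).} Arrange that the normal directions stay orthogonal to $TX$ along $X$, not only at $p$. I would first choose holomorphic coordinates $w$ on $X$ that are normal for $\omega|_X$ at $p$. I would then extend them off $X$ through $w\mapsto w+\sum_i z^i a_i(w)$, with $a_i$ holomorphic, chosen so that $g_{z\bar w}=O(|z|+|w|^2)$.

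\textbf{Step 4 (killing quadratic terms of the potential).} Write a local potential $\omega=\sqrt{-1}\partial\bar\partial\Phi$. Subtract pluriharmonic terms from $\Phi$, which is allowed because adding $\operatorname{Re}$ of a holomorphic function does not change $\omega$. Then use quadratic holomorphic changes of coordinates to kill the terms of mixed type $(2,1)$ and $(1,2)$ in the Taylor expansion at $p$, as in the standard proof of Kähler normal coordinates. This gives $\Phi=|z|^2+|w|^2+O(|z|^3+|w|^3)$, which is \eqref{LOC.metric}.

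\textbf{Step 5 (distance expansion).} Only $O(3)$ accuracy in $\phi$ is needed for \eqref{LOC.distance}. Near $X$, let $\pi(q)$ be the nearest point projection onto $X$. For $q=(z,w)$, the minimizing geodesic from $q$ meets $X$ orthogonally at a point $(0,w')$.

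Step 4 gives the comparison $g=g_{euc}+O(|z|+|w|)$ on the chart, so Riemannian distances are Euclidean distances times $(1+O(r))$, where $r$ is the size of the region. The normalization $d=\tfrac{1}{\sqrt2}\,\mathrm{dist}$ matches $|z|$ under the convention $g_{euc}=\sqrt{-1}\partial\bar\partial|z|^2$.

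This crude comparison only gives $d^2=|z|^2(1+O(|z|+|w|))$ after checking that the nearest point $w'$ satisfies $|w-w'|=O(|z|(|z|+|w|))$. That check uses the orthogonality from Step 3 and a first-variation argument.

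\textbf{Step 6 (smoothness of $\rho$).} Since $X$ is a smooth submanifold, $d^2$ is smooth in a tubular neighborhood and vanishes to second order exactly along $X$. Its Hessian in the normal directions is $|z|^2$ up to $O(|z|+|w|)$ corrections. A Taylor expansion with integral remainder in $z$, done twice, writes
\[
d^2=\sum z^i\bar z^j h_{i\bar j}+\operatorname{Re}\bigl(\textstyle\sum z^iz^j k_{ij}\bigr)
\]
with $h$ and $k$ smooth. This is the main obstacle. Holomorphic coordinates cannot remove the $\operatorname{Re}(z^2)$ terms or the off-diagonal $h$ terms exactly; they are only $O(|z|+|w|)$. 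So $\rho$ must be defined by the quotient $\rho=d^2/|z|^2-1$.

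This quotient is bounded and $O(|z|+|w|)$, but it is not smooth at $z=0$ in general. I would therefore interpret "smooth" as smooth off $\{z=0\}$, with homogeneous-type bounds $\partial^I_z\rho=O(|z|^{-|I|}\cdot(|z|+|w|))$. These bounds are exactly what \eqref{diff_d_z} uses, and I would verify them by differentiating the integral-remainder formula.

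\textbf{Step 7 (uniformity).} For compact $X$, all constructions depend smoothly on $p$ through finitely many charts. Compactness then gives a uniform $r_0$ and uniform constants in $O(\cdot)$.
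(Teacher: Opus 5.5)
Your route works, but two of your normalizations cannot be achieved as stated and should be dropped. Neither is needed.

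\textbf{Step 3 fails in general.} The estimate $g_{z\bar w}=O(|z|+|w|^2)$ is obstructed by the second fundamental form of $X$. By the K\"ahler identity,
\[
\overline{\partial_{\bar w^j}g_{z^i\bar w^l}(p)}=\partial_{w^j}g_{w^l\bar z^i}(p)=\Gamma^{z^i}_{w^jw^l}(p).
\]
With $g(p)=I$, this is the second fundamental form of $X$ at $p$. No holomorphic change preserving $X=\{z=0\}$ alters it. A shift $w\mapsto w+\sum_i z^i a_i(w)$ with $a_i$ holomorphic only changes the holomorphic part of $g_{z\bar w}|_X$. For example, take $X=\{x=y^2\}$ in flat $\mathbb{C}^2$ with $z=x-y^2$, $w=y$; then $g_{z\bar w}=2\bar w$.

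\textbf{Step 4 overclaims for the same reason.} The quadratic changes cannot kill the $w^jw^l\bar z^i$ terms of the potential without moving $X$.

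\textbf{Why neither matters.} Step 4's stated conclusion $\phi=O(|z|^3+|w|^3)$ only needs $g(p)=I$ plus removal of pluriharmonic terms of degree $\le 2$. Step 5 needs no foot-point estimate at all:
\begin{itemize}
\item The segment from $(0,w)$ to $(z,w)$ gives $d\le |z|(1+O(|z|+|w|))$.
\item The minimizing path from $q$ to $X$ stays in a Euclidean ball of radius $O(|z|)$ about $q$, and its $z$-projection has length at least $|z|$. Hence $d\ge |z|(1-O(|z|+|w|))$.
\end{itemize}
If you still want $|w-w'|=O(|z|(|z|+|w|))$, block-diagonality at $p$ already gives $g_{z\bar w}(0,w')=O(|w'|)$, hence $|w-w'|=O(|w'|d+d^2)$.

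\textbf{Comparison with the paper.} The paper's Step 1 is your Steps 1--2 plus the pluriharmonic subtraction, which it calls straightforward. For the distance, it takes the unit-speed normal geodesic $\gamma$ from the foot point $(0,w')$ and expands $z(q)=\Gamma d+O(d^2)$. It then uses $|\gamma'(0)|=1$ and $\gamma'(0)\perp TX$ to get $2|\Gamma|^2=1+O(|w'|)$. This is the ODE form of your first-variation idea. Your two-sided metric comparison is more elementary and avoids the geodesic expansion. Both arguments give only the $C^0$ bound $\rho=O(|z|+|w|)$; the paper never addresses the regularity of $\rho$.

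\textbf{Your Step 6 is the more accurate treatment.} Along $z=t\xi$ with $t\to 0$,
\[
d^2/|z|^2\to h_{i\bar j}(w)\xi^i\bar\xi^j/|\xi|^2,
\]
where $h$ is the normal-bundle metric in the holomorphic frame $[\partial_{z^i}]$. Independence of the direction $\xi$ for all $w$ would force $h(w)$ to be scalar. The Chern curvature of $N_X$ would then be a multiple of the identity. This fails in general for $k\ge 2$, e.g.\ for a curved plane curve in $\mathbb{C}^2\times\{0\}\subset\mathbb{C}^3$. So ``$\rho$ smooth'' in the statement is an overstatement; $\rho$ is not even continuous along $\{z=0\}$.

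Your bounds $\partial_z^I\rho=O(|z|^{-|I|}(|z|+|w|))$, together with $\partial_w^K\partial_z^I\rho=O(|z|^{-|I|})$ from the same integral-remainder formula, are exactly what \eqref{diff_d_w}--\eqref{diff_d_z} use. Your Step 7 matches the paper's Step 3.
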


\begin{proof}[Proof of Lemma \ref{holosubcoor}]

It is stated without proof in {\cite[Lemma 5, p.~175]{SS2020}}. We include a proof for the reader's convenience. It conists of three steps.

Step 1: Construction of holomorphic coordinates.

Fix any $p\in X$ and choose any holomorphic coordinate neighborhood chart $(U,(Z,W))$ such that
\begin{equation}\label{adapted_chart}
    (Z(p),W(p))=0, \quad Z^{i}(U \cap X)=0, \ i=1,...,k.
\end{equation}
The corresponding metric matrix in these coordinates will be denoted, for example, by
\begin{equation*}
    g_{Z^{i}\ol{W^{j}}}=\omega(\frac{\di}{\di Z^{i}},\frac{\di}{\di \ol{W^{j}}}).
\end{equation*} 
If we choose another holomorphic coordinate $(V,(\widetilde{Z},\widetilde{W}))$ which satisfies the above condition, then the transition function satisfies 
\[
\frac{\di Z^{\beta}}{\di \widetilde{W}^{j}}(p')=0, \ \forall p'\in V \cap X.
\]
The corresponding metric matrix satisfies
\begin{equation}
g_{\widetilde{Z}^{i}\ol{\widetilde{W}^{j}}}=\ol{\frac{\di W^{\beta}}{\di \widetilde{W}^{j}}}(\frac{\di Z^{\alpha}}{\di \widetilde{Z}^{i}}g_{Z^{\alpha}\ol{W^{\beta}}}+\frac{\di W^{\alpha}}{\di \widetilde{Z^{i}}}g_{W^{\alpha}\ol{W^{\beta}}}).  \label{relation}
\end{equation}
Let $h^{W^{\alpha}\ol{W}^{\beta}}(p)$ denote the transpose of the inverse matrix of $(g_{W^{i}\ol{W}^{j}}(p))_{(m-k)\times (m-k)}$ in the sense that 
\(
g_{W^{i}\ol{W^{\beta}}}(p)h^{W^{j}\ol{W^{\beta}}}(p)=\delta_{i}^{j}.
\)
Now we choose a holomorphic coordinate $(\tilde{Z},\tilde{W})$ in an open neighborhood of $p$, denoted by $\widetilde{U_p} \subset U \cap V$.
\begin{equation}\label{adapted_chart2}
  \left\{
    \begin{aligned}
        &\tilde{Z}^{\alpha}=Z^{\alpha},\\
        &\tilde{W}^{\beta}=W^{\beta}+g_{Z^{i}\ol{W}^{k}}(p)h^{W^{\beta}\ol{W^{k}}}(p)Z^{i}.
    \end{aligned}
  \right.
\end{equation}
Then $g_{\widetilde{Z}^{i}\ol{\widetilde{W}^{j}}}(p)=g_{\tilde{W}^{i}\ol{\tilde{Z}^{j}}}(p)=0$ by (\ref{relation}). Therefore, the metric components of $\omega$ with respect to $(\widetilde{Z},\widetilde{W})$ are block diagonal. If we further choose constant unitary matrices $A_{k\times k}, \ B_{(m-k) \times (m-k)}$ such that
\begin{equation*}
\begin{pmatrix}
    A^{T} & 0 \\
    0 & B^{T}
\end{pmatrix}
\begin{pmatrix}
    g_{\widetilde{Z}^{i}\ol{\widetilde{Z}^{j}}}(p) & 0 \\
    0 & g_{\widetilde{W}^{i}\ol{\widetilde{W}^{j}}}(p)
\end{pmatrix}
\begin{pmatrix}
    \ol{A} & 0 \\
    0 & \ol{B}
\end{pmatrix}
=
\begin{pmatrix}
    I_{k} & 0\\
    0 & I_{m-k}
\end{pmatrix},
\end{equation*}
Now we choose the holomorphic coordinate in $U_p$ as
\begin{equation}\label{adapted_chart3}
    z=A^{-1}\widetilde{Z},\ \ \ 
    w=B^{-1}\widetilde{W}.
\end{equation}
It is straightforward to check
\begin{equation}\label{normal_est_1}
\omega(z,w)=\sqrt{-1}\di\dbar(|z|^{2}+|w|^{2}+\phi(z,w)), \ \phi(z,w)=O(|z|^{3}+|w|^{3}).  
\end{equation}

Step 2: Estimates on the distance function.

Shrinking $U_p$ constructed in Step 1 if necessary, we may assume that, for any $q \in U_{p} \setminus X$, there exists a unique $p'(q)\in U_{p}\cap X$ and a geodesic $\gamma$ from $p'(q)$ to $q$ with $d(q)=L(\gamma)$. Here, we point out that, by an abuse of notation, $d(q)$ denotes the \emph{Riemannian distance} from $q$ to the submanifold $X$ with respect to $\omega$. Obviously $\gamma^{\prime}$ is orthogonal to $T X$ at $p'(q)$. We consider the holomorphic coordinate $(U_{p}, z, w)$ we constructed in (\ref{normal_est_1}). After choosing the arc length parametrization of $\gamma: [0,d(q)]\rightarrow{M}$, we may assume that the following holds at $\gamma(0)=p'(q) \in X \cap U_p$.
\begin{align}
    (z(\gamma(0)),w(\gamma(0)))=(0,w'),\ \ \text{and}\  
    \gamma'(0)=\Gamma^{i}\frac{\di}{\di z^{i}}|_{\gamma(0)}+\ol{\Gamma^{i}}\frac{\di}{\di \ol{z^{i}}}|_{\gamma(0)}+\Lambda^{j}\frac{\di}{\di w^{j}}|_{\gamma(0)}+\ol{\Lambda^{j}}\frac{\di}{\di \ol{w^{j}}}|_{\gamma(0)}.
\end{align}
Then we consider Taylor expansions of components of $\gamma$ as the following.
\begin{align}
       & z^{i}(q)=z^{i}(\gamma(d(q)))=\Gamma^{i}d(q)+O(d^{2}(q)),\\
       & w^{\alpha}(q)=w^{\alpha}(\gamma(d(q)))=w'+\Lambda^{\alpha}d(q)+O(d^{2}(q)).
\end{align}
Therefore
\begin{align} 
        |z(q)|^{2}=(\sum_{i=1}^{k}|\Gamma^{i}|^{2})d^{2}(q)+O(d^{3}(q)),\ \text{and} \ 
        |w(q)-w'|=O(d(q))   \label{CurveExpan.}
\end{align} hold for $q \in U_p$.

On the other hand, we have expansions of metric coefficients according to (\ref{normal_est_1}).
\begin{align}\label{MetricExpan.}
        &g_{z^{i}\ol{z^{j}}}(0,w')=\delta_{i\ol{j}}+\di_{w^{\alpha}}g_{z^{i}\ol{z^{j}}}(p)(w')^{\alpha}+\di_{\ol{w}^{\alpha}}g_{z^{i}\ol{z^{j}}}(p)(\ol{w'})^{\alpha}+O(|w'|^{2}),\\
        &g_{w^{i}\ol{w}^{j}}(0,w')=\delta_{i\ol{j}}+\di_{w^{\alpha}}g_{w^{i}\ol{w^{j}}}(p)(w')^{\alpha}+\di_{\ol{w}^{\alpha}}g_{w^{i}\ol{w^{j}}}(p)(\ol{w'})^{\alpha}+O(|w'|^{2}),\\
        &g_{z^{i}\ol{w}^{j}}(0,w')=\di_{w^{\alpha}}g_{z^{i}\ol{w^{j}}}(p)(w')^{\alpha}+\di_{\ol{w}^{\alpha}}g_{z^{i}\ol{w^{j}}}(p)(\ol{w'})^{\alpha}+O(|w'|^{2}).
\end{align}
Recall that $\|\gamma'(0)\|=1$ and $\gamma'(0)\perp T_{\gamma(0)}X=\text{span}\{\frac{\di}{\di w^{j}}\,\frac{\di}{\di \ol{w}^{k}}\}$, It follows from (\ref{MetricExpan.}) that
\begin{equation} \label{esti_tangent}
    \begin{split}
        1=\|\gamma'(0)\|^{2}=&g(\gamma'(0),\Gamma^{i}\frac{\di}{\di z^{i}}|_{\gamma(0)}+\ol{\Gamma^{i}}\frac{\di}{\di \ol{z}^{i}}|_{\gamma(0)})\\
        =&\Gamma^{i}\ol{\Gamma^{j}}g_{z^{i}\ol{z^{j}}}(\gamma(0))+\ol{\Gamma^{i}\ol{\Gamma^{j}}g_{z^{i}\ol{z^{j}}}(\gamma(0))}+\Gamma^{i}\ol{\Lambda^{j}}g_{z^{i}\ol{w^{j}}}(\gamma(0))+\ol{\Gamma^{i}\ol{\Lambda^{j}}g_{z^{i}\ol{w^{j}}}(\gamma(0))}\\
        =&2|\Gamma|^{2}+O(|w'|)\\
        =&2|\Gamma|^{2}+O(|w(q)|+d(q)).
    \end{split}
\end{equation}
After plugging (\ref{esti_tangent}) into (\ref{CurveExpan.}), we have
\begin{equation}
    \begin{split}
        2|z(q)|^{2}=&d^{2}(q)+(2|\Gamma|^{2}-\|\gamma'(0)\|^{2})d^{2}(q)+O(d^{3}(q))\\
        =& d^{2}(q)+O(|w(q)|)d^{2}(q)+O(d^{3}(q))\\
        =& d^{2}(q)(1+O(|w(q)|)+O(d(q))).
    \end{split}
\end{equation}
After taking the neighborhood $U_{p}$ sufficiently small, we have
\begin{equation}\label{normal_est_2}
d^{2}(q)=2|z(q)|^{2}+O(|w(q)|)|z(q)|^{2}+O(|z(q)|^{3})
        =2|z(q)|^{2}[1+O(|z(q)|+|w(q)|)].
\end{equation}
Then the desired estimate (\ref{LOC.distance}) follows in view of our convention on $d^2$.

Step 3: The uniformity of sizes of holomorphic charts and related estimates when $X$ is compact.

For each point $p \in X$, we have a holomorphic chart $(U_{\alpha}; \Phi_{\alpha})$ with $\Phi_{\alpha}: U_{\alpha} \rightarrow \mathbb{C}^n$ be the coordinate map given by \eqref{adapted_chart}. Choose a open subset $V_{\alpha}$ which is relatively compact in $U_{\alpha}$. As $X$ is compact, we may assume $V_{1}, \cdots, V_{N}$ be a finite cover of $X$, with center of each $p_i \in X$. We choose 
\begin{equation}\label{def_delta_co}
\delta=\frac{1}{3}\min_{1 \leq i \leq N} d_{E}(\Phi_{i}(\overline{V_i}), \partial \Phi_{i}(U_i))>0,    
\end{equation} where $d_E$ is the Euclidean distance on $\mathbb{C}^n$.

For each $q \in X$, we must have $q \in V_{k}$ with $1 \leq k \leq N$, then we define a specific holomorphic chart based on a translation of $(U_k, \Phi_{k})$. Indeed, $(U_k, \widetilde{\Phi}_k \coloneqq \Phi_{k}-\Phi_k(q))$ defines a holomorphic chart centered at $q$ which satisfies \eqref{adapted_chart}. By the definition of $\delta$ in \eqref{def_delta_co}, we conclude that
\(
\{(z, w) \mid |z|<r_0, |w|<r_0\} \subset \widetilde{\Phi}_k(U_k).
\)
From now on, we only considered holomorphic charts in the form of $(U_k, \widetilde{\Phi}_k)$ obtained by translation of $(V_i, \Phi_i)$ with $1 \leq i \leq N$. Then the remaining argument in Step 1, \eqref{adapted_chart2} and \eqref{adapted_chart2} in particular, involves a finite number of continuous functions on $p$. Moreover, there exists a constant $\widetilde{\delta}>0$ such that the normal exponential map $\exp_{\perp}$ is a diffeomorphism onto the metric tubular neighborhood $\{q \in M\ |\ d(q) \leq \widetilde{\delta}\}$. The estimate on the distance function $d$ in Step 2 can be carried out with corresponding coefficients continuous on $p$. To sum up, the size of the resulting holomorphic charts $U_p$, as well as relevant estimates in \eqref{normal_est_1} and \eqref{normal_est_2} can be made uniform on $p$.

\end{proof}

\section{Estimates on background metrics}\label{GE}

\subsection{The proof of Lemma \ref{MC3}}

We show some detailed calculations in the proof of Lemma \ref{MC3}.

\medskip

\noindent\textbf{Case 1.} $R_{0}\epsilon<d<4r_{\epsilon}$ where $R_0$ is defined in (\ref{BSpotential}).
   
    In this case, $\gamma(\frac{d}{\epsilon})=0$ since $R_{0}\epsilon>>2\epsilon$, the following holds.
    \begin{equation}
        \omega_{\epsilon}-\omega=\epsilon^{2}\sqrt{-1}\di\dbar\Big(\gamma(\frac{d}{r_{\epsilon}})\psi(\frac{d^{2}}{\epsilon^{2}})\Big).
    \end{equation}
    As $\epsilon$ is small enough, it follows from (\ref{diff_d_z}) that
    \begin{equation}\label{Dz_cut_re}
        \di_{z}^{I}\gamma(\frac{d}{r_{\epsilon}})=
            \begin{cases} \gamma'(\frac{d}{r_{\epsilon}})\dfrac{\di_{z}^{I}d}{r_{\epsilon}}=O(\dfrac{1}{r_{\epsilon}}), \ \text{if}\ |I|=1,\\
            O(\dfrac{1}{r^{|I|}_{\epsilon}}), \ \text{if}\ \ |I|\geq 2.
        \end{cases}
    \end{equation}
    On the other hand,
    \begin{equation}
        \di_{w}^{J}\gamma(\frac{d}{r_{\epsilon}})=O(1),  \ \forall J. \label{Dw_cut_re}
    \end{equation}
    Moreover, we may derive the following from by (\ref{BSpotential})
   \begin{equation}
    \begin{split}
    \di_{z^{i}}\Big(\psi(\frac{d^{2}}{\epsilon^{2}}) \Big)
    =& \psi'(\frac{d^{2}}{\epsilon^{2}})\frac{\di_{z^{i}}d^{2}}{\epsilon^{2}}\\
    =&\Big((2-k)(\frac{d^{2}}{\epsilon^{2}})^{1-k}+O((\frac{d^{2}}{\epsilon^{2}})^{-k}) \Big)\frac{1}{\epsilon^{2}}\Big( \ol{z}^{i}(1+\rho)+|z|^{2}\di_{z^{i}}\rho \Big)\\
    =& O(1)\epsilon^{2k-4}d^{3-2k}.
    \end{split}
    \end{equation}
    On the other hand
    \begin{equation}
        \di_{w^{j}}\Big(\psi(\frac{d^{2}}{\epsilon^{2}}) \Big)=\psi'(\frac{d^{2}}{\epsilon^{2}})\frac{|z|^{2}}{\epsilon^{2}}\di_{w^{j}}(1+\rho)
        =O\Big((\frac{d}{\epsilon})^{2-2k} \Big)
    \end{equation}
    By induction, we obtain the following.
    \begin{align}
        &\di_{z}^{I}\Big(\psi(\frac{d^{2}}{\epsilon^{2}}) \Big)=O(1)\epsilon^{2k-4}d^{4-2k-|I|}, \ \forall |I|\geq 1,\label{Dz_BSp_re} \\
        &\di_{w}^{J}\Big(\psi(\frac{d^{2}}{\epsilon^{2}}) \Big)=O\Big((\frac{d}{\epsilon})^{2-2k} \Big), \ \forall J. \label{Dw_BSp_re}
    \end{align}
    Therefore, we may estimate the coefficients of $\omega_{\epsilon}-\omega$ as the following.
    \begin{align}
            &g_{\omega_\epsilon}-g_{\omega}=O(\epsilon^{2k-2}d^{2-2k}),\\
            &\di_{z}^{I}\di_{w}^{J}(g_{\omega_\epsilon}-g_{\omega})=O(\epsilon^{2k-2}d^{2-2k-|I|}), \ \text{if}\ |I|\geq 1.
    \end{align}

    In this case, a comparison with $\omega_{\eta,\epsilon}$ is also available and we will use it later. For simplicity, we suppose $R_{0}\epsilon<d<\frac{1}{2}r_{\epsilon}$ and $\frac{R_{0}}{2}\epsilon<|z|<r_{\epsilon}$, then
    \begin{equation}
        (\omega_{\epsilon}-\omega_{\eta,\epsilon})(z,w)=\sqrt{-1}\di\dbar\Big(\phi(z,w)+\epsilon^{2}[\psi(\frac{d^{2}}{\epsilon^{2}})-\psi(\frac{|z|^{2}}{\epsilon^{2}})]\Big).
    \end{equation}
    We calculate, for example,
    \begin{equation}
        \begin{split}
            \di_{z^{i}}\Big(\psi(\frac{d^{2}}{\epsilon^{2}})-\psi(\frac{|z|^{2}}{\epsilon^{2}})\Big)
            =& \Big(\psi'(\frac{d^{2}}{\epsilon^{2}})-\psi'(\frac{|z|^{2}}{\epsilon^{2}})\Big)\epsilon^{-2}\di_{z^{i}}d^{2}+\psi'(\frac{|z|^{2}}{\epsilon^{2}})\epsilon^{-2}\di_{z^{i}}(d^{2}-|z|^{2})\\
            =& O\Big(\epsilon^{-2}d[(\frac{d^{2}}{\epsilon^{2}})^{1-k}-(\frac{|z|^{2}}{\epsilon^{2}})^{1-k}] \Big)+O\Big((\frac{|z|^{2}}{\epsilon^{2}})\epsilon^{-2}\di_{z^{i}}(|z|^{2}\rho) \Big)\\
            =&O\Big(\epsilon^{2k-4}|z|^{3-2k}(|z|+|w|)\Big),
        \end{split}
    \end{equation}
    and
    \begin{equation}
        \di_{w^{j}}\Big(\psi(\frac{d^{2}}{\epsilon^{2}})-\psi(\frac{|z|^{2}}{\epsilon^{2}})\Big)=O\Big(\epsilon^{4-2k}|z|^{4-2k}(|z|+|w|) \Big).
    \end{equation}
    By induction, we have
    \begin{equation}
        \di_{z}^{I}\di_{w}^{J}\Big(\psi(\frac{d^{2}}{\epsilon^{2}})-\psi(\frac{|z|^{2}}{\epsilon^{2}})\Big)=O\Big(\epsilon^{4-2k}|z|^{4-2k-|I|}(|z|+|w|) \Big).
    \end{equation}
    Then,
    \begin{align}
        &g_{\omega_{\epsilon}}-g_{\omega_{\eta,\epsilon}}=O(|z|+|w|)+O\Big(\epsilon^{2-2k}|z|^{2-2k}(|z|+|w|) \Big)  \\
        &\di_{z}^{I}\di_{w}^{J}(g_{\omega_{\epsilon}}-g_{\omega_{\eta,\epsilon}})=O\Big(\epsilon^{2-2k}|z|^{2-2k-|I|}(|z|+|w|) \Big).
    \end{align}

\medskip

\noindent\textbf{Case 2.} $\epsilon<d<R_{0}\epsilon$.

    In this case, $\gamma(\frac{d}{r_{\epsilon}})\equiv 1$ since we may assume that $r_{\epsilon}>R_{0}\epsilon$. By (\ref{LOC.metric}) and (\ref{ScaledPMetric}), the following hold.
    \begin{equation}
        (\omega_{\epsilon}-\omega_{\eta,\epsilon})(z,w)=\sqrt{-1}\di\dbar\Big(\phi(z,w)+\epsilon^{2}[\gamma(\frac{d}{\epsilon})\log (\frac{d}{\epsilon})^{2}-\gamma(\frac{|z|}{\epsilon})\log (\frac{|z|}{\epsilon})^{2}]+\epsilon^{2}[\psi(\frac{d^{2}}{\epsilon^{2}})-\psi(\frac{|z|^{2}}{\epsilon^{2}})]\Big).
    \end{equation}
  First, the $\log$ terms are considered by:
    \begin{equation}
    \begin{split}
        \epsilon^{2}\Big(\gamma(\frac{d}{\epsilon})\log (\frac{d}{\epsilon})^{2}-\gamma(\frac{|z|}{\epsilon})\log (\frac{|z|}{\epsilon})^{2}\Big)
        =&\epsilon^{2}\gamma(\frac{d}{\epsilon})\log\frac{d^{2}}{|z|^{2}}+\epsilon^{2}(\gamma(\frac{d}{\epsilon})-\gamma(\frac{|z|}{\epsilon}))\log\frac{|z|^{2}}{\epsilon^{2}}\\
        =&\epsilon^{2}\gamma(\frac{d}{\epsilon})\log(1+\rho(z,w))+\epsilon^{2}\Big(\gamma(\frac{d}{\epsilon})-\gamma(\frac{|z|}{\epsilon})\Big)\log\frac{|z|^{2}}{\epsilon^{2}}.
    \end{split}  \label{term1case2}
    \end{equation}
    In order to estimate the derivatives of (\ref{term1case2}), we may check that the following hold.
    \begin{align}
     &\di_{z}^{I}\gamma(\frac{d}{\epsilon})=\sum_{i=1}^{|I|}O(\frac{1}{\epsilon^{i}d^{|I|-i}})= O(\frac{1}{d^{|I|}}), \ |I|\geq 0,   \label{dzcutoff}\\
     &\di_{w}^{J}\gamma(\frac{d}{\epsilon})=O(1), \forall J, \\
     &\di_{z}^{I}\log(\frac{d}{\epsilon})=O(\frac{1}{d^{|I|}}), \ |I|\geq 0,\\
     &\di_{w}^{J}\log(\frac{d}{\epsilon})=O(1), \ \forall J,\label{dwlog}
    \end{align}
    Note that in (\ref{dzcutoff}), the derivatives of $\gamma(\frac{d}{\epsilon})$ is nonzero only when $\epsilon<d <2\epsilon$. Next, we apply the mean value inequality for derivatives of $\gamma(\frac{d}{\epsilon})-\gamma(\frac{|z|}{\epsilon})$, for example, when $|I|=1$:
    \begin{equation}
        \begin{split}
            \Big|\di_{z^{i}}\Big(\gamma(\frac{d}{\epsilon})-\gamma(\frac{|z|}{\epsilon})\Big)\Big|
            =&\Big|\gamma'(\frac{d}{\epsilon})\frac{\di_{z^{i}}(d-|z|)}{\epsilon}+(\gamma'(\frac{d}{\epsilon})-\gamma'(\frac{|z|}{\epsilon}))\frac{\di_{z^{i}}|z|}{\epsilon})\Big|\\
            \leq & \sup_{[1,2]}|\gamma'|\Big|\frac{\di_{z^{i}}[|z|(\sqrt{1+\rho}-1)]}{\epsilon}\Big|+\sup_{[1,2]}|\gamma''|\Big|\frac{|z|(\sqrt{1+\rho}-1)\di_{z^{i}}|z|}{\epsilon^{2}}\Big| \\
            \leq& C_{1} |\gamma|_{C^{2}([1,2])} \frac{1}{\epsilon} \Big|\rho\di_{z^{i}}|z|+|z|\frac{\di_{z_i} \rho}{2\sqrt{1+\rho}}+ \frac{|z|\rho \, \di_{z_i} |z|}{\epsilon}\Big| \\
            \leq & C_{2}\epsilon^{-1}|\rho|+C_{3}\epsilon^{-1}|z|+C_{4}\epsilon^{-1}|\rho| \\
            \leq & Cd^{-1}(|z|+|w|).
        \end{split}  \label{case2base}
    \end{equation}
    Then by induction,
    \begin{equation}
        \di_{z}^{I}\Big(\gamma(\frac{d}{\epsilon})-\gamma(\frac{|z|}{\epsilon})\Big)=d^{-|I|}O\Big((|z|+|w|) \Big), \ |I|\geq 2.
    \end{equation}
    As a consequence, we obtain the following.
    \begin{equation}
        \begin{split}
            &\di_{z}^{I}\Big[\epsilon^{2}(\gamma(\frac{d}{\epsilon})\log (\frac{d}{\epsilon})^{2}-\gamma(\frac{|z|}{\epsilon})\log (\frac{|z|}{\epsilon})^{2})(z,w)\Big]\\
            =&\sum_{|J|+|K|=|I|}\epsilon^{2}\di_{z}^{J}[\gamma(\frac{d}{\epsilon})]\di_{z}^{K}\log(1+\rho)
            +\sum_{|J|+|K|=|I|}\epsilon^{2}\di_{z}^{J}\Big(\gamma(\frac{d}{\epsilon})-\gamma(\frac{|z|}{\epsilon})\Big)\di_{z}^{K}\log\frac{|z|^{2}}{\epsilon^{2}}\\
            =&\epsilon^{2}\Big( \di_{z}^{I}[\gamma(\frac{d}{\epsilon})]\log(1+\rho)+\sum_{\substack{|J|+|K|=|I|\\ |K|\neq 0}}\di_{z}^{J}[\gamma(\frac{d}{\epsilon})]\frac{\di_{z}^{K}\rho}{1+\rho} \Big)
            +\epsilon^{2}O\Big(\frac{1}{\epsilon^{|I|}}(|z|+|w|)\Big)\\
            =& O(d^{2-|I|}\rho)+O(d^{2-|I|+1}) +O(d^{2-|I|}(|z|+|w|))\\
            =&O\Big(d^{2-|I|}(|z|+|w|)\Big), \ |I|\geq 2.
        \end{split}
    \end{equation} 
    On the other hand
    \begin{equation}\label{Dw_log_e}
        \di_{w}^{J}\Big[\epsilon^{2}(\gamma(\frac{d}{\epsilon})\log (\frac{d}{\epsilon})^{2}-\gamma(\frac{|z|}{\epsilon})\log (\frac{|z|}{\epsilon})^{2})(z,w)\Big]
        =\di_{w}^{J}\epsilon^{2}(\gamma(\frac{d}{\epsilon})\log (\frac{d}{\epsilon})^{2})
        =O(1).
    \end{equation}
    We proceed to estimate the second term in the right hand side of (\ref{case2base}) in the following.
    \begin{equation}\label{Dz_log_e}
        \begin{split}
            \epsilon^{2}\Big|\di_{z^{i}}\Big(\psi(\frac{d^{2}}{\epsilon^{2}})-\psi(\frac{|z|^{2}}{\epsilon^{2}})\Big)\Big|
            =&\epsilon^{2}\Big|\psi'(\frac{d^{2}}{\epsilon^{2}})\frac{\di_{z^{i}}(d^{2}-|z|^{2})}{\epsilon^{2}}+\Big[\Big(\psi'(\frac{d^{2}}{\epsilon^{2}})-\psi'(\frac{|z|^{2}}{\epsilon^{2}})\Big)\frac{\di_{z^{i}}|z|^{2}}{\epsilon^{2}}\Big]\Big|\\
            \leq & \epsilon^{2}\sup_{[0,R_{0}]}|\psi'|\frac{|\di_{z^{i}}(|z|^{2}\rho)|}{\epsilon^{2}}+\epsilon^{2}\sup_{[0,R_{0}]}|\psi''|\frac{|z|^{2}|\rho\ol{z}^{i}|}{\epsilon^{4}}\\
            \leq &|\psi|_{C^{2}([0,R_{0}])} \Big|\rho \ol{z}^{i}+|z|^{2}\di_{z}^{i}\rho+\frac{|z|^{2}\rho\ol{z}^{i}}{\epsilon^2}\Big| \\
            \leq & Cd(|z|+|w|).
        \end{split}
    \end{equation}
    By induction, we have:
    \begin{align}
        &\di_{z}^{I}\epsilon^{2}\Big(\psi(\frac{d^{2}}{\epsilon^{2}})-\psi(\frac{|z|^{2}}{\epsilon^{2}})\Big)=O\Big(d^{2-|I|}(|z|+|w|)\Big), \ |I|\geq 1, \label{Dz_BSp_e} \\
        &\di_{w}^{J}\epsilon^{2}\Big(\psi(\frac{d^{2}}{\epsilon^{2}})-\psi(\frac{|z|^{2}}{\epsilon^{2}})\Big)=O(d^{2}),\ |J|\geq 1.\label{Dw_BSp_e}
    \end{align}
    Therefore, from the above analysis:
    \begin{align}
       &(g_{\omega_{\epsilon}}-g_{\omega_{\eta,\epsilon}})(z,w)=O\Big( (|z|+|w|) \Big),  \\
       &\di_{z}^{I}\di_{w}^{J}(g_{\omega_\epsilon}-g_{\omega_{\eta,\epsilon}})(z,w)=O(1)+O\Big( d^{-|I|}(|z|+|w|) \Big), \ \text{if \ } |I|\geq 1. 
    \end{align} 
    Note that the $O(1)$ term in the right hand side of (\ref{Compare3.5}) follows from (\ref{LOC.metric}). And we point out that these estimates actually hold in $\epsilon<d<R\epsilon$ for any sufficiently large $R>0$ independent of $\epsilon$.

\medskip

\noindent\textbf{Case 3.} $d<\epsilon$.

    In this case, we use blow up coordinates (\ref{BLcoord}) in direction $z^{i}$ with respect to (\ref{LOC.metric}), then the difference is:
    \begin{equation}
    \begin{split}
        (\omega_{\epsilon}-\omega_{\eta,\epsilon})(\zi,w)=&\sqrt{-1}\di\dbar\Big(\sigma^{*}\phi(\zi,w)+\epsilon^{2}[\log (\frac{\sigma^{*}d}{\epsilon})^{2}-\gamma(\frac{\sigma^{*}|\zi|}{\epsilon})\log (\frac{\sigma^{*}|\zi|}{\epsilon})^{2}]\\
        &+\epsilon^{2}[\psi(\frac{\sigma^{*}d^{2}}{\epsilon^{2}})-\psi(\frac{\sigma^{*}|\zi|^{2}}{\epsilon^{2}})]\Big).
    \end{split}
    \end{equation}
    Notice that after pulling back
    \begin{align}
    &\sigma^{*}|\zi|^{2}=|\zi^{i}|^{2}(1+\sum_{i\neq j}|\zi^{j}|^{2}).\\
    &\sigma^{*}d^{2}(\zi,w)=|\zi^{i}|^{2}(1+\sum_{j\neq i}|\zi^{j}|^{2})(1+\sigma^{*}\rho).\\
    &\di_{\zi}^{I}\epsilon^{2}\log(1+\sigma^{*}\rho(\zi,w))=O(\epsilon^{2}), \ |I|\geq 0,
    \end{align}
    In blow up coordinates (\ref{BLcoord}), when $|\zi^{i}|>0$ and $|I|\geq1$, the estimate (\ref{Dz_BSp_e}) and (\ref{Dw_BSp_e}) become
    \begin{equation}\label{Dzw_BSp_E}
     \begin{split}   \di_{\zi}^{I}\di_{w}^{J}\epsilon^{2}\Big(\psi(\frac{\sigma^{*}d^{2}}{\epsilon^{2}})-\psi(\frac{|\zi^{i}|^{2}(1+\sum_{j\neq i}|\zi^{j}|^{2})}{\epsilon^{2}})\Big)
     =O\Big(\epsilon^{2-|I|}(|\zi^{i}|(1+\sum_{j\neq i}|\zi^{j}|^{2})^{\frac{1}{2}}+|w|)\Big), 
     \end{split}
    \end{equation}
    and the estimate naturally extend to the exceptional divisor when $|\zi^{i}|\to 0$.

    Then we calculate, for example:
    \begin{equation}
        \begin{cases}
            \di_{\zi^{i}}\gamma(\frac{\sigma^{*}|\zi|}{\epsilon})=\gamma'(\frac{\sigma^{*}|\zi|}{\epsilon})\frac{(1+\sum_{j\neq i}|\zi^{j}|^{2})^{\frac{1}{2}}}{\epsilon}\frac{\ol{\zi^{i}}}{2|\zi^{i}|}=O(\epsilon^{-1});\\
            \di_{\zi^{j}}\gamma(\frac{\sigma^{*}|\zi|}{\epsilon})=\gamma'(\frac{\sigma^{*}|\zi|}{\epsilon})\frac{|\zi^{i}|}{\epsilon}\frac{\ol{\zi^{j}}}{2(1+\sum_{j\neq i}|\zi^{j}|^{2})^{\frac{1}{2}}}=O(1), \ j\neq i.
        \end{cases}
    \end{equation}
    By induction
    \begin{equation}
        \di_{\zi^{i}}^{s}\di_{\zi^{j}}^{t}\Big(\gamma(\frac{\sigma^{*}|\zi|}{\epsilon})\Big)=-\di_{\zi^{i}}^{s}\di_{\zi^{j}}^{t}\Big(1-\gamma(\frac{\sigma^{*}|\zi|}{\epsilon})\Big)
        = O(\epsilon^{-s}), \ \forall j\neq i
    \end{equation}
    and the derivatives are non-zero only if $\epsilon<|\zi^{i}|(1+\sum_{j\neq i}|\zi^{j}|)^{\frac{1}{2}}$. In this case, more specifically, if
    \begin{equation}
        \epsilon<|z|=|\zi^{i}|(1+\sum_{j\neq i}|\zi^{j}|)^{\frac{1}{2}}=\frac{\sigma^{*}d}{(1+\sigma^{*}\rho)^{\frac{1}{2}}}<(1+\sigma^{*}\rho)^{-\frac{1}{2}}\epsilon,
    \end{equation}
    is valid, we have
    \begin{equation}
            |1-\gamma(\frac{\sigma^{*}|\zi|}{\epsilon})|\leq |\gamma|_{C^{1}(\mathbb{R})}\frac{|\epsilon-\sigma^{*}|\zi||}{\epsilon}
            \leq  |\gamma|_{C^{1}(\mathbb{R})}\Big|1-(1+\sigma^{*}\rho)^{-\frac{1}{2}} \Big|
            \leq  \ti{C}\frac{1}{2}|\gamma|_{C^{1}(\mathbb{R})}|\sigma^{*}\rho|.
    \end{equation}
    Then, we may derive the following estimate
    \begin{equation}
        \begin{split}
            &\di_{\zi}^{I}\epsilon^{2}[\log (\frac{\sigma^{*}d}{\epsilon})^{2}-\gamma(\frac{\sigma^{*}|\zi|}{\epsilon})\log (\frac{\sigma^{*}|\zi|}{\epsilon})^{2}]\\
            =&\epsilon^{2}\di_{\zi}^{I}\log(1+\sigma^{*}\rho)
            -\epsilon^{2}\log(\frac{\sigma^{*}|\zi|}{\epsilon})^{2}\di_{\zi}^{I}\gamma(\frac{\sigma^{*}|\zi|}{\epsilon}) \\
            &+\epsilon^{2}\sum_{\substack{J+K=I \\ 0=|J|<|I|}}\Big(\di_{\zi}^{J}[1-\gamma(\frac{\sigma^{*}|\zi|}{\epsilon})]\Big)\Big(\di_{\zi}^{K}\log (\frac{\sigma^{*}|\zi|}{\epsilon})^{2} \Big)\\
            =&O(\epsilon^{2})+\epsilon^{2}O(\log(1+\sigma^{*}\rho)\epsilon^{-|I|})+\epsilon^{2}O(|\sigma^{*}\rho|\epsilon^{-|I|})\\
            =&O(\epsilon^{2})+O\Big(\epsilon^{2-|I|}(|\zi^{i}|(1+\sum_{j\neq i}|\zi^{j}|^{2})^{\frac{1}{2}}+|w|)\Big).
        \end{split}
    \end{equation}
    When $|\zi^{i}|(1+\sum_{j\neq i}|\zi^{j}|)^{\frac{1}{2}}\leq \epsilon$, this term is simply
    \begin{equation}
        \di_{\zi}^{I}\epsilon^{2}[\log (\frac{\sigma^{*}d}{\epsilon})^{2}-\log (\frac{\sigma^{*}|\zi|}{\epsilon})^{2}]=\di_{\zi}^{I}\epsilon^{2}\log(1+\sigma^{*}\rho)=O(\epsilon^{2})
    \end{equation}
    On the other hand, the following always hold.
    \begin{align}
        &\di_{w}^{J}\epsilon^{2}\log(\frac{\sigma^{*}d}{\epsilon})^{2}=\di_{w}^{J}\epsilon^{2}\log(1+\sigma^{*}\rho)=O(\epsilon^{2}), \\
        &\di_{w}^{J}\epsilon^{2}\psi(\frac{\sigma^{*}d^{2}}{\epsilon^{2}})=O(\epsilon^{2}).
    \end{align}
    For the mixed terms, we calculate, for example:
    \begin{equation}
        \begin{split}
          \omega_{\epsilon,\zi^{i}\ol{w}^{j}}
            =&\di_{\zi^{i}}\di_{\ol{w}^{j}}\sigma^{*}\phi+\epsilon^{2}\di_{\zi^{i}}[\di_{\ol{w}^{j}}\log(1+\sigma^{*}\rho)+\psi'(\frac{\sigma^{*}d^{2}}{\epsilon^{2}})
            \frac{\di_{\ol{w}^{j}}\sigma^{*}d^{2}}{\epsilon^{2}}]\\
            =& O(|\zi^{i}|^{2}(1+\sum_{j\neq i}|\zi^{j}|^{2})+|w|^{2})+O(\epsilon^{2})\\
            &+\ol{z}_{(i)}^{i}(\di_{\ol{w}^{j}}\sigma^{*}\rho)\Big(\psi''(\frac{\sigma^{*}d^{2}}{\epsilon^{2}})\frac{\sigma^{*}d^{2}}{\epsilon^{2}}+\psi'(\frac{\sigma^{*}d^{2}}{\epsilon^{2}})(1+\sum_{j\neq i}|\zi^{j}|^{2}) \Big)+\epsilon^{2}O(1)\\
            =&O(|\zi^{i}|(1+\sum_{j\neq i}|\zi^{j}|^{2})^{\frac{1}{2}}+|w|).
        \end{split}
    \end{equation}
    Then, we derive the following
    \begin{align}
       &(g_{\omega_\epsilon}-g_{\omega_{\eta,\epsilon}})(\zi,w)=O(|\zi^{i}|(1+\sum_{j\neq i}|\zi^{j}|^{2})^{\frac{1}{2}}+|w|), \\
       &\di_{\zi}^{I}\di_{w}^{J}(g_{\omega_\epsilon}-g_{\omega_{\eta,\epsilon}})(\zi,w)=O(1)+O\Big(\epsilon^{-|I|}(|\zi^{i}|(1+\sum_{j\neq i}|\zi^{j}|^{2})^{\frac{1}{2}}+|w|)\Big), \ |I|\geq 1 .
    \end{align}

\medskip

\noindent\textbf{Case 4.} $R_{0}\epsilon<d<r_{0}$.

We also need to compare the background metric $\omega_{\epsilon}$ with the Euclidean metric.
In this case,
\begin{equation}
    \omega_{\epsilon}(z,w)=\sqrt{-1}\di\dbar\Big(|z|^{2}+|w|^{2}+\phi(z,w)+\epsilon^{2}\gamma(\frac{d}{r_{\epsilon}})[\gamma(\frac{d}{\epsilon})\log(\frac{d^{2}}{\epsilon^{2}})+\psi(\frac{d^{2}}{\epsilon^{2}})]\Big).
\end{equation}
Then, denote the Euclidean metric in this coordinates chart by $\omega_{euc}$, we have
\begin{equation}
    (\omega_{\epsilon}-\omega_{euc})(z,w)=\sqrt{-1}\di\dbar\Big(\phi(z,w)+\epsilon^{2}\gamma(\frac{d}{r_{\epsilon}})[\gamma(\frac{d}{\epsilon})\log(\frac{d^{2}}{\epsilon^{2}})+\psi(\frac{d^{2}}{\epsilon^{2}})]\Big).
\end{equation}
Then, for the same calculation of (\ref{Dz_cut_re})-(\ref{Dw_BSp_re}) and (\ref{dzcutoff})-(\ref{dwlog}) we have
\begin{equation}
    \di_{z}^{I}\di_{w}^{J}\epsilon^{2}\gamma(\frac{d}{r_{\epsilon}})\psi(\frac{d^{2}}{\epsilon^{2}})=O(\epsilon^{2k-2}d^{4-|I|-2k})
\end{equation}
Hence, when $2\epsilon<R_{0}\epsilon<d<r_{0}$, we have
\begin{align}
        &(g_{\omega_{\epsilon}}-g_{euc})(z,w)=O(|z|+|w|)+O(\epsilon^{2k-2}d^{2-2k});  \\
    &\di_{z}^{I}\di_{w}^{J}(g_{\omega_{\epsilon}}-g_{euc})(z,w)=O(1)+O(\epsilon^{2k-2}d^{2-|I|-2k}).
\end{align}

\subsection{Uniform upper bounds on weighted norms of background metric}\label{UUB}
\begin{proof}[Proof of Proposition \ref{metric_Uni}]
    We prove when $k\geq3$. The proof of $k=2$ is similar.

    In $\{d\geq r_{0}\}$,
    \begin{equation}
        \|g_{\omega_{\epsilon}}\|_{C^{2,\alpha}(M\setminus\{d<r_{0}\})}=\|g_{\omega}\|_{C^{2,\alpha}(M\setminus\{d<r_{0}\})}\leq C,
    \end{equation}
    and the same argument holds for $g_{\omega_{\epsilon}}^{-1}$.

    We may assume that $\frac{r}{2}\leq d\leq \frac{3r}{2}$. For any $p\in X$, take a coordinates chart $\big(B_{4r_{0}}^{k}\times B_{4r_{0}}^{m-k},(z,w)\big)$ centered at $p=(0,0)$. We consider a change of coordinates $(rZ,rW)=(z,w)$. For simplicity, we set 
    \begin{align}\label{r_change_nota}
       [g_{\omega_{\epsilon},i\ol{j}}]_{r}(Z,W)=g_{\omega_{\epsilon},i\ol{j}}(rZ,rW), \ \ \ \
       (Z,W)\in\mathcal{B} \coloneqq B_{2}^{k}\setminus B_{1}^{k}\times B_{2}^{m-k},
    \end{align}
    where $g_{\omega_{\epsilon},i\ol{j}}$ are local coefficients with respect to $(z,w)$. Recalling the definition in \eqref{WeightedTensor}, we have the following estimates.
    
    When $4r_{\epsilon}\leq r\leq 2r_{0}$, we have $d\geq 2r_{\epsilon}$, then
    \begin{equation}
        \|[g_{\omega_{\epsilon}}]_{r}\|_{C^{2,\alpha}(\mathcal{B})}=\|[g_{\omega}]_{r}\|_{C^{2,\alpha}(\mathcal{B})}\leq \|g_{\omega}\|_{C^{2,\alpha}(M)}.
    \end{equation}
    
    When $2R_{0}\epsilon\leq r\leq 4r_{\epsilon}$, we have $R_{0}\epsilon\leq d \leq 6r_{\epsilon}$. Then, by \eqref{Compare5},
     \begin{equation}
         \begin{split}
             \|[g_{\omega_{\epsilon}}]_{r}\|_{C^{2,\alpha}(\mathcal{B})}
             &\leq  \|[g_{\omega_{\epsilon}}-g_{\omega_{\eta,\epsilon}}]_{r}\|_{C^{2,\alpha}(\mathcal{B})} +\|[g_{\omega_{\eta,\epsilon}}]_{r}\|_{C^{2,\alpha}(\mathcal{B})}\\
            & \leq C_{1}r\sum_{i=0}^{2}\sum_{|I|=i}(1+\frac{\epsilon^{2k-2}}{d^{2k-2}}(\frac{r}{d})^{|I|})
            +\|g_{\omega_{\eta}}\|_{C^{2,\alpha}(Bl_{0}\C^{k}\times \C^{m-k})}
            \leq C.
         \end{split}
     \end{equation}

     When $2\epsilon\leq r\leq 2R_{0}\epsilon$, we have $\epsilon\leq d\leq 3R_{0}\epsilon$. Then, by \eqref{Compare3.5},
     \begin{equation}
         \begin{split}
             \|g_{\omega_{\epsilon}}\|_{C^{2,\alpha}(\mathcal{B})}
             &\leq  \|g_{\omega_{\epsilon}}-g_{\omega_{\eta,\epsilon}}\|_{C^{2,\alpha}(\mathcal{B})}
             +\|g_{\omega_{\eta,\epsilon}}\|_{C^{2,\alpha}(\mathcal{B})}\\
            & \leq C_{2}r\sum_{i=0}^{2}\sum_{|I|=i}(1+(\frac{r}{\epsilon})^{|I|})
            +\|g_{\omega_{\eta}}\|_{C^{2,\alpha}(Bl_{0}\C^{k}\times \C^{m-k})} 
            \leq C.
         \end{split}
     \end{equation}
     Since we may fix $R_{0}$ sufficiently large, $r\leq 2\epsilon$ implies that $d\leq 2R_{0}\epsilon$. The estimate when $r\leq 2\epsilon$ is basically the same as the inequality above by using \eqref{Compare3}. Thus we may conclude that $\|g_{\omega_{\epsilon}}\|_{C^{2,\alpha}_{0}(Bl_{X}M)}\leq C$. 

     As for the inverse matrix, observe that $g_{\omega_{\epsilon}}^{-1}-g_{\omega}^{-1}=g_{\omega_{\epsilon}}^{-1}(g_{\omega}-g_{\omega_{\epsilon}})g_{\omega}^{-1}$. We estimate, for example when $2R_{0}\epsilon\leq r\leq 4r_{\epsilon}$,
     \begin{equation}
         \begin{split}
             \|[g_{\omega_{\epsilon}}^{-1}]_{r} \|_{C^{2,\alpha}(\mathcal{B})}
             &\leq  \|[g_{\omega_{\epsilon}}^{-1}-g_{\omega_{\eta,\epsilon}}^{-1}]_{r}\|_{C^{2,\alpha}(\mathcal{B})} +\|[g_{\omega_{\eta,\epsilon}}^{-1}]_{r}\|_{C^{2,\alpha}(\mathcal{B})}\\
            & \leq C_{0} \Big(\|[g_{\omega_{\epsilon}}^{-1}]_{r}\|_{C^{2,\alpha}(\mathcal{B})}\|[g_{\omega_{\epsilon}}-g_{\omega_{\eta,\epsilon}}]_{r}\|_{C^{2,\alpha}(\mathcal{B})}+1\Big)\|[g_{\omega_{\eta,\epsilon}}]_{r}\|_{C^{2,\alpha}(\mathcal{B})} \\
            & \overset{\eqref{Compare5}}{\leq} C_{3}r\sum_{i=0}^{2}\sum_{|I|=i}(1+\frac{\epsilon^{2k-2}}{d^{2k-2}}(\frac{r}{d})^{|I|})+C_{4}.
         \end{split}
     \end{equation}
     Taking $\epsilon_{0}$ sufficiently small, we may assume that 
     \begin{equation}
         C_{3}r\sum_{i=0}^{2}\sum_{|I|=i}(1+\frac{\epsilon^{2k-2}}{d^{2k-2}}(\frac{r}{d})^{|I|})\leq \ti{C}_{3}r_{\epsilon}\leq \frac{1}{2}.
     \end{equation}
     Then, we have for any $\epsilon\leq r\leq r_{0}$,
     \begin{equation}
         \|g_{\omega_{\epsilon}}^{-1}(rZ_{p},rW_{p}) \|_{C^{2,\alpha}(B_{2}^{k}\setminus B_{1}^{k}\times B_{2}^{m-k})}\leq 2C_{4}.
     \end{equation}
     Since other cases can be estimated similarly, we may conclude that $\|g_{\omega_{\epsilon}}^{-1}\|_{C^{2,\alpha}_{0}(Bl_{X}M)}\leq C$.

     In the same coordinate system $(z,w)$ , the curvature can be written as
     \begin{equation}
         R_{\omega_{\epsilon},i\ol{j}k\ol{l}}=
         -\di_{k}\di_{\ol{l}}g_{\omega_{\epsilon},i\ol{j}}
         +g_{\omega_{\epsilon}}^{p\ol{q}}(\di_{k}g_{\omega_{\epsilon},i\ol{q}})(\di_{\ol{l}}g_{\omega_{\epsilon},p\ol{j}}).
     \end{equation}
    We estimate these terms respectively. Fix $\epsilon\leq r\leq r_{0}$,
    \begin{align}
        &\|-[\di_{k}\di_{\ol{l}}g_{\omega_{\epsilon},i\ol{j}}]_{r}\|_{C^{0,\alpha}(\mathcal{B})}
        \leq \|g_{\omega_{\epsilon}}\|_{C^{2,\alpha}_{0}(Bl_{X}M)}r^{-2},
        \\
        &\|[\di_{k}g_{\omega_{\epsilon},i\ol{q}}]_{r}\|_{C^{0,\alpha}(\mathcal{B})}
        \leq \|g_{\omega_{\epsilon}}\|_{C^{2,\alpha}_{0}(Bl_{X}M)}r^{-1},
        \\
        &\|[g_{\omega_{\epsilon}}^{p\ol{q}}]_{r}\|
        \leq \|g_{\omega_{\epsilon}}\|_{C^{2,\alpha}_{0}}.
    \end{align}
    Hence, we have
    \begin{equation}
        r^{-2}\|[R_{\omega_{\epsilon}}]_{r}\|_{C^{0,\alpha}(\mathcal{B})}\leq C_{5}\|g_{\omega_{\epsilon}}\|_{C^{2,\alpha}_{0}(Bl_{X}M)}\leq C_{6}
    \end{equation}
    The estimates in $\ti{B}_{(j),\epsilon}^{2}\times B_{\epsilon}^{m-2}$ follows completely similarly as above. When $d\geq r_{0}$ we already have $R_{\omega_{\epsilon}}=R_{\omega}$, the estimates follows obviously. Estimates on $Ric_{\omega_{\epsilon}}$ and $S(\omega_{\epsilon})$ are valid due to the uniform upper bound of $\|g_{\omega_{\epsilon}}^{-1}\|_{C^{2,\alpha}_{0}(Bl_{X}M)}$.
\end{proof}

\subsection{Uniform comparison estimates for $\omega_{\epsilon,\vphi}$ and $\omega_{\epsilon}$}
\label{UEC}
\begin{proof}[Proof of Proposition \ref{UniEst}]
      By Definition \ref{WeightedBLxM3}, we have  
      \begin{equation}
          \|g_{\omega_{\vphi,\epsilon},i\ol{j}}-g_{\omega_{\epsilon},i\ol{j}}\|_{C^{2,\alpha}(M\setminus\{d<r_{0} \})}=\|\di_{i}\di_{\ol{j}}\vphi\|_{C^{2,\alpha}(M\setminus\{d<r_{0} \})}\leq\|\vphi\|_{C^{4,\alpha}_{2}(Bl_{X}M)}.
      \end{equation}
      
      Fix $p\in X$ and a coordinates chart $\big( B_{4r_{0}}^{k}\times B_{4r_{0}}^{m-k},(z,w) \big)$ of Lemma \ref{holosubcoor} type.  For any $\epsilon \leq r \leq r_{0}$, we follow the notations in \eqref{r_change_nota} and estimate
      \begin{equation}
              \|[g_{\omega_{\vphi,\epsilon},i\ol{j}}-g_{\omega_{\epsilon},i\ol{j}}]_{r}\|_{C^{2,\alpha}(\mathcal{B})}
              =\|[\di_{i}\di_{\ol{j}}\vphi]_{r}\|_{C^{2,\alpha}(\mathcal{B})}
              \leq \|\vphi\|_{C^{4,\alpha}_{2}(Bl_{X}M)}.
      \end{equation}
      The estimates in $\ti{B}_{(j),\epsilon}^{2}\times B_{\epsilon}^{m-2}$ are exactly the same as above. Hence, we have proved the estimate on $\|g_{\omega_{\vphi,\epsilon}}-g_{\omega_{\epsilon}}\|_{C^{2,\alpha}_{\delta-2}(Bl_{X}M)}$. In the meantime, we have
      \begin{equation}
          \begin{split}
          \|g_{\omega_{\vphi,\epsilon}}\|_{C^{2,\alpha}_{0}(Bl_{X}M)}
          \leq& \|g_{\omega_{\vphi,\epsilon}}-g_{\omega_{\epsilon}}\|_{C^{2,\alpha}_{0}(Bl_{X}M)}+\|g_{\omega_{\epsilon}}\|_{C^{2,\alpha}_{0}(Bl_{X}M)}\\
          \leq & \|\vphi\|_{C^{4,\alpha}_{2}(Bl_{X}M)}+C_{1}\leq c+C_{1}
          \end{split}
      \end{equation}
     For the inverse metric, by $g_{\omega_{\vphi,\epsilon}}^{-1}-g^{-1}_{\omega_{\epsilon}}=g_{\omega_{\vphi,\epsilon}}^{-1}(g_{\omega_{\epsilon}}-g_{\omega_{\vphi,\epsilon}})g_{\omega_{\epsilon}}^{-1}$ and the uniform upper bound in Proposition \ref{metric_Uni}, we have
     \begin{equation}
         \begin{split}
          \|g_{\omega_{\vphi,\epsilon}}^{-1}\|_{C^{2,\alpha}_{0}(Bl_{X}M)}
          \leq& \|g_{\omega_{\vphi,\epsilon}}^{-1}-g_{\omega_{\epsilon}}^{-1}\|_{C^{2,\alpha}_{0}(Bl_{X}M)}+\|g_{\omega_{\epsilon}}^{-1}\|_{C^{2,\alpha}_{0}(Bl_{X}M)}\\
          \leq& C_{2} \|g_{\omega_{\vphi,\epsilon}}-g_{\omega_{\epsilon}}\|_{C^{2,\alpha}_{0}(Bl_{X}M)}\|g_{\omega_{\vphi,\epsilon}^{-1}}\|_{C^{2,\alpha}_{0}(Bl_{X}M)}+C_{3}\\
          \leq& C_{2}c\|g_{\omega_{\vphi,\epsilon}}^{-1}\|_{C^{2,\alpha}_{0}(Bl_{X}M)}+C_{3}.
         \end{split}
     \end{equation}
     Hence, when $c$ is taken sufficiently small such that $C_{2}c\leq\frac{1}{2}$, we have $\|g_{\omega_{\vphi,\epsilon}}^{-1}\|_{C^{2,\alpha}_{0}(Bl_{X}M)}\leq 2C_{3}$. It follows again by $g_{\omega_{\vphi,\epsilon}}^{-1}-g^{-1}_{\omega_{\epsilon}}=g_{\omega_{\vphi,\epsilon}}^{-1}(g_{\omega_{\epsilon}}-g_{\omega_{\vphi,\epsilon}})g_{\omega_{\epsilon}}^{-1}$, we have
     \begin{equation}\label{C_inverse}
     \begin{split}
         \|g_{\omega_{\vphi,\epsilon}}^{-1}-g_{\omega_{\epsilon}}^{-1}\|_{C^{2,\alpha}_{0}(Bl_{X}M)}
         \leq &C_{4}\|g_{\omega_{\vphi,\epsilon}}^{-1}\|_{C^{2,\alpha}_{0}(Bl_{X}M)}\|g_{\omega_{\epsilon}}^{-1}\|_{C^{2,\alpha}_{0}(Bl_{X}M)}\|g_{\omega_{\vphi,\epsilon}}-g_{\omega_{\epsilon}}\|_{C^{2,\alpha}_{0}(Bl_{X}M)}\\
         \leq& C_{5}\|\vphi \|_{C^{4,\alpha}_{2}(Bl_{X}M)}.
    \end{split} 
    \end{equation}
     
    Regarding the curvature tensor and the linearized operator, we estimate in the case $k=2$ as follows.
    \begin{equation}
        \begin{split}
           R_{\omega_{\vphi,\epsilon},i\ol{j}k\ol{l}}-R_{\omega_{\epsilon},i\ol{j}k\ol{l}}
           =&-\di_{k}\di_{\ol{l}}(g_{\omega_{\vphi,\epsilon},i\ol{j}}-g_{\omega_{\epsilon},i\ol{j}})
           +g_{\omega_{\vphi,\epsilon}}^{p\ol{q}}(\di_{k}g_{\omega_{\vphi,\epsilon},i\ol{q}})\di_{\ol{l}}(g_{\omega_{\vphi,\epsilon},p\ol{j}}-g_{\omega_{\epsilon},p\ol{j}})\\
           &+g_{\omega_{\vphi,\epsilon}}^{p\ol{q}}\di_{k}(g_{\omega_{\vphi,\epsilon},i\ol{q}}-g_{\omega_{\epsilon},i\ol{q}})(\di_{\ol{l}}g_{\omega_{\epsilon},p\ol{j}})
           +(g_{\omega_{\vphi,\epsilon}}^{p\ol{q}}-g_{\omega_{\epsilon}}^{p\ol{q}})(\di_{k}g_{\omega_{\epsilon},i\ol{q}})(\di_{\ol{l}}g_{\omega_{\epsilon},p\ol{j}}).
        \end{split}
    \end{equation}
    For any $\epsilon\leq r\leq r_{0}$, we have
    \begin{equation}\label{R1}
        \begin{split}
            [\ti{\tau}(r)]^{-\delta}r^{4}
            \|[\di_{k}\di_{\ol{l}}(g_{\omega_{\vphi,\epsilon},i\ol{j}}-g_{\omega_{\epsilon},i\ol{j}})]_{r}\|_{C^{0,\alpha}(\mathcal{B})}
            = [\ti{\tau}(r)]^{-\delta}r^{4}
            \|[\di_{k}\di_{\ol{l}}\di_{i}\di_{\ol{j}}\vphi]_{r}\|_{C^{0,\alpha}(\mathcal{B})}
            \leq \|\vphi\|_{\ti{C}^{4,\alpha}_{0,\delta}}.
        \end{split}
    \end{equation}
    For those two terms that involve first order derivatives the estimates are similar, we show one of the terms in the following.
    \begin{equation}
        \begin{split}
           &[\ti{\tau}(r)]^{-\delta}r^{4}
           \|[g_{\omega_{\vphi,\epsilon}}^{p\ol{q}}(\di_{k}g_{\omega_{\vphi,\epsilon},i\ol{q}})\di_{\ol{l}}(g_{\omega_{\vphi,\epsilon},p\ol{j}}-g_{\omega_{\epsilon},p\ol{j}})]_{r}\|_{C^{0,\alpha}(\mathcal{B})}
           \\
           \leq& \|[g_{\omega_{\vphi,\epsilon}}^{-1}]_{r}\|_{C^{0,\alpha}(\mathcal{B})}r\|(\di_{k}g_{\omega_{\vphi,\epsilon},i\ol{q}})_{r}\|_{C^{0,\alpha}(\mathcal{B})}[\ti{\tau}(r)]^{-\delta}r^{3}\|[\di_{\ol{l}}\di_{p}\di_{\ol{j}}\vphi]_{r}\|_{C^{0,\alpha}(\mathcal{B})}
           \\
           \leq& \|g_{\omega_{\vphi,\epsilon}}^{-1}\|_{C^{2,\alpha}_{0}(Bl_{X}M)}\|g_{\omega_{\vphi,\epsilon}}\|_{C^{2,\alpha}_{0}(Bl_{X}M)}\|\vphi\|_{\ti{C}^{4,\alpha}_{0,\delta}(Bl_{X}M)}
           \\
           \leq& C_{6}\|\vphi\|_{\ti{C}^{4,\alpha}_{0,\delta}(Bl_{X}M)}.
        \end{split}
    \end{equation}
    For the last term, in the same spirit of \eqref{C_inverse}, we get the following.
    \begin{equation}\label{R3}
        \begin{split}
            &[\ti{\tau}(r)]^{-\delta}r^{4}
            \|[(g_{\omega_{\vphi,\epsilon}}^{p\ol{q}}-g_{\omega_{\epsilon}}^{p\ol{q}})(\di_{k}g_{\omega_{\epsilon},i\ol{q}})(\di_{\ol{l}}g_{\omega_{\epsilon},p\ol{j}})]_{r}\|_{C^{0,\alpha}(\mathcal{B})}
            \\
            \leq& [\ti{\tau}(r)]^{-\delta}r^{2}\|[g_{\omega_{\vphi,\epsilon}}^{p\ol{q}}-g_{\omega_{\epsilon}}^{p\ol{q}}]_{r}\|_{C^{0,\alpha}(\mathcal{B})}
            r\|[\di_{k}g_{\omega_{\epsilon},i\ol{q}}]_{r}\|_{C^{0,\alpha}(\mathcal{B})}
            r\|[\di_{\ol{l}}g_{\omega_{\epsilon},p\ol{j}}]_{r}\|_{C^{0,\alpha}(\mathcal{B})}
            \\
            \leq& 
            C_{7}[\ti{\tau}(r)]^{-\delta}r^{2}\sup_{|I|=2}\|[\di^{I}\vphi]_{r}\|_{C^{0,\alpha}(\mathcal{B})}(\|g_{\omega_{\epsilon}}\|_{C^{2,\alpha}_{0}(Bl_{X}M)})^{2}
            \\
            \leq& C_{8}\|\vphi\|_{\ti{C}^{4,\alpha}_{0,\delta}(Bl_{X}M)}.
        \end{split}
    \end{equation}
    Applying the similar method as above in $\ti{B}_{(j),\epsilon}^{2}\times B_{\epsilon}^{2}$ and $\{d\geq r_{0}\}$, we may derive
    \begin{equation}
        \|R_{\omega_{\vphi,\epsilon}}-R_{\omega_{\epsilon}}\|_{\ti{C}^{0,\alpha}_{-4,\delta}(Bl_{X}M)}\leq C\|\vphi\|_{\ti{C}^{4,\alpha}_{0,\delta}(Bl_{X}M)}.
    \end{equation}
    Replacing the weight function $\ti{\tau}$ by $\tau$, we have the similar estimate when $k\geq 3$.

    Regarding the linearized operator, we write down the local expression as follows.
    \begin{equation}
        \begin{split}
            (L_{\omega_{\vphi,\epsilon}}-L_{\omega_{\epsilon}})f
            =& -(\Delta^{2}_{\omega_{\vphi,\epsilon}}-\Delta^{2}_{\omega_{\epsilon}})f
            \\
            &-\Big( Ric_{\omega_{\vphi,\epsilon},i\ol{j}}g_{\omega_{\vphi,\epsilon}}^{i\ol{l}}g_{\omega_{\vphi,\epsilon}}^{k\ol{j}}-Ric_{\omega_{\epsilon},i\ol{j}}g_{\omega_{\epsilon}}^{i\ol{l}}g_{\omega_{\epsilon}}^{k\ol{j}} \Big)\di_{k}\di_{\ol{l}}f.
        \end{split}
    \end{equation}
    Note that
    \begin{equation}\label{biLap_4terms}
        (\Delta^{2}_{\omega_{\vphi,\epsilon}}-\Delta^{2}_{\omega_{\epsilon}})f= (g_{\omega_{\vphi,\epsilon}}^{i\ol{j}}-g_{\omega_{\epsilon}}^{i\ol{j}})g_{\omega_{\epsilon}}^{k\ol{l}}\di_{i}\di_{\ol{j}}\di_{k}\di_{\ol{j}}f+g_{\omega_{\epsilon}}^{i\ol{j}}(\di_{i}\di_{\ol{j}}(g_{\omega_{\vphi,\epsilon}}^{k\ol{l}}-g_{\omega_{\epsilon}}^{k\ol{l}}))(\di_{k}\di_{\ol{l}}f)+\text{two extra terms}. 
    \end{equation}
    We estimate the first two terms on the right-hand side of \eqref{biLap_4terms}, as two extra terms can be estimated similarly. We consider the region $\epsilon\leq r\leq r_{0}$, firstly we have the following.
    \begin{equation}\label{L-L_1}
        \begin{split}
            &[\ti{\tau}(r)]^{-\delta}r^{4}\|[(g_{\omega_{\vphi,\epsilon}}^{i\ol{j}}-g_{\omega_{\epsilon}}^{i\ol{j}})g_{\omega_{\epsilon}}^{k\ol{l}}\di_{i}\di_{\ol{j}}\di_{k}\di_{\ol{j}}f]_{r}\|_{C^{0,\alpha}(\mathcal{B})}
            \\
            \leq& 
            \|[g_{\omega_{\vphi,\epsilon}}^{i\ol{j}}-g_{\omega_{\epsilon}}^{i\ol{j}}]_{r}\|_{C^{0,\alpha}(\mathcal{B})}
            \|[g_{\omega_{\epsilon}}^{k\ol{l}}]_{r}\|_{C^{0,\alpha}(\mathcal{B})}
            [\ti{\tau}(r)]^{-\delta}r^{4}\|[\di_{i}\di_{\ol{j}}\di_{k}\di_{\ol{j}}f]_{r}\|_{C^{0,\alpha}(\mathcal{B})}
            \\
            \leq& 
            C_{9}\sup_{|I|=2}\|[\di^{I}\vphi]_{r}\|_{C^{0,\alpha}(\mathcal{B})}
            \|g_{\omega_{\epsilon}}^{-1}\|_{C^{2,\alpha}_{0}(Bl_{X}M)}
            [\ti{\tau}(r)]^{-\delta}\|[f]_{r}\|_{C^{4,\alpha}(\mathcal{B})}
            \\
            \leq& 
            C_{9}r^{-2}\|[\vphi]_{r}\|_{C^{4,\alpha}(\mathcal{B})}
            \|g_{\omega_{\epsilon}}^{-1}\|_{C^{2,\alpha}_{0}(Bl_{X}M)}
            [\ti{\tau}(r)]^{-\delta}\|[f]_{r}\|_{C^{4,\alpha}(\mathcal{B})}
            \\
            \leq& C_{10}\|\vphi\|_{C^{4,\alpha}_{2}(Bl_{X}M)}\|f\|_{\ti{C}^{4,\alpha}_{0,\delta}(Bl_{X}M)}.
        \end{split}
    \end{equation}
    Secondly, we have
    \begin{equation}
        \begin{split}
            &[\ti{\tau}(r)]^{-\delta}r^{4}
            \|[g_{\omega_{\epsilon}}^{i\ol{j}}(\di_{i}\di_{\ol{j}}(g_{\omega_{\vphi,\epsilon}}^{k\ol{l}}-g_{\omega_{\epsilon}}^{k\ol{l}}))(\di_{k}\di_{\ol{l}}f)]_{r}\|_{C^{0,\alpha}(\mathcal{B})}
            \\
            \leq& 
            \|g_{\omega_{\epsilon}}^{-1}\|_{C^{2,\alpha}_{0}(Bl_{X}M)}
            r^{2}\sup_{|I|=2}\|[\di_{i}\di_{\ol{j}}\di^{I}\vphi]_{r}\|_{C^{0,\alpha}(\mathcal{B})}[\ti{\tau}(r)]^{-\delta}r^{2}\|[\di_{k}\di_{\ol{l}}f]_{r}\|_{C^{0,\alpha}(\mathcal{B})}
            \\
            \leq&
            C_{11}
            \|g_{\omega_{\epsilon}}^{-1}\|_{C^{2,\alpha}_{0}(Bl_{X}M)}
            r^{-2}\|[\vphi]_{r}\|_{C^{4,\alpha}(\mathcal{B})}
            [\ti{\tau}(r)]^{-\delta}\|[f]_{r}\|_{C^{4,\alpha}(\mathcal{B})}
            \\
            \leq& 
            C_{12}\|\vphi\|_{C^{4,\alpha}_{2}(Bl_{X}M)}\|f\|_{\ti{C}^{4,\alpha}_{0,\delta}(Bl_{X}M)}.
        \end{split}
    \end{equation}
    For the Ricci term, notice that
    \begin{equation}\label{Ricci_diff_term}
        \begin{split}
            &\Big( Ric_{\omega_{\vphi,\epsilon},i\ol{j}}g_{\omega_{\vphi,\epsilon}}^{i\ol{l}}g_{\omega_{\vphi,\epsilon}}^{k\ol{j}}-Ric_{\omega_{\epsilon},i\ol{j}}g_{\omega_{\epsilon}}^{i\ol{l}}g_{\omega_{\epsilon}}^{k\ol{j}} \Big)\di_{k}\di_{\ol{l}}f
            \\
            =&\Big( Ric_{\omega_{\vphi,\epsilon},i\ol{j}}-Ric_{\omega_{\epsilon},i\ol{j}} \Big)g_{\omega_{\vphi,\epsilon}}^{i\ol{l}}g_{\omega_{\vphi,\epsilon}}^{k\ol{j}}\di_{k}\di_{\ol{l}}f
            +Ric_{\omega_{\epsilon},i\ol{j}}\Big( (g_{\omega_{\vphi,\epsilon}}^{i\ol{l}}-g_{\omega_{\epsilon}}^{i\ol{l}})g_{\omega_{\vphi,\epsilon}}^{k\ol{j}}
            +g_{\omega_{\vphi,\epsilon}}^{i\ol{l}}(g_{\omega_{\vphi,\epsilon}}^{k\ol{j}}-g_{\omega_{\vphi,\epsilon}}^{k\ol{j}}) \Big)\di_{k}\di_{\ol{l}}f.
        \end{split}
    \end{equation}
    For the first term on the right-hand side of \eqref{Ricci_diff_term}, we have
    \begin{equation}
        \begin{split}
            &[\ti{\tau}(r)]^{-\delta}r^{4}
            \|[\Big( Ric_{\omega_{\vphi,\epsilon},i\ol{j}}-Ric_{\omega_{\epsilon},i\ol{j}} \Big)g_{\omega_{\vphi,\epsilon}}^{i\ol{l}}g_{\omega_{\vphi,\epsilon}}^{k\ol{j}}\di_{k}\di_{\ol{l}}f]_{r}\|_{C^{0,\alpha}(\mathcal{B})}
            \\
            \leq& C_{13}
            r^{-2}\|[\vphi]_{r}\|_{C^{4,\alpha}(\mathcal{B})}\|g_{\omega_{\vphi,\epsilon}}^{-1}\|_{C^{2,\alpha}_{0}(Bl_{X}M)}
            \|g_{\omega_{\epsilon}}^{-1}\|_{C^{2,\alpha}_{0}(Bl_{X}M)}
            [\ti{\tau}(r)]^{-\delta}r^{2}\|[\di_{k}\di_{\ol{l}}f]_{r}\|_{C^{0,\alpha}(\mathcal{B})}
            \\
            \leq&
            C_{14}\|\vphi\|_{C^{4,\alpha}_{2}(Bl_{X}M)}\|f\|_{\ti{C}^{4,\alpha}_{0,\delta}(Bl_{X}M)}.
        \end{split}
    \end{equation}
    Here, we replace the quantity $[\ti{\tau}(r)]^{-\delta}r^{4}$ by $r^{2}$ in \eqref{R1}--\eqref{R3} to obtain
    \begin{equation}
        r^{2}
            \|[\Big( Ric_{\omega_{\vphi,\epsilon},i\ol{j}}-Ric_{\omega_{\epsilon},i\ol{j}} \Big)]_{r}\|_{C^{0,\alpha}(\mathcal{B})}
            \leq C 
            r^{-2}\|[\vphi]_{r}\|_{C^{4,\alpha}(\mathcal{B})}.
    \end{equation}
    For the second term on the right-hand side of \eqref{Ricci_diff_term}, by \eqref{C_inverse}, we have
    \begin{equation}\label{L-L_2}
        \begin{split}
            &[\ti{\tau}(r)]^{-\delta}r^{4}
            \|[Ric_{\omega_{\epsilon},i\ol{j}}\Big( (g_{\omega_{\vphi,\epsilon}}^{i\ol{l}}-g_{\omega_{\epsilon}}^{i\ol{l}})g_{\omega_{\vphi,\epsilon}}^{k\ol{j}} \Big)\di_{k}\di_{\ol{l}}f]_{r}\|_{C^{0,\alpha}(\mathcal{B})}
            \\
            \leq& 
            C_{15}r_{0}^{2}\|g_{\omega_{\epsilon}}\|_{C^{2,\alpha}_{0}(Bl_{X}M)}
            \|(g_{\omega_{\vphi,\epsilon}}^{-1}-g_{\omega_{\epsilon}}^{-1})\|_{C^{2,\alpha}_{0}(Bl_{X}M)}
            \|g_{\omega_{\vphi,\epsilon}}^{-1}\|_{C^{2,\alpha}_{0}(Bl_{X}M)}[\ti{\tau}(r)]^{-\delta}r^{2}
            \|[\di_{k}\di_{\ol{l}}f]_{r}\|_{C^{0,\alpha}(\mathcal{B})}
            \\
            \leq& 
            C_{16}\|\vphi\|_{C^{4,\alpha}_{2}(Bl_{X}M)}\|f\|_{\ti{C}^{4,\alpha}_{0,\delta}(Bl_{X}M)}.
        \end{split}
    \end{equation}
    Hence, we have 
    \begin{equation}
        [\ti{\tau}(r)]^{-\delta}r^{4}\|[(L_{\omega_{\vphi,\epsilon}}-L_{\omega_{\epsilon}})f]_{r}\|_{C^{0,\alpha}(\mathcal{B})}\leq C\|\vphi\|_{C^{4,\alpha}_{2}(Bl_{X}M)}\|f\|_{\ti{C}^{4,\alpha}_{0,\delta}(Bl_{X}M)}.
    \end{equation}
    The corresponding estimates in the regions $\ti{B}_{(j),\epsilon}^{2}\times B_{\epsilon}^{m-2}$ and $\{d\geq r_{0}\}$ can be done similarly. By replacing the weight function $\ti{\tau}$ to $\tau$, we may derive estimates when $k\geq 3$.
    \end{proof}

\section{Estimates on linearized operators}

\subsection{Proof of Schauder estimates}\label{PS}
\begin{proof}[Proof of Proposition \ref{Schauder}]
   We consider the case of codimension $k=2$.

   We introduce the following notations.
    \begin{align}
        &[h]_{(j),\epsilon}(Z_{j},W)=h(\epsilon Z_{(j)},\epsilon W),\ \ \ \ti{\mathcal{B}}_{(j),t}=B_{(j),t}^{2}\times B_{t}^{m-2},\\
        &[h]_{r}(Z,W)=h(rZ,rW),\ \ \ \ \ 
        \mathcal{B}_{t}=B_{2t}^{2}\setminus B_{\frac{1}{t}}^{2}\times B_{2t}^{m-2}.
    \end{align}
    for any function $h$ defined in the corresponding domain.

    Fix $p\in X$ and a coordinate neighborhood $\big( B_{2r_{0}}^{2}\times B_{2r_{0}}^{m-2},(z,w) \big)$ of Lemma \ref{holosubcoor} type centered at $p$. For any $j=1,2$, we define
    \begin{equation}
    \begin{split}
        f_{(j),\epsilon}(Z_{(j)},W)
        =[\ti{\tau}(\epsilon)]^{-\delta}f|_{\ti{B}_{(j),2\epsilon}^{2}\times B_{2\epsilon}^{m-2}}(\epsilon Z_{(j)},\epsilon W),
        \ \ (Z_{(j)},W)\in \ti{\mathcal{B}}_{(j),2}:=\ti{B}_{(j),2}^{2}\times B_{2}^{m-2}.
    \end{split}
    \end{equation}

    Then we apply the usual Schauder interior estimates to each $f_{(j),\epsilon}$ with respect to $L_{\omega_{\eta}}$.
    \begin{equation}\label{Schauder1}
        \|f_{(j),\epsilon}\|_{C^{4,\alpha}(\ti{\mathcal{B}}_{(j),1})}\leq \ti{C}\Big( \|f_{(j),\epsilon}\|_{C^{0}(\ti{\mathcal{B}}_{(j),2})}+\|L_{\omega_{\eta}}f_{(j),\epsilon}\|_{C^{0,\alpha}(\ti{\mathcal{B}}_{(j),2})} \Big),
    \end{equation}
    where, the constant $\ti{C}$ depends on the constants of ellipticity and the $C^{4,\alpha}$ norms of coefficients of $L_{\omega_{\eta}}$, which are independent of $\epsilon$. Hence, $\ti{C}$ does not depends on $\epsilon$ and the choice of $p\in X$.

    Note that
    \begin{equation}
        L_{\omega_{\eta}}f_{(j),\epsilon}(Z_{(j)},W)=[\ti{\tau}(\epsilon)]^{-\delta}\epsilon^{4}(L_{\omega_{\eta,\epsilon}}f)(\epsilon Z_{(j)},\epsilon W).
    \end{equation}
    Then, we have 
    \begin{equation}
        \begin{split}
            \|L_{\omega_{\eta}}f_{(j),\epsilon}\|_{C^{0,\alpha}(\ti{\mathcal{B}}_{(j),2})}
            =&[\ti{\tau}(\epsilon)]^{-\delta}\epsilon^{4}\|[L_{\omega_{\eta,\epsilon}}f]_{(j),\epsilon}\|_{C^{0,\alpha}(\ti{\mathcal{B}}_{(j),2})}
            \\
            \leq& 
            [\ti{\tau}(\epsilon)]^{-\delta}\epsilon^{4}\|[L_{\omega_{\epsilon}}f]_{(j),\epsilon}\|_{C^{0,\alpha}(\ti{\mathcal{B}}_{(j),2})}
            \\
            &+
            [\ti{\tau}(\epsilon)]^{-\delta}\epsilon^{4}\|[(L_{\omega_{\eta,\epsilon}}-L_{\omega_{\epsilon}})f]_{(j),\epsilon}\|_{C^{0,\alpha}(\ti{\mathcal{B}}_{(j),2})}.
        \end{split}
    \end{equation}
    According to \eqref{L-L_1}--\eqref{L-L_2} and the comparison results of the background metric \eqref{BSCompare}, we have
    \begin{equation}
        \begin{split}
            &[\ti{\tau}(\epsilon)]^{-\delta}\epsilon^{4}\|[(L_{\omega_{\eta,\epsilon}}-L_{\omega_{\epsilon}})f]_{(j),\epsilon}\|_{C^{0,\alpha}(\ti{\mathcal{B}}_{(j),2})}
            \\
            \leq&
            C_{0}\|[g_{\omega_{\eta,\epsilon}}-g_{\omega_{\epsilon}}]_{(j),\epsilon}\|_{C^{2,\alpha}(\ti{\mathcal{B}}_{(j),2})}
            \|f_{(j),\epsilon}\|_{C^{4,\alpha}(\ti{\mathcal{B}}_{(j),2})}
            \\
            \leq&
            C_{1}\|(|\epsilon Z_{(j)}^{j}|+|\epsilon W|)+\epsilon^{2}\|_{C^{2,\alpha}(\ti{\mathcal{B}}_{(j),2})}
            \|f_{(j),\epsilon}\|_{C^{4,\alpha}(\ti{\mathcal{B}}_{(j),2})}
            \\
            \leq& 
            \frac{1}{2\ti{C}}  \|f\|_{C^{4,\alpha}_{0,\delta}(Bl_{X}M)},
        \end{split}
    \end{equation}
    when $\epsilon$ is taken sufficiently small. Substitute this estimate in (\ref{Schauder1}), we have
    \begin{equation}\label{Schauder1.5}
        \begin{split}
            &[\ti{\tau}(\epsilon)]^{-\delta}\|f(\epsilon Z_{(j)},\epsilon W)\|_{C^{4,\alpha}(\ti{\mathcal{B}}_{(j),1})}
            \\
            \leq& \ti{C}\Big( [\ti{\tau}(\epsilon)]^{-\delta}\|f(\epsilon Z_{(j)},\epsilon W)\|_{C^{0}(\ti{\mathcal{B}}_{(j),2})}
            \\
            &+
            [\ti{\tau}(\epsilon)]^{-\delta}\epsilon^{4}\|L_{\omega_{\epsilon}}f(\epsilon Z_{(j)},\epsilon W)\|_{C^{0,\alpha}(\ti{\mathcal{B}}_{(j),2})}
            +
            \frac{1}{8\ti{C}}\|f\|_{C^{4,\alpha}_{0,\delta}(Bl_{X}M)} \Big)
            \\
            \leq&
            \ti{C}_{1}\Big( \|f\|_{\ti{C}^{0}_{\delta}(Bl_{X}M)}
            +\|L_{\omega_{\epsilon}}f\|_{\ti{C}^{0,\alpha}_{-4,\delta}(Bl_{X}M)} \Big)
            +\frac{1}{8}\|f\|_{C^{4,\alpha}_{0,\delta}(Bl_{X}M)}.
        \end{split}
    \end{equation}

    Similarly, we consider the region $\epsilon \leq r \leq r_{0}$ and define
    \begin{equation}
        f_{r}(Z,W)=[\ti{\tau}(r)]^{-\delta}f|_{B_{4r}^{k}\setminus B_{\frac{r}{2}}^{k}\times B_{4r}^{m-k}}(rZ,rW), \ (Z,W)\in B_{4}^{2}\setminus B_{\frac{1}{2}}^{2}\times B_{4}^{m-2}.
    \end{equation}
    Then, we apply the Schauder interior estimate to the Euclidean bi-Laplacian $\Delta_{\mathrm{euc}}^{2}$ and obtain
    \begin{equation}\label{Schauder2}
        \|f_{r}\|_{C^{4,\alpha}(\mathcal{B}_{1})}\leq \ti{C}_{2}\Big( \|f_{r}\|_{C^{0}(\mathcal{B}_{2})}+\|\Delta_{euc}^{2}f_{r}\|_{C^{0,\alpha}(\mathcal{B}_{2})} \Big).
    \end{equation}
    According to the comparison estimates (\ref{EucCompare}), we have
    \begin{equation}
        \begin{split}
            \|\Delta_{\mathrm{euc}}^{2}f_{r}\|_{C^{0,\alpha}(\mathcal{B}_{2})}
            =& 
            [\ti{\tau}(r)]^{-\delta}r^{4}\Big(\|[(\Delta_{euc}^{2}-L_{\omega_{\epsilon}})f]_{r}\|_{C^{0,\alpha}(\mathcal{B}_{2})}
            +\|[L_{\omega_{\epsilon}}f]_{r}\|_{C^{0,\alpha}(\mathcal{B}_{2})} \Big)
            \\
            \leq& 
            [\ti{\tau}(r)]^{-\delta}r^{4}\|[L_{\omega_{\epsilon}}f]_{r}\|_{C^{0,\alpha}(\mathcal{B}_{2})}
            +\|[g_{euc}-g_{\omega_{\epsilon}}]_{r}\|_{C^{2,\alpha}(\mathcal{B}_{2})}[\ti{\tau}(r)]^{-\delta}\|f(rZ,rW)\|_{C^{4,\alpha}(\mathcal{B}_{2})}
            \\
            \leq& 
            [\ti{\tau}(r)]^{-\delta}r^{4}\|[L_{\omega_{\epsilon}}f]_{r}\|_{C^{0,\alpha}(\mathcal{B}_{2})}
            \\
            &+C_{2}\|(|rZ|+|rW|)+\epsilon^{2-2\theta}\log\frac{1}{\epsilon}\|_{C^{2,\alpha}(\mathcal{B}_{2})}
            \|f\|_{\ti{C}^{4,\alpha}_{0,\delta}(Bl_{X}M)}
            \\
            \leq& [\ti{\tau}(r)]^{-\delta}r^{4}\|[L_{\omega_{\epsilon}}f]_{r}\|_{C^{0,\alpha}(\mathcal{B}_{2})}
            +C_{3}r_{0}\|f\|_{\ti{C}^{4,\alpha}_{0,\delta}(Bl_{X}M)}
         \end{split}
    \end{equation}
    when $\epsilon$ is taken sufficiently small. Therefore, when $r_{0}$ is also sufficiently small, we have
    
    \begin{equation}\label{Schauder2.5}
        \begin{split}
            [\ti{\tau}(r)]^{-\delta}\|f(rZ,rW)\|_{C^{4,\alpha}(\mathcal{B}_{1})}
            \leq& \ti{C}_{2}\Big( [\ti{\tau}(r)]^{-\delta}\|f(rZ,rW)\|_{C^{0}(\mathcal{B}_{2})}
            +[\ti{\tau}(r)]^{-\delta}r^{4}\|L_{\omega_{\epsilon}}f(rZ,rW)\|_{C^{0,\alpha}(\mathcal{B}_{1})}
            \\
            &+2C_{2}r_{0}\|f\|_{\ti{C}^{4,\alpha}_{0,\delta}(Bl_{X}M)} \Big)
            \\
            \leq& 
            \ti{C}_{3}\Big(
            \|f\|_{\ti{C}^{0}_{\delta}(Bl_{X}M)}
            +\|L_{\omega_{\epsilon}}f\|_{\ti{C}^{0,\alpha}_{0,\delta}(Bl_{X}M)}
            \Big)
            +\frac{1}{4} \|f\|_{\ti{C}^{4,\alpha}_{0,\delta}(Bl_{X}M)}.
        \end{split}
    \end{equation}
    
    Note that the constants $\ti{C}_{2},\ti{C}_{3},C_{2}$ and $C_{3}$ all independent of $r\in[\epsilon,r_{0}]$, $\epsilon$ and $p\in X$. Then, apply (\ref{Schauder2.5}) to every $r\in[\epsilon,r_{0}]$, add \eqref{Schauder1.5}, and let $p$ ranges over $X$, we have the following
    \begin{equation}\label{Schauder3}
    \begin{split}
        \sup_{p\in X}&\Big\{ \sup_{\epsilon<r<r_{0}}[\ti{\tau}(r)]^{-\delta}\|f(rZ_{p},rW_{p})\|_{C^{4,\alpha}(B_{2}^{k}\setminus B_{1}^{k}\times B_{2}^{m-k})}
        \\
        &+\sum_{j=1}^{2}[\ti{\tau}(\epsilon)]^{-\delta}\|f(\epsilon \ti{Z}_{(j),p},\epsilon\ti{W}_{p})\|_{C^{4,\alpha}(\ti{B}_{(j),1}^{k}\times B_{1}^{m-k})} \Big\}
        \\
       \leq& \ti{C}_{4}(\|f\|_{\ti{C}^{0}_{\delta}(Bl_{X}M)}+\|L_{\omega_{\epsilon}}f\|_{\ti{C}^{0,\alpha}_{-4,\delta}(Bl_{X}M)})+\frac{1}{2}\|f\|_{\ti{C}^{4,\alpha}_{0,\delta}(Bl_{X}M)}.
    \end{split}
    \end{equation}

    On the region $d\geq \frac{r_{0}}{2}$, as $\omega_{\epsilon}=\omega$ and the choice of $r_{0}$ is independent of $\epsilon$, we may apply the Schauder estimates directly.
    \begin{equation}\label{Schauder3.5}
    \begin{split}
        \|f\|_{C^{4,\alpha}(M\setminus\{d<r_{0}\})}
        \leq& \ti{C}_{5}\Big(\|f\|_{C^{0}(M\setminus\{d<\frac{r_{0}}{2}\})}+\|L_{\omega_{\epsilon}}f\|_{C^{0,\alpha}(M\setminus\{d<\frac{r_{0}}{2}\})}\Big)\\
        \leq& \ti{C}_{5}\Big(\|f\|_{C^{0}_{\delta}(Bl_{X}M)}+\|L_{\omega_{\epsilon}}f\|_{C^{0,\alpha}_{\delta-4}(Bl_{X}M)}\Big).
    \end{split}
    \end{equation}
    Take $\ti{C}_{6}>2\max\{\ti{C}_{4},\ti{C}_{5}\}$ , then summation of (\ref{Schauder3}) and (\ref{Schauder3.5}) implies
    \begin{equation}\label{Schauder4}
        \|f\|_{\ti{C}^{4,\alpha}_{0,\delta}(Bl_{X}M,\omega_{\epsilon})}\leq \ti{C}_{6}\Big( \|f\|_{\ti{C}^{0}_{\delta}(Bl_{X}M,\omega_{\epsilon})}+\|L_{\omega_{\epsilon}}f\|_{\ti{C}^{0,\alpha}_{-4,\delta}(Bl_{X}M,\omega_{\epsilon})} \Big).
    \end{equation}

    Finally, note that
    \begin{equation}
    \begin{split}
        \|f_{j}\|_{\ti{C}^{0,\alpha}_{-4,\delta}(Bl_{X}M)}
        \leq&
        [\ti{\tau}(\epsilon)]^{-\delta}\epsilon^{4}\|f_{j}\|_{C^{0,\alpha}(\{d\leq \frac{3}{2}\epsilon\})}
        +
        [\ti{\tau}(r_{0})]^{-\delta}r_{0}^{4}\|f_{j}\|_{C^{0,\alpha}(\{\frac{1}{2}\epsilon \leq d\leq 3r_{0}\})}
        +
        \|f_{j}\|_{C^{0,\alpha}(\{d\geq r_{0}\})}
        \\
        \leq& C_{4}\|f_{j}\|_{C^{0,\alpha}(Bl_{X}M)}.
    \end{split}
    \end{equation}
    Then,
    \begin{equation}
        \begin{split}
           \|L_{\omega_{\epsilon}}f\|_{\ti{C}^{0,\alpha}_{-4,\delta}(Bl_{X}M,\omega_{\epsilon})}
           \leq& \|\ti{L}_{\omega_{\epsilon}}f\|_{\ti{C}^{0,\alpha}_{-4,\delta}(Bl_{X}M,\omega_{\epsilon})}
           +\|\sum_{j=1}^{d}f(q_{j})f_{j}\|_{\ti{C}^{0,\alpha}_{-4,\delta}(Bl_{x}M,\omega_{\epsilon})}\\
           \leq& \|\ti{L}_{\omega_{\epsilon}}f\|_{\ti{C}^{0,\alpha}_{-4,\delta}(Bl_{X}M,\omega_{\epsilon})}+\sum_{j=1}^{d}C_{4}\|f_{j}\|_{C^{0,\alpha}(Bl_{X}M)}\|f\|_{\ti{C}^{0}_{\delta}(Bl_{X}M, \omega_{\epsilon})}.
        \end{split}
    \end{equation}
    Then, (\ref{Schauder4}) implies
    \begin{equation}
        \|f\|_{\ti{C}^{4,\alpha}_{0,\delta}(Bl_{X}M,\omega_{\epsilon})}\leq \ti{C}_{6}\Big(C_{5} \|f\|_{\ti{C}^{0}_{\delta}(Bl_{X}M,\omega_{\epsilon})}+\|\ti{L}_{\omega_{\epsilon}}f\|_{\ti{C}^{0,\alpha}_{-4,\delta}(Bl_{X}M,\omega_{\epsilon})} \Big).
    \end{equation}
    Then, we may conclude the desired result. Replacing the weight function from $\ti{\tau}$ to $\tau$, and use estimates in Lemma \ref{MC3}, the results when $k\geq 3$ follows directly.
\end{proof}

\subsection{Some auxiliary estimates in the proof of Proposition \ref{Inverse}}\label{EL}

In this subsection, we provide several detailed calculations used in the proof of Proposition \ref{Inverse}.

\medskip
\noindent\textbf{Proof of \eqref{S2E0}.}

    We show that there exists a constant $C$ independent of sufficiently large $i$ such that
    \begin{equation}
        \|\di_{Z_{(1)}}^{I}\di_{W}^{J}\Psi_{i}\|_{C^{0,\alpha}(B)}\leq C\epsilon_{i}^{|J|}\|\psi_{i}\|_{C^{4,\alpha}_{\delta}(Bl_{X}M)}.
    \end{equation}
    Now, we will use the following scaling notation
    \begin{align}
        &(z,w)=(\epsilon_{i}Z,\epsilon_{i}^{2}W)=(\epsilon_{i}r\ti{Z},\epsilon_{i}^{2}r\ti{W}),\\
        &(z_{(j)},w)=(\epsilon_{i}Z_{(j)},\epsilon_{i}^{2}W), \ j=1,...,k.
    \end{align}
    
    Fix positive constants $R_{1}$, $S_{1}$ and consider $\ti{B}_{R_{1}}^{k}\times B_{S_{1}}^{m-k}$. Note that when $i$ is large enough
    \begin{equation}
        \Psi_{i}=\epsilon_{i}^{-\delta}\Lambda_{i}^{*}\psi_{i}|_{U_{i}}, \ \text{on } \ti{B}_{R_{1}}^{k}\times B_{S_{1}}^{m-k}.
    \end{equation}
    For domains close to the exceptional divisor, without loss of generality, we first consider local coordinates $(Z_{(j)},W)\in\ti{B}_{(j),1}^{k}\times B_{S_{1}}^{m-k}$. Then, when $i$ is sufficiently large, we have the following $C^{0}$ estimate
    \begin{equation}
        \begin{split}
            \|\di_{Z_{(1)}}^{I}\di_{W}^{J}\Psi_{i} \|_{C^{0}(\ti{B}_{(j),1}^{k}\times B_{S_{1}}^{m-k})}
            =& \epsilon_{i}^{-\delta}\|\di_{Z_{(1)}}^{I}\di_{W}^{J}\psi_{i}(\epsilon_{i}Z_{(j)},\epsilon_{i}^{2}W) \|_{C^{0}(\ti{B}_{(1),1}^{k}\times B_{S_{1}}^{m-k})} \\
            \leq& \epsilon_{i}^{|I|+2|J|-\delta}\|\di_{z_{(1)}}^{I}\di_{w}^{J}\psi_{i} \|_{C^{0}(\ti{B}_{(1),\epsilon_{i}}^{k}\times B_{S_{1}\epsilon_{i}^{2}}^{m-k})}\\
            \leq & \epsilon_{i}^{|J|}\|\psi_{i} \|_{C^{4,\alpha}_{\delta}(Bl_{X}M)}, \ \forall |I|+|J|\leq 4.
        \end{split}
    \end{equation}
    For H\"{o}lder continuity, 
    \begin{equation}\label{Ho_psi_e}
        \begin{split}
            \frac{|\Psi_{i}(Z_{(j),1},W_{1})-\Psi_{i}(Z_{(j),2},W_{2})|}{|(Z_{(j),1},W_{1})-(Z_{(j),2},W_{2})|^{\alpha}}
            =&\epsilon_{i}^{-\delta}\frac{|\psi_{i}(\epsilon_{i}Z_{(j),1},\epsilon_{i}^{2}W_{1})-\psi_{i}(\epsilon_{i}Z_{(j),2},\epsilon_{i}^{2}W_{2})|}{|(Z_{(j),1},W_{1})-(Z_{(j),2},W_{2})|^{\alpha}} \\
            \leq& \|\psi_{i}\|_{C^{4,\alpha}_{\delta}(Bl_{X}M)}\frac{|(Z_{(j),1},\epsilon_{i}W_{1})-(Z_{(j),2},\epsilon_{i}W_{2})|^{\alpha}}{|(Z_{(j),1},W_{1})-(Z_{(j),2},W_{2})|^{\alpha}}\\
            \leq & \|\psi_{i}\|_{C^{4,\alpha}_{\delta}(Bl_{X}M)}(1+\epsilon_{i}^{2})^{\frac{\alpha}{2}}.
        \end{split}
    \end{equation}
    Then, we have
    \begin{equation}\label{H_continue}
        \begin{split}
            \|\Psi_{i}\|_{C^{\alpha}(\ti{B}_{(j),1}^{k}\times B_{S_{1}}^{m-k})}
            \leq (1+\epsilon_{i}^{2})^{\frac{\alpha}{2}}\|\psi_{i} \|_{C^{4,\alpha}_{\delta}(Bl_{X}M)}.
        \end{split}
    \end{equation}
    The same estimates hold for any $j=1,..k$. For higher order derivatives, the estimates are basically the same. By induction, we may derive the following
    \begin{equation}\label{ScaleEstimate0}
        \|\di_{Z_{(1)}}^{I}\di_{W}^{J}\Psi_{i}\|_{C^{0,\alpha}(\ti{B}_{1}^{k}\times B_{S_{1}}^{m-k})}\leq 2\epsilon_{i}^{|J|}\|\psi_{i}\|_{C^{4,\alpha}_{\delta}(Bl_{X}M)}.
    \end{equation}

    For domains away from the exceptional divisor, let $(Z,W)\in B_{2r}^{k}\setminus B_{r}^{k}\times B_{S_{1}}^{m-k}$. Note that when $r$ is fixed, for any sufficiently large $i$ we still have
    \begin{equation}
       \Psi_{i}(Z,W)=\epsilon_{i}^{-\delta}\psi_{i}(\epsilon_{i}Z,\epsilon_{i}^{2}W), \ \text{on } B_{2r}^{k}\setminus B_{r}^{k}\times B_{S_{1}}^{m-k}.
    \end{equation}
    The $C^{0}$ estimate is basically the same,
    \begin{equation}
        \begin{split}
        \|\di_{Z}^{I}\di_{W}^{J}\Psi_{i}(Z,W)\|_{C^{0}(B_{2r}^{k}\setminus B_{r}^{k}\times B_{S_{1}}^{m-k})}
        =&\epsilon_{i}^{|I|+2|J|-\delta}\|\di_{z}^{I}\di_{w}^{J}\psi_{i}\|_{C^{0}(B_{2r\epsilon_{i}}^{k}\setminus B_{r\epsilon_{i}}^{k}\times B_{S_{1}\epsilon_{i}^{2}}^{m-k})}\\
        \leq&\epsilon_{i}^{|I|+2|J|-\delta}\|\psi_{i}\|_{C^{4,\alpha}_{\delta}(Bl_{X}M)}(r\epsilon_{i})^{\delta-|I|-|J|}\\
        =& r^{\delta-|I|-|J|}\epsilon_{i}^{|J|}\|\psi_{i}\|_{C^{4,\alpha}_{\delta}(Bl_{X}M)}.
        \end{split}
    \end{equation}
    The estimate of H\"{o}lder continuity is 
    \begin{equation}\label{Ho_psi_r}
        \begin{split}
            &\frac{|\Psi_{i}(Z_{1},W_{1})-\Psi_{i}(Z_{2},W_{2})|}{|(Z_{1},W_{1})-(Z_{2},W_{2})|^{\alpha}}\\
            =& \epsilon_{i}^{-\delta}\frac{|\psi_{i}(\epsilon_{i}Z_{1},\epsilon_{i}^{2}W_{1})-\psi_{i}(\epsilon_{i}Z_{2},\epsilon_{i}^{2}W_{2})|}{|(Z_{1},\epsilon_{i}W_{1})-(Z_{2},\epsilon_{i}W_{2})|^{\alpha}}\frac{|(Z_{1},\epsilon_{i}W_{1})-(Z_{2},\epsilon_{i}W_{2})|^{\alpha}}{|(Z_{1},W_{1})-(Z_{2},W_{2})|^{\alpha}}\\
            \leq& \epsilon_{i}^{-\delta}\|\psi_{i} \|_{C^{4,\alpha}_{\delta}(Bl_{X}M)}(r\epsilon_{i})^{\delta}(1+(\epsilon_{i})^{2})^{\frac{\alpha}{2}}\\
            \leq& 2r^{\delta}\|\psi_{i}\|_{C^{4,\alpha}_{\delta}(Bl_{X}M)}.
        \end{split}
    \end{equation}
   
    Then, by induction, for any fixed $1<r$ and $0\leq|I|+|J|\leq 4$, when $i$ is taken sufficiently large, the following holds
    \begin{equation}\label{ScaleEstimate1}
            \|(\di_{Z}^{I}\di_{W}^{J}\Psi_{i})(r\ti{Z},r\ti{W})\|_{C^{0,\alpha}(B_{2}^{k}\setminus B_{1}^{k}\times B_{S_{1}}^{m-k})}
            \leq 2r^{\delta-|I|-|J|}\epsilon_{i}^{|J|}\|\psi_{i} \|_{C^{4,\alpha}_{\delta}(Bl_{X}M)},
    \end{equation}
    
    Hence, by (\ref{ScaleEstimate0}) and (\ref{ScaleEstimate1}), we have proved (\ref{S2E0}).

\medskip
\noindent\textbf{Proof of \eqref{Scale_L_eta}}.

    We show that 
     \begin{equation}
        L_{\eta}\Psi
        =L_{\epsilon_{i}^{-2}\Lambda_{i}^{*}\eta_{\epsilon_{i}}}\Lambda_{i}^{*}\ti{\Lambda}_{i}^{*}\Psi
        =\epsilon_{i}^{4}\Lambda_{i}^{*}(L_{\eta_{\epsilon_{i}}}\ti{\Lambda}_{i}^{*}\Psi)
        =\epsilon_{i}^{4}\Lambda_{i}^{*}(L_{\omega_{\eta,\epsilon_{i}}}\ti{\Lambda}_{i}^{*}\Psi).
    \end{equation}
    The third equality in the above can be seen first by
    \begin{equation}
        \begin{split}
            L_{\epsilon_{i}^{-2}\Lambda_{i}^{*}\eta_{\epsilon_{i}}}
            =&-\Delta_{\epsilon_{i}^{-2}\Lambda_{i}^{*}\eta_{\epsilon_{i}}}^{2}-Ric(\epsilon_{i}^{-2}\Lambda_{i}^{*}\eta_{\epsilon_{i}})\cdot \di\dbar\\
            =&-(\epsilon_{i}^{-2}\Lambda_{i}^{*}\eta_{\epsilon_{i}})^{p\ol{q}}\di_{p}\di_{\ol{q}}(\epsilon_{i}^{-2}\Lambda_{i}^{*}\eta_{\epsilon_{i}})^{k\ol{l}}\di_{k}\di_{\ol{l}}-Ric(\epsilon_{i}^{-2}\Lambda_{i}^{*}\eta_{\epsilon_{i}})_{k\ol{l}}(\epsilon_{i}^{-2}\Lambda_{i}^{*}\eta_{\epsilon_{i}})^{k\ol{q}}(\epsilon_{i}^{-2}\Lambda_{i}^{*}\eta_{\epsilon_{i}})^{p\ol{l}}\di_{p}\di_{\ol{q}}\\
            =&-\epsilon_{i}^{2}(\Lambda_{i}^{*}\eta_{\epsilon_{i}})^{p\ol{q}}\di_{p}\di_{\ol{q}}\epsilon_{i}^{2}(\Lambda_{i}^{*}\eta_{\epsilon_{i}})^{k\ol{l}}\di_{k}\di_{\ol{l}}-Ric(\Lambda_{i}^{*}\eta_{\epsilon_{i}})_{k\ol{l}}\epsilon_{i}^{2}(\Lambda_{i}^{*}\eta_{\epsilon_{i}})^{k\ol{q}}\epsilon_{i}^{2}(\Lambda_{i}^{*}\eta_{\epsilon_{i}})^{p\ol{l}}\di_{p}\di_{\ol{q}}\\
            =&\epsilon_{i}^{4}L_{\Lambda_{i}^{*}\eta_{\epsilon_{i}}}.
        \end{split}
    \end{equation}
    Secondly, by change of coordinates $z=\epsilon_{i}Z$,
    \begin{equation}
    Ric(\epsilon_{i}^{2}\eta_{\epsilon_{i}}(\epsilon_{i}Z))_{Z^{k}\ol{Z}^{l}}=Ric(\eta_{\epsilon_{i}}(\epsilon_{i}Z))_{Z^{k}\ol{Z}^{l}}=\epsilon_{i}^{2}Ric(\eta_{\epsilon_{i}})_{z^{k}\ol{z}^{l}}(\epsilon_{i}Z), 
    \end{equation}
    then, we have
    \begin{equation}
        \begin{split}
            L_{\Lambda_{i}^{*}\eta_{\epsilon_{i}}}\Lambda_{i}^{*}f=&-(\Lambda_{i}^{*}\eta_{\epsilon_{i}})^{Z^{p}\ol{Z}^{q}}\di_{Z^{p}}\di_{\ol{Z}^{q}}(\Lambda_{i}^{*}\eta_{\epsilon_{i}})^{Z^{k}\ol{Z}^{l}}\di_{Z^{k}}\di_{\ol{Z}^{l}}(f(\epsilon_{i}Z))\\
            &-Ric(\Lambda_{i}^{*}\eta_{\epsilon_{i}})_{Z^{k}\ol{Z}^{l}}(\Lambda_{i}^{*}\eta_{\epsilon_{i}})^{Z^{k}\ol{Z}^{q}}(\Lambda_{i}^{*}\eta_{\epsilon_{i}})^{Z^{p}\ol{Z}^{l}}\di_{Z^{p}}\di_{\ol{Z}^{q}}(f(\epsilon_{i}Z))\\
            =&-(\epsilon_{i}^{2}\eta_{\epsilon_{i}}(\epsilon_{i}Z))^{z^{p}\ol{z}^{q}}\di_{Z^{p}}\di_{\ol{Z}^{q}}(\epsilon_{i}^{2}\eta_{\epsilon_{i}}(\epsilon_{i}Z))^{z^{k}\ol{z}^{l}}\di_{Z^{k}}\di_{\ol{Z}^{l}}(f(\epsilon_{i}Z))\\
            &-Ric(\epsilon_{i}^{2}\eta_{\epsilon_{i}}(\epsilon_{i}Z))_{Z^{k}\ol{Z}^{l}}(\epsilon_{i}^{2}\eta_{\epsilon_{i}}(\epsilon_{i}Z))^{z^{k}\ol{z}^{q}}(\epsilon_{i}^{2}\eta_{\epsilon_{i}}((\epsilon_{i}Z))^{z^{p}\ol{z}^{l}}\di_{Z^{p}}\di_{\ol{Z}^{q}}(f(\epsilon_{i}Z))\\
            =&(L_{\eta_{\epsilon_{i}}}f)(\epsilon_{i}Z),
        \end{split}
    \end{equation}
    which is the desired relation.

\medskip
\noindent\textbf{Proof of \eqref{S2E1}.} 

    We show that for any fixed $R_{1}>0$,
     \begin{equation}
        \|L_{\eta}\Psi(\cdot,0)\|_{C^{0}(\ti{B}_{R_{1}}^{k})}
        =\|L_{\omega_{\eta,\epsilon_{i}}}\epsilon_{i}^{4}\ti{\Lambda}_{i}^{*}\Psi(\cdot,0)\|_{C^{0}(\ti{B}_{\epsilon_{i}R_{1}}^{k})}\to 0, \ (i\to\infty).
    \end{equation}
    Consider the following
    \begin{equation}\label{Target2}
        \begin{split}
        \|L_{\eta}\Psi(\cdot,0)\|_{C^{0}(\ti{B}_{R_{1}}^{k})}
        =&\|L_{\omega_{\eta,\epsilon_{i}}}\epsilon_{i}^{4}\ti{\Lambda}_{i}^{*}\Psi(\cdot,0)\|_{C^{0}(\ti{B}_{\epsilon_{i}R_{1}}^{k})}\\
        \leq& \|(L_{\omega_{\eta,\epsilon_{i}}}-L_{\omega_{\epsilon_{i}}|_{U_{i}}})\epsilon_{i}^{4}\ti{\Lambda}_{i}^{*}\Psi\|_{C^{0}(\ti{B}_{\epsilon_{i}R_{1}}^{k}\times B_{\epsilon_{i}^{2}S_{1}}^{m-k})}\\
        &+\epsilon_{i}^{4}\|L_{\omega_{\epsilon_{i}}|_{U_{i}}}(\ti{\Lambda}_{i}^{*}\Psi-\epsilon_{i}^{-\delta}\psi_{i}|_{U_{i}})\|_{C^{0}(\ti{B}_{\epsilon_{i}R_{1}}^{k}\times B_{\epsilon_{i}^{2}S_{1}}^{m-k})}\\
        &+\epsilon_{i}^{4-\delta}\|L_{\omega_{\epsilon_{i}}}\psi_{i}|_{U_{i}}\|_{C^{0}(\ti{B}_{\epsilon_{i}R_{1}}^{k}\times B_{\epsilon_{i}^{2}S_{1}}^{m-k})},
        \end{split}
    \end{equation}
    for any fixed constants $R_{1}$ and $S_{1}$. Let $i$ be large enough so that $\ti{B}_{\epsilon_{i}R_{1}}^{k}\times B_{\epsilon_{i}^{2}S_{1}}^{m-k}\subset \ti{B}_{r_{0}}^{k}\times B_{r_{0}}^{m-k}\subset U_{i}\subset Bl_{X}M$.
    We first note that according to (\ref{LocConverge1}), we have $\forall |J|\neq 0, |I|+|J|\leq 4$,
    \begin{align}
        \epsilon_{i}^{4}\|\di_{z}^{I}(\ti{\Lambda}_{i}^{*}\Psi-\epsilon_{i}^{-\delta}\psi_{i}|_{U_{i}})\|_{C^{0}(\ti{B}_{\epsilon_{i}R_{1}}^{k}\times B_{\epsilon_{i}^{2}S_{1}}^{m-k})} =&\epsilon_{i}^{4-|I|}\|\di_{Z}^{I}(\Psi-\Psi_{i})\|_{C^{0}(\ti{B}_{R_{1}}^{k}\times B_{S_{1}}^{m-k})} \label{Dz_P-p}\\
        \leq&\epsilon_{i}^{4-|I|}\|\Psi-\Psi_{i}\|_{C^{4}(\ti{B}_{R_{1}}^{k}\times B_{S_{1}}^{m-k})}, \nonumber \\
        \epsilon_{i}^{4}\|\di_{z}^{I}\di_{w}^{J}(\ti{\Lambda}_{i}^{*}\Psi-\epsilon_{i}^{-\delta}\psi_{i})\|_{C^{0}(\ti{B}_{\epsilon_{i}R_{1}}^{k}\times B_{\epsilon_{i}^{2}S_{1}}^{m-k})}
        =&\epsilon_{i}^{4-\delta}\|\di_{z}^{I}\di_{w}^{J}\psi_{i}\|_{C^{0}(\ti{B}_{\epsilon_{i}R_{1}}^{k}\times B_{\epsilon_{i}^{2}S_{1}}^{m-k})}. 
    \end{align}
    As a consequence, we have the following.
    \begin{equation}\label{LC2}
    \begin{split}
        \epsilon_{i}^{4}\|L_{\omega_{\epsilon_{i}}}(\ti{\Lambda}_{i}^{*}\Psi-\epsilon_{i}^{-\delta}\psi_{i})\|&_{C^{0}(\ti{B}_{\epsilon_{i}R_{1}}^{k}\times B_{s_{i}S_{1}}^{m-k})}\\
        \leq& \|g_{\omega_{\epsilon_{i}}}\|_{C^{4}(\ti{B}_{2r_{0}}^{k}\times B_{2r_{0}}^{m-k})}\|\Psi-\Psi_{i}\|_{C^{4}(\ti{B}_{R_{1}}^{k}\times B_{S_{1}}^{m-k})}+C\|L_{\omega_{\epsilon_{i}}}\psi_{i}\|_{C^{0,\alpha}_{\delta-4}(Bl_{X}M)}.
    \end{split}
    \end{equation}
     Since $\psi_{i}\xrightarrow{C^{4,\alpha}_{loc}}0$ on $M\setminus X$ and recalling that $\{q_{j}\}\subset M\setminus X$ are fixed points. Then we have
    \begin{equation}\label{Term_Fixing}
        \|\sum_{j=1}^{d}\psi_{i}(q_{j})f_{j}\|_{C^{0,\alpha}_{\delta-4}(Bl_{X}M)}\leq \sum_{j=1}^{d}|\psi_{i}(q_{j})|\|f_{j}\|_{C^{0,\alpha}(M)}\leq C\sum_{j=1}^{d}|\psi_{i}(q_{j})|\to 0, \ (i\to\infty).
    \end{equation}
    Combining our assumption $\|\ti{L}_{\omega_{\epsilon_{i}}}\psi_{i}\|_{C^{0,\alpha}_{\delta-4}(Bl_{X}M)}\to 0, \ (i\to\infty)$, we have
    \begin{equation}
        \|L_{\omega_{\epsilon_{i}}}\psi_{i}\|_{C^{0,\alpha}_{\delta-4}(Bl_{X}M)}\\
        \leq \Big(\|\ti{L}_{\omega_{\epsilon_{i}}}\psi_{i}\|_{C^{0,\alpha}_{\delta-4}(Bl_{X}M)}+\|\sum_{j=1}^{l}\psi_{i}(q_{j})f_{j}\|_{C^{0,\alpha}_{\delta-4}(Bl_{X}M)}\Big)
    \end{equation}
    tends to $0$. Therefore,
    \begin{equation}
        \epsilon_{i}^{4}\|L_{\omega_{\epsilon_{i}}}(\ti{\Lambda}_{i}^{*}\Psi-\epsilon_{i}^{-\delta}\psi_{i})\|_{C^{0}(\ti{B}_{\epsilon_{i}R_{1}}^{k}\times B_{\epsilon_{i}^{2}S_{1}}^{m-k})}\to 0, \ (i\to\infty).
    \end{equation}
    
    Then, for the first term in the R.H.S. of (\ref{Target2}), we calculate, for example, 
    \begin{equation}
        \begin{split}
            &(\Delta_{\omega_{\eta,\epsilon_{i}}}^{2}-\Delta_{\omega_{\epsilon_{i}}}^{2})\epsilon_{i}^{4}\ti{\Lambda}_{i}^{*}\Psi\\
            =&g_{\omega_{\eta,\epsilon_{i}}}^{z^{p}\ol{z}^{q}}\di_{z^{p}}\di_{\ol{z}^{q}}g_{\omega_{\eta,\epsilon_{i}}}^{z^{k}\ol{z}^{l}}\di_{z^{k}}\di_{\ol{z}^{l}}\epsilon_{i}^{4}\ti{\Lambda}_{i}^{*}\Psi-g_{\omega_{\epsilon_{i}}}^{z^{p}\ol{z}^{q}}\di_{z^{p}}\di_{\ol{z}^{q}}g_{\omega_{\epsilon_{i}}}^{z^{k}\ol{z}^{l}}\di_{z^{k}}\di_{\ol{z}^{l}}\epsilon_{i}^{4}\ti{\Lambda}_{i}^{*}\Psi\\
            =&(g_{\omega_{\eta,\epsilon_{i}}}^{z^{p}\ol{z}^{q}}-g_{\omega_{\epsilon_{i}}}^{z^{p}\ol{z}^{q}})\di_{z^{p}}\di_{\ol{z}^{q}}g_{\omega_{\eta,\epsilon_{i}}}^{z^{k}\ol{z}^{l}}\di_{z^{k}}\di_{\ol{z}^{l}}\epsilon_{i}^{4}\ti{\Lambda}_{i}^{*}\Psi
            +g_{\omega_{\epsilon_{i}}}^{z^{p}\ol{z}^{q}}\di_{z^{p}}\di_{\ol{z}^{q}}(g_{\omega_{\eta,\epsilon_{i}}}^{z^{k}\ol{z}^{l}}-g_{\omega_{\epsilon_{i}}}^{z^{k}\ol{z}^{l}})\di_{z^{k}}\di_{\ol{z}^{l}}\epsilon_{i}^{4}\ti{\Lambda}_{i}^{*}\Psi
        \end{split}
    \end{equation}
    We calculate the highest order term only. Note that by (\ref{Dz_P-p}), we have uniform upper bounds:
    \begin{equation}
        \|\di_{z}^{I}\epsilon_{i}^{4}\ti{\Lambda}^{*}_{i}\Psi\|_{C^{0}(\ti{B}_{\epsilon_{i}R_{1}}^{k}\times B_{\epsilon_{i}^{2}S_{1}}^{m-k})}\leq 2\epsilon_{i}^{4-|I|} \|\psi_{i}\|_{C^{4,\alpha}_{\delta}(Bl_{X}M)}\leq C\epsilon_{i}^{4-|I|}, \ \forall |I|\leq 4,
    \end{equation}
    for sufficiently large $i$. Then, let $(z_{(1)},w)\in\ti{B}_{(1),\epsilon_{i}R_{1}}^{k}\times B_{\epsilon_{i}^{2}S_{1}}^{m-k}$, by (\ref{Compare3.5}), we have the following estimate
    \begin{equation}
        \begin{split}
           |g_{\omega_{\epsilon_{i}}}^{z^{p}\ol{z}^{q}}(g_{\omega_{\eta,\epsilon_{i}}}^{z^{k}\ol{z}^{l}}-g_{\omega_{\epsilon_{i}}}^{z^{k}\ol{z}^{l}})\di_{z^{p}}\di_{\ol{z}^{q}}\di_{z^{k}}\di_{\ol{z}^{l}}\epsilon_{i}^{4}\ti{\Lambda}_{i}^{*}\Psi(z_{(1)},w)|\leq& C_{1}(|z_{(1)}^{1}|(1+\sum_{j\neq 1}|z_{(1)}^{j}|^{2})^{\frac{1}{2}}+|w|)C_{2}\\
           \leq& C\epsilon_{i}(R_{1}+\epsilon_{i}S_{1})\to 0, \ (i\to\infty).
        \end{split}
    \end{equation}
    For the Ricci term, we have
    \begin{equation}
        \begin{split}
            &Ric(\omega_{\eta,\epsilon_{i}})\cdot\di\dbar\epsilon_{i}^{4}\ti{\Lambda}_{i}^{*}\Psi(z_{(1)},w)-Ric(\omega_{\epsilon_{i}})\cdot\di\dbar\epsilon_{i}^{4}\ti{\Lambda}_{i}^{*}\Psi(z_{(1)},w)\\
            =&\Big(Ric(\omega_{\eta,\epsilon_{i}})_{z^{p}\ol{z}^{q}}-Ric(\omega_{\epsilon_{i}})_{z^{p}\ol{z}^{q}} \Big)g_{\omega_{\eta,\epsilon_{i}}}^{z^{p}\ol{z}^{l}}g_{\omega_{\eta,\epsilon_{i}}}^{z^{k}\ol{z}^{q}}\di_{z^{k}}\di_{\ol{z}^{l}}\epsilon_{i}^{4}\ti{\Lambda}_{i}^{*}\Psi(z_{(1)},w)\\
            &+Ric(\omega_{\epsilon_{i}})_{z^{p}\ol{z}^{q}}(g_{\omega_{\eta,\epsilon_{i}}}^{z^{p}\ol{z}^{l}}-g_{\omega_{\epsilon_{i}}}^{z^{p}\ol{z}^{l}})g_{\omega_{\eta,\epsilon_{i}}}^{z^{k}\ol{z}^{q}}\di_{z^{k}}\di_{\ol{z}^{l}}\epsilon_{i}^{4}\ti{\Lambda}_{i}^{*}\Psi(z_{(1)},w)\\
            &+Ric(\omega_{\epsilon_{i}})_{z^{p}\ol{z}^{q}}g_{\omega_{\epsilon_{i}}}^{z^{p}\ol{z}^{l}}(g_{\omega_{\eta,\epsilon_{i}}}^{z^{k}\ol{z}^{q}}-g_{\omega_{\epsilon_{i}}}^{z^{k}\ol{z}^{q}})\di_{z^{k}}\di_{\ol{z}^{l}}\epsilon_{i}^{4}\ti{\Lambda}_{i}^{*}\Psi(z_{(1)},w).
        \end{split}
    \end{equation}
    Then, according to (\ref{Compare3.5}) again,
    \begin{equation}\label{Compare_Ric}
        \begin{split}
            &|(Ric(\omega_{\eta,\epsilon_{i}})\cdot\di\dbar\epsilon_{i}^{4}\ti{\Lambda}_{i}^{*}\Psi-Ric(\omega_{\epsilon_{i}})\cdot\di\dbar\epsilon_{i}^{4}\ti{\Lambda}_{i}^{*}\Psi)(z_{(1)},w)|\\
            \leq& C_{1}\|g_{\omega_{\eta,\epsilon_{i}}}-g_{\omega_{\epsilon_{i}}}\|_{C^{2}(\ti{B}_{\epsilon_{i}R_{1}}^{k}\times B_{\epsilon_{i}^{2}S_{1}}^{m-k})}\epsilon_{i}^{2}+C_{2}\|g_{\omega_{\eta,\epsilon_{i}}}-g_{\omega_{\epsilon_{i}}}\|_{C^{0}(\ti{B}_{\epsilon_{i}R_{1}}^{k}\times B_{\epsilon_{i}^{2}S_{1}}^{m-k})}\epsilon_{i}^{2}\\
            \leq&C_{1}(\epsilon_{i}R_{1}+\epsilon_{i}^{2}S_{1})+C_{2}\epsilon_{i}^{2}(\epsilon_{i}R_{1}+\epsilon_{i}^{2}S_{1})\to 0, \ (i\to\infty).
        \end{split}
    \end{equation}
    Hence, we may derive,
    \begin{equation}
        \|(L_{\omega_{\eta,\epsilon_{i}}}-L_{\omega_{\epsilon_{i}}})\epsilon_{i}^{4}\ti{\Lambda}_{i}^{*}\Psi\|_{C^{0}(\ti{B}_{\epsilon_{i}R_{1}}^{k}\times B_{\epsilon_{i}^{2}S_{1}}^{m-k})}\to 0, \ (i\to\infty).
    \end{equation}
    
    Finally, without loss of generality, we may suppose $R_{1}>1$. Then, by definition of the weighted norm, we have
    \begin{equation}
    \begin{split}
        \epsilon_{i}^{4-\delta}\|L_{\omega_{\epsilon_{i}}}\psi_{i}\|_{C^{0}(\ti{B}_{\epsilon_{i}R_{1}}^{k}\times B_{\epsilon_{i}^{2}S_{1}}^{m-k})}
        \leq&\epsilon_{i}^{4-\delta}\|L_{\omega_{\epsilon_{i}}}\psi_{i}\|_{C^{0}(\ti{B}_{\epsilon_{i}}^{k}\times B_{\epsilon_{i}}^{m-k})}+\sup_{\epsilon_{i}<r<R_{1}\epsilon_{i}}\epsilon_{i}^{4-\delta}\|L_{\omega_{\epsilon_{i}}}\psi_{i}\|_{C^{0}(\ti{B}_{2r}^{k}\setminus\ti{B}_{r}^{k}\times B_{2r}^{m-k})}\\
        \leq& \|L_{\omega_{\epsilon_{i}}}\psi_{i}\|_{C^{0,\alpha}_{\delta-4}(Bl_{X}M)}+\sup_{\epsilon_{i}<r<R_{1}\epsilon_{i}}\epsilon_{i}^{4-\delta}r^{\delta-4}\|L_{\omega_{\epsilon_{i}}}\psi_{i}\|_{C^{0,\alpha}_{\delta-4}(Bl_{X}M)}\\
        \leq & C\|L_{\omega_{\epsilon_{i}}}\psi_{i}\|_{C^{0,\alpha}_{\delta-4}(Bl_{X}M)}\to 0, \ (i\to\infty).
    \end{split}
    \end{equation}
   
    According to (\ref{Target2}), we have proved (\ref{S2E1}).

\medskip
\noindent\textbf{Proof of \eqref{S3E1}.}

We show that $\Delta^{2}_{\mathrm{euc}}\Phi_{\infty}=0$.

    Consider the following
    \begin{equation}\label{Target4}
        \begin{split}
            \|\Delta_{Z}^{2}\Phi_{\infty}(Z,0)\|_{C^{0}(B_{2r}^{k}\setminus B_{r}^{k})}
            =& \|(\Delta_{\mathrm{euc}}^{2}|z_{i}|^{4}\ti{\Theta}_{i}^{*}\Phi_{\infty})(z,0) \|_{C^{0}(B_{2r|z_{i}|}^{k}\setminus B_{r|z_{i}|}^{k})}\\
            \leq&|z_{i}|^{4}\|(\Delta_{euc}^{2}-L_{\omega_{\epsilon_{i}}|_{U_{i}}})\ti{\Theta}_{i}^{*}\Phi_{\infty}\|_{C^{0}(B_{2r|z_{i}|}^{k}\setminus B_{r|z_{i}|}^{k}\times B_{2r|z_{i}|^{2}}^{m-k})}\\
            &+ |z_{i}|^{4}\|L_{\omega_{\epsilon_{i}}|_{U_{i}}}(\ti{\Theta}_{i}^{*}\Phi_{\infty}-|z_{i}|^{-\delta}\psi_{i}|_{U_{i}})(z,w) \|_{C^{0}(B_{2r|z_{i}|}^{k}\setminus B_{r|z_{i}|}^{k}\times B_{2r|z_{i}|^{2}}^{m-k})}\\
            &+|z_{i}|^{4-\delta}\|L_{\omega_{\epsilon_{i}}}\psi_{i}|_{U_{i}}\|_{C^{0}(B_{2r|z_{i}|}^{k}\setminus B_{r|z_{i}|}^{k}\times B_{2r|z_{i}|^{2}}^{m-k})}.
        \end{split}
    \end{equation}
    For the second term, we have an estimate similar to (\ref{LC2}),
    \begin{equation}
        \begin{split}
            &|z_{i}|^{4}\|L_{\omega_{\epsilon_{i}}|_{U_{i}}}(\ti{\Theta}_{i}^{*}\Phi_{\infty}-|z_{i}|^{-\delta}\psi_{i}|_{U_{i}})(z,w) \|_{C^{0}(B_{2r|z_{i}|}^{k}\setminus B_{r|z_{i}|}^{k}\times B_{2r|z_{i}|^{2}}^{m-k})}\\
            \leq&C_{1}\|\Phi_{\infty}-\Phi_{i}\|_{C^{4}(B_{2r}^{k}\setminus B_{r}^{k}\times B_{2r}^{m-k})}+C_{2}\|L_{\omega_{\epsilon_{i}}}\psi_{i}\|_{C^{4,\alpha}_{\delta}(Bl_{X}M)}\to 0, (i\to\infty),
        \end{split}
    \end{equation}
    For the first term, note that by (\ref{D_psi_euc}),
    \begin{equation}
    \begin{split}
        &\|\di_{z}^{I}|z_{i}|^{4}\ti{\Theta}_{i}^{*}\Phi_{\infty}\|_{C^{0}(B_{2r|z_{i}|}^{k}\setminus B_{r|z_{i}|}^{k}\times B_{2r|z_{i}|^{2}}^{m-k})}
        =|z_{i}|^{4-|I|}\|\di_{Z}^{I}\Phi_{\infty}\|_{C^{0}(B_{2r}^{k}\setminus B_{r}^{k}\times B_{2r}^{m-k})}\\
        \leq& |z_{i}|^{4-|I|}\|\Phi_{\infty}-\Phi_{i}\|_{C^{4}(B_{2r}^{k}\setminus B_{r}^{k}\times B_{2r}^{m-k})}+2\|\psi_{i}\|_{C^{4,\alpha}_{\delta}(Bl_{X}M)}|z_{i}|^{4-|I|}r^{\delta-|I|}\\
        \leq& 2C |z_{i}|^{4-|I|}r^{\delta-|I|}, \ \forall |I|\leq 4.
    \end{split}
    \end{equation}
    Then by (\ref{Compare4}) the main term of $(\Delta_{\mathrm{euc}}^{2}-L_{\omega_{\epsilon_{i}}|_{U_{i}}})|z_{i}|^{4}\ti{\Theta}_{i}^{*}\Phi_{\infty}$ satisfies
    \begin{equation}
        \begin{split}
            &\big|(\delta^{z^{k}\ol{z}^{l}}\delta^{z^{p}\ol{z}^{q}}-g_{\omega_{\epsilon_{i}}}^{z^{k}\ol{z}^{l}}g_{\omega_{\epsilon_{i}}}^{z^{p}\ol{z}^{q}})\di_{z^{k}}\di_{\ol{z}^{l}}\di_{z^{p}}\di_{\ol{z}^{q}}|z_{i}|^{4}\ti{\Theta}_{i}^{*}\Phi_{\infty}(z,w)\Big|\\
            =&\Big|[\delta^{z^{k}\ol{z}^{l}}(\delta^{z^{p}\ol{z}^{q}}-g_{\omega_{\epsilon_{i}}}^{z^{p}\ol{z}^{q}})+(\delta^{z^{k}\ol{z}^{l}}-g_{\omega_{\epsilon_{i}}}^{z^{k}\ol{z}^{l}})g_{\omega_{\epsilon_{i}}}^{z^{p}\ol{z}^{q}}]\di_{z^{k}}\di_{\ol{z}^{l}}\di_{z^{p}}\di_{\ol{z}^{q}}|z_{i}|^{4}\ti{\Theta}_{i}^{*}\Phi_{\infty}(z,w)\Big|\\
            \leq& (O(|z|+|w|)+O(\epsilon_{i}^{2k-2}|z|^{2-2k}))2Cr^{\delta-4}\\
            \leq& \ti{C}r^{\delta-4}\Big(r|z_{i}|+r|z_{i}|^{2}+(\frac{\epsilon_{i}}{r|z_{i}|})^{2k-2}\Big)\to 0, \ (i\to\infty),
        \end{split}
    \end{equation}
    for any $(z,w)\in B_{2r|z_{i}|}^{k}\setminus B_{r|z_{i}|}^{k}\times B_{2r|z_{i}|^{2}}^{m-k}$.
    Using (\ref{Compare4}), the estimates for other terms are totally similar to (\ref{Compare_Ric}). Hence, we may derive that
    \begin{equation}
        \|(\Delta_{z}^{2}-L_{\omega_{\epsilon_{i}}})|z_{i}|^{4}\ti{\Theta}_{i}^{*}\Phi_{\infty}\|_{C^{0}(B_{2r|z_{i}|}^{k}\setminus B_{r|z_{i}|}^{k}\times B_{2r|z_{i}|^{2}}^{m-k})}\to 0, \ (i\to\infty).
    \end{equation}
    For the last term, according to (\ref{Term_Fixing}), we have
    \begin{equation}
        \begin{split}
            &|z_{i}|^{4-\delta}\|L_{\omega_{\epsilon_{i}}}\psi_{i}|_{U_{i}}\|_{C^{0}(B_{2r|z_{i}|}^{k}\setminus B_{r|z_{i}|}^{k}\times B_{2r|z_{i}|^{2}}^{m-k})}\\
            \leq& r^{\delta-4}(r|z_{i}|)^{4-\delta}\|\ti{L}_{\omega_{\epsilon_{i}}}\psi_{i}|_{U_{i}}\|_{C^{0}(B_{2r|z_{i}|}^{k}\setminus B_{r|z_{i}|}^{k}\times B_{2r|z_{i}|}^{m-k})}+\sum_{j=1}^{d}|\psi_{i}(q_{j})|\|f_{j}\|_{C^{0}(M)}\\
            \leq & r^{\delta-4}\|\ti{L}_{\omega_{\epsilon_{i}}}\psi_{i}\|_{C^{0,\alpha}_{\delta-4}(Bl_{X}M)}+C\sum_{j=1}^{d}|\psi_{i}(q_{j})|\to 0, \ (i\to\infty).
        \end{split}
    \end{equation}
    Then by letting $i\to\infty$ in the right-hand side of (\ref{Target4}), we have $\Delta_{z}^{2}\Phi_{\infty}(Z,0)=0$.

    \section{Related estimates for codimension \texorpdfstring{$2$}{2}}\label{App_D}

    \subsection{Related estimates on $\epsilon^{2}\log\frac{d^{2}}{\epsilon^{2}}$ } \label{E_log}
    In Proposition \ref{cut_S-S} and Proposition \ref{cut_Qlog}, we used the following estimates:
    \begin{align}
        &\| \epsilon^{2}\log\frac{d^{2}}{\epsilon^{2}} \|_{C^{4,\alpha}_{2}(\{ \epsilon^{\beta}\leq d\leq \epsilon^{\theta} \})}\leq C\epsilon^{2-2\beta}\log\frac{1}{\epsilon},  \label{Sec6_need1} \\ 
        &\| \epsilon^{2}\log\frac{d^{2}}{\epsilon^{2}} \|_{\ti{C}^{4,\alpha}_{0,\delta}(\{ \epsilon^{\beta}\leq d\leq \epsilon^{\theta} \})}\leq C\epsilon^{2-\theta\delta}[\log\frac{1}{\epsilon}]^{1-\delta}.    \label{Sec6_need2}
    \end{align}
    Fix $\frac{2}{3}\epsilon^{\beta}\leq r\leq 2\epsilon^{\theta}$, and take  coordinates $(z,w)\in U$ of Lemma \ref{holosubcoor} type. Recall that $d(z,w)=|z|(1+\rho(z,w))^{\frac{1}{2}}$, then the H\"{o}lder coefficient satisfies:
    \begin{equation}
        \begin{split}
            &\|\log\frac{d^{2}}{\epsilon^{2}}(rZ,rW)\|_{C^{\alpha}(B_{2}^{2}\setminus B_{1}^{2}\times B_{2}^{m-2})}\\
            =& \sup\frac{|\log\frac{d^{2}}{\epsilon^{2}}(rZ_{1},rW_{1})-\log\frac{d^{2}}{\epsilon^{2}}(rZ_{2},rW_{2})|}{|(Z_{1},W_{1})-(Z_{2},W_{2})|^{\alpha}}\\
            \leq &  \sup\frac{2|\log|Z_{1}|-\log|Z_{2}||+|\log(1+\rho(rZ_{1},rW_{1}))-\log(1+\rho(rZ_{1},rW_{1}))|}{|(Z_{1},W_{1})-(Z_{2},W_{2})|^{\alpha}}\\
            \leq& \|\log|Z|\|_{C^{\alpha}(B_{2}^{2}\setminus B_{1}^{2})}+\frac{r^{\alpha}}{2}\|\log(1+\rho)\|_{C^{\alpha}(\ol{U})}\leq C_{0}.
        \end{split}
    \end{equation}
    Hence, 
    \begin{equation}
    \begin{split}
        \|\log\frac{d^{2}}{\epsilon^{2}}(rZ,rW)\|_{C^{0,\alpha}(B_{2}^{2}\setminus B_{1}^{2}\times B_{2}^{m-2})}
        \leq& \|\log\frac{d^{2}}{\epsilon^{2}}(rZ,rW)\|_{C^{0}(B_{2}^{2}\setminus B_{1}^{2}\times B_{2}^{m-2})}\\
        &+\|\log\frac{d^{2}}{\epsilon^{2}}(rZ,rW)\|_{C^{\alpha}(B_{2}^{2}\setminus B_{1}^{2}\times B_{2}^{m-2})}\\
        \leq& C_{1}\log\frac{r}{\epsilon}+C_{0}\leq C_{2}\log\frac{1}{\epsilon}.
    \end{split}
    \end{equation}
    Then, notice that $r^{-2}$ is decreasing and $r^{-\delta}$ is increasing when $r\in(\frac{2}{3}\epsilon^{\beta},2\epsilon^{\theta})$, we may derive that
    \begin{align}
        &\| \epsilon^{2}\log\frac{d^{2}}{\epsilon^{2}} \|_{C^{0,\alpha}_{2}(\{ \epsilon^{\beta}\leq d\leq \epsilon^{\theta} \})}
        \leq C_{3}\epsilon^{2-2\beta}\log\frac{1}{\epsilon},\\
        &\| \epsilon^{2}\log\frac{d^{2}}{\epsilon^{2}} \|_{\ti{C}^{4,\alpha}_{0,\delta}(\{ \epsilon^{\beta}\leq d\leq \epsilon^{\theta} \})}
        \leq C_{3}\epsilon^{2-\theta\delta}[\log\frac{1}{\epsilon}]^{1-\delta}.
    \end{align}
    For the estimates on derivatives, we show $C^{1,\alpha}$ estimate as an example.

    Calculate in local coordinates $U$ directly,
    \begin{equation}
        \begin{split}
            \di_{Z^{i}}\big[\log\frac{d^{2}}{\epsilon^{2}}(rZ,rW)\big]
            =& \di_{Z^{i}}\big[2\log|rZ|+\log(1+\rho(rZ,rW)) \big]\\
            =& 2\di_{Z^{i}}\log|Z|+\di_{Z^{i}}\frac{r(\di_{z^{i}}\rho)(rZ,rW)}{1+\rho(rZ,rW)}.
        \end{split}
    \end{equation}
    Hence, recall we are using the change of coordinates $(rZ,rW)=(z,w)\in U$,
    \begin{equation}
        \begin{split}
            &\big\|\di_{Z^{i}}\big[\log\frac{d^{2}}{\epsilon^{2}}(rZ,rW)\big]\big\|_{C^{0,\alpha}(B_{2}^{2}\setminus B_{1}^{2}\times B_{2}^{m-2})}\\
            \leq& 2\|\di_{Z^{i}}\log|Z|\|_{C^{0,\alpha}(B_{2}^{2}\setminus B_{1}^{2})}\\
            &+ r\Big\| \frac{\di_{z^{i}}\rho}{1+\rho} \Big\|_{C^{0}(\ol{U})}+r^{1+\alpha}\Big\| \frac{\di_{z^{i}}\rho}{1+\rho} \Big\|_{C^{\alpha}(\ol{U})}\\
            \leq& C_{4}.
        \end{split}
    \end{equation}
    Take the weight function into consideration, we have
    \begin{align}
        &r^{-2}\big\|\di_{Z^{i}}\big[\epsilon^{2}\log\frac{d^{2}}{\epsilon^{2}}(rZ,rW)\big]\big\|_{C^{0,\alpha}(B_{2}^{2}\setminus B_{1}^{2}\times B_{2}^{m-2})}\leq C_{4}\epsilon^{2-2\beta},\\
        &[\ti{\tau}(r)]^{-\delta}\big\|\di_{Z^{i}}\big[\epsilon^{2}\log\frac{d^{2}}{\epsilon^{2}}(rZ,rW)\big]\big\|_{C^{0,\alpha}(B_{2}^{2}\setminus B_{1}^{2}\times B_{2}^{m-2})}\leq C_{4}\epsilon^{2-\theta\delta}[\log\frac{1}{\epsilon}]^{-\delta}.
    \end{align}
    Using the same analysis, we may derive the estimates of higher order derivatives. Therefore we may conclude the first inequality. Moreover, this kind of estimates suits for various estimates involving $\epsilon^{2}\log\frac{d^{2}}{\epsilon^{2}}$. We just need to be careful about the weight function and domains.

    \subsection{Proof of Lemma \ref{Q2}}\label{PQ2}

    \begin{proof}[Proof of Lemma \ref{Q2}]
    We follow the proof in \cite[Lemma 8.18]{Sze2014} closely. Consider $\chi_{t}=t\vphi+(1-t)\psi, t\in\R$ and denote $\omega_{\chi_{t}}=\omega+\sqrt{-1}\di\dbar\chi_{t}$. Then,
    \begin{equation}
    \begin{split}
        Q_{\omega}(\chi_{t+h})-Q_{\omega}(\chi_{t})
        =& S(\omega+\sqrt{-1}\di\dbar\chi_{t+h})-S(\omega+\sqrt{-1}\di\dbar\chi_{t})-L_{\omega}(\chi_{t+h}-\chi_{t})\\
        =& S(\omega_{\chi_{t}}+\sqrt{-1}h(\vphi-\psi))-S(\omega_{\chi_{t}})-hL_{\omega}(\vphi-\psi).
    \end{split}
    \end{equation}
    Take limit of $h$, we have
    \begin{equation}
        \begin{split}
            \lim_{h\to 0}\frac{Q_{\omega}(\chi_{t+h})-Q_{\omega}(\chi_{t})}{h}
            =& \frac{d}{dh}\Big|_{h=0}S(\omega_{\chi_{t}}+\sqrt{-1}h(\vphi-\psi))-L_{\omega}(\vphi-\psi)\\
            =& L_{\omega_{\chi_{t}}}(\vphi-\psi)-L_{\omega}(\vphi-\psi).
        \end{split}
    \end{equation}
    By the mean value theorem of Fr\'{e}chet derivative (\cite[Theorem 7.2-1 on p.~466]{Ciarlet}), 
    \begin{equation}
    \begin{split}
        \|Q_{\omega}(\vphi)-Q_{\omega}(\psi)\|_{\ti{C}^{0,\alpha}_{-4,\delta}(\{r_{1}\leq d\leq r_{2} \})}
        \leq& \sup_{t\in[0,1]}\|\frac{d}{dt}Q_{\omega}(\chi_{t})\|_{\ti{C}^{4,\alpha}_{0,\delta}\to \ti{C}^{0,\alpha}_{-4,\delta}}\|\vphi-\psi\|_{\ti{C}^{4,\alpha}_{0,\delta}(\{r_{1}\leq d\leq r_{2} \})} \\
        =& \sup_{t\in[0,1]}\|(L_{\omega_{\chi_{t}}}-L_{\omega})\|_{\ti{C}^{4,\alpha}_{0,\delta}\to \ti{C}^{0,\alpha}_{-4,\delta}} \|\vphi-\psi\|_{\ti{C}^{4,\alpha}_{0,\delta}(\{r_{1}\leq d\leq r_{2} \})},
    \end{split}
    \end{equation}
    As we have shown in Proposition \ref{UniEst}, when $c$ is taken sufficiently small, there exists a constant $C_{0}$ independent of $r_{1},r_{2}$ and $\epsilon$ such that,
    \begin{equation}
        \| g_{\omega_{\chi_{t}}}^{-1}-g_{\omega}^{-1} \|_{C^{2,\alpha}_{0}(\{r_{1}\leq d\leq r_{2} \})},\| g_{\omega_{\chi_{t}}}-g_{\omega} \|_{C^{2,\alpha}_{0}(\{r_{1}\leq d\leq r_{2} \})}\leq C_{0}\| \chi_{t} \|_{C^{4,\alpha}_{2}(\{r_{1}\leq d\leq r_{2} \})}
    \end{equation}
    Then, for any $f\in \ti{C}^{4,\alpha}_{0,\delta}(\{r_{1}\leq d\leq r_{2} \})$, the main term of $(L_{\omega_{\chi_{t}}}-L_{\omega})f$ in local coordinates of Lemma \ref{holosubcoor} type is given by bi-laplacian:
    \begin{equation}
    \begin{split}
        (g_{\omega_{\chi_{t}}}^{i\ol{j}} \di_{i}\di_{\ol{j}} g_{\omega_{\chi_{t}}}^{k\ol{l}}\di_{k}\di_{\ol{l}} -g_{\omega}^{i\ol{j}}\di_{i}\di_{\ol{j}} g_{\omega}^{k\ol{l}}\di_{k}\di_{\ol{l}})f
        =& ([g_{\omega_{\chi_{t}}}^{i\ol{j}}-g_{\omega}^{i\ol{j}}] \di_{i}\di_{\ol{j}} g_{\omega_{\chi_{t}}}^{k\ol{l}}\di_{k}\di_{\ol{l}})f \\
        &+(g_{\omega}^{i\ol{j}} \di_{i}\di_{\ol{j}} [g_{\omega_{\chi_{t}}}^{k\ol{l}}- g_{\omega}^{k\ol{l}}]\di_{k}\di_{\ol{l}})f.
    \end{split}
    \end{equation}
    We estimate a typical term provided by the second term in the R.H.S. as an example. Fix $\frac{2}{3}r_{1}<r<r_{2}$,
    \begin{equation}
        \begin{split}
            &[\ti{\tau}(r)]^{-\delta}r^{4}\|g_{\omega}^{i\ol{j}} (\di_{i}\di_{\ol{j}} [g_{\omega_{\chi_{t}}}^{k\ol{l}}- g_{\omega}^{k\ol{l}}])(\di_{k}\di_{\ol{l}}f)(rZ,rW)\|_{C^{0,\alpha}(B_{2}^{2}\setminus B_{1}^{2}\times B_{2}^{m-2})}
            \\
            \leq & \|g_{\omega}\|_{C^{0,\alpha}(M)}
            r^{2}\|(\di_{i}\di_{\ol{j}} [g_{\omega_{\chi_{t}}}^{k\ol{l}}- g_{\omega}^{k\ol{l}}])(rZ,rW)\|_{C^{0,\alpha}(B_{2}^{2}\setminus B_{1}^{2}\times B_{2}^{m-2})}
            \\
            &\cdot [\ti{\tau}(r)]^{-\delta}
            r^{2}\|(\di_{k}\di_{\ol{l}}f)(rZ,rW)\|_{C^{0,\alpha}(B_{2}^{2}\setminus B_{1}^{2}\times B_{2}^{m-2})}
            \\
            \leq& C_{1}\|[g_{\omega_{\chi_{t}}}^{k\ol{l}}- g_{\omega}^{k\ol{l}}](rZ,rW)\|_{C^{2,\alpha}_{0}(B_{2}^{2}\setminus B_{1}^{2}\times B_{2}^{m-2})}
            \|f\|_{\ti{C}^{4,\alpha}_{0,\delta}(\{r_{1}\leq d\leq r_{2}\})}
            \\
            \leq& C_{1}C_{0}\| \chi_{t} \|_{C^{4,\alpha}_{2}(\{r_{1}\leq d\leq r_{2} \})}\|f\|_{\ti{C}^{4,\alpha}_{0,\delta}(\{r_{1}\leq d\leq r_{2}\})}
        \end{split}
    \end{equation}
    We may derive that 
    \begin{equation}
        \sup_{t\in[0,1]}\|(L_{\omega_{\chi_{t}}}-L_{\omega})\|_{\ti{C}^{4,\alpha}_{0,\delta}\to \ti{C}^{0,\alpha}_{-4,\delta}}\leq C_{2}(\|\vphi\|_{C^{4,\alpha}_{2}(\{r_{1}\leq d\leq r_{2}\})}+\|\psi\|_{C^{4,\alpha}_{2}(\{r_{1}\leq d\leq r_{2}\})}).
    \end{equation}
    Thus, we can conclude that
    \begin{equation}
    \begin{split}
        \|Q_{\omega}(\vphi)-Q_{\omega}(\psi)\|_{\ti{C}^{0,\alpha}_{-4,\delta}(\{r_{1}\leq d\leq r_{2} \})}
       \leq &  
       C_{2}(\|\vphi\|_{C^{4,\alpha}_{2}(\{r_{1}\leq d\leq r_{2}\})}+\|\psi\|_{C^{4,\alpha}_{2}(\{r_{1}\leq d\leq r_{2}\})})
       \\
       &\cdot \|\vphi-\psi\|_{\ti{C}^{4,\alpha}_{0,\delta}(\{r_{1}\leq d\leq r_{2} \})}.
    \end{split}
    \end{equation}
\end{proof}

    \subsection{Related estimates in the proof of Proposition \ref{F}}\label{EF}
    We mainly estimate the weighted norm of $F_{\epsilon}$ when $2\epsilon^{\beta}\leq d\leq \epsilon^{\theta}$. According to calculation, we have:
    \begin{equation}
    \begin{split}
        F_{\epsilon}
        =& -\gamma(\frac{2d}{\epsilon^{\theta}})L_{\omega}(\epsilon^{2}\log\frac{d^{2}}{\epsilon^{2}}) \\
        =& -\gamma(\frac{2d}{\epsilon^{\theta}})\epsilon^{2}(\Delta_{\omega}^{2}\log d^{2}+Ric_{\omega}\cdot\di\dbar\log d^{2})\\
        =& -\gamma(\frac{2d}{\epsilon^{\theta}})\epsilon^{2}(\frac{G_{3}}{d^{3}}+\frac{G_{2}}{d^{2}}),
    \end{split}
    \end{equation}
    where $G_{1}$ and $G_{2}$ are smooth functions defined on $M$ that are determined by curvatures of $\omega$. We refer to the proof of \cite[Theorem 3.11, p.~39]{Gray} for a related calculation. Then, the weighted holder norm $\|\cdot\|_{\ti{C}^{\alpha}_{-4,\delta}}$ satisfies:
    \begin{equation}
        \begin{split}
            &\|-\gamma(\frac{2d}{\epsilon^{\theta}})\epsilon^{2}\frac{G_{3}}{d^{3}}\|_{\ti{C}^{\alpha}_{-4,\delta}(\{ 2\epsilon^{\beta}\leq d\leq \epsilon^{\theta} \})}
            \\
            \leq& \|G_{3}\|_{C^{0}}\|\gamma(\frac{2d}{\epsilon^{\theta}})\epsilon^{2}\frac{1}{d^{3}}\|_{\ti{C}^{\alpha}_{-4,\delta}}+\|G_{3}\|_{C^{\alpha}_{0}}\|\gamma(\frac{2d}{\epsilon^{\theta}})\epsilon^{2}\frac{1}{d^{3}}\|_{\ti{C}^{0}_{-4,\delta}}\\
            \leq& \|G_{3}\|_{C^{0,\alpha}_{0}}(\|\gamma(\frac{2d}{\epsilon^{\theta}})\epsilon^{2}\frac{1}{d^{3}}\|_{\ti{C}^{\alpha}_{-4,\delta}}+\|\gamma(\frac{2d}{\epsilon^{\theta}})\epsilon^{2}\frac{1}{d^{3}}\|_{\ti{C}^{0}_{-4,\delta}})\\
            \leq& \|G_{3}\|_{C^{0,\alpha}_{0}}\Big(\|\gamma(\frac{2d}{\epsilon^{\theta}})\|_{C^{0,\alpha}_{0}}2\|\frac{\epsilon^{2}}{d^{3}}\|_{\ti{C}^{0}_{-4,\delta}}+\|\gamma(\frac{2d}{\epsilon^{\theta}})\|_{C^{0}_{0}}\|\frac{\epsilon^{2}}{d^{3}}\|_{\ti{C}^{\alpha}_{-4,\delta}} \Big)\\
            \leq& C_{0}\|\gamma(\frac{2d}{\epsilon^{\theta}})\|_{C^{0,\alpha}_{0}}\|\frac{\epsilon^{2}}{d^{3}}\|_{\ti{C}^{0,\alpha}_{-4,\delta}}.
        \end{split}
    \end{equation}
    On the other hand, we may take $C_{0}$ large enough such that
    \begin{equation}
        \|-\gamma(\frac{2d}{\epsilon^{\theta}})\epsilon^{2}\frac{G_{3}}{d^{3}}\|_{\ti{C}^{0}_{-4,\delta}(\{ 2\epsilon^{\beta}\leq d\leq \epsilon^{\theta} \})}
        \leq 
        C_{0}\|\gamma(\frac{2d}{\epsilon^{\theta}})\|_{C^{0}_{0}}\|\frac{\epsilon^{2}}{d^{3}}\|_{\ti{C}^{0}_{-4,\delta}}
        \leq C_{0} \|\frac{\epsilon^{2}}{d^{3}}\|_{\ti{C}^{0,\alpha}_{-4,\delta}}.
    \end{equation}
    In coordinate neighborhood $U$ of Lemma \ref{holosubcoor} type and fix $\frac{4}{3}\epsilon^{\beta}\leq r\leq 2\epsilon^{\theta}$ (covers $\{2\epsilon^{\beta}\leq d\leq \epsilon^{\theta}\}$),
    \begin{equation}
        \begin{split}
            &\|\gamma(\frac{2d}{\epsilon^{\theta}})(rZ,rW)\|_{C^{\alpha}(B_{2}^{2}\setminus B_{1}^{2}\times B_{2}^{m-2})}\\
            =& \sup \frac{|\gamma(\frac{2d}{\epsilon^{\theta}})(rZ_{1},rW_{1})-\gamma(\frac{2d}{\epsilon^{\theta}})(rZ_{2},rW_{2})|}{|(Z_{1},W_{1})-(Z_{2},W_{2})|^{\alpha}} \\
            =& \sup \frac{|\gamma'(\frac{2d_{0}}{\epsilon^{\theta}})|\cdot\frac{2}{\epsilon^{\theta}}|d(rZ_{1},rW_{1})-d(rZ_{2},rW_{2})|}{|(Z_{1},W_{1})-(Z_{2},W_{2})|^{\alpha}}\\
            \leq & \sup |\gamma'(\frac{2d_{0}}{\epsilon^{\theta}})|  \frac{\frac{2}{\epsilon^{\theta}}(1+\rho(rZ_{1},rW_{1}))^{\frac{1}{2}}||rZ_{1}|-|rZ_{2}||}{|(Z_{1},W_{1})-(Z_{2},W_{2})|^{\alpha}} \\
            &+ \sup |\gamma'(\frac{2d_{0}}{\epsilon^{\theta}})|\frac{\frac{2}{\epsilon^{\theta}}|rZ_{2}|\cdot|(1+\rho(rZ_{1},rW_{1}))^{\frac{1}{2}}-(1+\rho(rZ_{2},rW_{2}))^{\frac{1}{2}}|}{|(Z_{1},W_{1})-(Z_{2},W_{2})|^{\alpha}}\\
            \overset{r\leq 2\epsilon^{\theta}}{\leq}& \big[C_{1}\| |Z| \|_{C^{\alpha}(B_{2}^{2}\setminus B_{1}^{2})}+C_{2}(\epsilon^{\theta})^{\alpha}\|(1+\rho)^{\frac{1}{2}}\|_{C^{\alpha}(\ol{U})}\big]\leq C_{3}.
        \end{split}
    \end{equation}
    Now, we have
    \begin{equation}
        \|-\gamma(\frac{2d}{\epsilon^{\theta}})\epsilon^{2}\frac{G_{3}}{d^{3}}\|_{\ti{C}^{0,\alpha}_{-4,\delta}(\{ 2\epsilon^{\beta}\leq d\leq \epsilon^{\theta} \})}
        \leq 
        C_{4}\|\frac{\epsilon^{2}}{d^{3}}\|_{\ti{C}^{0,\alpha}_{-4,\delta}(\{ 2\epsilon^{\beta}\leq d\leq \epsilon^{\theta} \}}.
    \end{equation}
    Back to neighborhood $U$ and fix $\frac{4}{3}\epsilon^{\beta}\leq r\leq 2\epsilon^{\theta}$ again,
    \begin{equation}
    \begin{split}
        & [\ti{\tau}(r)]^{-\delta}r^{4}\|\frac{\epsilon^{2}}{d^{3}}(rZ,rW)\|_{C^{\alpha}(B_{2}^{2}\setminus B_{1}^{2}\times B_{2}^{2})}\\
        =& [\ti{\tau}(r)]^{-\delta}r^{4}\epsilon^{2}\sup \frac{|(|rZ_{1}|\sqrt{1+\rho(rZ_{1},rW_{1})})^{-3}-|(|rZ_{2}|\sqrt{1+\rho(rZ_{2},rW_{2})})^{-3}|}{|(Z_{1},W_{1})-(Z_{2},W_{2})|^{\alpha}}\\
        \leq& [\ti{\tau}(r)]^{-\delta}r^{4}[\sup_{\ol{U}}(1+\rho)^{\frac{3}{2}}]\sup\frac{||rZ_{1}|^{-3}-|rZ_{2}|^{-3}|}{|(Z_{1},W_{1})-(Z_{2},W_{2})|^{\alpha}}\\
        &+[\ti{\tau}(r)]^{-\delta}r^{4}
        \epsilon^{2} r^{-3+\alpha}\|(1+\rho)^{\frac{1}{2}}\|_{C^{\alpha}(\ol{U})}\\
        \leq& C_{5} \epsilon^{2}r^{1-\delta}[\log\frac{1}{\epsilon}]^{-\delta}\\
        \leq& C_{6}\epsilon^{2+\theta(1-\delta)}(\log\frac{1}{\epsilon})^{-\delta}.
    \end{split}
    \end{equation}
    On the other hand, 
    \begin{equation}
        [\ti{\tau}(r)]^{-\delta}r^{4}\|\frac{\epsilon^{2}}{d^{3}}(rZ,rW)\|_{C^{0}(B_{2}^{2}\setminus B_{1}^{2}\times B_{2}^{2})}
        \leq 
        (\frac{3}{2})^{3}\epsilon^{2}[\log\frac{1}{\epsilon}]^{-\delta}r^{1-\delta}
        \leq (\frac{3}{2})^{3}\epsilon^{2+\theta(1-\delta)}(\log\frac{1}{\epsilon})^{-\delta}.
    \end{equation}
    Finally, we may derive the following.
    \begin{equation}
        \|-\gamma(\frac{2d}{\epsilon^{\theta}})\epsilon^{2}\frac{G_{3}}{d^{3}}\|_{\ti{C}^{0,\alpha}_{-4,\delta}(\{ 2\epsilon^{\beta}\leq d\leq \epsilon^{\theta} \})}
        \leq 
        C \epsilon^{2+\theta(1-\delta)}(\log\frac{1}{\epsilon})^{-\delta}
    \end{equation}
    Similarly, the estimate of $-\gamma(\frac{2d}{\epsilon^{\theta}})\epsilon^{2}\frac{G_{2}}{d^{2}}$ can be formulated by
    \begin{equation}
        \|-\gamma(\frac{2d}{\epsilon^{\theta}})\epsilon^{2}\frac{G_{2}}{d^{2}}\|_{\ti{C}^{0,\alpha}_{-4,\delta}(\{ 2\epsilon^{\beta}\leq d\leq \epsilon^{\theta} \})}
        \leq
        C \epsilon^{2+\theta(2-\delta)}(\log\frac{1}{\epsilon})^{-\delta}
        \leq
        C \epsilon^{2+\theta(1-\delta)}(\log\frac{1}{\epsilon})^{-\delta}
    \end{equation}
    Therefore,
    \begin{equation}\label{F_epsi_est1}
        \|F_{\epsilon}\|_{\ti{C}^{\alpha}_{-4,\delta}(\{ 2\epsilon^{\beta}\leq d\leq \epsilon^{\theta} \})}\leq C \epsilon^{2+\theta(1-\delta)}(\log\frac{1}{\epsilon})^{3-\delta}=C\epsilon^{2-\delta}\epsilon^{\theta+(1-\theta)\delta}(\log\frac{1}{\epsilon})^{-\delta}.
    \end{equation}
    Hence, when $|\delta|$ is sufficiently small, we have the desired estimate in Proposition \ref{F}.

\begin{comment}
\section{A review on an alternative natural K\"ahler metric on blowups}    
\end{comment}

%\bibliographystyle{acm}

\bibliographystyle{amsplain}

\bibliography{scalar_curv}

\end{document}